\documentclass{amsart}
\usepackage{amsmath,amssymb,latexsym,hyperref,url,graphicx}
\usepackage[usenames,dvipsnames]{color}


\newtheorem{theorem}{Theorem}
\newtheorem{conjecture}[theorem]{Conjecture}
\newtheorem{corollary}[theorem]{Corollary}
\newtheorem{lemma}[theorem]{Lemma}
\newtheorem{proposition}[theorem]{Proposition}
\newtheorem*{theorem4/3}{Theorem~\ref{4/3}}
\newtheorem*{theorem6/5}{Theorem~\ref{6/5}}

\theoremstyle{definition}
\newtheorem*{definition*}{Definition}
\newtheorem*{example*}{Example}
\newtheorem*{notation*}{Notation}
\newtheorem*{question*}{Open question}

\newcommand{\Q}{\mathbb{Q}}
\newcommand{\Z}{\mathbb{Z}}
\newcommand{\ab}{\frac{a}{b}}
\newcommand{\pq}{\frac{p}{q}}
\newcommand{\Imin}{I_\textnormal{min}}
\newcommand{\Imax}{I_\textnormal{max}}
\newcommand{\mmax}{m_\textnormal{max}}
\newcommand{\word}{\textnormal{\textbf{w}}}
\newcommand{\colonequal}{\mathrel{\mathop:}=}

\newcommand{\gray}[1]{\textcolor{Gray}{#1}}

\begin{document}

\title{Avoiding fractional powers over the natural numbers}
\author{Lara Pudwell}
\address{
	Department of Mathematics and Statistics\\
	Valparaiso University\\
	Valparaiso, Indiana 46383 \\
	USA
}
\author{Eric Rowland}
\thanks{The second-named author was supported in part by a Marie Curie Actions COFUND fellowship at the University of Li\`ege.}
\address{
	Department of Mathematics \\
	Hofstra University \\
	Hempstead, NY 11549 \\
	USA
}
\date{April 8, 2018}

\begin{abstract}
We study the lexicographically least infinite $a/b$-power-free word on the alphabet of non-negative integers.
Frequently this word is a fixed point of a uniform morphism, or closely related to one.
For example, the lexicographically least $7/4$-power-free word is a fixed point of a $50847$-uniform morphism.
We identify the structure of the lexicographically least $a/b$-power-free word for three infinite families of rationals $a/b$ as well many ``sporadic'' rationals that do not seem to belong to general families.
To accomplish this, we develop an automated procedure for proving $a/b$-power-freeness for morphisms of a certain form, both for explicit and symbolic rational numbers $a/b$.
Finally, we establish a connection to words on a finite alphabet.
Namely, the lexicographically least $27/23$-power-free word is in fact a word on the finite alphabet $\{0, 1, 2\}$, and its sequence of letters is $353$-automatic.
\end{abstract}

\maketitle

\section{Introduction}\label{Introduction}

A major thread of the combinatorics on words literature is concerned with avoidability of patterns.
Beginning with work of Thue~\cite{Thue 1906, Thue 1912, Berstel}, a basic question has been the following.
Given a pattern, on what size alphabet does there exist an infinite word containing no factors matching the pattern?
For example, it is easy to see that squares (words of the form $ww$ where $w$ is a nonempty word) are unavoidable on a binary alphabet, but Thue~\cite{Thue 1906} exhibited an infinite square-free word on a ternary alphabet.

If it is not known whether a given pattern is avoidable on a given alphabet, it is natural to attempt to construct long finite words that avoid the pattern as follows.
Choose an order on the alphabet.
Begin with the empty word, and then iteratively lengthen the current word by appending the least letter of the alphabet, or, if that letter introduces an instance of the pattern, the next least letter, etc.
If no letter extends the word, then backtrack to the previous letter and increment it instead.
If there exists an infinite word avoiding the pattern, then this procedure eventually computes prefixes of the lexicographically least infinite word avoiding that pattern.

Lexicographically least words avoiding patterns have become a subject of study in their own right.
An \emph{overlap} is a word of the form $c x c x c$ where $c$ is a letter.
On a binary alphabet, the lexicographically least overlap-free word is $001001\varphi^\infty(1)$, where $\varphi(0) = 01, \varphi(1) = 10$ and $\varphi^\infty(1)$ is the complement of the Thue--Morse word~\cite{Allouche--Currie--Shallit}.

Guay-Paquet and Shallit~\cite{Guay-Paquet--Shallit} began the study of lexicographically least words avoiding patterns on the alphabet~$\Z_{\geq 0}$.
They gave morphisms generating the lexicographically least words on $\Z_{\geq 0}$ avoiding overlaps and avoiding squares.
Since the alphabet is infinite, prefixes of such words can be computed without backtracking.

Rowland and Shallit~\cite{Rowland--Shallit} gave a morphism description for the lexicographically least $\frac{3}{2}$-power-free word.
A fractional power is a partial repetition, defined as follows.
Let $a$ and $b$ be relatively prime positive integers.
If $v = v_0 v_1 \cdots v_{l-1}$ is a nonempty word whose length $l$ is divisible by $b$, define
\[
	v^{a/b} \colonequal v^{\lfloor a/b \rfloor} v_0 v_1 \cdots v_{l \cdot \text{FractionalPart}(a/b) - 1},
\]
where $\text{FractionalPart}(\ab) = \ab - \lfloor \ab \rfloor$.
For example, $(0111)^{3/2} = 011101$.
We say that $v^{a/b}$ is an \emph{$\ab$-power}.
Note that $|v^{a/b}| = \ab |v|$.
If $\ab > 1$, then a word $w$ is an $\ab$-power if and only if $w$ can be written $v^e x$ where $e$ is a non-negative integer, $x$ is a prefix of $v$, and $\frac{|w|}{|v|} = \ab$.
We say that a word is \emph{$\ab$-power-free} if none of its factors are $\ab$-powers.
Avoiding $\frac{3}{2}$-powers, for example, means avoiding factors $xyx$ where $|x| = |y| \geq 1$.
Avoiding $\frac{5}{4}$-powers means avoiding factors $xyx$ where $3 |x| = |y| \geq 1$.
More generally, if $1 < \ab < 2$ then an $\ab$-power is a word of the form $xyx$ where $\frac{|xyx|}{|xy|} = \ab$.
A \emph{bordered word} is a word of the form $xyx$ where $x$ is nonempty, so for $1 < \ab < 2$ one can think of an $\ab$-power as a bordered word with a prescribed relationship between $|x|$ and $|y|$.

Basic terminology is as follows.
If $\Sigma$ is an alphabet (finite or infinite), $\Sigma^*$ denotes the set of finite words with letters from~$\Sigma$.
We index letters in a finite or infinite word starting with position~$0$.
A \emph{morphism} on an alphabet $\Sigma$ is a map $\varphi \colon \Sigma \to \Sigma^*$.
A morphism on $\Sigma$ extends naturally to finite and infinite words by concatenation.
A morphism $\varphi$ on $\Sigma$ is \emph{$k$-uniform} if $|\varphi(n)| = k$ for all $n \in \Sigma$.
If there is a letter $c \in \Sigma$ such that $c$ is the first letter of $\varphi(c)$, then iterating $\varphi$ gives a word $\varphi^\infty(c)$ which begins with $c$ and which is a fixed point of~$\varphi$.

In this paper, we show that for some rational numbers $\ab$, the lexicographically least $\ab$-power-free word on $\Z_{\geq 0}$ is a fixed point of a uniform morphism.
For other rationals, this word is the image, under a coding, of a fixed point of a morphism on the alphabet $\Z_{\geq 0} \cup \Sigma$ for some finite set~$\Sigma$.
In both cases, the morphisms ${\varphi |}_{\Z_{\geq 0}}$ are $\ab$-power-free, meaning that if $w$ avoids $\ab$-powers then $\varphi(w)$ avoids $\ab$-powers.
By studying lexicographically least words, we discover many $\ab$-power-free morphisms, which are interesting in their own right.

The outline of the paper is as follows.
In Section~\ref{Examples} we discuss the lexicographically least word avoiding $\ab$-powers for several explicit rationals $\ab$ and discuss the $k$-regularity of their sequences of letters.
In Section~\ref{first intervals} we show that, for an infinite family of rationals in the interval $\frac{5}{3} \leq \ab < 2$, the lexicographically least $\ab$-power-free word is a fixed point of an $\ab$-power-free $(2 a - b)$-uniform morphism.
In Section~\ref{power-free morphisms} we discuss automating proofs of $\ab$-power-freeness and prove similar theorems for other morphisms.
In Section~\ref{Families of words} we establish the structure of the lexicographically least $\ab$-power-free word for two additional infinite families of rationals.
Using the machinery we have built, in Section~\ref{Sporadic words} we address some sporadic words that we have not found to belong to infinite families.

The theorems in Section~\ref{power-free morphisms} and, in part, Sections~\ref{Families of words} and \ref{Sporadic words}, are proved by automated symbolic case analysis.
Since the morphisms are symbolic in $a$ and $b$, a substantial amount of symbolic computation is required.
The \textit{Mathematica} package \textsc{SymbolicWords} was written to manipulate words with symbolic run lengths and perform these computations.
It can be downloaded from the web site of the second-named author\footnote{\url{http://people.hofstra.edu/Eric_Rowland/packages.html\#SymbolicWords} as of this writing.}.

\section{Some explicit rationals}\label{Examples}

The following word is our main object of study.

\begin{notation*}
Let $a$ and $b$ be relatively prime positive integers such that $\ab > 1$.
Define $\word_{a/b}$ to be the lexicographically least infinite word on $\Z_{\geq 0}$ avoiding $\ab$-powers.
\end{notation*}

We require $\ab > 1$, because if $0 < \ab \leq 1$ then every word of length $a$ is an $\ab$-power and $\word_{a/b}$ does not exist.
For $\ab > 1$, the word $\word_{a/b}$ exists, since, given a prefix, appending an integer that doesn't occur in the prefix yields a word with no $\ab$-power suffix.
It is clear that $\word_{a/b}$ is not eventually periodic, since the periodic word $xxx\cdots$ contains the $\ab$-power $x^a = (x^b)^{a/b}$.

In this section we examine $\word_{a/b}$ for some explicit rational numbers~$\ab$.
Guay-Paquet and Shallit~\cite{Guay-Paquet--Shallit} showed that the lexicographically least square-free word on $\Z_{\geq 0}$ is
\[
	\word_2 = \varphi^\infty(0) = 01020103010201040102010301020105 \cdots,
\]
where $\varphi$ is the $2$-uniform morphism given by $\varphi(n) = 0 (n+1)$.
More generally, for an integer $a \geq 2$ we have $\word_a = \varphi^\infty(0)$, where $\varphi(n) = 0^{a-1}(n+1)$.
If we write $\word_a = w(0) w(1) \cdots$ so that $w(i)$ is the letter at position $i$ in $\word_a$, then
\[
	w(a i + r) =
	\begin{cases}
		0		& \text{if $0 \leq r \leq a-2$} \\
		w(i) + 1	& \text{if $r = a-1$}.
	\end{cases}
\]

The results in this paper can been seen as generalizations of this morphism and recurrence to fractional powers.
Many of the morphisms that will appear are of the form $\varphi(n) = u \, (n + d)$, where $u$ is a word of length $k - 1$ and $d \in \Z_{\geq 0}$.
If we write $\varphi^\infty(0) = w(0) w(1) \cdots$, then the letter sequence $w(i)_{i \geq 0}$ satisfies
\begin{equation}\label{constant columns recurrence}
	w(k i + r) =
	\begin{cases}
		w(r)		& \text{if $0 \leq r \leq k-2$} \\
		w(i) + d	& \text{if $r = k-1$}
	\end{cases}
\end{equation}
for all $i \geq 0$.

Consider
\[
	\word_{3/2} = 001102 \, 100112 \, 001103 \, 100113 \, 001102 \, 100114 \, 001103 \, 100112 \, \cdots.
\]
The first array in Figure~\ref{3/2 5/3 9/5 arrays} shows the first several letters of $\word_{3/2}$, partitioned into rows of length $6$, with the integers $0$ through $6$ rendered in gray levels from white to black.
The first five columns are periodic, and the last column is a ``self-similar'' column consisting of the letters of $\word_{3/2}$, each increased by~$2$.
The letters of $\word_{3/2}$ satisfy
\[
	w(6 i + r) =
	\begin{cases}
		w(r)		& \text{if $r \in \{0, 2, 4\}$ and $i$ is even} \\
		1 - w(r)	& \text{if $r \in \{0, 2, 4\}$ and $i$ is odd} \\
		w(r)		& \text{if $r \in \{1, 3\}$} \\
		w(i) + 2	& \text{if $r = 5$}
	\end{cases}
\]
for all $i \geq 0$, which follows from the recurrence given by Shallit and the second-named author~\cite{Rowland--Shallit}.
Moreover, $\word_{3/2}$ is the image under a coding of a fixed point of a $6$-uniform morphism as follows.
Consider the alphabet $\Z_{\geq 0} \cup \{0', 1'\}$ extended by new letters $0'$ and~$1'$.
Let $\varphi$ be the morphism on $\Z_{\geq 0} \cup \{0', 1'\}$ defined by
\[
	\varphi(n) =
	\begin{cases}
		0' 0 1' 1 0' 2		& \text{if $n = 0'$} \\
		0' 0 1' 1 0' 3		& \text{if $n = 1'$} \\
		1' 0 0' 1 1' (n + 2)	& \text{if $n \in \Z_{\geq 0}$}.
	\end{cases}
\]
Let $\tau$ be the coding defined by $\tau(0') = 0$, $\tau(1') = 1$, and $\tau(n) = n$ for $n \in \Z_{\geq 0}$; this morphism $\tau$ will be the same throughout the paper.
Then $\word_{3/2} = \tau(\varphi^\infty(0'))$.

\begin{figure}
	\includegraphics[scale=.5]{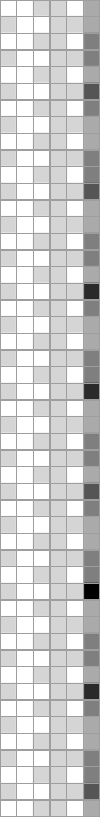} \hspace{2cm}
	\includegraphics[scale=.5]{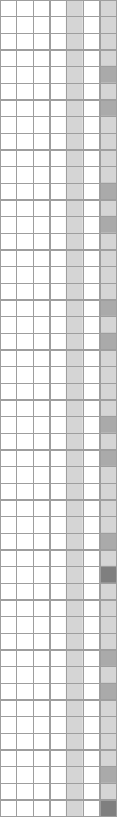} \hspace{2cm}
	\includegraphics[scale=.5]{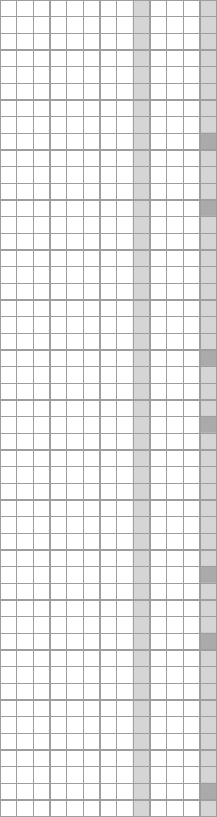}
	\caption{Prefixes of $\word_{3/2}$ (left), $\word_{5/3}$ (center), and $\word_{9/5}$ (right), partitioned into rows of width~$k$.}
	\label{3/2 5/3 9/5 arrays}
\end{figure}

The integer $6$ features prominently in the structural description of $\word_{3/2}$, and the sequence of letters of $\word_{3/2}$ is a $6$-regular sequence in the sense of Allouche and Shallit~\cite{Allouche--Shallit 1992}.
For an integer $k \geq 2$, a sequence $s(i)_{i \geq 0}$ is said to be \emph{$k$-regular} if the $\Z$-module generated by the set of subsequences $\{ s(k^e i + j)_{i \geq 0} : \text{$e \geq 0$ and $0 \leq j \leq k^e - 1$} \}$ is finitely generated.
This implies that $s(i)$ can be computed from the base-$k$ digits of $i$ from some finite set of linear recurrences (such as Equation~\eqref{constant columns recurrence}) and initial conditions.

One of the main motivations of the present paper is to put the `$6$' for $\word_{3/2}$ into context by studying $\word_{a/b}$ for a number of other rationals~$\ab$.
We will see that $\word_{a/b}$ is often $k$-regular for some value of~$k$.
For each integer $a \geq 2$, the word $\word_a$ is $a$-regular.
To demonstrate the variety of values that occur for $b \geq 2$, let us survey $\word_{a/b}$ for some rationals with small numerators and denominators.
We start with some words with fairly simple structure and progress toward more complex words.

For $\ab = \frac{5}{3}$ we obtain the word
\[
	\word_{5/3} = 0000101 \, 0000101 \, 0000101 \, 0000101 \, 0000102 \, 0000101 \, 0000102 \, \cdots.
\]
Partitioning $\word_{5/3}$ into rows of length $7$ produces the second array in Figure~\ref{3/2 5/3 9/5 arrays}.
There are $6$ constant columns and one self-similar column in which the sequence reappears with every term increased by~$1$.
Therefore the sequence of letters seems to satisfy Equation~\eqref{constant columns recurrence} with $k = 7$ and $d = 1$, and in fact we have the following.

\begin{theorem}\label{5/3}
Let $\varphi$ be the $7$-uniform morphism defined by
\[
	\varphi(n) = 000010(n + 1)
\]
for all $n \in \Z_{\geq 0}$.
Then $\word_{5/3} = \varphi^\infty(0)$.
\end{theorem}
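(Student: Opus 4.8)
The plan is to establish the two halves of the statement separately: that $w \colonequal \varphi^\infty(0)$ avoids $5/3$-powers, and that it is lexicographically least among infinite words on $\Z_{\geq 0}$ that do so; together these give $\word_{5/3} = w$. Throughout I use that, since $1 < 5/3 < 2$, a $5/3$-power is a factor $xyx$ with $|x| = 2|y| \geq 2$, so its period $|xy| = 3|y|$ is a multiple of $3$. Writing $\varphi^\infty(0) = w(0) w(1) \cdots$, I also record the structure supplied by \eqref{constant columns recurrence} with $k = 7$ and $d = 1$: the columns $r \in \{0,1,2,3,5\}$ are identically $0$, the column $r = 4$ is identically $1$, and the self-similar column satisfies $w(7i+6) = w(i) + 1$. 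Consequently the letter at a position is nonzero exactly when that position is $\equiv 4$ or $\equiv 6 \pmod 7$, and these two facts drive both halves of the argument.

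For lexicographic minimality I would argue by strong induction on the position $i$, proving that for every $c$ with $0 \le c < w(i)$ the word $w(0) \cdots w(i-1)\, c$ ends in a $5/3$-power; combined with power-freeness this forces the greedy word to agree with $w$ everywhere. When $i \equiv 0,1,2,3,5 \pmod 7$ we have $w(i) = 0$ and there is nothing to check. When $i \equiv 4 \pmod 7$ we have $w(i) = 1$, and lowering it to $0$ produces the suffix $00000$ (positions $i-4,\dots,i$ are all $0$), which is a $5/3$-power. The crucial case is $i = 7j+6$, where $w(i) = w(j)+1$: lowering the letter to $0$ yields the suffix $00100$ (positions $i-4,\dots,i$ read $0,0,1,0,0$), a $5/3$-power; and for $1 \le c \le w(j)$ I lift the inductive data at position $j$. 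By induction $w(0)\cdots w(j-1)(c-1)$ ends in a $5/3$-power $P$, and since $w(7j)\cdots w(7j+5) = 000010$ one has $w(0)\cdots w(7j+5)\, c = \varphi\bigl(w(0)\cdots w(j-1)(c-1)\bigr)$, which ends in $\varphi(P)$. Because $\varphi$ is $7$-uniform it multiplies both $|x|$ and $|y|$ by $7$, so $\varphi(P)$ is again a $5/3$-power. Thus all of $c = 0, 1, \dots, w(j)$ are excluded, and $w(7j+6) = w(j)+1$ is the least admissible letter.

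For power-freeness I would use synchronization and descent. Suppose $w$ contains a $5/3$-power $xyx$ of minimal period $3m$, so $m = |y| \geq 1$. The key observation is that a run of four consecutive $0$'s can occur only at positions $\equiv 0,1,2,3 \pmod 7$, because every position $\equiv 4$ or $\equiv 6 \pmod 7$ carries a nonzero letter (including the values $w(i)+1 \geq 1$ in the self-similar column); hence any factor of length at least $10$ contains such a run and thereby determines its own starting position modulo $7$. If $m \geq 5$ then $|x| = 2m \geq 10$, so the two occurrences of $x$, which begin $3m$ apart, must be in phase, forcing $7 \mid 3m$ and hence $7 \mid m$. Writing $m = 7m'$, I restrict the period relation $w(p) = w(p + 3m)$ to positions $p \equiv 6 \pmod 7$ inside the power; using $w(7i+6) = w(i)+1$ the increments cancel, giving $w(q) = w(q + 3m')$ for the first $2m'$ values $q$ of a contiguous interval of length $5m'$. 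This is a $5/3$-power of period $3m' < 3m$ in $w$, contradicting minimality. The residual cases $m \in \{1,2,3,4\}$ (where $x$ is too short to synchronize) and $m \in \{5,6\}$ (where $7 \nmid m$ already contradicts the phase conclusion) are handled by a finite case analysis over the residue modulo $7$ of the starting position, which caps the period at $12$.

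The main obstacle is the power-freeness half, and within it the careful bookkeeping of the synchronization step: one must verify that the four-$0$ run fixes the phase uniquely and that the descended window has exactly the length $5m'$ needed to be a genuine $5/3$-power rather than merely a long periodic factor. The finite base case, though routine, must be organized so that the unbounded letters in the self-similar column do not generate infinitely many subcases; this is precisely the symbolic computation automated by the \textsc{SymbolicWords} package and subsumed by the general power-free-morphism method of Section~\ref{power-free morphisms}, which could alternatively be invoked in place of the hand argument once it is available.
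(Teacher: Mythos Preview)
Your argument is correct and follows the same overall architecture as the paper: synchronization plus descent for $5/3$-power-freeness, and the inductive ``decrementing introduces a power'' argument for lexicographic minimality. The lexicographic-least half is essentially identical to the paper's proof of Theorem~\ref{2 2a-b} specialized to $a=5$, $b=3$.

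For power-freeness there is a mild organizational difference worth noting. The paper proves the stronger statement that the morphism $\varphi$ itself is $5/3$-power-free (if $\varphi(w)$ contains a $5/3$-power then $w$ does), and then deduces power-freeness of the fixed point. You instead work directly on $\varphi^\infty(0)$, using the recursion $w(7i+6)=w(i)+1$ to descend through the self-similar column without first shifting the putative power to a block boundary. Your synchronization hook (the unique run $0000$ in a length-$10$ window) is a small variant of the paper's (nonzero words of length $\geq 2(a-b)=4$ are located modulo $k$); yours needs a slightly larger threshold but avoids the separate ``$x$ is all zeros'' case the paper handles for $m=2$. Both routes leave the same residual finite check (the paper's table for $m=1$ plus the $m=2$ zero case, versus your symbolic verification for $m\le 4$), and both correctly reduce it to a case analysis over starting residues modulo $7$ with symbolic letters at positions $\equiv 6$. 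What the paper's formulation buys is reusability: showing $\varphi$ is power-free as a morphism is exactly the hypothesis that feeds into the automated machinery of Section~\ref{power-free morphisms}, whereas your fixed-point argument is tailored to this single word.
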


The last array in Figure~\ref{3/2 5/3 9/5 arrays} shows the word $\word_{9/5}$ partitioned into rows of length $k = 13$.
Again we see $12$ constant columns and one self-similar column.
Indeed this word is generated by the following morphism.

\begin{theorem}\label{9/5}
Let $\varphi$ be the $13$-uniform morphism defined by
\[
	\varphi(n) = 000000001000(n + 1)
\]
for all $n \in \Z_{\geq 0}$.
Then $\word_{9/5} = \varphi^\infty(0)$.
\end{theorem}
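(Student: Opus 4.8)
The plan is to prove the two defining properties of $\word_{9/5}$ separately: first that $\varphi^\infty(0)$ avoids $\frac{9}{5}$-powers, and second that it is lexicographically least among $\frac{9}{5}$-power-free words on $\Z_{\geq 0}$. Throughout I write $w(i)$ for the letter at position $i$, so that by Equation~\eqref{constant columns recurrence} (with $k = 13$ and $d = 1$) we have $w(13 i + r) = w(r)$ for $0 \le r \le 11$ and $w(13 i + 12) = w(i) + 1$, where $w(0) \cdots w(11) = 0^8 1 0^3$. Since $1 < \frac{9}{5} < 2$, a $\frac{9}{5}$-power is a bordered word $xyx$ with $|x| = 4|y|$; in particular the shortest such power built from a single repeated letter is $0^9 = (0^5)^{9/5}$, which is exactly why the constant columns carry eight $0$'s before the letter $1$ in column $8$.

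For power-freeness I would use a desubstitution argument resting on a synchronization property of $\varphi$. The maximal runs of $0$'s in $\varphi^\infty(0)$ have length exactly $8$ (columns $0$ through $7$ of a block) or $3$ (columns $9$ through $11$), because column $8$ always carries the letter $1$ and column $12$ always carries a letter $\ge 1$, so no two runs merge. Consequently every occurrence of the factor $0^8$ begins at a position divisible by $13$, and this factor synchronizes the block decomposition. Given this, I would argue that a $\frac{9}{5}$-power $v^{9/5}$ whose period $|v|$ exceeds a small bound must have its repeated structure aligned to block boundaries; using that $\varphi$ is $13$-uniform and injective, such a power then pulls back through $\varphi$ to a strictly shorter $\frac{9}{5}$-power occurring in $\varphi^\infty(0)$ itself, yielding an infinite descent. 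The finitely many cases with small period would be dispatched by a direct symbolic check on a bounded prefix. This is precisely the style of argument automated by the package \textsc{SymbolicWords} in Section~\ref{power-free morphisms}.

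For lexicographic minimality I would induct on the block index $i$, showing that the greedy $\frac{9}{5}$-power-free word agrees with $\varphi^\infty(0)$ through position $13 i + 12$. Within a block, each column whose value is $0$ is automatic: $0$ is the least letter and, by power-freeness, placing it introduces no power, so the greedy algorithm selects it. At column $8$ the preceding letters are $w(13 i) \cdots w(13 i + 7) = 0^8$, so appending $0$ would produce the suffix $0^9 = (0^5)^{9/5}$; hence the letter there is forced to be at least $1$, and $1$ is admissible by power-freeness, giving $w(13 i + 8) = 1$. The only remaining column is the self-similar column $13 i + 12$, where the value is $w(i) + 1$.

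The crux, and the step I expect to be the main obstacle, is showing that at the self-similar column every smaller letter $c \in \{0, 1, \ldots, w(i)\}$ creates a $\frac{9}{5}$-power, so that $w(i) + 1$ is forced. Here one must exploit the self-similarity directly: the context surrounding position $13 i + 12$ is the $\varphi$-image of the context surrounding position $i$, and I would show that choosing $c \le w(i)$ reproduces, at scale $13$, a border that already appeared near the earlier occurrence of the value $c$, thereby completing a bordered word with the prescribed ratio $|x| = 4|y|$. Making this precise requires careful tracking of how the repetition interacts with the block structure together with the induction hypothesis that the prefix through position $i$ is already lexicographically least. Once forcing is established at every position, $\varphi^\infty(0)$ coincides with the greedy word, which is $\word_{9/5}$. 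The structural parallel with Theorem~\ref{5/3} (there $2a - b = 7$, here $2a - b = 13$) strongly suggests that both statements are instances of a single uniform argument, which is the direction pursued in Section~\ref{first intervals}.
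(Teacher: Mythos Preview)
Your plan is correct and mirrors the paper's proof, which handles Theorem~\ref{9/5} as the special case $a=9$, $b=5$ of Theorem~\ref{2 2a-b}: synchronization of nonzero factors plus pull-back through $\varphi$ (using $\gcd(b,2a-b)=1$) for power-freeness, and forced letters column by column for lexicographic minimality. The one place the paper is crisper than your sketch is the self-similar column: replacing $n+1$ by $c\ge 1$ at position $13i+12$ produces exactly $\varphi$ of the length-$(i{+}1)$ prefix with last letter $c-1$, so the induction runs on $c$ with the explicit base case $(0^{4}1)^{9/5}=0^4\,1\,0^4$ for $c=0$, which makes precise the step you flagged as the main obstacle.
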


We prove Theorems~\ref{5/3} and \ref{9/5} in Section~\ref{first intervals}.
For $\ab = \frac{8}{5}$ the value of $k$ is somewhat larger.

\begin{theorem}\label{8/5}
Let $\varphi$ be the $733$-uniform morphism defined by
\small
\begin{align*}
	\varphi(n) = {}
	& 0000000100100000100100000001001100000001001000001001000000010020000 \\
	& 0100100100000001001000001001000001001000000010010010000000100100000 \\
	& 1001000001001000000010010010000000100100000100100000100100000001001 \\
	& 0010000000100100000100100000100100000001001001000000010010000010010 \\
	& 0000100100000001001001000000010010000010010000010010000000100100100 \\
	& 0000010010000010010000010010000000100100100000001001000001001000001 \\
	& 0010110000000100100000100100000001002000001001001000000010010000010 \\
	& 0100000100100000001001001000000010010000010010000010010000000100100 \\
	& 1000000010010000010010000010010000000100100100000001001000001001000 \\
	& 0010010001000100010001000100010001101000000010010000010010000000101 \\
	& 00010001000100010001000100010100000001001000001001000000010100(n + 2)
\end{align*}
\normalsize
for all $n \in \Z_{\geq 0}$.
Then $\word_{8/5} = \varphi^\infty(0)$.
\end{theorem}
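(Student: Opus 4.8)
The statement asserts that the fixed point $\varphi^\infty(0)$ coincides with the lexicographically least word $\word_{8/5}$, and I would unpack this into two claims: (i) $\varphi^\infty(0)$ is $8/5$-power-free, and (ii) it is lexicographically least among $8/5$-power-free words on $\Z_{\geq 0}$. Note that $\frac{8}{5}$ lies strictly below $\frac{5}{3}$, so this rational is \emph{not} covered by the family result for $\frac{5}{3} \leq \ab < 2$, and I would therefore prove both halves directly. It is worth recording at the outset what an $8/5$-power is concretely: a bordered word $xyx$ with $\frac{|xyx|}{|xy|} = \frac{8}{5}$, equivalently $2|x| = 3|y|$, so I am forbidding factors $xyx$ with $|x| = 3t$ and $|y| = 2t$ for some $t \geq 1$.

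For part (i) I would first show that $\varphi$ is an $8/5$-power-free morphism on $\Z_{\geq 0}$, i.e.\ that $\varphi(w)$ avoids $8/5$-powers whenever $w$ does. Because $\varphi$ is a \emph{concrete} $733$-uniform morphism rather than a symbolic family, this is exactly the kind of finite computation automated in Section~\ref{power-free morphisms}. The plan is to take an arbitrary occurrence of an $8/5$-power $xyx$ inside $\varphi(w)$ and analyze how its three distinguished boundaries sit relative to the length-$733$ block decomposition induced by $\varphi$. Synchronizing the occurrence with the block structure forces a corresponding shorter $8/5$-power, or an explicitly forbidden short factor, in the preimage $w$; since the border length need only be examined modulo $733$, the claim collapses to a bounded case check carried out with \textsc{SymbolicWords}. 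Combining this with the facts that $\varphi^\infty(0)$ is the limit of the words $\varphi^n(0)$ and that an explicit short prefix is $8/5$-power-free then yields (i).

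For the leastness (ii) I would exploit the self-similar structure recorded in Equation~\eqref{constant columns recurrence} with $k = 733$ and $d = 2$: every block $\varphi(n) = u\,(n+2)$ contributes the same word $u$ in its first $732$ positions, so those columns are constant, while the last column reproduces the sequence with each term increased by $2$. It then suffices to prove that for every position $i$ and every letter $c$ with $0 \leq c < w(i)$, appending $c$ after the length-$i$ prefix of $\varphi^\infty(0)$ creates an $8/5$-power ending at position $i$ — necessarily as a suffix, since the prefix is already $8/5$-power-free by (i). As the letter sequence is $733$-regular, the specific $8/5$-power that a smaller letter would complete depends only on the base-$733$ expansion of $i$, so the verification reduces to finitely many templates: one checks the forcing directly on the $732$ constant columns using the periodicity of the preceding columns, and on the self-similar column one propagates the forcing by induction, with the increment $d = 2$ matching the value $w(i)+2$ that must not be lowered.

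The hard part will be the leastness half, and within it the bookkeeping of candidate letters. Lowering $w(i)$ does not merely mean testing $c = w(i)-1$ but \emph{every} smaller non-negative integer, and along the self-similar column $w(i)$ grows without bound. The crux is to show that a uniformly bounded family of $8/5$-power templates already excludes all such $c$, so that the a priori infinite set of candidate letters collapses to a finite verification compatible with the self-similar induction. This, together with the sheer size of the $733$-letter block, which makes each alignment case computationally heavy, is where the symbolic machinery does the real work; by comparison, the power-freeness half is a comparatively routine (if large) finite synchronization argument.
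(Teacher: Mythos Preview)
Your proposal is correct and follows the paper's approach: establish $\tfrac{8}{5}$-power-freeness of $\varphi$ via the locating-length machinery of Section~\ref{power-free morphisms} (the paper computes $\ell = 301$ and then checks finitely many factor lengths as in Proposition~\ref{big m}), and prove leastness by showing that decrementing any nonzero letter introduces an $\tfrac{8}{5}$-power suffix, handling the self-similar column inductively exactly as in the proof of Theorem~\ref{2 2a-b}. Your worry in the final paragraph is overblown: at a position in the last column the letter is $n+2$, and decrementing it to some $c \geq 2$ corresponds under $\varphi$ to decrementing the preimage letter $n$ to $c-2$, so the induction immediately reduces the unbounded set of candidates to the two explicit base cases $c=0$ and $c=1$.
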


Among rationals with denominator $4$ we come across an even longer morphism.

\begin{theorem}\label{7/4}
There is a $50847$-uniform morphism
\[
	\varphi(n) = 000000100100000010010000001001000011\cdots1000000100100000010(n + 2)
\]
such that $\word_{7/4} = \varphi^\infty(0)$.
\end{theorem}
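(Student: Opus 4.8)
The plan is to prove the two properties that together characterize $\word_{7/4}$: that $w \colonequal \varphi^\infty(0)$ avoids $7/4$-powers, and that $w$ is lexicographically least among all $7/4$-power-free words on $\Z_{\geq 0}$. For leastness I would use the greedy criterion: \emph{if} $w$ is $7/4$-power-free \emph{and}, for every $n \geq 0$ and every letter $c$ with $0 \leq c < w(n)$, the prefix $w(0) w(1) \cdots w(n-1)\, c$ ends in a $7/4$-power, \emph{then} $w = \word_{7/4}$. Indeed, were $\word_{7/4}$ and $w$ to first differ at position $n$, minimality of $\word_{7/4}$ would force $\word_{7/4}(n) < w(n)$; since the two words agree on positions $0, \dots, n-1$, the greedy condition would then exhibit a $7/4$-power in the prefix $\word_{7/4}(0) \cdots \word_{7/4}(n)$, contradicting that $\word_{7/4}$ is $7/4$-power-free. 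Thus it suffices to verify power-freeness and the greedy condition for the explicit $w$.

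For power-freeness I would first note that $\varphi(0)$ begins with $0$, so $w = \varphi^\infty(0)$ is well defined, and it is enough to show that the restriction ${\varphi|}_{\Z_{\geq 0}}$ is a $7/4$-power-free morphism: since the one-letter word $0$ trivially avoids $7/4$-powers, every $\varphi^m(0)$ and hence $w$ would then avoid them. To show $\varphi$ is $7/4$-power-free I would invoke the automated framework of Section~\ref{power-free morphisms}, whose hypotheses are finitely checkable conditions on a morphism of this form. Concretely, suppose $\varphi(v)$ contains a $7/4$-power $xyx$ (where $\frac{|xyx|}{|xy|} = \frac{7}{4}$ forces $|x| = 3|y|$) for some $7/4$-power-free $v$, and analyze the position of this factor relative to the decomposition of $\varphi(v)$ into images of length $k = 50847$. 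When the period $|xy|$ exceeds a synchronization threshold, the markers supplied by the final letters $n+2$ of the blocks, together with the internal structure of the common prefix $u$, force the power to desubstitute to a strictly shorter $7/4$-power in $v$, a contradiction. When the period is bounded by a fixed multiple of $k$, the factor lies within a bounded number of consecutive blocks and one checks directly against the explicit word $u$ that no such factor occurs; because the self-similar last column produces arbitrarily large letters, these alignment cases must be carried out with the unbounded letters held symbolic, which is exactly what \textsc{SymbolicWords} automates.

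For the greedy condition I would argue column by column using recurrence~\eqref{constant columns recurrence} with $k = 50847$ and $d = 2$. For a position $n \equiv r \pmod k$ with $0 \leq r \leq k-2$, the letter $w(n) = w(r)$ takes one of the small fixed values occurring in $u$, so only finitely many candidates $c < w(n)$ need be ruled out; for each I would locate the $7/4$-power forced to end at position $n$, its border being determined by the nearby structure of $u$. For a position $n = ki + (k-1)$, the letter is the self-similar value $w(i) + 2$; here the impossibility of every smaller letter reduces, through the morphic self-similarity of the final column, to the greedy condition at position $i$ one level down, so an induction on $i$ closes the argument once the finitely many base cases have been verified.

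The main obstacle is the power-freeness step. While the dichotomy between ``long'' and ``short'' periods is conceptually routine once the synchronization threshold is fixed, the length $k = 50847$ makes both the number of short-period alignments and the sizes of the symbolic words involved enormous, so the verification is feasible only through the automated machinery; determining the synchronization constant and confirming that the short-period case analysis is genuinely exhaustive is where the substantive work lies.
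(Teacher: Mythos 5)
Your proposal is correct and follows essentially the same route as the paper: the paper proves this theorem (as part of Theorem~\ref{sporadic rationals with no transient}) by automatically verifying that $\varphi$ locates words of some length $\ell$ (here $\ell = 12940$), using Lemma~\ref{big x} and the variant of Proposition~\ref{big m} to desubstitute long $\frac{7}{4}$-powers and reduce the short ones to a finite symbolic check, and then establishing leastness by showing that decrementing any nonzero letter introduces a $\frac{7}{4}$-power, with the self-similar last letter handled by the same induction you describe. The only difference is one of emphasis, not substance.
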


We do not have any intuition to explain the large value $k = 50847$.
Indeed, to even guess this value one must compute several hundred thousand letters of $\word_{7/4}$.

Let us look at a couple more rationals with denominator~$5$.
For $\frac{6}{5}$ the correct value is $k = 1001$.
However, the array obtained by partitioning $\word_{6/5}$ into rows of length $1001$, shown in Figure~\ref{6/5 array}, does not have constant columns but columns that become constant after $30$ rows.
Subsequent rows suggest a certain $1001$-uniform morphism $\varphi$, but we must build in the earlier rows by defining $\varphi(0') = v \, \varphi(0)$ for some word $v$ whose first letter is~$0'$.
Note that $\varphi$ is no longer uniform.
We refer to the prefix $v$ as the \emph{transient}.
Let $\tau$ be as before; $\tau(0') = 0$ and $\tau(n) = n$ for $n \in \Z_{\geq 0}$.
Then we have the following.

\begin{figure}
	\includegraphics[width=\textwidth]{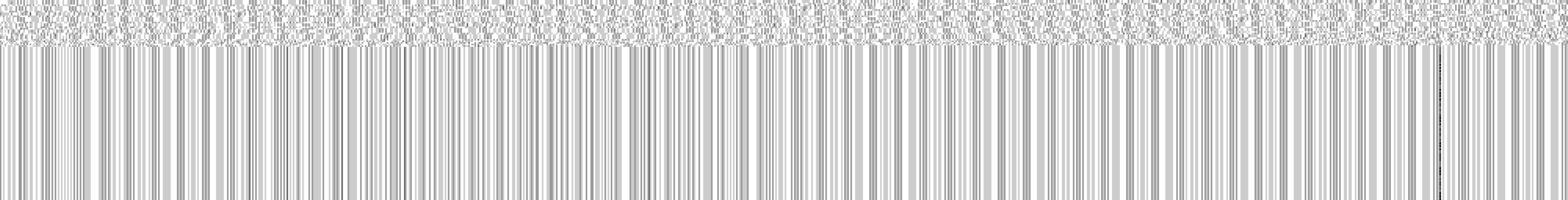}
	\caption{A prefix of $\word_{6/5}$, partitioned into rows of width $k = 1001$.}
	\label{6/5 array}
\end{figure}

\begin{theorem}\label{6/5}
There exist words $u, v$ of lengths $|u| = 1001 - 1$ and $|v| = 29949$ such that $\word_{6/5} = \tau(\varphi^\infty(0'))$, where
\[
	\varphi(n) =
	\begin{cases}
		v \, \varphi(0)		& \text{if $n = 0'$} \\
		u \, (n + 3)			& \text{if $n \in \Z_{\geq 0}$}.
	\end{cases}
\]
\end{theorem}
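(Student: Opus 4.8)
The plan is to verify the two defining properties of the lexicographically least word for $W \colonequal \tau(\varphi^\infty(0'))$: first, that $W$ avoids $6/5$-powers, and second, that for every position $i$ and every letter $c$ with $0 \le c < W(i)$, the finite word $W(0) \cdots W(i-1)\, c$ ends in a $6/5$-power. Granting both, a routine induction on $i$ identifies $W$ with $\word_{6/5}$: if the two words agree on positions $0, \dots, i-1$, then the greedy rule defining $\word_{6/5}$ chooses at position $i$ the least letter that avoids creating a $6/5$-power suffix; power-freeness of $W$ shows $W(i)$ is admissible, while the second property shows that no smaller letter is, so $\word_{6/5}(i) = W(i)$. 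Throughout I would first make $\varphi$ explicit by computing a long enough prefix of $\word_{6/5}$: then $u$ is $\varphi(0)$ with its final letter $3$ deleted, and $v$ is the length-$29949$ transient visible in Figure~\ref{6/5 array}.

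For the first property the engine is the result of Section~\ref{power-free morphisms} that the $1001$-uniform morphism $\psi \colonequal {\varphi|}_{\Z_{\geq 0}}$ is $6/5$-power-free; the desubstitution underlying that result says that a sufficiently long $6/5$-power in $\psi(z)$ forces a strictly shorter $6/5$-power in $z$, for an arbitrary word $z$ over $\Z_{\geq 0}$. I would connect this to $W$ through the self-similar identity
\[
	W(30950)\, W(30951) \cdots = \psi\bigl(W(1)\, W(2) \cdots\bigr),
\]
which follows from the fixed-point equation $\varphi(\varphi^\infty(0')) = \varphi^\infty(0')$, the relations $\tau\varphi(n) = \psi(n)$ for $n \in \Z_{\geq 0}$ and $\tau\varphi(0') = \tau(v)\,u\,3$, and the fact that $0'$ occurs in $\varphi^\infty(0')$ only as its initial letter. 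I then argue by minimality. A shortest $6/5$-power in $W$ either (i) lies in a bounded prefix, where it is excluded by a finite \textsc{SymbolicWords} computation; (ii) lies in the self-similar tail, where synchronized desubstitution through $\psi$ produces a strictly shorter $6/5$-power in $W$, contradicting minimality; or (iii) straddles the junction at position $30950$, which I would reduce by working with $\varphi$ on the extended alphabet—treating the long first block $\varphi(0')$ as a single image—and invoking its synchronization together with a finite check.

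For the second property, positions carrying the letter $0$ impose no condition, so the content is to show that each positive letter of $W$ cannot be lowered: the entries of $u$ and of $v$, and the values $w(i) + 3$ in the self-similar column. For a self-similar position I would use the recurrence \eqref{constant columns recurrence}, valid once the transient has passed, to reduce the assertion that $W(0) \cdots W(i-1)\, c$ ends in a $6/5$-power for each $c < W(i)$ to the corresponding assertion at the parent index $\lfloor i/1001 \rfloor$, so that finitely many base cases propagate to all $i$ by induction along the column. The finitely many nonzero positions inside $u$ and inside the transient $v$ are dispatched by direct symbolic checks exhibiting, for each smaller candidate letter, an explicit $6/5$-power completed using earlier letters of $W$.

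The main difficulty is the transient $v$, which destroys exact $1001$-uniform self-similarity near the start of $W$, so that neither the desubstitution of the power-freeness argument nor the inductive descent of the leastness argument aligns cleanly with morphism blocks in the transient region; in particular case (iii) above, a long $6/5$-power straddling the junction at position $30950$, cannot be disposed of simply by enlarging the prefix bound. The crux is therefore a synchronization analysis that locates the block boundaries relative to any candidate power and unwinds the internal structure of $\varphi(0')$, together with a bound large enough that every power meeting the transient is caught by the finite computation. Certifying those residual cases—necessarily with symbolic run lengths, since the values $a = 6$ and $b = 5$ enter the block arithmetic—is the technical heart of the proof.
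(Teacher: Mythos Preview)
Your overall strategy matches the paper's: establish that $\psi \colonequal \varphi|_{\Z_{\geq 0}}$ is $6/5$-power-free via the machinery of Section~\ref{power-free morphisms}, use desubstitution to reduce any $6/5$-power in $W$ to one beginning in the transient region, finish power-freeness by a finite check, and handle lexicographic leastness by direct checks on $v$ and $u$ together with induction along the self-similar column via the $+3$ shift. (Note incidentally that since $\varphi(0) = \psi(0)$, the junction sits more naturally at $|v| = 29949$, giving the cleaner identity $W = \tau(v)\,\psi(W)$.)

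Where you diverge from the paper is in what you call case~(iii) and the ``synchronization analysis'' you flag as the crux. The paper's device here is simpler and avoids any special straddling argument. Once you know a shortest $6/5$-power $xyx$ must begin at some $i \leq |v| - 1$, you compute a single integer $\ell$ such that every length-$\ell$ factor of $W$ beginning at a position $i \leq |v|-1$ occurs only once in $W$; this is done by the same position-set refinement used to find locating lengths, but initialized with positions in $\tau(v)\,\varphi(n)$ split into transient positions (representing only themselves) and periodic positions (representing residues mod $1001$). For this morphism $\ell = 18215$. Since the first copy of $x$ begins in the transient, $|x| < \ell$, hence $m \leq 18214$ where $|xyx| = 6m$. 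That collapses your case~(iii) into the same finite window-sliding check as case~(i), with no need to unwind the structure of $\varphi(0')$ or to treat long powers crossing the junction separately. So the ``main difficulty'' you identify is in fact disposed of exactly by enlarging the prefix bound, once that bound is chosen via unique occurrence rather than naively.

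One further correction: because $a = 6$ and $b = 5$ are fixed, no symbolic run-length arithmetic is needed here. All words involved are explicit words over $\Z_{\geq 0} \cup \{n+3\}$, and every check is a concrete finite computation.
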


Although we state Theorem~\ref{6/5} as an existence result, the words $u$ and $v$ can be obtained explicitly by computing the appropriate prefix of $\word_{6/5}$.

The sequence of letters in $\word_{6/5}$ does not satisfy Equation~\eqref{constant columns recurrence} but does satisfy a modified equation accounting for the transient.
Write $\word_{6/5} = w(0) w(1) \cdots$.
Then for all $i \geq 0$ we have
\[
	w(1001 i + 29949 + r) =
	\begin{cases}
		w(29949 + r)	& \text{if $0 \leq r \leq 999$} \\
		w(i) + 3		& \text{if $r = 1000$}.
	\end{cases}
\]
That is, the letters of $\word_{6/5}$ reappear as a subsequence, with every term increased by $3$, beginning at $w(30949) = w(0) + 3$.

For $\frac{7}{5}$ there seems to be a similar transient, with $k = 80874$.

\begin{conjecture}\label{7/5}
There exist words $u, v$ of lengths $|u| = 80874 - 1$ and $|v| = 93105$ such that $\word_{7/5} = \tau(\varphi^\infty(0'))$, where
\[
	\varphi(n) =
	\begin{cases}
		v \, \varphi(0)		& \text{if $n = 0'$} \\
		u \, (n + 1)			& \text{if $n \in \Z_{\geq 0}$}.
	\end{cases}
\]
\end{conjecture}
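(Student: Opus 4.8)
The plan is to reduce the statement to the two properties that jointly characterize $\word_{7/5}$. Set $W \colonequal \tau(\varphi^\infty(0'))$. I would show, first, that $W$ is $\frac{7}{5}$-power-free, and second, that $W$ is the output of the greedy procedure, i.e. that for every position $i$ and every letter $c$ with $0 \le c < W(i)$ the word $W(0) \cdots W(i-1)\, c$ ends in a $\frac{7}{5}$-power. The first property gives $\word_{7/5} \le W$ lexicographically (the least power-free word is below any power-free word), and the second, run as an induction on $i$, gives the reverse inequality, so together they force $\word_{7/5} = W$. This is exactly the scheme behind Theorem~\ref{6/5}; the content here is to execute it for the data $k = 80874$, $|v| = 93105$, and increment $d = 1$.

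For power-freeness I would first treat the integer morphism $\psi \colonequal {\varphi|}_{\Z_{\geq 0}}$, which sends $n \mapsto u\,(n+1)$ and is $80874$-uniform. Using the automated symbolic case analysis of Section~\ref{power-free morphisms}, one argues that $\psi$ is $\frac{7}{5}$-power-free: assuming a factor $xyx$ with $\frac{|xyx|}{|xy|} = \frac{7}{5}$ occurs in $\psi(s)$ for some $\frac{7}{5}$-power-free word $s$, one desubstitutes the occurrence against the length-$80874$ block structure of $\psi$ and derives either a strictly shorter power in $s$ or an outright contradiction, checking every alignment of the border $x$ with the block boundaries symbolically. The transient is then handled separately: since $\varphi(0') = v\,\varphi(0)$ perturbs only a bounded prefix, it suffices to combine the power-freeness of the $\psi$-images with a finite verification that no $\frac{7}{5}$-power straddles the boundary between the decoded transient $\tau(v)$ and the self-similar tail.

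Lexicographic minimality is the forcing half, and I would argue it by induction using the self-similar recurrence that $W$ inherits from $\varphi$, namely the $\frac{7}{5}$-analogue of the displayed $\word_{6/5}$ recurrence: $W(80874\,i + 93105 + r) = W(93105 + r)$ for $0 \le r \le 80872$, and $W(80874\,i + 93105 + 80873) = W(i) + 1$. Letters lying in the constant columns are pinned down by exhibiting, for each smaller candidate $c$, an explicit short $\frac{7}{5}$-power that placing $c$ would create; the letter in the self-similar column is forced inductively, since lowering $W(80874\,i + 80873 + 93105)$ below $W(i)+1$ would, under desubstitution, produce a power witnessing that $W(i)$ itself could be lowered, contradicting the induction hypothesis. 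The transient prefix is again dispatched by a finite computation.

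The main obstacle is not conceptual but computational scale. The morphism has images of length $80874$ and a transient of length $93105$, so the symbolic case analysis must range over alignments and run-length patterns vastly larger than those needed for $\word_{6/5}$; both the power-freeness verification and the exhaustive forcing check threaten to exceed what \textsc{SymbolicWords} can complete in practice. Moreover, even pinning down the exact words $u$ and $v$ requires computing on the order of $10^5$ to $10^6$ letters of $\word_{7/5}$ in order to observe the self-similar column emerging cleanly from the transient. It is precisely this infeasibility of completing an otherwise routine symbolic verification that keeps the statement a conjecture rather than a theorem.
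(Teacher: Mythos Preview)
Your proposal matches the paper's stance exactly: the paper does not prove this statement at all, leaving it as a conjecture, and remarks that ``in principle, Conjecture~\ref{7/5} can be proved in the same manner'' as Theorems~\ref{8/5}--\ref{6/5} and \ref{4/3}, ``although the computation would take longer than we choose to wait.'' Your outline---show $\tau(\varphi^\infty(0'))$ is $\tfrac{7}{5}$-power-free via the uniform part ${\varphi|}_{\Z_{\geq 0}}$ plus a finite boundary check, then establish lexicographic minimality by the inductive forcing argument---is precisely the template of the proof of Theorem~\ref{6/5}, and your diagnosis that the obstruction is purely computational scale is the same as the paper's.

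One small refinement: for an explicit rational like $\tfrac{7}{5}$ the paper would not use the symbolic run-length machinery of Section~\ref{power-free morphisms} that you invoke, but rather the direct word-level procedure described at the start of Section~\ref{Sporadic words}, which manipulates $\varphi(n)$ as an explicit word on $\Z_{\geq 0} \cup \{n+d\}$ and computes the minimal locating length $\ell$ by tracking position sets. This is faster than the symbolic approach and is what makes the $k=50847$ case of Theorem~\ref{7/4} tractable; still, $k=80874$ with a transient of length $93105$ pushes even this beyond what the authors were willing to run.
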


For the word $\word_{4/3}$, partitioning into rows of length $k = 56$ gives $k - 1$ eventually periodic columns.
Unlike the previous examples, the self-similar column for $\word_{4/3}$ does not contain the sequence simply transformed by adding a constant $d$; each $0$ is increased by $1$ and other integers are increased by~$2$.

\begin{theorem}\label{4/3}
There exist words $u, v$ of lengths $|u| = 56 - 1$ and $|v| = 18$ such that $\word_{4/3} = \tau(\varphi^\infty(0'))$, where
\[
	\varphi(n) =
	\begin{cases}
		v \, \varphi(0)		& \text{if $n = 0'$} \\
		u \, 1				& \text{if $n = 0$} \\
		u \, (n + 2)			& \text{if $n \in \Z_{\geq 1}$}.
	\end{cases}
\]
\end{theorem}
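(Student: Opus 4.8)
The plan is to set $x \colonequal \tau(\varphi^\infty(0'))$ and establish the equality $\word_{4/3} = x$ in two halves: first that $x$ is $4/3$-power-free, which gives $\word_{4/3} \leq_{\textnormal{lex}} x$, and then that $x$ is the least $4/3$-power-free word, which gives the reverse. I would package the second half as an induction on position. Suppose $\word_{4/3}$ and $x$ agree on positions $0, \ldots, i-1$. Power-freeness of $x$ shows that the letter $x(i)$ is an admissible greedy choice at position $i$, so it remains only to prove the forcing statement that every letter $c$ with $0 \leq c < x(i)$ completes a $4/3$-power, i.e.\ $x(0) \cdots x(i-1)\, c$ has a suffix that is a $4/3$-power. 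Granting the forcing statement at every position, the greedy algorithm reproduces $x$ letter by letter, so $\word_{4/3} = x$.

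For power-freeness I would invoke the automated symbolic case analysis of Section~\ref{power-free morphisms}. The essential tool is a synchronization property: because every image $\varphi(n)$ shares the common prefix $u$ and differs only in its final letter, any factor of $x$ long enough to contain a full block of length $k = 56$ can be parsed back uniquely into a preimage. Granting this, I would assume toward a contradiction that $x$ contains a $4/3$-power $z = pqp$ with $|pqp|/|pq| = 4/3$, align $z$ against the block decomposition of length $56$, and argue that matching the two copies of the border $p$ forces the period $|pq|$ to be a multiple of $56$; desubstituting then produces a strictly shorter $4/3$-power, and finitely many short powers are checked directly. The growing letters of the self-similar column are what make the alignment rigid, since matching two copies of $p$ through such a letter can only happen block-aligned. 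Two finite boundary checks are needed here that are absent in the purely uniform case: that no $4/3$-power straddles the transient block encoded by $v$ via $\varphi(0') = v\,\varphi(0)$, and that applying the coding $\tau$, which sends the marker $0'$ to $0$, introduces no $4/3$-power that was not already present on the extended alphabet.

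For the forcing property I would use the self-similar recurrence satisfied by the letters of $x$ --- the $4/3$ analogue of Equation~\eqref{constant columns recurrence}, with the transient offset $|v| = 18$ built in and with the self-similar column incrementing $0$ by $1$ and every larger letter by $2$. Writing a position as $56 i + 18 + r$ with $0 \leq r \leq 55$, the positions with $r \leq 54$ lie in the eventually periodic columns, so the claim that every $c$ below the prescribed letter completes a $4/3$-power is a finite verification over the periods. For the self-similar positions $r = 55$ I would argue by induction: the few small values below the prescribed letter are forbidden by the local block context supplied by $u$, while the remaining forbidden values correspond, after subtracting the increment, to the values forbidden at the preimage position $i$. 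The mismatched $+1$-versus-$+2$ increment on $0$ is precisely what reconciles the two levels, so that the prescribed value is the least letter escaping every $4/3$-power.

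I expect the main obstacle to be the power-freeness verification rather than the forcing step. The infinite alphabet with unbounded letters means the usual finite test set of factors must be replaced by the symbolic manipulation of words with parametrized run lengths, which is exactly the role of the \textsc{SymbolicWords} package, and the non-uniform transient $v$ together with the collapsing coding $\tau$ create boundary cases that sit outside the clean block structure and must be dispatched by hand. Making the synchronization lemma hold uniformly across the transient --- so that every sufficiently long factor, including those overlapping the prefix $v$, still has a unique block decomposition --- is the delicate technical core on which both halves of the argument rest.
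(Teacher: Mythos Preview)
Your proposal is correct and follows essentially the same route as the paper: $\tfrac{4}{3}$-power-freeness via a synchronization/locating argument that desubstitutes long potential powers and handles the transient $v$ and the short factors by finite checks, then lexicographic leastness by finite verification on the non-self-similar columns together with induction on the self-similar column exploiting the $+1$/$+2$ increments. One small correction: since $\tfrac{4}{3}$ is an explicit rational, the paper carries out all the finite checks on explicit words rather than with symbolic run-length encodings---the \textsc{SymbolicWords} machinery is reserved for the parametric families in $a$ and $b$.
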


We prove Theorems~\ref{8/5}--\ref{6/5} and Theorem~\ref{4/3} in Section~\ref{Sporadic words}.
In principle, Conjecture~\ref{7/5} can be proved in the same manner, although the computation would take longer than we choose to wait.

Finally, we mention a word whose structure we do not know.
We have computed the prefix of $\word_{5/4}$ of length $400000$.
When partitioned into rows of length $12$ (or $6$ or $24$), all but one column appears to be eventually periodic, but we have not been able to identify self-similar structure in this last column.

With the possible exception of $\word_{5/4}$, the structure of each of these words is organized around some integer~$k$.
This integer is the length of $\varphi(n)$ for each $n \in \Z_{\geq 0}$.
We next show that the sequence of letters in such a word forms a $k$-regular sequence.
For the fixed point $\varphi^\infty(0)$ of a $k$-uniform morphism $\varphi(n) = u \, (n + d)$, the $k$-regularity follows directly from Equation~\eqref{constant columns recurrence}.

\begin{theorem}\label{regular sequences}
Let $k \geq 2$ and $d \geq 0$.
Let $u$ be a word on $\Z_{\geq 0}$ of length $k - 1$.
Let $v$ be a nonempty finite word on $\Z_{\geq 0} \cup \{0'\}$ whose first letter is $0'$ and whose remaining letters are integers.
Let
\[
	\varphi(n) =
	\begin{cases}
		v \, \varphi(0)		& \text{if $n = 0'$} \\
		u \, (n + d)			& \text{if $n \in \Z_{\geq 0}$}.
	\end{cases}
\]
Then the sequence of letters in $\tau(\varphi^\infty(0'))$ is a $k$-regular sequence.
\end{theorem}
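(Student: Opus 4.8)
The plan is to reduce the statement to a linear recurrence for the letter sequence $s \colonequal \tau(\varphi^\infty(0'))$ and then, by hand, to exhibit a finitely generated $\Z$-module of sequences that contains $s$ and is stable under the $k$ decimation operators $D_r \colon (x(i))_{i \ge 0} \mapsto (x(ki+r))_{i \ge 0}$ for $0 \le r \le k-1$. Stability together with finite generation forces the $\Z$-module generated by the $k$-kernel of $s$ to be a submodule of a finitely generated $\Z$-module, hence (since $\Z$ is Noetherian) finitely generated, which is exactly the assertion of $k$-regularity.

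First I would pin down the recurrence. Since $v$ begins with $0'$ while all of its remaining letters, together with all letters of $\varphi(0) = u\,(0+d)$, are non-negative integers, an induction on $n$ shows that $\varphi^n(0')$ begins with $0'$ and is otherwise a word on $\Z_{\ge 0}$; hence $0'$ occurs in $\varphi^\infty(0')$ only at position $0$. Writing $\varphi^\infty(0') = 0'\,w_1 w_2 \cdots$ and using that it is a fixed point, I would apply $\tau$ to $\varphi^\infty(0') = \varphi(0')\,\varphi(w_1)\varphi(w_2)\cdots$ to get the factorization $s = \tau(v)\,\psi(s)$, where $\psi \colonequal {\varphi|}_{\Z_{\ge 0}}$ satisfies $\psi(n) = u\,(n+d)$ and $\tau(\psi(s)) = \psi(s)$ because $\psi(s)$ is an integer word. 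Reading off the length-$k$ blocks of $\psi(s)$ then yields, with $m \colonequal |v|$,
\[
	s(ki+m+r) =
	\begin{cases}
		u(r) & \text{if } 0 \le r \le k-2,\\
		s(i)+d & \text{if } r = k-1,
	\end{cases}
\]
valid for all $i \ge 0$, together with $s(j) = \tau(v)(j)$ for $0 \le j < m$. (When $k \mid m$ this is exactly Equation~\eqref{constant columns recurrence}; in general there is the transient shift by $m$.)

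For the module I would take as generators: the constant sequence $\mathbf{1}$; the padded shifts $s_c$, defined by $s_c(i) = s(i+c)$ when $i+c \ge 0$ and $s_c(i) = 0$ otherwise, for $c$ in the integer interval $I \colonequal \{-(m+k-1), \dots, 0\}$; and the unit pulses $\delta_0, \dots, \delta_m$. This is a finite set. The heart of the argument is the computation of $D_r(s_c)$: writing $ki+r+c = k(i+e)+m+\rho$ with $0 \le \rho \le k-1$ gives $\rho = (r+c-m) \bmod k$ and an integer $e = e_{r,c} = (r+c-m-\rho)/k$, so for all sufficiently large $i$ the recurrence yields $D_r(s_c)(i) = u(\rho)$ (a multiple of $\mathbf{1}$) when $\rho \le k-2$, and $D_r(s_c)(i) = s(i+e)+d = s_e(i)+d$ when $\rho = k-1$. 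Two facts then close the argument: (i) a short monotonicity check shows $e_{r,c} \in I$ whenever $c \in I$, so $I$ is invariant (this is where $k \ge 2$ enters, through $e \approx c/k$ contracting toward $I$); and (ii) the discrepancy between $D_r(s_c)$ and its eventual value is supported on the indices $i$ with $ki+r+c < m$, i.e.\ on $\{0,\dots,m\}$, so it lies in $\langle \delta_0, \dots, \delta_m\rangle$. Since $\langle \delta_0, \dots, \delta_m\rangle$ is itself $D_r$-stable (each $D_r(\delta_j)$ is $\delta_{(j-r)/k}$ or $\mathbf{0}$), the full module is $D_r$-stable and contains $s = s_0$, giving $k$-regularity.

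The main obstacle is exactly the control in (i) and (ii) of the self-similar column. A priori each decimation could shift the self-referential copy of $s$ by an unbounded amount and could spawn an ever-growing family of transient corrections; the crux is to observe that decimation by $k$ simultaneously contracts the shift parameter back into the fixed window $I$ and shrinks the support of every correction by a factor of $k$, so that only finitely many shifts $s_c$ and finitely many pulses $\delta_j$ ever appear. Once this boundedness is established, the rest is bookkeeping.
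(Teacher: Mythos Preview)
Your proof is correct. Both you and the paper begin from the same transient recurrence
\[
	s(ki + m + r) =
	\begin{cases}
		u(r) & 0 \le r \le k-2,\\
		s(i) + d & r = k-1,
	\end{cases}
\]
and both arguments pivot on the observation that iterated decimation of the self-similar column only ever produces shifts of $s$ by a bounded amount.

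The organization differs. The paper analyzes the $k$-kernel directly: it singles out those kernel sequences $w(k^e i + j)$ for which every application of the recurrence hits the $r=k-1$ branch, parameterizes them by an integer $q$, and shows $q$ stabilizes because $Q = \frac{k^e-1}{k^e(k-1)}(m+k-1)$ tends to a finite limit; it then argues separately that every other kernel sequence is eventually constant. You instead exhibit an explicit finitely generated module---spanned by $\mathbf{1}$, the padded shifts $s_c$ for $c \in \{-(m+k-1),\dots,0\}$, and the pulses $\delta_0,\dots,\delta_m$---and verify closure under each $D_r$ in one stroke. Your contraction estimate $(k-2)(m+k-1)\ge 0$ is the analogue of the paper's convergence of $Q$, and your pulse correction plays the role of the paper's ``eventually constant'' case. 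The module-theoretic packaging is arguably tidier (no need to solve for $N$ or track $\lfloor Q\rfloor$), while the paper's argument makes the structure of individual kernel sequences more transparent. Substantively they are the same proof.
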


\begin{proof}
Let $w(i)$ be the letter at position $i$ in $\tau(\varphi^\infty(0'))$.
Let $u(i)$ be the letter at position $i$ in $u$.
We have
\begin{equation}\label{transient recurrence}
	w(k i + |v| + r) =
	\begin{cases}
		u(r)		& \text{if $0 \leq r \leq k - 2$} \\
		w(i) + d	& \text{if $r = k - 1$}
	\end{cases}
\end{equation}
for all $i \geq 0$.

The set of sequences $\{ w(k^e i + j)_{i \geq 0} : \text{$e \geq 0$ and $0 \leq j \leq k^e - 1$} \}$ is called the \emph{$k$-kernel} of $w(i)_{i \geq 0}$.
We show that the $\Z$-module generated by the $k$-kernel of $w(i)_{i \geq 0}$ is finitely generated, and hence $w(i)_{i \geq 0}$ is $k$-regular.

First we consider sequences in the $k$-kernel of the form $w(k^e i + j)_{i \geq 0}$ where
\[
	j = -k^e q + \tfrac{k^e - 1}{k - 1} \left(|v| + k - 1\right)
\]
for some integer $q$.
Since $0 \leq j < k^e$, we have $Q - 1 < q \leq Q$ (and therefore $q = \lfloor Q \rfloor$), where
\[
	Q = \tfrac{k^e - 1}{k^e (k - 1)} \left(|v| + k - 1\right).
\]
Since $Q$ approaches the finite limit $\frac{|v| + k - 1}{k - 1}$ as $e \to \infty$, for sufficiently large $e$ all the $q$ are the same.
Moreover, $e$ applications of Equation~\eqref{transient recurrence} show that $w(k^e i + j) = w(i - q) + e d$ for all $i \geq q$.
This isn't sufficient, because we would like to show that the terms $w(k^e i + j)$ are related for all $i \geq 0$.
However, $N$ applications of Equation~\eqref{transient recurrence} show that
\[
	w(k^e i + j) = w\!\left(k^{e-N} (i - q) + \tfrac{k^{e-N} - 1}{k - 1} (|v| + k - 1)\right) + N d,
\]
provided the argument of $w$ on the right side is non-negative.
To ensure this is the case, we solve for $N$ and find that
\[
	N = e + \left\lfloor \log_k \left(1 - q \cdot \tfrac{k - 1}{|v| + k - 1}\right) \right\rfloor
\]
suffices.
(Note that the argument of $\log_k$ is less than $1$, so $N < e$.)
We conclude that $w(k^e i + j) - N d$ is independent of $e$.
Therefore, up to addition by multiples of $d$, there are only finitely many distinct sequences of the form $w(k^e i + j)_{i \geq 0}$, and the $\Z$-module they generate is finitely generated.

Every sequence in the $k$-kernel that is not of the form discussed in the previous paragraph is eventually constant, since iteratively applying Equation~\eqref{transient recurrence} shows that it is some multiple of $d$ plus a subsequence of one of $k - 1$ eventually constant sequences.
Aside from the multiple of $d$, there are only finitely many distinct sequences of this form.
So these sequences also generate a finitely generated $\Z$-module.
\end{proof}

The value of $k$ for which a given sequence is $k$-regular is not unique; for each $\alpha \geq 1$, a sequence is $k$-regular if and only if it is $k^\alpha$-regular~\cite[Theorem~2.9]{Allouche--Shallit 1992}.
Define an equivalence relation $\sim$ on $\Z_{\geq 2}$ in which $k \sim l$ if there exist positive integers $s$ and $t$ such that $k^s = l^t$.
If $k \sim l$, then $k$ and $l$ are said to be \emph{multiplicatively dependent}.
Corollary~\ref{unique k} uses the following bound on the growth rate of letters in $\word_{a/b}$ and a result of Bell~\cite{Bell} to show that if $\word_{a/b}$ is $k$-regular then $k$ is unique up to multiplicative dependence.

\begin{theorem}\label{growth}
Let $a, b$ be relatively prime positive integers such that $\ab > 1$.
Let $w(i)$ be the letter at position $i$ in $\word_{a/b}$.
Then $w(i) = O(\sqrt{i})$.
\end{theorem}

\begin{proof}
If $\word_{a/b}$ is a word on a finite alphabet, then the conclusion clearly holds, so assume that every $n \geq 0$ occurs in $\word_{a/b}$.
Let $i_n$ be the position of the first occurrence of $n$ in $\word_{a/b}$.

We say that a word $v$ is a \emph{pre-$\ab$-power} if $|v| = ma$ for some integer $m \geq 1$ and if there exists an integer $c$ such that the word obtained by changing the last letter of $v$ to $c$ is an $\ab$-power.

Fix $n \geq 1$.
For each $c$ with $0 \leq c \leq n-1$, let $m_c a$ be the length of a pre-$\ab$-power ending at position $i_{n}$ in $\word_{a/b}$ that prevents the letter $c$ from occurring at position $i_n$.
Since an $\ab$-power of length $m_c a$ consists of repeated factors of length $\frac{b}{a} (m_c a) = m_c b$, the letter $c$ occurs at position $i_n - m_c b$.
It follows that $m_0, \dots, m_{n-1}$ are distinct.

Let $M = \max(m_0, \dots, m_{n-1})$.
We claim that $i_n - i_{n-1} \geq M$.
We consider two cases.

First suppose $m_{n-1}b \geq M$.
We have already established that $n-1$ occurs at position $i_n - m_{n-1} b$.
Since $i_{n-1}$ is the position of the first occurrence of $n-1$, this implies that $i_{n-1} \leq i_n - m_{n-1} b$, so $i_{n}-i_{n-1} \geq m_{n-1} b \geq M$ as claimed.

Otherwise, $m_{n-1}b < M$.
Since $M=\max(m_0, \dots, m_{n-1})$, we have $M =m_c$ for some $c$.
Therefore $m_{n-1}b < m_c$ for this $c$, and $c \neq n-1$.
By definition of $m_c$, we have that $w(i_n-j) = w(i_n-m_c b -j)$ for $1 \leq j < m_c$.
(In fact, if $b >1$ the statement holds for $1 \leq j < m_c \cdot (a \bmod b)$.)
Since $n-1$ occurs at position $i_n - m_{n-1} b$, for $j = m_{n-1}b$ this shows that $n - 1 = w(i_n - m_{n-1}b) = w(i_n - m_c b - m_{n-1}b)$.
This implies that $i_{n-1} \leq i_n - m_c b -m_{n-1}b$, so $i_n - i_{n-1} \geq m_c b + m_{n-1} b > m_c b \geq m_c =M$ as claimed.

Since $m_0, \dots, m_{n-1}$ are distinct positive integers, it follows that $i_{n} - i_{n-1} \geq M \geq n$.
It follows that $i_n$ grows at least like $n^2$, so $w(i) = O(\sqrt{i})$.
\end{proof}

\begin{corollary}\label{unique k}
Let $a, b$ be relatively prime positive integers such that $\ab > 1$.
The values of $k$ for which $\word_{a/b}$ is $k$-regular are equivalent modulo $\sim$.
\end{corollary}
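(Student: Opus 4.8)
The plan is to combine the growth bound from Theorem~\ref{growth} with Bell's result to constrain which $k$ can make $\word_{a/b}$ regular. The key observation is that $k$-regularity imposes a specific restriction on how quickly the letters of a sequence may grow, and the growth rate $O(\sqrt{i})$ from Theorem~\ref{growth} is polynomial rather than exponential. I would first recall the precise statement of Bell's theorem~\cite{Bell}: if an unbounded sequence $s(i)_{i \geq 0}$ is both $k$-regular and $l$-regular for multiplicatively independent integers $k$ and $l$, then the growth of $s$ is constrained in a way that is incompatible with the growth exhibited by a genuinely $k$-regular sequence whose base-$k$ representation governs its values. More precisely, Bell's dichotomy says that a sequence that is both $k$- and $l$-regular for multiplicatively independent $k, l$ must grow either at most polylogarithmically or else at least like a fixed positive power determined jointly by $k$ and $l$; the middle ground occupied by a polynomially-but-not-exponentially growing regular sequence forces $k \sim l$.

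The main steps would proceed as follows. First I would dispose of the finite-alphabet case: if $\word_{a/b}$ is a word on a finite alphabet, then its letter sequence is bounded, and by the standard theory (for instance~\cite{Allouche--Shallit 1992}) a bounded $k$-automatic or $k$-regular sequence can be $l$-regular for essentially any $l$, but in that situation there is nothing to prove in the sense that boundedness does not single out a multiplicative-dependence class---so I would need to check whether the corollary is even being asserted in that degenerate case or whether the hypothesis implicitly excludes it. Assuming instead that every $n \geq 0$ occurs in $\word_{a/b}$, the letter sequence is unbounded, and this is the substantive case. Suppose for contradiction that $\word_{a/b}$ is both $k$-regular and $l$-regular with $k \not\sim l$. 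Then I would invoke Bell's theorem to conclude that the letter sequence must grow either slower than any positive power of $i$ or at least as fast as some fixed positive power $i^\gamma$; I would then combine this with Theorem~\ref{growth}, which gives $w(i) = O(\sqrt{i})$, together with the lower bound $i_n \gg n^2$ established in the proof of Theorem~\ref{growth} (equivalently $w(i) \gg \sqrt{i}$ along the subsequence of first occurrences), to pin the growth at exactly order $\sqrt{i}$---a genuine positive power---which contradicts the polylogarithmic alternative while the power-law alternative forces $k \sim l$ after all.

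The hard part will be extracting from Theorem~\ref{growth} not merely the upper bound $w(i) = O(\sqrt{i})$ but also a matching lower bound showing that the letters are genuinely unbounded at a polynomial rate, so that the sequence cannot fall into Bell's slow-growth (polylogarithmic) regime. The proof of Theorem~\ref{growth} shows $i_n \geq i_{n-1} + n$ and hence $i_n = \Omega(n^2)$, which says that first occurrences are spread out; turning this into a usable two-sided growth estimate for $w(i)$ itself---rather than for the inverse function $i_n$---requires a short argument, but it is exactly what rules out the degenerate regime in Bell's dichotomy. Once both the upper and lower polynomial bounds are in hand, the application of Bell's theorem is essentially a citation: the only remaining point is to state his result in the form that a sequence that is $k$-regular and $l$-regular with $k \not\sim l$ and with growth strictly between polylogarithmic and any fixed positive power cannot exist, so that our $\Theta(\sqrt{i})$ growth forces $k \sim l$, i.e.\ the admissible values of $k$ form a single $\sim$-class.
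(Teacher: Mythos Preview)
Your plan has two genuine gaps.

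First, you have misstated Bell's theorem. Bell's generalization of Cobham does not give a ``polylogarithmic versus power-law'' growth dichotomy. What it says is that if an integer sequence is simultaneously $k$-regular and $l$-regular for multiplicatively independent $k$ and $l$, then its generating function $\sum_{i \geq 0} w(i) x^i$ is rational with all poles at roots of unity; equivalently, $w(i)$ is an eventual quasi-polynomial $d_e(i) i^e + \cdots + d_0(i)$ with each $d_j(i)$ eventually periodic. There is no dichotomy of the form you describe, and you cannot substitute one.

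Second, the lower bound you say you need, $w(i) \gg \sqrt{i}$, is simply false: $w(i) = 0$ for infinitely many $i$ (indeed for a positive density of $i$, since $\word_{a/b}$ begins with $0^{a-1}$ and the letter $0$ recurs throughout). The inequality $i_n \geq i_{n-1} + n$ from Theorem~\ref{growth} controls the positions of \emph{first occurrences}, not the values $w(i)$ themselves, and cannot be inverted to a pointwise lower bound on $w(i)$.

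With the correct form of Bell's theorem, no lower bound is needed. The paper argues as follows: assuming $\word_{a/b}$ is both $k$-regular and $l$-regular with $k \not\sim l$, Bell gives that $w(i)$ is an eventual quasi-polynomial. If its degree is $0$, then $w(i)$ is eventually periodic, contradicting the fact (noted earlier in the paper) that $\word_{a/b}$ is never eventually periodic. If its degree is $e \geq 1$, then along some arithmetic progression $w(i)$ grows like a nonzero constant times $i^e$, contradicting $w(i) = O(\sqrt{i})$ from Theorem~\ref{growth}. This also dispatches the finite-alphabet case uniformly: a bounded sequence that is eventually quasi-polynomial must be eventually periodic, which is again impossible. (Alternatively, the bounded case is exactly Cobham's original theorem, since a bounded $k$-regular sequence is $k$-automatic.)
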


\begin{proof}
Write $\word_{a/b} = w(0) w(1) \cdots$.
Bell's generalization~\cite{Bell} of Cobham's theorem implies that if $w(i)_{i \geq 0}$ is both $k$-regular and $l$-regular, where $k \geq 2$ and $l \geq 2$ are multiplicatively independent, then $\sum_{i \geq 0} w(i) x^i$ is the power series of a rational function whose poles are roots of unity.
Therefore it suffices to show that $\sum_{i \geq 0} w(i) x^i$ is not such a power series.

The coefficients of a rational power series whose poles are roots of unity are given by an eventual quasi-polynomial, that is, $d_e(i) i^e + \dots + d_1(i) i + d_0(i)$ where each $d_j(i)_{i \geq 0}$ is eventually periodic.
The sequence $w(i)_{i \geq 0}$ is not given by an eventual quasi-polynomial of degree $e = 0$ since it is not eventually periodic.
Theorem~\ref{growth} rules out degree each $e \geq 1$.
\end{proof}

Corollary~\ref{unique k} suggests a partial function
\[
	\rho \colon \Q_{> 1} \to \Z_{\geq 2}/{\sim}
\]
where $\rho(\ab)$ is the class of integers $k$ such that $\word_{a/b}$ is $k$-regular, if this class exists.
If this class does not exist, we leave $\rho(\ab)$ undefined.
When $\rho(\ab)$ is defined, we will abuse notation and write $\rho(\ab) = k$, choosing one integer $k$ from the class.
For example, $\rho(a) = a$ for each integer $a \geq 2$, and $\rho(\frac{3}{2}) = 6$.
Theorems~\ref{5/3}--\ref{6/5} imply the values $\rho(\frac{5}{3}) = 7$, $\rho(\frac{9}{5}) = 13$, $\rho(\frac{8}{5}) = 733$, $\rho(\frac{7}{4}) = 50847$, and $\rho(\frac{6}{5}) = 1001$.
Conjecture~\ref{7/5} would imply $\rho(\frac{7}{5}) = 80874$.
The proof of Theorem~\ref{regular sequences} applies to the morphism in Theorem~\ref{4/3}, since $w(i)_{i \geq 0}$ is the only sequence in the $56$-kernel that contains a $0$ and is not eventually constant, so for the remaining sequences we can take $d = 2$ to be constant; therefore $\rho(\frac{4}{3}) = 56$.
We do not know if $\rho(\frac{5}{4})$ is defined.

\begin{question*}
For each rational number $\ab > 1$, does there exist $k$ such that $\word_{a/b}$ is $k$-regular?
In other words, is $\rho(\ab)$ defined?
\end{question*}

We end this section by noting that there are two other natural notions of avoidance for fractional powers.
For each notion we may define the lexicographically least word avoiding all words in the corresponding pattern.

\begin{notation*}
Let $a$ and $b$ be relatively prime positive integers such that $\ab > 1$.
\begin{itemize}
\item
$\word_{\geq a/b}$ is the lex.\ least infinite word on $\Z_{\geq 0}$ avoiding $\pq$-powers for all $\pq \geq \ab$.
\item
$\word_{> a/b}$ is the lex.\ least infinite word on $\Z_{\geq 0}$ avoiding $\pq$-powers for all $\pq > \ab$.
\end{itemize}
\end{notation*}

For an integer $a \geq 2$, the structure of $\word_{\geq a}$ is as follows.

\begin{proposition}
Let $a \geq 2$ be an integer.
Then $\word_{\geq a} = \word_a$.
\end{proposition}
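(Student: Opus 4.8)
The plan is to show that the two avoidance conditions defining $\word_{\geq a}$ and $\word_a$ are literally the same condition, so that both words are, by definition, the lexicographically least infinite word on $\Z_{\geq 0}$ avoiding the same collection of factors; equality then follows immediately. Concretely, I would prove that for any word $w$, the word $w$ avoids $a$-powers if and only if $w$ avoids $\pq$-powers for every rational $\pq \geq a$.

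One direction is immediate: since $a = a/1$ is itself a rational with $a \geq a$, any word avoiding all $\pq$-powers with $\pq \geq a$ in particular avoids $a$-powers. The key step is the converse. Here I would use that $a$ is an \emph{integer}: if $\pq \geq a$ with $a \in \Z$, then $\lfloor \pq \rfloor \geq a$. Given any $\pq$-power $v^{p/q}$ occurring in $w$, the definition writes $v^{p/q} = v^{\lfloor p/q \rfloor} x$ with $x$ a prefix of $v$, so $v^a$ is a prefix of $v^{\lfloor p/q \rfloor}$ and hence a factor of $w$. Since $v^a$ is an $a$-power, a word containing a $\pq$-power with $\pq \geq a$ automatically contains an $a$-power. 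Contrapositively, if $w$ avoids $a$-powers then it avoids every such $\pq$-power.

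Combining the two directions, the set of words on $\Z_{\geq 0}$ avoiding $a$-powers coincides exactly with the set of words avoiding all $\pq$-powers for $\pq \geq a$. Both $\word_a$ and $\word_{\geq a}$ are by definition the lexicographically least infinite member of this one set, so $\word_{\geq a} = \word_a$. I do not anticipate a genuine obstacle: the only point requiring care is the inequality $\lfloor \pq \rfloor \geq a$, which is precisely where integrality of $a$ is used and which fails for non-integer thresholds, explaining why the statement is restricted to integers $a \geq 2$.
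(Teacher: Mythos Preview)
Your proof is correct and follows essentially the same approach as the paper: the key observation in both is that every $\pq$-power with $\pq \geq a$ contains an $a$-power as a factor (via $v^{\lfloor p/q \rfloor}$ and integrality of $a$), so the two avoidance conditions coincide. The paper phrases the conclusion as a pair of lexicographic inequalities rather than as equality of the two avoidance languages, but this is a cosmetic difference.
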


\begin{proof}
Since the language of $a$-powers is a subset of the language of \gray{(}$\geq a$\gray{)}-powers, we have $\word_{\geq a} \geq \word_a$ lexicographically.
Conversely, every $\pq$-power contains a $\lfloor \pq \rfloor$-power, so $\word_a$ does not contain any $\pq$-power with $\pq \geq a$; since $\word_{\geq a}$ is the lexicographically least such word, it follows that $\word_a \geq \word_{\geq a}$ lexicographically.
Therefore $\word_{\geq a} = \word_a$.
\end{proof}

If $b \geq 2$, it is possible to avoid $\ab$-powers while containing a $\pq$-power with $\pq > \ab$.
For example, $001102$ avoids $\frac{3}{2}$-powers but contains squares.
For this reason, $\word_{\geq a/b}$ and $\word_{a/b}$ are not equal in general.

\begin{proposition}\label{inequality of words}
Let $a, b$ be relatively prime positive integers such that $\ab > 1$ and $b \geq 2$.
Then $\word_{\geq a/b} \neq \word_{a/b}$.
\end{proposition}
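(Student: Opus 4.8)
The plan is to show that $\word_{a/b}$ itself fails to be $(\geq \ab)$-power-free, which immediately separates it from $\word_{\geq a/b}$. Since every $a/b$-power is in particular a $\pq$-power with $\pq \geq \ab$, the language avoided by $\word_{\geq a/b}$ is contained in the language avoided by $\word_{a/b}$; arguing as in the previous proposition, this gives $\word_{\geq a/b} \geq \word_{a/b}$ lexicographically. Thus it suffices to exhibit a single factor of $\word_{a/b}$ that is a $\pq$-power for some $\pq \geq \ab$, for then $\word_{a/b}$ cannot equal $\word_{\geq a/b}$. I first record that the hypotheses $\ab > 1$ and $b \geq 2$ force $a > b \geq 2$, hence $a \geq 3$.

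The candidate factor is the leading block of zeros, so first I would determine how many zeros $\word_{a/b}$ begins with. A word $0^m$ contains an $\ab$-power if and only if $m \geq a$, since $0^m$ is itself an $\ab$-power exactly when $a \mid m$ (take the period $0^{bj}$ when $m = aj$), and the shortest all-zero $\ab$-power is $0^a = (0^b)^{a/b}$. Because appending a fresh letter always keeps a prefix extendable, as noted when $\word_{a/b}$ was shown to exist, the lexicographically least word coincides with the greedy one, so it begins with exactly $a - 1$ zeros followed by a nonzero letter. In particular $0^{a-1}$ is a factor of $\word_{a/b}$.

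The key observation is that this block already conceals a repetition of exponent larger than $\ab$: taking the period to be the single letter $0$, we have that $0^{a-1} = 0^{(a-1)/1}$ is an $(a-1)$-power. It remains to check $a - 1 > \ab$, equivalently $a(b-1) > b$; since $a \geq b + 1$ and $b \geq 2$ we have $a(b-1) \geq (b+1)(b-1) = b^2 - 1 > b$, so the inequality holds. Hence $0^{a-1}$ is a $\pq$-power with $\pq = a - 1 > \ab$, and in particular a $\pq$-power with $\pq \geq \ab$. Since $\word_{\geq a/b}$ contains no such factor by definition, $0^{a-1}$ cannot occur in $\word_{\geq a/b}$, and therefore $\word_{\geq a/b} \neq \word_{a/b}$.

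This argument is short and there is no serious obstacle; it is precisely the phenomenon illustrated by the example $001102$ preceding the statement, namely the case $\ab = \frac{3}{2}$ in which $0^{a-1} = 00$ is a square and $2 > \frac{3}{2}$. The only points requiring care are the two routine verifications: that the greedy construction produces exactly $a-1$ leading zeros, and the elementary inequality $a - 1 > \ab$. Both follow immediately from $a \geq 3$ and $b \geq 2$.
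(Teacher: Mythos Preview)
Your proof is correct. Both your argument and the paper's rest on the same observation---that $\word_{a/b}$ begins with the block $0^{a-1}$---but the two finish differently. The paper computes the leading prefix of $\word_{\geq a/b}$ explicitly as $0^{\lceil a/b \rceil - 1}\,1$ and then compares prefixes directly, using $\lceil a/b \rceil < a$. You instead observe that $0^{a-1}$ is already an $(a-1)$-power and verify $a-1 > \ab$, so $\word_{a/b}$ fails to be $(\geq \ab)$-power-free and hence cannot equal $\word_{\geq a/b}$. Your route is a touch more conceptual and spares you the computation of the second prefix; the paper's route pins down exactly where the two words first diverge.

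One small slip worth fixing: in your opening paragraph the containment is stated backwards. The set of $\ab$-powers is a \emph{subset} of the set of $(\geq \ab)$-powers, so the pattern language avoided by $\word_{a/b}$ is contained in that avoided by $\word_{\geq a/b}$, not the reverse. Your conclusion $\word_{\geq a/b} \geq \word_{a/b}$ is nonetheless correct, and in any case the lexicographic comparison is not needed for your main argument, which stands on its own.
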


\begin{proof}
To show that $\word_{\geq a/b}$ and $\word_{a/b}$ are unequal, we compare their prefixes.
We start with $\word_{a/b}$.
The word $0^{a-1}$ is $\ab$-power-free, because the shortest $\ab$-powers have length $a$.
The word $0^a = (0^b)^{a/b}$ is an $\ab$-power, so the length-$a$ prefix of $\word_{a/b}$ is $0^{a-1} 1$.
The word $\word_{\geq a/b}$ begins with $0^{\lceil a/b \rceil - 1}$, since every $\pq$-power with $\pq \geq \ab$ has length at least $p \geq \frac{a q}{b} \geq \ab$.
However, $0^{\lceil a/b \rceil}$ is a $\lceil \ab \rceil$-power, so $\word_{\geq a/b}$ begins with $0^{\lceil a/b \rceil - 1} 1$.
Since $\lceil \ab \rceil < a$, the two words are not equal.
\end{proof}

Even though $\word_{\geq a/b} \neq \word_{a/b}$ for $b \geq 2$, these two words are nonetheless closely related for certain rationals.
Consider
\[
	\word_{\geq 3/2} = 01203 \, 10213 \, 01204 \, 10214 \, 01203 \, 10215 \, 01204 \, 10213 \, \cdots.
\]
The words $\word_{\geq 3/2}$ and $\word_{3/2}$ are generated by the same underlying $6$-uniform morphism~\cite{Rowland--Shallit}.
In particular, $\word_{\geq 3/2}(5 i + 4) = \word_{3/2}(i) + 3$ for all $i \geq 0$.
The words $\word_{\geq 4/3}$ and $\word_{4/3}$ also appear to have the essentially the same self-similar column.
It would be interesting to know whether similar statements hold for other rationals.

\begin{conjecture}\label{4/3 words}
For all $i \geq 0$ we have
\[
	\word_{\geq 4/3}(336 i + 1666) = \word_{4/3}(56 i + 17) + 4.
\]
\end{conjecture}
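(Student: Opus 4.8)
The plan is to follow the paper's general strategy: exhibit explicit morphic descriptions of both $\word_{\geq 4/3}$ and $\word_{4/3}$, read off from each a recurrence for its self-similar column, and then match the two columns by induction on $i$. The description of $\word_{4/3}$ is already in hand (Theorem~\ref{4/3}), so the first task is to find and prove an analogous description of $\word_{\geq 4/3}$. Guided by the $\frac{3}{2}$ example, where $\word_{\geq 3/2}$ and $\word_{3/2}$ are governed by the same underlying morphism and the self-similar column of $\word_{\geq 3/2}$ reproduces $\word_{3/2}$ (up to the additive constant $3$), I expect $\word_{\geq 4/3}$ to be the coding $\tau(\psi^\infty(0'))$ of the fixed point of a morphism $\psi$ organized around $k = 336$ with a transient prefix, of the same shape as the morphism in Theorem~\ref{4/3}.

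To obtain $\psi$ I would first compute a long prefix of $\word_{\geq 4/3}$, partition it into rows of width $336$, and read off the eventually periodic columns together with the single self-similar column; the transient length and the additive behavior of that column then determine a candidate $\psi$. Proving the candidate correct splits into two standard verifications. First, that $\psi$ preserves avoidance of all $\pq$-powers with $\pq \geq \frac{4}{3}$; this is exactly the kind of statement handled by the automated symbolic case analysis of Section~\ref{power-free morphisms}, now applied to the whole family of exponents $\pq \geq \frac{4}{3}$ rather than to a single exponent. Second, that $\tau(\psi^\infty(0'))$ is lexicographically least, i.e.\ that lowering the letter at any position would create a forbidden power; this is the usual minimality check, verified against the finitely many constraints active in each residue class modulo the period.

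With both descriptions established, the recurrences of the type in Theorem~\ref{regular sequences} give, on the one hand, that $\word_{4/3}(56 i + 17)$ is a shifted copy of $\word_{4/3}$ itself (each $0$ raised by $1$ and each larger letter by $2$), and, on the other hand, the self-similar column $\word_{\geq 4/3}(336 i + 1666)$. As in the $\frac{3}{2}$ case, I expect the self-similar column of $\word_{\geq 4/3}$ to reference $\word_{4/3}$ rather than $\word_{\geq 4/3}$, so that its first-order recurrence coincides, after adding $4$, with the recurrence satisfied by $\word_{4/3}(56 i + 17)$. The claimed identity then follows by induction on $i$, with the finitely many indices $i$ corresponding to the transient portions of the two words checked directly by computation.

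The main obstacle is the first step: producing and rigorously justifying the morphic description of $\word_{\geq 4/3}$. Even guessing $\psi$ requires computing enough of $\word_{\geq 4/3}$ to see past its transient at width $336$, and the symbolic power-freeness proof for the full range $\pq \geq \frac{4}{3}$ is considerably heavier than for a single exponent; this is presumably why the statement is left as a conjecture, in the same spirit as the remark following Conjecture~\ref{7/5}. A secondary subtlety is confirming that the self-similar column of $\word_{\geq 4/3}$ genuinely reproduces $\word_{4/3}$ and not $\word_{\geq 4/3}$, since it is this cross-reference, rather than any single computation, that makes the final comparison possible and the statement interesting.
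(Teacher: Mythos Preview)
The statement you are addressing is labelled a \emph{conjecture} in the paper, and the paper gives no proof of it; it is offered as an observation that ``would be interesting to know'' in general. So there is no paper proof to compare your proposal against.

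Your proposal, for its part, is not a proof either but a plan. You correctly isolate the essential missing ingredient: a rigorously justified morphic description of $\word_{\geq 4/3}$, together with a proof that the associated morphism preserves avoidance of \emph{all} $\pq$-powers with $\pq \geq \tfrac{4}{3}$. You do not carry this out; you only describe how one would try. That is precisely the obstacle the paper does not overcome, which is why the statement remains a conjecture. Two further points deserve caution. First, the machinery of Section~\ref{power-free morphisms} is built for a \emph{single} exponent $\ab$, not for the infinite family $\pq \geq \tfrac{4}{3}$; extending the automated case analysis to that family is a genuine additional difficulty, not a routine adaptation. Second, your expectation that the self-similar column of $\word_{\geq 4/3}$ ``references $\word_{4/3}$ rather than $\word_{\geq 4/3}$'' is exactly the content of the conjecture and cannot be assumed as part of the argument; it is what must be \emph{derived} once (and if) the morphic structure of $\word_{\geq 4/3}$ is established.
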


On the other hand, $\word_{> a/b}$ and $\word_{a/b}$ need not be related, even for $b = 1$.
Guay-Paquet and Shallit~\cite{Guay-Paquet--Shallit} studied the overlap-free word
\[
	\word_{> 2} = 001001100100200100110010021001002001001100\cdots
\]
and exhibited a (non-uniform) morphism $\varphi$ such that $\word_{> 2} = \varphi^\infty(0)$.
The words $\word_{> 2}$ and $\word_2$ appear to be unrelated.
It seems likely that the structure of $\word_{> a/b}$ is typically more difficult to determine than $\word_{a/b}$.

\section{The intervals $\ab \geq 2$ and $\frac{5}{3} \leq \ab < 2$}\label{first intervals}

It turns out that for $\ab \geq 2$ the lexicographically least $\ab$-power-free word is a word we have already seen.
For example, one computes
\[
	\word_{5/2} = 00001 \, 00001 \, 00001 \, 00001 \, 00002 \, 00001 \, 00001 \, 00001 \, 00001 \, 00002 \, \cdots
\]
and observes that $\word_{5/2}$ agrees with $\word_5$ on a long prefix.
In fact these two words are the same.

\begin{theorem}\label{large rationals}
Let $a, b$ be relatively prime positive integers such that $\ab \geq 2$.
Then $\word_{a/b} = \word_a$.
In particular, $\rho(\ab) = a$.
\end{theorem}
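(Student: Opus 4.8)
The plan is to sandwich $\word_{a/b}$ between $\word_a$ from both sides lexicographically. One direction is essentially free. The set of $a$-power-free words strictly contains the set of $\ab$-power-free words: every $a$th power $x^a$ equals $(x^b)^{a/b}$ and is therefore an $\ab$-power, so any word avoiding all $\ab$-powers in particular avoids all $a$th powers. Consequently $\word_{a/b}$ is $a$-power-free, and since $\word_a$ is the lexicographically least $a$-power-free word, $\word_a \le \word_{a/b}$. The reverse inequality $\word_{a/b} \le \word_a$ will follow once I prove that $\word_a$ is itself $\ab$-power-free, for then $\word_a$ lies in the set over which $\word_{a/b}$ is the least element. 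These two inequalities give $\word_{a/b} = \word_a$, and the claim $\rho(\ab) = a$ is then immediate from $\rho(a) = a$.

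The heart of the argument, and the only real obstacle, is to show that \textbf{$\word_a$ is $\ab$-power-free whenever $\ab \ge 2$}. First I would extract a closed form from the recurrence for $\word_a$: writing $w(j)$ for the letter at position $j$, an induction on the recurrence $w(ai+r)=0$ (for $0\le r\le a-2$) and $w(ai+a-1)=w(i)+1$ shows that
\[
	w(j) = \nu_a(j+1), \qquad \text{where } \nu_a(m) = \max\{k \ge 0 : a^k \mid m\}.
\]
Thus $\word_a$ is the base-$a$ ruler sequence, and the only structural fact I need is that $\nu_a(m) \ge k \iff a^k \mid m$, together with the fact that any interval of $a^k$ consecutive integers contains a multiple of $a^k$.

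Now suppose for contradiction that $\word_a$ contains an $\ab$-power occupying positions $[s, s+L-1]$, arising from a block $v$ of length $p = |v|$ with $b \mid p$, period $p$, and total length $L = \ab\, p \ge 2p$. In terms of valuations this says $\nu_a(n) = \nu_a(n+p)$ for every $n$ in $[s+1,\, s+L-p]$. Let $K$ be the maximum of $\nu_a$ on the window $[s+1, s+L]$, attained at $n_0$, so $a^K \mid n_0$. Because $L \ge 2p$, at least one of $n_0-p$ and $n_0+p$ again lies in the window, and the period relation (applied at $n_0$ when $n_0 \le s+L-p$, or at $n_0-p$ otherwise) forces $\nu_a$ to equal $K$ there too; hence $a^K \mid (n_0 \pm p)$ and therefore $a^K \mid p$. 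Since also $b \mid p$ and $\gcd(a,b)=1$, we get $a^K b \mid p$, and thus
\[
	L = \tfrac{a}{b}\, p = a^{K+1}\cdot \tfrac{p}{a^K b}
\]
is a positive multiple of $a^{K+1}$; in particular $L \ge a^{K+1}$. But an interval of $L \ge a^{K+1}$ consecutive integers must contain a multiple of $a^{K+1}$, whose valuation is at least $K+1 > K$, contradicting the maximality of $K$ on the window. This contradiction shows that $\word_a$ avoids all $\ab$-powers, which supplies the missing inequality and completes the proof. The whole difficulty is concentrated in this last divisibility computation; everything else is a soft comparison of language inclusions.
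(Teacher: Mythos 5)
Your proof is correct, and the second half takes a genuinely different route from the paper. Both proofs handle the easy direction identically (an $a$-power $x^a=(x^b)^{a/b}$ is an $\ab$-power, so $\word_{a/b}$ is $a$-power-free and $\word_a \le \word_{a/b}$). For the harder direction, that $\word_a$ is $\ab$-power-free, the paper runs a minimal-counterexample argument exploiting self-similarity: a shortest $\ab$-power $v^{a/b}$ in $\word_a$ must have $|v|$ divisible by $a$ because its nonzero letters sit at positions $\equiv a-1 \pmod a$ and $v^2$ occurs; deleting the zeros and subtracting $1$ (via the recurrence $w(ai+a-1)=w(i)+1$) then produces a strictly shorter $\ab$-power, a contradiction. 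You instead first identify $\word_a$ explicitly as the ruler sequence $w(j)=\nu_a(j+1)$ and convert the putative $\ab$-power into the valuation identity $\nu_a(n)=\nu_a(n+p)$ on a window of length $L=\tfrac{a}{b}p\ge 2p$; taking $K$ to be the maximal valuation on the window forces $a^K\mid p$, hence $a^Kb\mid p$ and $L\ge a^{K+1}$, which plants a point of valuation $K+1$ inside the window and contradicts maximality. The two arguments are morally cousins --- your ``divide by $a^K$'' step is the iterated form of the paper's single ``delete zeros and decrement'' step --- but yours replaces the induction on minimal counterexamples with one closed-form computation and a single extremal argument, at the cost of first proving the closed form (a routine induction you correctly sketch). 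I checked the case split at the maximizer ($n_0\le s+L-p$ versus $n_0>s+L-p$, where $L\ge 2p$ guarantees $n_0-p$ stays in the window) and the coprimality step $a^Kb\mid p$; both are sound.
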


\begin{proof}
We show that $\word_{a/b}$ is $a$-power-free, which implies that $\word_a \leq \word_{a/b}$ lexicographically, and that $\word_a$ is $\ab$-power-free, which implies that $\word_{a/b} \leq \word_a$ lexicographically.

Since the $a$-power $v^a$ is an $\ab$-power $(v^b)^{a/b}$ and $\word_{a/b}$ is $\ab$-power-free, it follows immediately that $\word_{a/b}$ is $a$-power-free.

Suppose toward a contradiction that $\word_a$ contains an $\ab$-power.
Let $v^{a/b}$ (where $|v|$ is divisible by $b$) be an $\ab$-power in $\word_a$ of minimal length.
Since  $\ab \geq 2$, $v^2$ occurs in $\word_a$.
We have $|v^{a/b}| \geq a$, since $a$ and $b$ are relatively prime.
On the other hand, the longest zero factor of $\word_a$ is $0^{a-1}$, so $v$ contains at least one nonzero letter.
The nonzero letters of $\word_a = w(0) w(1) \cdots$ are $w(i)$ for $i \equiv a-1 \mod a$.
Since nonzero letters occur spaced by multiples of $a$ and $v^2$ occurs in $\word_a$, it follows that $|v|$ is divisible by $a$.
Let $x$ be the word obtained by deleting the $0$ letters of $v$.
Then the word obtained by deleting the $0$ letters of $v^{a/b}$ is $x^{a/b}$, which is also the word obtained by sampling every $a$ letters of $v^{a/b}$ starting from the first nonzero letter.
By Equation~\eqref{constant columns recurrence} with $d = 1$, the word obtained by subtracting $1$ from each letter in $x^{a/b}$ occurs in $\word_a$, and since $|x| = |v| / a$ this contradicts the minimality of $v$.
\end{proof}

In light of Theorem~\ref{large rationals}, the remainder of the paper is concerned with $\word_{a/b}$ for $1 < \ab < 2$.
Let $\Q_{(1, 2)} \colonequal \Q \cap (1, 2)$ denote the set of rational numbers in this interval.
The following result relates the prefixes of $\word_{a/b}$ to each other.

\begin{proposition}
The order on $\Q_{(1, 2)} \cup \Z_{\geq 2}$ induced by the lexicographic order on $\{\word_{a/b} : \ab \in \Q_{(1, 2)} \cup \Z_{\geq 2}\}$ is
\[
	2 \succ \frac{3}{2} \succ 3 \succ \frac{4}{3} \succ 4 \succ \frac{5}{4} \succ \frac{5}{3} \succ 5 \succ \frac{6}{5} \succ 6 \succ \frac{7}{6} \succ \frac{7}{5} \succ \frac{7}{4} \succ 7 \succ \frac{8}{7} \succ \frac{8}{5} \succ 8 \succ \cdots.
\]
\end{proposition}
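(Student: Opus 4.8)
The plan is to reduce the claimed chain to a single two‑part comparison of prefixes: I will show that $\word_{a/b} > \word_{p/q}$ lexicographically (that is, $\ab \succ \pq$) if and only if either $a < p$, or $a = p$ and $b > q$. Establishing this characterization for every pair simultaneously shows the words are pairwise distinct and determines the induced order completely; sorting $\Q_{(1,2)} \cup \Z_{\geq 2}$ by increasing numerator with ties broken by decreasing denominator then reproduces exactly the displayed sequence.

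Both halves come from a short common prefix. First I record the basic fact, already implicit in the proof of Proposition~\ref{inequality of words}, that for every $\ab \in \Q_{(1,2)} \cup \Z_{\geq 2}$ the word $\word_{a/b}$ begins with $0^{a-1} 1$: the block $0^{a-1}$ is $\ab$-power-free because the shortest $\ab$-powers have length $a$, while $0^a = (0^b)^{a/b}$ forces the greedy algorithm to place a nonzero letter, necessarily $1$. This gives the primary comparison at once: if $a < p$ then both words agree (as $0^{a-1}$) through position $a-2$, and at position $a-1$ the word $\word_{a/b}$ carries a $1$ while $\word_{p/q}$ is still inside its longer initial zero block, so $\word_{a/b} > \word_{p/q}$.

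The secondary comparison, with fixed numerator $a$ and denominators $q < b$, is the heart of the argument, and I would obtain it by counting the zeros that follow the first $1$ at position $a-1$. As we append zeros I analyze the length-$a$ windows. Since $b < a < 2b$, a window containing the single $1$ has period $b$ precisely when the $1$ sits in a singleton residue class modulo $b$, and this first occurs when the window starting at position $a-b$ is completed; hence for $b \geq 2$ exactly $a-b-1$ zeros may follow the $1$, and position $2a-b-1$ is forced nonzero. (For $b=1$ the only obstruction is $0^a$, giving $a-1$ zeros and a nonzero letter at position $2a-1$.) Only length-$a$ powers matter here, since the prefix in question has length below $2a$. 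The key point is that this second nonzero position strictly decreases as the denominator increases, so for $q < b$ the words $\word_{a/b}$ and $\word_{a/q}$ agree on $0^{a-1} 1 0^{a-b-1}$, while at position $2a-b-1$ the word $\word_{a/b}$ has a nonzero letter and $\word_{a/q}$ still has a $0$ (its own second nonzero letter lying strictly farther right). Thus $\word_{a/b} > \word_{a/q}$.

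The main obstacle is the window bookkeeping in the secondary comparison: one must verify that, while appending zeros, the only length-$a$ factors that can arise are the all-zero one and single-$1$ ones, that the first forbidden single-$1$ window is exactly the one starting at $a-b$, and that no longer $\ab$-power can intervene before position $2a-b-1$. These are elementary but careful period computations; once they are in hand, the comparison and the resulting total order follow immediately.
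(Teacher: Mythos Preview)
Your proposal is correct and follows essentially the same approach as the paper: both arguments compare numerators via the prefix $0^{a-1}1$ and then, for fixed numerator, compare denominators via the position of the second nonzero letter, namely $2a-b-1$ for $b\geq 2$ and $2a-1$ for $b=1$. The paper is slightly more direct in that it simply exhibits the obstructing $\ab$-power $(0^{b-1}1)^{a/b}=0^{b-1}10^{a-b}$ rather than phrasing it as a window analysis, but the content is the same.
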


\begin{proof}
Let $a$ and $b$ be relatively prime positive integers such that $\ab \in \Q_{(1, 2)} \cup \Z_{\geq 2}$.
As in the proof of Proposition~\ref{inequality of words}, the length-$a$ prefix of $\word_{a/b}$ is $0^{a-1} 1$.
Therefore the rationals in $\Q_{(1, 2)} \cup \Z_{\geq 2}$ with a given numerator form an interval in the order $\succ$.
If $a = 2$ then $b = 1$ and this prefix is followed by $02$.
If $a \geq 3$ and $b = 1$ then this prefix is followed by $0^{a-1} 1$.
Finally, if $b > 1$ then this prefix is followed by $0^{a - b - 1} 1$, since $0^{a-1} 1 0^{a - b - 1}$ is $\ab$-power-free but $0^{a-1} 1 0^{a - b}$ contains the $\ab$-power $0^{b - 1} 1 0^{a - b} = (0^{b-1} 1)^{a/b}$.
This is sufficient information to distinguish all $\word_{a/b}$ and hence order them.
In particular, $\ab \succ \frac{a}{b'}$ if and only if $b > b'$.
\end{proof}

Recall our claims in Theorems~\ref{5/3} and \ref{9/5} that $\word_{5/3} = \varphi^\infty(0)$ for the $7$-uniform morphism $\varphi(n) = 000010(n+1)$ and $\word_{9/5} = \varphi^\infty(0)$ for the $13$-uniform morphism $\varphi(n) = 000000001000(n+1)$.
These morphisms differ only in their run lengths.
It is not immediately obvious why $7$ is the correct value of $k$ for $\word_{5/3}$ and why $13$ is the correct value for $\word_{9/5}$.
However, these values can be understood in the context of an infinite family of morphisms that generate words $\word_{a/b}$.

\begin{theorem}\label{2 2a-b}
Let $a, b$ be relatively prime positive integers such that $\frac{5}{3} \leq \ab < 2$ and $\gcd(b,2) = 1$.
Let $\varphi$ be the $(2a-b)$-uniform morphism defined by
\[
	\varphi(n) = 0^{a - 1} \, 1 \, 0^{a - b - 1} \, (n + 1)
\]
for all $n \in \Z_{\geq 0}$.
Then $\word_{a/b} = \varphi^\infty(0)$.
In particular, $\rho(\ab) = 2a-b$.
\end{theorem}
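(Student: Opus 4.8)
The plan is to establish the two lexicographic inequalities $\word_{a/b} \leq \varphi^\infty(0)$ and $\varphi^\infty(0) \leq \word_{a/b}$. Throughout write $k = 2a-b$ and $W = \varphi^\infty(0) = w(0) w(1) \cdots$. Since $\varphi(n) = u\,(n+1)$ with $u = 0^{a-1}\,1\,0^{a-b-1}$ of length $k-1$ and $d = 1$, Equation~\eqref{constant columns recurrence} applies: each length-$k$ block equals $0^{a-1}\,1\,0^{a-b-1}\,(w(i)+1)$, so the $1$'s occupy exactly the columns $\equiv a-1 \pmod k$, and the final ``marker'' column is the self-similar sequence $w(i)+1$. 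The first inequality is immediate once $W$ is shown to be $\ab$-power-free, because $\word_{a/b}$ is by definition the lexicographically least such word. Once equality is proved, $k$-regularity (Theorem~\ref{regular sequences}) and uniqueness (Corollary~\ref{unique k}) give $\rho(\ab) = k$.

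For $\varphi^\infty(0) \leq \word_{a/b}$ I would show that $W$ is the greedy word, by induction on the position $n$: assuming $W$ is $\ab$-power-free, I prove that for every $n$ and every $c < w(n)$ the word $w(0) \cdots w(n-1)\,c$ contains an $\ab$-power ending at position $n$. At a $0$-column there is nothing to check. At a $1$-column the preceding $a-1$ letters are $0$, so $c = 0$ creates $0^a = (0^b)^{a/b}$. At the marker column of block $i'$ (where $n = ki'+k-1$ and $w(n) = w(i')+1$), the value $c = 0$ creates the factor $0^{b-1}\,1\,0^{a-b} = (0^{b-1}\,1)^{a/b}$, while for $1 \leq c \leq w(i')$ I would lift: by the induction hypothesis at the earlier position $i'$, the word $w(0)\cdots w(i'-1)\,(c-1)$ already contains an $\ab$-power ending at $i'$, and applying $\varphi$ (which sends a letter $e$ to a block ending in $e+1$) carries this to an $\ab$-power, of $k$ times the period, ending exactly at position $n$ with last letter $c$. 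This direction uses only Equation~\eqref{constant columns recurrence} and the power-freeness of $W$.

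The substantial step, and the one I expect to be the main obstacle, is that $W$ is $\ab$-power-free. As $1 < \ab < 2$, an $\ab$-power is a factor of length $am$ and period $bm$ (border length $(a-b)m$) for some $m \geq 1$; take a counterexample with $m$ minimal. When $m$ exceeds an explicit absolute constant I would first force the period to be a multiple of $k$. The key structural fact is that a $1$-column is preceded by the maximal zero-run $0^{a-1}$, whereas every marker is preceded by only $a-b-1 < a-1$ zeros; hence a $1$ lying sufficiently far inside the power and its period-translate are both preceded by $\geq a-1$ zeros and so must both be $1$-columns, giving $bm \equiv 0 \pmod k$. This is exactly where the hypothesis $\gcd(b,2)=1$ is essential: since $\gcd(a,b)=1$ we have $\gcd(b,k) = \gcd(b,2a-b) = \gcd(b,2a) = \gcd(b,2) = 1$, so $k \mid bm$ forces $k \mid m$. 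With the period a multiple of $k$, the markers inside the power form a contiguous factor of the self-similar sequence $w(i)+1$, and this factor is itself an $\ab$-power with the smaller parameter $m/k$; subtracting $1$ produces an $\ab$-power in $W$ with parameter $m/k < m$, contradicting minimality.

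This reduction leaves only the base cases $m \leq C$ for an explicit constant $C$ (using $\tfrac{5}{3} \leq \ab < 2$ one checks that a value such as $C = 8$ suffices), which must be eliminated by a direct analysis symbolic in $a$ and $b$. For these small $m$ the offending window has length $am \leq Ca$ and meets only boundedly many blocks, so its contents are completely determined by the run lengths $a-1$ and $a-b-1$ and the positions of the two nonzero columns; one then checks that the sparse pattern of $1$'s and markers can never repeat with the required period $bm$. I expect this short-period bookkeeping to be the genuinely delicate part of the argument, and it is precisely the sort of symbolic case analysis that the \textsc{SymbolicWords} machinery of Section~\ref{power-free morphisms} is built to carry out.
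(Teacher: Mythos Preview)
Your proposal is correct and mirrors the paper's proof: establish $\ab$-power-freeness by locating nonzero letters, deducing $k \mid m$ from $\gcd(b,k)=1$, and pulling back to a shorter power (the paper shifts the factor to a block boundary and applies $\varphi^{-1}$, which is equivalent to your marker-subsequence extraction), then prove lexicographic-leastness by the same inductive decrementing argument. One sharpening worth noting: the paper's locating argument already applies to any nonzero $x$ of length $\geq 2(a-b)$, so the structural reduction works for all $m \geq 2$ with $x$ nonzero; combined with the observation that $x = 0^{m(a-b)}$ forces $m \leq 2$ (via the maximal zero run $0^{a-1}$ and $\ab \geq \tfrac{5}{3}$), the base case collapses to an explicit table for $m=1$ plus a short zero-run count for $m=2$, so the ``delicate bookkeeping'' you anticipate is considerably smaller than $C=8$ suggests.
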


We devote the remainder of this section to a proof of Theorem~\ref{2 2a-b}.
We will use the following concept.
A morphism $\varphi$ is \emph{$\ab$-power-free} if it preserves $\ab$-power-freeness; that is, if $w$ is $\ab$-power-free then $\varphi(w)$ is $\ab$-power-free.
For example, the morphism $\varphi(n) = 0 \, (n + 1)$ is square-free~\cite{Guay-Paquet--Shallit}.
Since the word $0$ is $\ab$-power-free, if $\varphi$ is an $\ab$-power-free morphism then $\varphi^\infty(0)$ is also $\ab$-power-free.
If, moreover, $\varphi^\infty(0)$ is the lexicographically least $\ab$-power-free word, then $\word_{a/b} = \varphi^\infty(0)$.

In the interval $1 < \ab < 2$, an $\ab$-power is a word of the form $(xy)^{a/b} = xyx$, where $|xy| = m b$ and $|xyx| = m a$ for some $m \geq 1$.
It follows that $|x| = m \cdot (a-b)$ and $|y| = m \cdot (2b-a)$.
We use this in the following proof and repeatedly throughout the paper.
Note that $\ab < 2$ implies $y$ is not the empty word.

\begin{proof}[Proof of Theorem~\ref{2 2a-b}]
To show that $\varphi$ is $\ab$-power-free, we show that if $w$ is a finite word such that $\varphi(w)$ contains an $\ab$-power, then $w$ contains an $\ab$-power.
Suppose that $v^{a/b}$ is an $\ab$-power factor of $\varphi(w)$, where $|v| = m b$ for some $m \geq 1$.

First consider $m = 1$.
The word $\varphi(w)$ is a finite concatenation of words of the form $0^{a-1} \, 1 \, 0^{a-b-1} \, (n + 1)$ for $n \geq 0$.
We would like to survey all length-$a$ factors of $\varphi(w)$ and verify that they are not $\ab$-powers.
To do this, it suffices to slide a window of length $a$ through the circular word
\[
	0^{a-1} \, 1 \, 0^{a-b-1} \, (n + 1),
\]
since the length of this word is $2 a - b > a$.
There are $|\varphi(n)| = 2 a - b$ length-$a$ factors of $0^{a-1} \, 1 \, 0^{a-b-1} \, (n + 1)$, but we can partition them into the following five forms parameterized by $i$.
\[
\begin{array}{c|l}
	\text{length-$a$ factor}						& \text{interval for $i$} \\ \hline
	0^{a - 1 - i} \, 1 \, 0^i						& 0 \leq i \leq a - b - 1 \\
	0^{b - 1 - i} \, 1 \, 0^{a - b - 1} \, (n + 1) \, 0^i				& 0 \leq i \leq 2 b - a - 1 \\
	0^{a - b - 1 - i} \, 1 \, 0^{a - b - 1} \, (n + 1) \, 0^{2 b - a + i}		& 0 \leq i \leq 2 a - 3 b - 1 \\
	0^{2 b - a - 1 - i} \, 1 \, 0^{a - b - 1} \, (n + 1) \, 0^{a - b + i}		& 0 \leq i \leq 2 b - a - 1 \\
	0^{a - b - 1 - i} \, (n + 1) \, 0^{b + i}					& 0 \leq i \leq a - b - 1
\end{array}
\]
Therefore each length-$a$ factor of $\varphi(w)$ appears in the previous table for some $n \geq 0$ and some $i$ in the specified range.
Each $\ab$-power of length $a$ is of the form $xyx$ where $|x| = a-b$ and $|y| = 2b-a$, so to check that no length-$a$ factor of $\varphi(w)$ is an $\ab$-power we refine the previous table by writing each factor as $xyz$ with $|x| = a-b$, $|y| = 2b-a$, and $|z| = a-b$.
\[
\begin{array}{rcl|l}
	\text{$x$ \gray{(length $a - b$)}}		& \text{$y$ \gray{(length $2 b - a$)}}	& \text{$z$ \gray{(length $a - b$)}}			& \text{interval for $i$} \\ \hline
	0^{a - b}					& 0^{2 b - a}				& 0^{a - b - 1 - i} \, 1 \, 0^i				& 0 \leq i \leq a - b - 1 \\
	0^{a - b}					& 0^{2 b - a - 1 - i} \, 1 \, 0^i	& 0^{a - b - 1 - i} \, (n + 1) \, 0^i			& 0 \leq i \leq 2 b - a - 1 \\
	0^{a - b - 1 - i} \, 1 \, 0^i			& 0^{2 b - a}				& 0^{2 a - 3 b - 1 - i} \, (n + 1) \, 0^{2 b - a + i}	& 0 \leq i \leq 2 a - 3 b - 1 \\
	0^{2 b - a - 1 - i} \, 1 \, 0^{2 a - 3 b + i}	& 0^{2 b - a - 1 - i} \, (n + 1) \, 0^i	& 0^{a - b}						& 0 \leq i \leq 2 b - a - 1 \\
	0^{a - b - 1 - i} \, (n + 1) \, 0^i		& 0^{2 b - a}				& 0^{a - b}						& 0 \leq i \leq a - b - 1
\end{array}
\]
Since $n + 1 \neq 0$, we see that $x \neq z$ for each factor.
(When $n=0$, we have $x \neq z$ in the third row since $i \neq 2b-a+i$.)
It follows that $\varphi(w)$ contains no $\ab$-power factor of length $a$.

Therefore $m \geq 2$.
There are two cases.

First consider the possibility that $x = 0^{m \cdot (a-b)}$.
The longest zero factor of $\varphi(w)$ is $0^{a-1}$, so $m \cdot (a-b) < a$, which implies $m < \frac{a}{a-b} \leq \frac{a}{a - (3/5) a} = \frac{5}{2}$.
Therefore $m=2$.
The factor $y$ has at least one nonzero letter, because $x 0^{2 (2b-a)} x = 0^{2 a}$ is not a factor of $\varphi(w)$.
On the other hand, if $y$ has at least two nonzero letters, then, since the shortest maximal zero factor of $\varphi(w)$ is $0^{a-b-1}$, we have $2 (2b-a) = |y| \geq 1 + (a - b - 1) + 1$, which implies $5 b \geq 3a + 1$, but this contradicts $\frac{5}{3} \leq \ab$.
Therefore $y$ has exactly one nonzero letter.
Therefore $xyx = 0^{2 (a-b)} 0^i (n + 1) 0^{2 (2b-a) - 1 - i} 0^{2 (a-b)}$ for some $n \geq 0$ and $0 \leq i \leq 2 (2b-a) - 1$.
But the longest zero factor of $\varphi(w)$ is $0^{a-1}$, so $2 (a-b) + i \leq a-1$ and $2 (2b-a) - 1 - i + 2(a-b) \leq a-1$, which produce a contradiction when solving for $i$.

Now consider the case that $x$ contains a nonzero letter.
The word $0^{a-b-1} 1 0^{a-b-1}$ is the only nonzero word of its length that can occur in $\varphi(w)$ at two positions that are distinct modulo $k \colonequal |\varphi(n)| = 2a-b$, and extending this word in either direction determines its position modulo $k$; therefore two occurrences in $\varphi(w)$ of any nonzero word of length $\geq 2 (a - b)$ have positions that are congruent modulo $k$.
Since $|x| = m \cdot (a - b) \geq 2 (a - b)$, the positions of the two occurrences of $x$ in $\varphi(w)$ are congruent modulo $k$.
These two positions differ by $|xy| = m b$, so $k \mid m b$.
This implies $k \mid m$, since $\gcd(b,k) = 1$.
Therefore $k$ divides $|xyx|$.
Now we shift $xyx$ appropriately.
Let $j$ be the position of $xyx$ in $\varphi(w)$.
Write $j = k i + r$ for some $0 \leq r \leq k - 1$.
Let $x' y' z'$ be the word of length $|xyx|$ beginning at position $k i$, where $|x'| = |z'| = |x|$ and $|y'| = |y|$.
We claim that $x' = z'$.
Since $|x' y' z'|$ begins at position $k i$ and $|xy|$ is divisible by $k$, the words $x'$ and $z'$ agree on their first $r$ letters.
The remaining $|x| - r$ letters of $x'$ are the first $|x| - r$ letters of $x$, and the remaining $|x| - r$ letters of $z'$ are the first $|x| - r$ letters of the second $x$.
Therefore $x' = z'$, and we have found an $\ab$-power $x'y'x'$ beginning at position $k i$.
Since the positions of $y'$ and both occurrences of $x'$ are all divisible by $k$, we have $x' = \varphi(u)$ and $y' = \varphi(v)$ for some $u$ and $v$, which implies that the $\ab$-power $uvu$ is a factor of $w$.

It remains to show that $\varphi^\infty(0)$ is lexicographically least.
We show that decrementing any nonzero letter in $\varphi^\infty(0)$ to any smaller number introduces an $\ab$-power factor ending at that position.
Since $\varphi(n) = 0^{a-1} 1 0^{a-b-1} (n+1)$, the nonzero letters occur at positions congruent to $a - 1$ or $k - 1$ modulo $k$.
The letter at each position congruent to $a - 1$ modulo $k$ is $1$, and decrementing this $1$ to $0$ introduces the $\ab$-power $0^a = (0^b)^{a/b}$ ending at that position.
The letter at a position congruent to $a - 1$ modulo $k$ is $n+1$ for some $n \geq 0$.
Consider the effect of decrementing $n+1$ to $c$ for some $0 \leq c \leq n$.
If $c = 0$, then this introduces the $\ab$-power $0^{b-1} 1 0^{a-b} = (0^{b-1} 1)^{a/b}$.
Let $c \geq 1$, and assume that decrementing any letter to $c - 1$ introduces an $\ab$-power ending at this $c - 1$.
Let $\varphi(w)$ be a prefix of $\varphi^\infty(0)$ with last letter $n + 1$.
Then $w$ is a prefix of $\varphi^\infty(0)$ with last letter $n$.
Decrementing $n+1$ to $c$ produces the word $\varphi(w')$, where $w'$ is the word obtained by decrementing the last letter of $w$ to $c-1$.
By the inductive assumption, $w'$ contains an $\ab$-power suffix; therefore $\varphi(w')$ does as well.
\end{proof}

\section{$\ab$-power-free morphisms}\label{power-free morphisms}

Theorem~\ref{2 2a-b} establishes the structure of $\word_{a/b}$ for an infinite family of rationals~$\ab$.
It turns out there are additional families of words whose structure is given by a symbolic morphism, and there are also many words $\word_{a/b}$ whose structure is given by a morphism that has not been found to belong to a general family.
(Additionally, as in Theorem~\ref{6/5}, sometimes the word $\word_{a/b}$ is not a fixed point of a uniform morphism but is nonetheless related to a uniform morphism.)
Ideally, we would prove a single theorem that captures all these cases.
However, the structures are diverse enough that it is not clear how to unify them.
The next best thing, then, is to identify a general proof scheme so that each individual proof may be carried out automatically.
In this section we describe how to automatically verify that a morphism is $\ab$-power-free.
We then apply this method to $30$ symbolic morphisms.

The basic idea is that we use the special form of the morphisms to reduce the statement that $\varphi$ is $\ab$-power-free to a finite case analysis and then develop software to carry out the case analysis.
In the case of an explicit rational number (as in Theorems~\ref{8/5} and \ref{7/4}) this is more or less straightforward using the results below.
However, for parameterized morphisms that are symbolic in $a$ and $b$ (as in Theorem~\ref{2 2a-b}), this can require a significant amount of symbolic computation.

\subsection{Bounding the factor length}\label{bounding the factor length}

As in the proof of Theorem~\ref{2 2a-b}, to show that $\varphi$ is $\ab$-power-free, we must verify that if $\varphi(w)$ contains an $\ab$-power then $w$ contains an $\ab$-power.
In this subsection we reduce this task to the task of verifying the statement for factors of length $am$ for only finitely many values of $m$.
We use the following concept, which is related to the \emph{synchronization delay} introduced by Cassaigne~\cite{Cassaigne}.

\begin{definition*}
Let $k \geq 2$ and $\ell \geq 1$.
Let $\varphi$ be a $k$-uniform morphism on $\Sigma$.
We say that \emph{$\varphi$ locates words of length $\ell$} if for each word $x$ of length $\ell$ there exists an integer $j$ such that, for all $w \in \Sigma^*$, every occurrence of the factor $x$ in $\varphi(w)$ begins at a position congruent to $j$ modulo $k$.
\end{definition*}

If $\varphi$ locates words of length $\ell$, then $\varphi$ also locates words of length $\ell + 1$, since if $|x| = \ell + 1$ then the position of the length-$\ell$ prefix of $x$ is determined modulo $k$.

For example, one checks that the morphism $\varphi(n) = 000010(n+1)$, for which $\word_{5/3} = \varphi^\infty(0)$, locates words of length $7$.
More generally, under mild conditions the $k$-uniform morphism $\varphi(n) = u \, (n + d)$ locates words of length $k$.

\begin{lemma}\label{locating length}
Let $k \geq 2$ and $d \geq 0$.
Let $u \in \Z_{\geq 0}^{k-1}$ be a word of length $k-1$, and let $\varphi$ be the $k$-uniform morphism defined by $\varphi(n) = u \, (n + d)$.
If, for all integers $n \geq 0$ and all integers $a \geq 2$, the word $u \, (n + d)$ is not an $a$-power, then $\varphi$ locates words of length $k$.
\end{lemma}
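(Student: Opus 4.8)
The plan is to exploit the rigid block structure of $\varphi(w)$. Since $\varphi$ is $k$-uniform, any $\varphi(w)$ is a concatenation of blocks $B_0 B_1 \cdots$, where each block $B_i = u\,(w_i + d)$ consists of the fixed prefix $u$ of length $k-1$ followed by a single \emph{distinguished} letter $w_i + d \ge d$ occupying the position $\equiv k-1 \pmod{k}$. A window of length $k$ meets exactly one position in each residue class modulo $k$, so every length-$k$ factor contains exactly one distinguished letter and otherwise only letters of $u$, read off according to their position. Concretely, letting $\sigma$ denote the cyclic left shift, a factor $x$ of length $k$ that begins at a position $\equiv r \pmod{k}$ (with $0 \le r \le k-1$) satisfies $x = \sigma^{r}(u\,c)$, where $c \ge d$ is the distinguished letter of the block in which $x$ starts; equivalently $\sigma^{-r}(x) = u\,c$. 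To prove that $\varphi$ locates words of length $k$ it therefore suffices to show that a single word $x$ cannot occur at two positions whose residues modulo $k$ differ, since then we may take $j$ to be the common residue (and $j$ is arbitrary if $x$ never occurs).

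Suppose, for contradiction, that some $x$ of length $k$ occurs at residues $r \neq r'$, in blocks with distinguished letters $c$ and $c'$ respectively (possibly in different images $\varphi(w)$, $\varphi(w')$). From $\sigma^{-r}(x) = u\,c$ and $\sigma^{-r'}(x) = u\,c'$ we obtain $u\,c' = \sigma^{s}(u\,c)$ for some shift $s \in \{1, \dots, k-1\}$. Write $W = u\,c = w_0 w_1 \cdots w_{k-1}$, so $w_i = u(i)$ for $0 \le i \le k-2$ and $w_{k-1} = c$. The statement that $\sigma^{s}(W)$ again has $u$ as its length-$(k-1)$ prefix unwinds to the system of equalities
\[
	w_{(j+s) \bmod k} = w_j \qquad \text{for all } j \ne k-1,
\]
so that $W$ is cyclically $s$-periodic except possibly at the single index $k-1$.

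The crux is to upgrade this ``almost periodicity'' to genuine cyclic periodicity. Consider the orbit of $k-1$ under the map $j \mapsto (j+s) \bmod k$; it has length $k/g$, where $g = \gcd(s,k)$. Each equality above is available at every index of this orbit other than $k-1$ itself, and chaining them around the orbit recovers the one missing relation $w_{(k-1+s) \bmod k} = w_{k-1}$. Hence $w_{(j+s) \bmod k} = w_j$ holds for \emph{all} $j$, so $W$ is cyclically periodic with period $s$, and therefore with period $g = \gcd(s,k)$. Since $1 \le s \le k-1$, the period $g$ is a proper divisor of $k$, whence $W = P^{k/g}$, where $P$ is its length-$g$ prefix and $k/g \ge 2$. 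But $W = u\,(n+d)$ with $n = c - d \ge 0$, so $u\,(n+d)$ is an $a$-power with $a = k/g \ge 2$, contradicting the hypothesis. Therefore no $x$ occurs at two distinct residues, and $\varphi$ locates words of length $k$.

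I expect the main obstacle to be the chaining step in the last paragraph: verifying that the $k-1$ available equalities close up around the orbit of $k-1$ and recover the single relation we were missing (rather than merely forcing periodicity off the orbit of $k-1$). Everything else is bookkeeping about the block decomposition and the identification of length-$k$ factors with rotations of blocks $u\,c$.
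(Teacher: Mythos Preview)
Your proof is correct and follows essentially the same approach as the paper's: identify length-$k$ factors with cyclic rotations of a block $u\,c$, deduce from a hypothetical double occurrence that $u\,c' = \sigma^{s}(u\,c)$, and use the resulting near-periodicity together with a chaining argument along the orbit of $k-1$ to force $u\,c$ to be a proper power. The paper organizes the endgame slightly differently (it first deduces $c'=c$ and then invokes the non-power hypothesis), but the substance is the same.
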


\begin{proof}
Write $u = u(0) u(1) \cdots u(k-2)$, and let $w \in \Z_{\geq 0}^*$.
Every length-$k$ factor of $\varphi(w)$ is of the form
\[
	v = u(i) \cdots u(k-2) \, (n + d) \, u(0) \cdots u(i-1)
\]
for some $n \geq 0$ and $0 \leq i \leq k - 1$.
Suppose $v$ occurs elsewhere in $\varphi(w)$.
Then without loss of generality we have
\[
	u \, (n' + d) = u(i) \cdots u(k-2) \, (n + d) \, u(0) \cdots u(i-1)
\]
for some $n'$.
This implies $u(j) = u(j + i \bmod k)$ for each $j$ such that $0 \leq j \leq k - 2$ and $0 \leq (j + i \bmod k) \leq k - 2$.

If $i = 0$ then $n' = n$ and the two occurrences of $v$ begin at positions that are congruent modulo $k$.
If $i \neq 0$, then $u(i-1) = u((i-1) + t i \bmod k)$ for $0 \leq t \leq \frac{k}{\gcd(i,k)} - 2$ (because for $t = \frac{k}{\gcd(i,k)} - 1$ we have $((i-1) + t i \bmod k) = k-1 $ and $u(k-1)$ is not defined).
Letting $t = \frac{k}{\gcd(i,k)} - 2$ gives $n' + d = u(i-1) = u(k-i-1) = n + d$.
Therefore $n' = n$, and, since $u \, (n + d)$ is not an $a$-power, we have $i = 0$.
\end{proof}

An $\ab$-power $(xy)^{a/b} = xyx$ contains two occurrences of $x$, so if a morphism $\varphi$ locates words of some length, then the length of $xy$ for sufficiently long $\ab$-powers in $\varphi(w)$ is constrained to be divisible by $k$.
In Lemma~\ref{big x} and Proposition~\ref{big m} we use this to bound the length of factors of $\varphi(w)$ that we must verify are not $\ab$-powers in order to conclude that $\varphi$ is $\ab$-power-free.

\begin{lemma}\label{big x}
Let $a, b$ be relatively prime positive integers such that $1 < \ab < 2$.
Let $k \geq 2$ such that $\gcd(b, k) = 1$, and let $\ell \geq 1$.
Let $\varphi$ be a $k$-uniform morphism on a (finite or infinite) alphabet $\Sigma$ such that
\begin{itemize}
\item
$\varphi$ locates words of length $\ell$, and
\item
for all $n, n' \in \Sigma$, the words $\varphi(n)$ and $\varphi(n')$ differ in at most one position.
\end{itemize}
Then $w$ contains an $\ab$-power whenever $\varphi(w)$ contains an $\ab$-power $(xy)^{a/b} = xyx$ with $|x| \geq \ell$.
\end{lemma}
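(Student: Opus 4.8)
The plan is to generalize the final case in the proof of Theorem~\ref{2 2a-b} (the case where $x$ contains a nonzero letter), which is exactly the situation this lemma abstracts. Suppose $\varphi(w)$ contains an $\ab$-power $(xy)^{a/b} = xyx$ with $|x| \geq \ell$, and write $m = |xy|/b$, so that $|x| = m(a-b)$, $|y| = m(2b-a)$, and $|xyx| = ma$. The first step is to pin down the two occurrences of $x$ modulo $k$. Since $|x| \geq \ell$ and $\varphi$ locates words of length $\ell$, the length-$\ell$ prefix of $x$ forces every occurrence of $x$ in $\varphi(w)$ to begin in a single residue class modulo $k$; in particular the two occurrences of $x$ inside $xyx$ begin at congruent positions. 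These positions differ by $|xy| = mb$, so $k \mid mb$, and since $\gcd(b,k) = 1$ we get $k \mid m$. Consequently $|x|$, $|y|$, $|xy|$, and $|xyx|$ are all multiples of $k$.

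Next I would shift $xyx$ to a block boundary. Writing its starting position as $ki + r$ with $0 \le r \le k-1$, let $x'y'z'$ be the length-$ma$ factor beginning at position $ki$, with $|x'| = |z'| = |x|$ and $|y'| = |y|$. Because $|x|$ and $|y|$ are divisible by $k$, each of $x'$, $y'$, $z'$ begins at a multiple of $k$ and has length a multiple of $k$, so they are $\varphi$-images of consecutive factors $u$, $v$, $u'$ of $w$, with $uvu'$ a factor of $w$. Comparing the two occurrences of $x$ letter by letter shows that $x'$ and $z'$ agree on all positions from $r$ onward: the tail of $x'$ of length $|x|-r$ equals the head of $x$, the tail of $z'$ of length $|x|-r$ equals the head of the second $x$, and the two copies of $x$ coincide. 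Equivalently, $x'$ and $z'$ share every block except possibly their first, and that first block of each is a $\varphi$-image.

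The heart of the argument---and the step I expect to be the main obstacle---is to repair the one remaining block and produce a genuinely block-aligned $\ab$-power. Here the hypothesis that any two images $\varphi(n)$, $\varphi(n')$ differ in at most one position is essential: a short transitivity argument shows that, unless all images coincide, there is a single position $p^*$ at which all disagreements occur. If $p^* \ge r$, then the first blocks of $x'$ and $z'$ agree off $p^*$ and also at $p^*$ itself (they already agree on positions $r,\dots,k-1$), so $x' = z'$ outright. If instead $p^* < r$, the first blocks may differ, but the same reasoning applied to the blocks immediately following the two occurrences of $x$ (which agree on positions $0,\dots,r-1$, a range containing $p^*$) shows those blocks coincide; shifting the whole picture one block to the right then yields words $x''$, $z''$ with $x'' = z''$ and an $\ab$-power $x'' y'' x''$ of the same dimensions. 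Either way we obtain a block-aligned $\ab$-power in $\varphi(w)$.

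Finally, taking preimages of this block-aligned $\ab$-power recovers a factor of $w$ of the form $uvu'$ with $\varphi(u) = \varphi(u')$; since $\varphi$ is injective on letters (as holds for the morphisms to which the lemma is applied, and as is consistent with the single-differing-position structure), we conclude $u = u'$, so $uvu = (uv)^{a/b}$ is an $\ab$-power in $w$. The only genuinely delicate point is the dichotomy on the location of $p^*$ in the preceding paragraph; the locating and divisibility steps are routine once $\gcd(b,k)=1$ is in hand.
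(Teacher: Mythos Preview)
Your argument is essentially the paper's proof: both use the locating hypothesis together with $\gcd(b,k)=1$ to force $k\mid m$, then realign $xyx$ to a block boundary via the single-differing-position hypothesis, and finally pull back to an $\ab$-power in $w$. Your explicit dichotomy on the position $p^*$ is exactly what the paper compresses into ``slide a window \dots\ either to the left or to the right''; you are also right to flag injectivity of $\varphi$ on letters for the final step, a point the paper leaves implicit.
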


The proof generalizes the case $m \geq 2$ in the proof of $\ab$-power-freeness in Theorem~\ref{2 2a-b}.

\begin{proof}
Suppose $w$ is a word such that $\varphi(w)$ contains an $\ab$-power $(xy)^{a/b} = xyx$ with $|x| \geq \ell$.
Let $m = |xyx|/a$; as before, $|x| = m \cdot (a-b)$ and $|y| = m \cdot (2b-a)$.
Since $\varphi$ locates words of length $\ell$, $\varphi$ also locates words of length $|x|$.
Since $xyx$ is a factor of $\varphi(w)$, this implies that $k$ divides $|xy| = m b$.
Since $\gcd(b, k) = 1$, it follows that $k \mid m$, and therefore $k$ divides both $|x| = m \cdot (a-b)$ and $|y| = m \cdot (2b-a)$.

Let $j$ be the position of $xyx$ in $\varphi(w)$.
Write $j = k i_1 + r$ for some $0 \leq r \leq k - 1$.
Then $y$ begins at position $j + |x| = k i_2 + r$ and the second $x$ begins at position $j + |xy| = k i_3 + r$ for some $i_2, i_3$.
Since $\varphi(n)$ and $\varphi(n')$ differ in at most one position, adjacent factors of length $k$ in $\varphi(w)$ differ in at most one position, so we can slide a window of length $|xyx|$ either to the left or to the right from $xyx$ and obtain an $\ab$-power factor $x' y' x'$ beginning at position $k i_1$ or $k i_1 + k$ with $|x'| = |x|$ and $|y'| = |y|$.
Since the positions of $y'$ and both occurrences of $x'$ are all divisible by $k$, we have $x' = \varphi(u)$ and $y' = \varphi(v)$ for some $u$ and $v$, which implies that $uvu = (uv)^{a/b}$ is a factor of $w$.
\end{proof}

\begin{proposition}\label{big m}
Assume the hypotheses of Lemma~\ref{big x}.
Let $\Imin \in \Q$ such that $1 < \Imin < \ab$, and let $c \geq d \geq 0$ be integers such that $\ell = c a - d b$.
Let $\mmax \colonequal \lceil \frac{c \cdot \Imin - d}{\Imin - 1} \rceil - 1$.
Suppose additionally that for every $\ab$-power-free word $w \in \Sigma^*$ the word $\varphi(w)$ contains no $\ab$-power $(xy)^{a/b} = xyx$ of length $m a$ for $1 \leq m \leq \mmax$.
Then $\varphi$ is $\ab$-power-free.
\end{proposition}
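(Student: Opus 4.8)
The plan is to prove the contrapositive in exactly the form used earlier in the excerpt: assuming $w$ is $\ab$-power-free, I would show that $\varphi(w)$ contains no $\ab$-power. So suppose toward a contradiction that $\varphi(w)$ contains an $\ab$-power $(xy)^{a/b} = xyx$, and set $m = |xyx|/a$, so that $|x| = m \cdot (a-b)$ as recorded just before the proof of Theorem~\ref{2 2a-b}. The strategy is to split on the size of $m$: the small range $1 \leq m \leq \mmax$ is excluded directly by the additional hypothesis, while the large range $m > \mmax$ is handled by Lemma~\ref{big x}. The only real content is to check that these two ranges together exhaust all possibilities.

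The crux, then, is to verify that $m > \mmax$ forces $|x| \geq \ell$, so that Lemma~\ref{big x} indeed applies. Since $\ell = c a - d b$ and $|x| = m \cdot (a - b)$, dividing by $b$ shows that $|x| \geq \ell$ is equivalent to $m \geq \frac{c \cdot \ab - d}{\ab - 1}$. Writing $r = \ab$, I would therefore study the function $f(r) = \frac{c r - d}{r - 1} = c + \frac{c - d}{r - 1}$ for $r > 1$. Because $c \geq d \geq 0$, the term $\frac{c-d}{r-1}$ is non-increasing in $r$, so $f$ is non-increasing; combined with the hypothesis $\Imin < \ab$ this gives $f(\Imin) \geq f(\ab)$. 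Since $\mmax + 1 = \lceil f(\Imin) \rceil \geq f(\Imin)$, the chain $m \geq \mmax + 1 \geq f(\Imin) \geq f(\ab)$ holds for every integer $m > \mmax$, which is precisely $|x| \geq \ell$.

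With this inequality the two cases close the argument. For $m > \mmax$ we have $|x| \geq \ell$, so Lemma~\ref{big x} produces an $\ab$-power in $w$, contradicting the $\ab$-power-freeness of $w$. For $1 \leq m \leq \mmax$ the additional hypothesis asserts outright that $\varphi(w)$ contains no $\ab$-power of length $m a$. In either case we reach a contradiction, so $\varphi(w)$ is $\ab$-power-free, and hence $\varphi$ is an $\ab$-power-free morphism.

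The step I expect to require the most care is the calibration of the threshold $\mmax$: specifically, confirming that the monotonicity of $f$ really lets $f(\Imin)$ dominate $f(\ab)$, and that the ceiling in the definition of $\mmax$ leaves no off-by-one gap between the range $m \leq \mmax$ covered by hypothesis and the range $m > \mmax$ covered by Lemma~\ref{big x}. I would also check the degenerate boundary $c = d$ (where $f$ is the constant $c$ and $\ell = c \cdot (a-b)$) and confirm that $c \geq 1$, which is forced by $\ell \geq 1$, so that $f(\Imin)$ is positive and the ceiling behaves as intended.
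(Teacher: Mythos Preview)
Your proof is correct and follows essentially the same approach as the paper. The paper organizes the logic as a single chain---from Lemma~\ref{big x} deduce $|x|<\ell$, hence $m<\frac{ca-db}{a-b}=\frac{c\,\ab-d}{\ab-1}\leq\frac{c\,\Imin-d}{\Imin-1}$, hence $m\leq\mmax$, contradicting the hypothesis---whereas you split explicitly into the two cases $m\leq\mmax$ and $m>\mmax$; the monotonicity of $r\mapsto\frac{cr-d}{r-1}$ and the ceiling bookkeeping are identical in both.
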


\begin{proof}
Let $w \in \Sigma^*$ be $\ab$-power-free.
Suppose toward a contradiction that $\varphi(w)$ contains an $\ab$-power $(xy)^{a/b} = xyx$.
Let $m = |xyx|/a$; then $|x| = m \cdot (a-b)$.
By Lemma~\ref{big x}, $m = \frac{|x|}{a-b} < \frac{\ell}{a-b} = \frac{c a - d b}{a-b}$.
Since $c \geq d$, the function $t \mapsto \frac{c t - d}{t - 1}$ is non-increasing for $t > 1$.
Therefore
\[
	m < \frac{c a - d b}{a-b} = \frac{c\ab-d}{\ab-1} \leq \frac{c \cdot \Imin - d}{\Imin - 1}.
\]
Since $m$ is an integer, we have $m \leq \lceil \frac{c \cdot \Imin - d}{\Imin - 1} \rceil - 1 = \mmax$.
This contradicts an assumption, so $\varphi(w)$ is $\ab$-power-free and hence $\varphi$ is $\ab$-power-free.
\end{proof}

Given particular values of $\ell$ and $\ab$, we may have several choices for $c$ and $d$, and there are many choices for $\Imin$.
But we will be applying Proposition~\ref{big m} to families of morphisms for which $\Imin$, $c$, and $d$ are fixed.

\begin{example*}
Consider the morphism
\[
	\varphi(n) = 0^{a - 1} \, 1 \, 0^{a - b - 1} \, (n + 1)
\]
in Theorem~\ref{2 2a-b}, for which $k = 2 a - b$.
This morphism locates words of length $\ell = a$, as one can verify with the assistance of the table of length-$a$ factors of $\varphi(w)$ in the proof of Theorem~\ref{2 2a-b}.
To show that $\varphi$ is $\ab$-power-free, by Proposition~\ref{big m} it suffices to verify for every $\ab$-power-free $w$ that $\varphi(w)$ contains no $\ab$-power factors of length $m a$ for $m \leq \mmax = \lceil \frac{c \cdot \Imin - d}{\Imin - 1} \rceil - 1 = \lceil \frac{1 \cdot 5/3 - 0}{5/3 - 1} \rceil - 1 = \lceil \frac{5}{2} \rceil - 1 = 2$.
\end{example*}

Under some mild conditions, it turns out that the relevant factors of $\varphi(w)$ are sufficiently short that $w$ is necessarily $\ab$-power-free.
Namely, any factor of $\varphi(w)$ of length $m a$ is necessarily a factor of $\varphi(v)$ for some word $v$ of length $\lceil \frac{m a - 1}{k} \rceil + 1$, and the following lemma shows that $\lceil \frac{\mmax a - 1}{k} \rceil + 1 \leq a - 1$.
Since the shortest $\ab$-powers have length $a$, this guarantees that $v$ is $\ab$-power-free.
Therefore, in verifying the hypotheses of Proposition~\ref{big m}, we will replace ``for every $\ab$-power-free word $w$'' with ``for every word $w$''; we need not expend the computational effort to determine whether each $w$ is $\ab$-power-free, since it is too short to contain an $\ab$-power.

\begin{lemma}\label{short words are a/b-power-free}
Let $s, t, \mmax \in \Z_{\geq 0}$ and $\Imin, \Imax \in \Q$ such that $s \neq 0$ and $1 < \Imin < \Imax \leq 2$.
Let
\[
	a_\textnormal{min} \colonequal \min \left\{a \geq 1 : \text{$\Imin < \tfrac{a}{b} < \Imax$ for some $b \geq 1$ with $\gcd(b, s) = 1$}\right\}.
\]
Assume
\[
	\mmax \leq \left(s - \frac{t}{\Imin}\right) (a_\textnormal{min} - 2).
\]
Let $a, b$ be relatively prime positive integers such that $\Imin < \ab < \Imax$ and $\gcd(b, s) = 1$.
Let $k = s a - t b$.
Then $\lceil \frac{\mmax a - 1}{k} \rceil + 1 \leq a - 1$.
\end{lemma}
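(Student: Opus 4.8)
The plan is to reduce the ceiling inequality to a single polynomial inequality and then bound $k/a$ from below using $\ab > \Imin$. Since $a-2$ is a nonnegative integer (I check below that $a \geq a_\textnormal{min} \geq 2$), the claim $\lceil \frac{\mmax a - 1}{k}\rceil + 1 \leq a - 1$ is equivalent to $\lceil \frac{\mmax a - 1}{k}\rceil \leq a - 2$, and because $k > 0$ this in turn follows from the strict inequality $\mmax a - 1 < k(a-2)$. In fact I would prove the slightly stronger $\mmax a \leq k(a-2)$ and then subtract $1$, so the whole lemma comes down to establishing $\mmax a \leq k(a-2)$.

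First I would record two easy facts. (i) Every $a$ in the set defining $a_\textnormal{min}$ satisfies $\ab > \Imin > 1$ for some $b \geq 1$, hence $a > b \geq 1$ and so $a \geq 2$; our given $a$ (with its $b$) lies in this set, so $a \geq a_\textnormal{min} \geq 2$ and therefore $a - 2 \geq a_\textnormal{min} - 2 \geq 0$. (ii) From $\ab > \Imin$ we get $\frac{b}{a} < \frac{1}{\Imin}$, and since $t \geq 0$ this yields
\[
	\frac{k}{a} = s - t\,\frac{b}{a} \geq s - \frac{t}{\Imin},
\]
equivalently $k \geq a\left(s - \frac{t}{\Imin}\right)$. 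In the regime of interest $s - \frac{t}{\Imin} > 0$, which also shows $k > 0$.

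With these in hand the main chain is short. Using $a \geq a_\textnormal{min}$ together with monotonicity (multiplying $a_\textnormal{min} - 2 \leq a - 2$ by the nonnegative quantity $s - t/\Imin$), the hypothesis $\mmax \leq (s - \frac{t}{\Imin})(a_\textnormal{min} - 2)$ upgrades to $\mmax \leq (s - \frac{t}{\Imin})(a - 2)$. Multiplying by $a$ and applying fact (ii) gives
\[
	\mmax a \leq \left(s - \tfrac{t}{\Imin}\right)(a-2)\,a = \left[\left(s - \tfrac{t}{\Imin}\right)a\right](a-2) \leq k(a-2),
\]
which is exactly the target; subtracting $1$ and dividing by $k > 0$ then finishes the proof.

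I expect the only real subtlety — hardly an obstacle — to be the sign bookkeeping around $s - t/\Imin$ and the strictness of the ceiling step. The transfer of the hypothesis from $a_\textnormal{min}$ to $a$ needs $s - t/\Imin \geq 0$; this is automatic in every application, where $s \geq t$ forces $s - t/\Imin > s - t \geq 0$ (using $\Imin > 1$) and likewise $k \geq s(a-b) > 0$ since $a > b$. Should one wish to cover the degenerate case $s - t/\Imin < 0$, the hypothesis then forces $\mmax = 0$ (because $a_\textnormal{min} \geq 2$), and the conclusion collapses to $\lceil -1/k\rceil + 1 = 1 \leq a - 1$, which holds since $a \geq 2$. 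I would dispose of that case in a single sentence rather than dwell on it.
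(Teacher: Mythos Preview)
Your proof is correct and follows essentially the same route as the paper's: bound $k/a$ from below by $s - t/\Imin$ using $\ab > \Imin$, combine with $a \geq a_\textnormal{min}$, and reduce the ceiling inequality to $\mmax a \leq k(a-2)$. The paper organizes the chain as a single strict inequality $\mmax < \tfrac{k}{a}(a-2)$ (noting also that $\Imax \leq 2$ actually forces $a_\textnormal{min} \geq 3$, not just $\geq 2$), while you carry a non-strict inequality and recover strictness from the ``$-1$'' in the numerator; both work. Your explicit treatment of the sign of $s - t/\Imin$ is a nice touch the paper glosses over.
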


\begin{proof}
The condition $s \neq 0$ implies $k \neq 0$, so we can divide by $k$.
The condition $1 < \Imin < \Imax \leq 2$ implies $a_\textnormal{min} \geq 3$.
We have
\[
	\mmax
	\leq \left(s - \frac{t}{\Imin}\right) (a_\textnormal{min} - 2)
	< \left(s - \frac{t}{\ab}\right) (a - 2)
	= \frac{k}{a} (a - 2).
\]
This now implies $\frac{\mmax a}{k} < a - 2$, which implies $\lceil\frac{\mmax a}{k}\rceil \leq a - 2$, and therefore $\lceil \frac{\mmax a - 1}{k} \rceil + 1 \leq \lceil \frac{\mmax a}{k} \rceil + 1 \leq a - 1$.
\end{proof}

There will be no difficulty in satisfying the conditions of Lemma~\ref{short words are a/b-power-free} for the morphisms we encounter.

\begin{example*}[continued]
The $(2a-b)$-uniform morphism $\varphi$ in Theorem~\ref{2 2a-b} locates words of length $a$, so $s = 2, t = 1$ and $c = 1, d = 0$.
Earlier we computed $\mmax = 2$ for this morphism on the interval $\frac{5}{3} < \ab < 2$.
The smallest numerator of a rational number in the interval $\frac{5}{3} < \ab < 2$ with an odd denominator is $a_\textnormal{min} = 9$, so we have $\left(s - \frac{t}{\Imin}\right) (a_\textnormal{min} - 2) = \frac{49}{5} \geq 2 = \mmax$ and Lemma~\ref{short words are a/b-power-free} applies.
To conclude that $\varphi$ is $\ab$-power-free, it remains to verify that no factor of $\varphi(w)$ of length $a$ or $2a$ is an $\ab$-power.
(Note that in the proof of Theorem~\ref{2 2a-b} we only checked factors of length $a$ by a table, suggesting the better bound $\mmax = 1$.
But there we used a slightly different argument, treating separately the case where $x$ consists only of zeros, which allowed the argument for large $m$ to apply to factors of length $2a$.)
\end{example*}

We would like to use Proposition~\ref{big m} to prove, with as much automation as possible, that a given morphism $\varphi$, symbolic in $a$ and $b$, is $\ab$-power-free.
For now we assume that the interval restricting $\ab$ is also given.
The main steps required are the following.
\begin{enumerate}
\item\label{step1}
Identify an integer $\ell$ such that $\varphi$ locates words of length $\ell$.
\item\label{step2}
Verify that for every word $w \in \Sigma^*$ the word $\varphi(w)$ contains no $\ab$-power of length $m a$ with $1 \leq m \leq \mmax$, where $\mmax$ is determined by $\ell$.
\end{enumerate}

Step~\eqref{step1} can be accomplished simply by using Lemma~\ref{locating length}, but to carry out the computations for morphisms of moderate size we need to obtain a smaller value for $\ell$; we return to this in Section~\ref{determining a locating length}.
Step~\eqref{step2} works as in the proof of Theorem~\ref{2 2a-b} by listing, for each $m$, all symbolic factors of length $m a$ and verifying that none are $\ab$-powers.
We discuss the details in Sections~\ref{listing factors} and \ref{testing inequality}.
At this point the reader may be interested in looking at the theorems in Section~\ref{symbolic power-free morphisms} that we have proved with this approach.

\subsection{Listing factors of a given length}\label{listing factors}

In the proof of Theorem~\ref{2 2a-b} we generated a table of all possible length-$a$ factors of words of the form $\varphi(w)$.
The idea behind the automatic generation of tables like this is that we slide a window of some length through the infinite word
\[
	\varphi(n) \varphi(n) \varphi(n) \cdots
\]
and stop when we reach a factor that is identical to the first.
In Theorem~\ref{2 2a-b}, the window was short enough relative to $|\varphi(n)|$ that each factor contained at most one letter $n + 1$.
In general, this may not be the case; since we are considering arbitrary words we must slide a window through the word
\[
	\varphi(n_0) \varphi(n_1) \varphi(n_2) \cdots.
\]
However, it suffices to slide a window through the periodic word $\varphi(n) \varphi(n) \cdots$, stopping as before, and simply rename each occurrence of $n$ in a factor to be a unique symbol $n_i$ before performing a test on that factor (such as determining whether it is an $\ab$-power).

When $\ab$ is an explicit rational number, $\varphi(n)$ is a word of some explicit integer length.
In this case, sliding a window through $\varphi(n) \varphi(n) \cdots$ is trivial; we simply increment the starting and ending positions of the window by $1$ at each step.

For symbolic $\ab$, we build a table as in Theorem~\ref{2 2a-b}, where each symbolic factor is parameterized by $i$ in some interval.
We treat $\varphi(n) \varphi(n) \cdots$ as a queue.
Suppose the window length is $m a$.
To compute the prefix of $\varphi(n) \varphi(n) \cdots$ of length $m a$, we begin with $f = \epsilon$ as the empty word and record the number $m a - |f|$ of remaining letters to add.
Initially there are $m a$ remaining letters.
At each step, we have a block $c^l$ of $l$ identical letters $c$ at the beginning of the queue, and we need to determine whether to add the entire block or just a part of it to our factor.
If $m a - |f| \geq l$, then we take the entire block; otherwise we take the partial block $c^{m a - |f|}$.
If we are creating a table of factors that are further factored into subfactors of lengths $m \cdot (a - b)$, $m \cdot (2b - a)$, and $m \cdot (a - b)$, then we do this procedure once for each subfactor.

To slide the window to the right, we add a parameter $i$ to the run lengths of the current factor.
We must determine the maximum value $i_\text{max}$ such that sliding the window through $i_\text{max}$ positions maintains factors whose run-length encodings differ from the current factor only in their exponents.
The value of $i_\text{max}$ is the minimum of the first block length in $f$ and the first block length in the queue (or, if we are factoring $f$ into three subfactors, the minimum of the first block length in each of the three subfactors as well as the first block length in the queue).
Drop $i$ letters from the front of each subfactor, and add those $i$ letters onto the end of the preceding subfactor (or throw them away from the first subfactor).
This gives a factor parameterized by $i$ for $1 \leq i \leq i_\text{max}$ (or $0 \leq i \leq i_\text{max}$ for the first factor), which we add to the list of factors.
(If an interval contains only one point and the next interval contains more than one point, we can merge them to get an interval $0 \leq i \leq i_\text{max}$ as in the tables of Theorem~\ref{2 2a-b}.)
Then replace $i$ with $i_\text{max}$ in the factor, drop $i_\text{max}$ letters from the front of the queue, and repeat.

The run lengths for the symbolic morphisms we encounter are linear combinations of $a, b, 1$.
Therefore to compute $i_\text{max}$ we must be able to compute the minimum of two such expressions over an interval.
For example, if $\frac{5}{3} < \ab < 2$ and $a,b \in \Z_{\geq 1}$ then $\min(2 a - 2 b, a - 1) = 2 a - 2 b$.

If the minimum is not equal to either of its arguments on the entire interval, then we split the interval.
For example, when we encounter $\min(a - b - 1, -4 a + 7 b)$ for the interval $\frac{3}{2} < \ab < \frac{5}{3}$, we solve the homogeneous equation $a - b = -4 a + 7 b$ to find $\ab = \frac{8}{5}$ and split the interval into three subintervals $\frac{3}{2} < \ab < \frac{8}{5}$, $\ab = \frac{8}{5}$, and $\frac{8}{5} < \ab < \frac{5}{3}$.
We continue breaking up subintervals until we can compute the symbolic factors of length $m a$ for $\ab$ in each subinterval.
Later, when we compute factors of length $(m + 1) a$, we start with the set of subintervals obtained for length~$m a$.
Even though the length has changed, empirically it seems that most of the same subintervals reappear, so this saves the work of recomputing them.

\subsection{Testing inequality of symbolic words}\label{testing inequality}

Once we have generated all possible factors of $\varphi(w)$ of length $m a$, we must verify that, for all values of parameters that appear ($n$, $a$, $b$, and any interval parameters $i, j$), each factor is not an $\ab$-power.
Since we have factored each word as $xyz$ with $|x| = |z| = m \cdot (a-b)$, it suffices to verify that $x \neq z$.

An example from the proof of Theorem~\ref{2 2a-b} is $x = 0^{a - b - 1 - i} \, 1 \, 0^i$ and $z = 0^{2 a - 3 b - 1 - i} \, (n + 1) \, 0^{2 b - a + i}$.
Under the assumptions $n \geq 0$ and $\frac{5}{3} \leq \ab < 2$, we conclude that $x \neq z$ by comparing prefixes.
Namely, the two words $c_1^{l_0} c_2^{l_2} \cdots$ and $c_1^{l_1} c_3^{l_3} \cdots$ on the letters $c_i$ are unequal if $c_1 \neq c_2$, $c_1 \neq c_3$, $l_0 \neq l_1$, $l_2 \geq 1$, and $l_3 \geq 1$.

We have not developed a decision procedure to decide if there exist parameter values for which two symbolic words are equal.
For our purposes it is sufficient to show that pairs of symbolic words we encounter are unequal for all parameter values.
We have implemented a number of criteria under which this is true.
For example, if two words have identical prefixes (or suffixes), we can remove the common factor and recursively test inequality of the remaining factors.
If the first letters or last letters in two words are unequal, then the words are unequal.

Another criterion is the following.
Delete all explicit $0$ letters in both words.
If all remaining letters are unequal to $0$ and the two new words are unequal, then the original words are unequal.
It may happen that deleting $0$s does not result in words that are unequal.
For example, deleting $0$s in the words
\begin{align*}
	& 0^{352 a-621 b-i-1} \, 1 \, 0^{-51 a+91 b-1} \, (n+1) \, 0^{i}, \\
	& 0^{-51 a+91 b-j-1} \, (n+1) \, 0^{352 a-621 b-1} \, 1 \, 0^{j}
\end{align*}
produces $1 \, (n + 1)$ and $(n + 1) \, 1$, which are not unequal if $n = 0$.
We may still conclude the original words are unequal if the system of equalities of the corresponding deleted block lengths has no solution.
In this example, $-51 a+91 b-1 \neq 352 a-621 b-1$ on the interval $\frac{30}{17} < \ab < \frac{53}{30}$.

We use our collection of inequality criteria to verify that no symbolic factor represents an $\ab$-power, for the list of factors obtained on each subinterval by the process in Section~\ref{listing factors}.
Since the subintervals are open intervals, we must also verify their endpoints.
That is, when $\ab$ is an endpoint of a subinterval and satisfies the conditions of the theorem we are trying to prove (that is, it lies in the interval and is not eliminated by a $\gcd$ condition), then we check that the factors for this value of $\ab$ (which have explicit integer run lengths) are not $\ab$-powers.

\subsection{Determining a locating length}\label{determining a locating length}

The final step to automate is the identification of an integer $\ell$ such that $\varphi$ locates words of length $\ell$.
Lemma~\ref{big x} and Proposition~\ref{big m} show that the length $\ell$ of words that a morphism $\varphi$ locates affects the length of factors we must check.
Lemma~\ref{locating length} gives one possible length, but often we can find a smaller length.
For example, as mentioned in Section~\ref{bounding the factor length}, the $(2 a - b)$-uniform morphism in Theorem~\ref{2 2a-b} not only locates words of length $2 a - b$ but also locates words of length $a$.

Given a morphism $\varphi$ and an interval $\Imin < \ab < \Imax$, we determine $\ell$ as follows.
Generate all linear combinations $c a - d b$ with $10 \geq c \geq d \geq 0$.
The upper bound $10$ is sufficient for all morphisms we encounter below; for a more general upper bound, one could use $s$, where $k = s a - t b$, and then we are guaranteed to find a suitable $\ell$ under the conditions of Lemma~\ref{locating length}.
Eliminate any linear combinations that do not satisfy the hypothesis of Lemma~\ref{short words are a/b-power-free}, where $\mmax = \lceil \frac{c \Imin - d}{\Imin - 1} \rceil - 1$.
Then sort the remaining linear combinations by the upper bound $\mmax$ that each would imply if $\varphi$ locates words of that length.
Starting with the lowest potential bounds, test whether $\varphi$ locates words of each length until a length is found.

To determine whether $\varphi$ locates words of a given length, use the procedure described in Section~\ref{listing factors} to compute all symbolic factors of $\varphi(n) \varphi(n) \cdots$ of the candidate length.
As before, this may involve breaking up the interval $\Imin < \ab < \Imax$ into subintervals.
Then check whether each pair of symbolic factors is unequal using the tests described in Section~\ref{testing inequality}.

\subsection{Symbolic $\ab$-power-free morphisms}\label{symbolic power-free morphisms}

We now give $30$ symbolic $\ab$-power-free morphisms defined on the alphabet $\Z_{\geq 0}$.
With the exception of Theorem~\ref{8 5a-3b power-free}, these morphisms were discovered empirically from prefixes of words $\word_{a/b}$.
In Section~\ref{finding conjectures} we discuss the details.
We list morphisms in order of increasing number of nonzero letters in $\varphi(n)$.
The $30$ intervals are shown in Figure~\ref{number line plot}.

All $30$ theorems have the same form.
Each concerns a $k$-uniform morphism $\varphi$ parameterized by $a$ and $b$ with $\gcd(a, b) = 1$.
The ratio $\ab$ is restricted to some interval, the lower endpoint of which is $\Imin$, and we have $k = s a - t b$ for some integers $s, t$.
There is a divisibility condition on $b$ coming from Lemma~\ref{big x}, namely $\gcd(b,k) = 1$.
Since $a$ and $b$ are relatively prime, $\gcd(b,k) = 1$ is equivalent to $\gcd(b,s) = 1$; we write the latter since it is more explicit.

For each morphism, $\ab$-power-freeness is proved completely automatically.
The computations are available from the web site of the second-named author\footnote{\url{http://people.hofstra.edu/Eric_Rowland/papers.html\#Avoiding_fractional_powers_over_the_natural_numbers} as of this writing.}.
Theorems~\ref{2 2a-b power-free} and \ref{3 a power-free} are each proved in approximately $1$ second, including the time required to find $\ell$.
However, longer morphisms require more time; Theorem~\ref{279 67a-30b power-free} took more than $7$ hours to prove on a modern laptop (although this could be reduced by computing in parallel).
Theorems~\ref{2 2a-b power-free} and \ref{14 8a-4b power-free} include the lower endpoint of their intervals; a separate step establishes each theorem at this endpoint.

\begin{figure}
	\includegraphics[width=\textwidth]{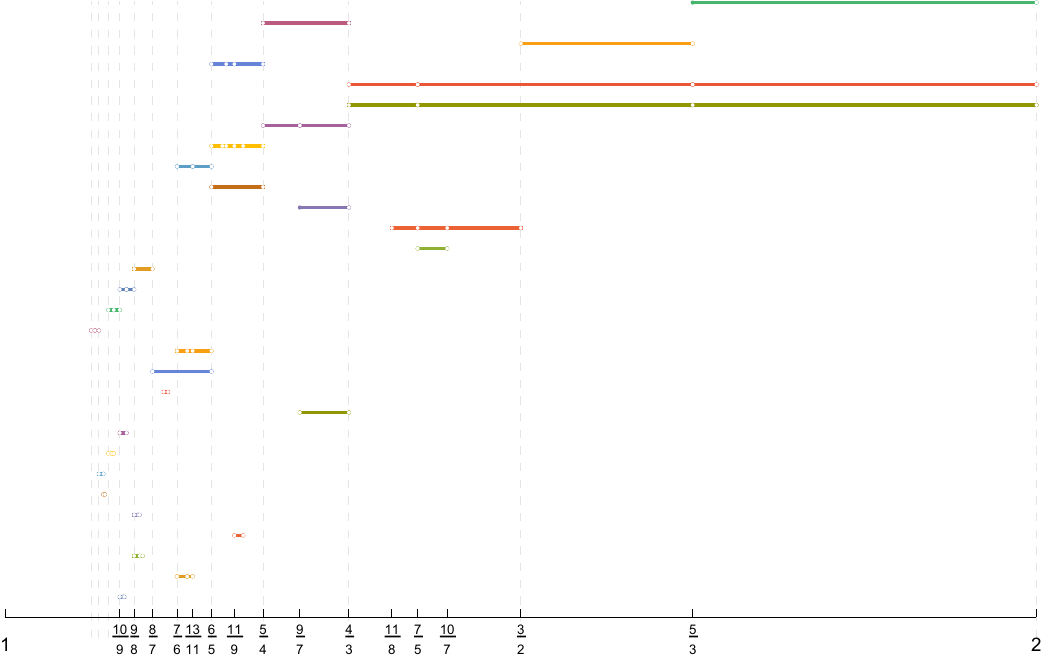}
	\caption{A plot of the intervals in Theorems~\ref{2 2a-b power-free}--\ref{279 67a-30b power-free}.  On each interval, the rational numbers satisfying a $\gcd$ condition support an $\ab$-power-free morphism.}
	\label{number line plot}
\end{figure}

The first theorem concerns the morphism in Theorem~\ref{2 2a-b}.
We have already proven that $\varphi$ is $\ab$-power-free, but we include it here for completeness.
Previously we showed that $\varphi$ locates words of length $a$; our automatic procedure reduces this length.

\begin{theorem}\label{2 2a-b power-free}
Let $a, b$ be relatively prime positive integers such that $\frac{5}{3} \leq \ab < 2$ and $\gcd(b,2) = 1$.
Then the $(2a-b)$-uniform morphism
\[
	\varphi(n) = 0^{a - 1} \, 1 \, 0^{a - b - 1} \, (n+1),
\]
with $2$ nonzero letters, locates words of length $2 a - 2 b$ and is $\ab$-power-free.
\end{theorem}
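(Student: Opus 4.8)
The plan is to derive both assertions from the automated framework of this section: the $\ab$-power-freeness of $\varphi$ will follow from Lemma~\ref{big x} and Proposition~\ref{big m} once the locating length $2a-2b$ is established. Two of the three hypotheses of Lemma~\ref{big x} are immediate here. First, $\gcd(b,k) = \gcd(b, 2a-b) = \gcd(b,2a) = \gcd(b,2) = 1$, using $\gcd(a,b)=1$ and that $b$ is odd. Second, $\varphi(n)$ and $\varphi(n')$ differ only in their last letter, so images of distinct letters differ in at most one position. The work is therefore concentrated in (i) verifying the locating length and (ii) checking the finitely many short factors that remain.

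For (i) I would classify the length-$2(a-b)$ factors of $\varphi(w)$ by their nonzero letters. The nonzero letters of $\varphi(w)$ occupy the residues $a-1$ (always the letter $1$) and $k-1 = 2a-b-1$ (a letter $n+1 \geq 1$) modulo $k=2a-b$, and consecutive nonzero letters are separated by a gap of $a-b$ (from a $1$ to the next $n+1$) or of $a$ (from an $n+1$ to the next $1$). Since $\ab < 2$ forces $a > 2(a-b)$, a window of length $2(a-b)$ holds at most two nonzero letters, and two can occur only at distance $a-b$, which identifies the left one as the letter $1$ and fixes the residue. A factor with a single nonzero letter is located by the lengths of its two flanking zero runs, which place that letter in exactly one of the two residue classes. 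Hence every factor containing a nonzero letter is located.

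The one remaining factor is the all-zeros word $0^{2(a-b)}$, and this is where I expect the main obstacle to lie. Because $\ab<2$ gives $2(a-b)\leq a-1$, this word fits inside the leading run $0^{a-1}$, and when $2b-a\geq 2$ it genuinely occurs at several residues, so the locating argument does not apply to an all-zeros border. I would follow the proof of Theorem~\ref{2 2a-b} and treat such a border separately: for an $\ab$-power $xyx$ with $x=0^{m(a-b)}$, the fact that the longest zero factor of $\varphi(w)$ is $0^{a-1}$ forces $m$ to be small and $y$ to contain exactly one nonzero letter, and the bound $\ab\geq\tfrac{5}{3}$ then yields a contradiction. The locating property is thus used only for borders containing a nonzero letter; combined with the separate all-zeros analysis, this gives the conclusion of Lemma~\ref{big x} — that an $\ab$-power in $\varphi(w)$ with $|x|\geq 2(a-b)$ lifts to one in $w$ — for every border.

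For (ii) I would bound the factor length and finish. Writing $2a-2b = c\,a-d\,b$ with $c=d=2$ and taking $\Imin=\tfrac{5}{3}$, Proposition~\ref{big m} gives $\mmax = \lceil\frac{2\cdot 5/3 - 2}{5/3 - 1}\rceil - 1 = 1$, so it suffices to show that $\varphi(w)$ contains no $\ab$-power of length $a$. Lemma~\ref{short words are a/b-power-free}, applied with $s=2$, $t=1$, and $a_\textnormal{min}=9$ (for which $\mmax \leq (s - t/\Imin)(a_\textnormal{min}-2) = \tfrac{49}{5}$ holds), shows that any length-$a$ factor of $\varphi(w)$ is the image of a word too short to contain an $\ab$-power, so the restriction to $\ab$-power-free $w$ may be dropped. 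It then remains to slide a window of length $a$ through $\varphi(n)\varphi(n)\cdots$, write each factor as $xyz$ with $|x|=|z|=a-b$ and $|y|=2b-a$, and verify $x\neq z$ — exactly the length-$a$ table in the proof of Theorem~\ref{2 2a-b}, where the nonzero letter $n+1$ forces $x\neq z$ in every row. Finally, since the interval is closed at $\tfrac{5}{3}$, I would dispatch the endpoint $\ab=\tfrac{5}{3}$ by the same factor computation with explicit integer run lengths.
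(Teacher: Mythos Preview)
Your argument for $\ab$-power-freeness is correct and is essentially the paper's own: the automated verification of Theorem~\ref{2 2a-b power-free} instantiates the Section~\ref{power-free morphisms} framework, and the underlying case analysis coincides with the manual proof already given for Theorem~\ref{2 2a-b}, which you reproduce (including the separate treatment of all-zero borders and the $m=1$ table).

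You have, in fact, caught a genuine misstatement. The assertion that $\varphi$ \emph{locates words of length $2a-2b$} is false in general: whenever $2b-a\ge 2$ the factor $0^{2(a-b)}$ sits inside the run $0^{a-1}$ at $2b-a$ distinct residues modulo $k$. Concretely, for $\ab=\tfrac{12}{7}$ one has $k=17$ and $0^{10}$ occurs at both residues $0$ and $1$. Only the rationals with $a=2b-1$ (namely $\tfrac{5}{3},\tfrac{9}{5},\tfrac{13}{7},\ldots$) escape this. Your remedy is exactly the right one and matches the manual proof of Theorem~\ref{2 2a-b}: the locating property \emph{does} hold for every length-$(2a-2b)$ factor containing a nonzero letter (your flanking-run argument is correct, since $p+q=2(a-b)-1$ forces $\max(p,q)\ge a-b$, pinning the residue class of the nonzero letter), and the residual border $x=0^{m(a-b)}$ is disposed of directly. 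Hence the conclusion of Lemma~\ref{big x} survives, Proposition~\ref{big m} gives $\mmax=1$, and the $\ab$-power-freeness stands. The ``locates'' clause in the theorem would need a qualifier (e.g., restricting to nonzero words) to be literally correct, but this does not affect the use made of it.
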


\begin{theorem}\label{3 a power-free}
Let $a, b$ be relatively prime positive integers such that $\frac{5}{4} < \ab < \frac{4}{3}$.
Then the $a$-uniform morphism
\[
	\varphi(n) = 0^{5 a - 6 b - 1} \, 1 \, 0^{-2 a + 3 b - 1} \, 1 \, 0^{-2 a + 3 b - 1} \, (n+2),
\]
with $3$ nonzero letters, locates words of length $3 a - 3 b$ and is $\ab$-power-free.
\end{theorem}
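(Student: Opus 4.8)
The plan is to apply the automated scheme of Section~\ref{power-free morphisms}, exactly as previewed for Theorem~\ref{2 2a-b power-free}. First I record the relevant parameters. The morphism is $a$-uniform, so $k = a$; writing $k = sa - tb$ gives $s = 1$ and $t = 0$. Because $\gcd(a,b) = 1$, the condition $\gcd(b,k) = \gcd(b,a) = 1$ required by Lemma~\ref{big x} holds automatically, which is why the theorem carries no extra $\gcd$ hypothesis. Moreover $\varphi(n)$ depends on $n$ only in its final letter $n+2$, so $\varphi(n)$ and $\varphi(n')$ differ in at most one position, satisfying the second hypothesis of Lemma~\ref{big x}. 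The first step is then to establish that $\varphi$ locates words of length $\ell = 3(a-b)$. Lemma~\ref{locating length} only guarantees locating length $k = a$, but on $\frac{5}{4} < \ab < \frac{4}{3}$ one has $3(a-b) < b < a$, so $\ell = 3(a-b)$ is a genuine improvement; I would obtain it via the procedure of Section~\ref{determining a locating length}, generating all symbolic length-$\ell$ factors of $\varphi(n)\varphi(n)\cdots$ and checking that two occurrences of any one of them must begin at positions congruent modulo $a$.

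Next I reduce $\ab$-power-freeness to a finite check. An $\ab$-power $(xy)^{a/b} = xyx$ of length $ma$ has $|x| = m(a-b)$, so by Lemma~\ref{big x} any such power in $\varphi(w)$ with $|x| \geq \ell = 3(a-b)$, i.e.\ with $m \geq 3$, already forces an $\ab$-power in $w$. Equivalently, Proposition~\ref{big m} with $c = d = 3$ and $\Imin = \frac{5}{4}$ yields the constant ratio $\frac{ca - db}{a-b} = 3$ and hence $\mmax = \lceil 3 \rceil - 1 = 2$. It therefore remains only to rule out $\ab$-powers of length $a$ and $2a$. To discharge these for arbitrary (rather than merely $\ab$-power-free) words $w$, I verify the hypotheses of Lemma~\ref{short words are a/b-power-free}: here $s = 1$, $t = 0$, and the smallest numerator of a reduced fraction in $(\frac{5}{4}, \frac{4}{3})$ is $a_\textnormal{min} = 9$, attained by $\frac{9}{7}$, so $\left(s - \frac{t}{\Imin}\right)(a_\textnormal{min} - 2) = 7 \geq 2 = \mmax$. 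Thus every factor of length at most $2a$ lies inside $\varphi(v)$ for a word $v$ too short to contain an $\ab$-power, and we may list factors over all words $w$.

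The substance of the proof is then the case analysis of Sections~\ref{listing factors} and \ref{testing inequality}. For each $m \in \{1, 2\}$ I would slide a length-$ma$ window through the periodic word $\varphi(n)\varphi(n)\cdots$, record every factor as a run-length-encoded word parameterized by a position offset $i$, and split it as $xyz$ with $|x| = |z| = m(a-b)$ and $|y| = m(2b-a)$. Computing the offset ranges forces comparisons of linear forms in $a, b, 1$ over $(\frac{5}{4}, \frac{4}{3})$, so the interval is subdivided at the rationals where two such forms cross, and each resulting open subinterval—together with its endpoints that are reduced fractions lying in the interval—must be treated. For every factor I then certify $x \neq z$ using the inequality criteria: matching first or last letters, stripping common prefixes or suffixes, deleting explicit zeros, and, when deleting zeros leaves ambiguous words such as $1\,(n+1)$ versus $(n+1)\,1$, showing that the corresponding system of run-length equations has no solution on the subinterval. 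Since $x \neq z$ means $xyz$ is not an $\ab$-power, this completes the argument.

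The main obstacle is the $m = 2$ case: with three nonzero letters in $\varphi(n)$, a length-$2a$ window can straddle several blocks, producing many more factor types and finer interval subdivisions than the two-nonzero-letter morphism of Theorem~\ref{2 2a-b}. Establishing $x \neq z$ uniformly across all of these symbolic factors—particularly the borderline cases where a naive letter-by-letter comparison fails and one must fall back on the no-solution test for the block-length equations—is where essentially all the computational effort resides, and is exactly the part the \textsc{SymbolicWords} package is designed to automate.
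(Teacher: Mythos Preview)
Your proposal is correct and follows exactly the automated scheme of Section~\ref{power-free morphisms} that the paper uses for all thirty theorems in Section~\ref{symbolic power-free morphisms}, including this one. Your parameter identifications ($s=1$, $t=0$, $c=d=3$, $a_\textnormal{min}=9$, $\mmax=2$) are all correct, and the paper confirms that the resulting symbolic case analysis for $m\in\{1,2\}$ is discharged by the \textsc{SymbolicWords} package in about one second.
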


We have two morphisms with $4$ nonzero letters.

\begin{theorem}\label{4 5a-4b power-free}
Let $a, b$ be relatively prime positive integers such that $\frac{3}{2} < \ab < \frac{5}{3}$ and $\gcd(b,5) = 1$.
Then the $(5a-4b)$-uniform morphism
\[
	\varphi(n) = 0^{a-1} \, 1 \, 0^{a-b-1} \, 1 \, 0^{2 a-2 b-1} \, 1 \, 0^{a-b-1} \, (n+1),
\]
with $4$ nonzero letters, locates words of length $5 a - 5 b$ and is $\ab$-power-free.
\end{theorem}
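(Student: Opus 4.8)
The plan is to run the automated scheme of Section~\ref{power-free morphisms}, following the worked example after Lemma~\ref{short words are a/b-power-free}. First I would record the morphism's parameters. Summing run lengths,
\[
	|\varphi(n)| = (a-1) + 1 + (a-b-1) + 1 + (2a-2b-1) + 1 + (a-b-1) + 1 = 5a - 4b,
\]
so $\varphi$ is $(5a-4b)$-uniform and $k = 5a-4b$, placing us in the framework $k = s a - t b$ with $s = 5$, $t = 4$. Because $\gcd(a,b) = 1$, we have $\gcd(b,k) = \gcd(b,5a) = \gcd(b,5)$, so the stated condition $\gcd(b,5) = 1$ is precisely the hypothesis $\gcd(b,k) = 1$ of Lemma~\ref{big x}. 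Moreover $\varphi(n) = u\,(n+1)$ with $u$ independent of $n$, so any two images differ only in their last letter, verifying the second bullet of Lemma~\ref{big x}.

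Next I would prove that $\varphi$ locates words of length $\ell = 5a - 5b = 5(a-b)$, as in Section~\ref{determining a locating length}. Writing $\ell = c a - d b$ with $c = d = 5$, I would apply the window-sliding procedure of Section~\ref{listing factors} to the periodic word $\varphi(n)\varphi(n)\cdots$ to enumerate every symbolic length-$\ell$ factor, parameterized by interval variables. As noted in that section, minima of run-length expressions such as $\min(a-b-1, -4a+7b)$ force a subdivision of $(\tfrac{3}{2}, \tfrac{5}{3})$ at $\ab = \tfrac{8}{5}$; I would subdivide until each factor has a single closed form on each open subinterval. Using the inequality tests of Section~\ref{testing inequality}, I would then check that no nonzero length-$\ell$ factor recurs at two positions inequivalent modulo $k$, which establishes the locating claim. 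The split point $\tfrac{8}{5}$ needs no separate endpoint check, since $\gcd(5,5) \neq 1$ excludes it.

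With locating in hand, Proposition~\ref{big m} applies with $\Imin = \tfrac{3}{2}$ and $\ell = c a - d b$, $c = d = 5$, yielding
\[
	\mmax = \left\lceil \frac{c\,\Imin - d}{\Imin - 1} \right\rceil - 1 = \left\lceil \frac{5\cdot\frac{3}{2} - 5}{\frac{3}{2} - 1} \right\rceil - 1 = \lceil 5 \rceil - 1 = 4.
\]
To justify quantifying over \emph{all} words rather than only $\ab$-power-free ones, I would invoke Lemma~\ref{short words are a/b-power-free} with $\Imin = \tfrac{3}{2}$, $\Imax = \tfrac{5}{3}$, and the smallest admissible numerator $a_\textnormal{min} = 11$ (realized by $\tfrac{11}{7}$, since $\tfrac{8}{5}$ is barred by $\gcd(b,5)=1$); its hypothesis holds because $\mmax = 4 \leq (s - t/\Imin)(a_\textnormal{min} - 2) = (5 - \tfrac{8}{3})\cdot 9 = 21$. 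It then remains to show, for each $m \in \{1,2,3,4\}$, that $\varphi(n)\varphi(n)\cdots$ has no length-$m a$ factor $xyz$ with $|x| = |z| = m(a-b)$, $|y| = m(2b-a)$, and $x = z$; equivalently, to verify $x \neq z$ for each symbolic factor, on every subinterval.

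The hard part will be this final verification for $m = 4$: both the number of symbolic factors and the number of subinterval splits grow with $m$, and, as the example in Section~\ref{testing inequality} shows, deleting $0$-runs can collapse a pair $x,z$ to forms like $1\,(n+1)$ versus $(n+1)\,1$ that coincide at $n = 0$. In those cases I would instead show that the system of equalities among the deleted zero-run lengths is infeasible on the relevant subinterval. Marshalling all these inequality certificates is exactly what the \textsc{SymbolicWords} package automates, and for a morphism with only four nonzero letters the computation is light enough to finish in about a second.
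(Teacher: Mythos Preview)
Your proposal is correct and follows exactly the automated scheme the paper uses for Theorem~\ref{4 5a-4b power-free}: the paper proves all thirty theorems in Section~\ref{symbolic power-free morphisms}, including this one, by running the procedure of Sections~\ref{bounding the factor length}--\ref{determining a locating length} via the \textsc{SymbolicWords} package, and your parameter computations ($k=5a-4b$, $\ell=5a-5b$ with $c=d=5$, $\mmax=4$, $a_\textnormal{min}=11$, and the split at $\ab=\tfrac{8}{5}$ excluded by $\gcd(b,5)=1$) are all correct. The only cosmetic point is that the qualifier ``nonzero'' on length-$\ell$ factors is superfluous, since $0^{5(a-b)}$ cannot occur in $\varphi(n)\varphi(n)\cdots$ on this interval; otherwise your plan matches the paper's proof.
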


\begin{theorem}\label{4 a power-free}
Let $a, b$ be relatively prime positive integers such that $\frac{6}{5} < \ab < \frac{5}{4}$ and $\ab \notin \{\frac{11}{9}, \frac{17}{14}\}$.
Then the $a$-uniform morphism
\[
	\varphi(n) = 0^{6 a-7 b-1} \, 1 \, 0^{-3 a+4 b-1} \, 1 \, 0^{-8 a+10 b-1} \, 1 \, 0^{6 a-7 b-1} \, (n+1),
\]
with $4$ nonzero letters, locates words of length $a$ and is $\ab$-power-free.
\end{theorem}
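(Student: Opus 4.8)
This is an instance of the automated scheme of Section~\ref{power-free morphisms}, so the plan is to instantiate Proposition~\ref{big m} for the given morphism and then carry out the resulting finite symbolic case analysis. First I would record the elementary bookkeeping. The four run lengths $6a-7b-1$, $-3a+4b-1$, $-8a+10b-1$, $6a-7b-1$ are all non-negative on $(\frac65,\frac54)$ (for instance $-8a+10b-1\ge 0$ is the condition $\ab\le\frac54-\frac{1}{8b}$, which holds since $\ab<\frac54$), so $\varphi$ is well defined, and summing the blocks shows it is $a$-uniform. Hence $k=a$, i.e.\ $s=1$ and $t=0$ in the notation of Section~\ref{symbolic power-free morphisms}, and the divisibility hypothesis $\gcd(b,k)=1$ of Lemma~\ref{big x} is automatic from $\gcd(a,b)=1$; this is why no extra $\gcd$ condition appears in the statement.

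The first substantive step is to establish the locating length $\ell=a$. Since here $k=a$, this is exactly the conclusion of Lemma~\ref{locating length}, which applies as soon as $\varphi(n)=u\,(n+1)$ is never a nontrivial integer power. That holds because the gap pattern of the nonzero letters is aperiodic: a proper period would force the cyclic sequence of block lengths to repeat, which the generically distinct values $6a-7b-1$, $-3a+4b-1$, $-8a+10b-1$ prevent. With $\ell=a$ we take $c=1$, $d=0$, and with $\Imin=\frac65$ we compute
\[
	\mmax=\left\lceil\frac{c\cdot\Imin-d}{\Imin-1}\right\rceil-1=\left\lceil\frac{6/5}{1/5}\right\rceil-1=5 .
\]
To be permitted to test arbitrary words $w$ rather than only $\ab$-power-free ones, I would verify the hypothesis of Lemma~\ref{short words are a/b-power-free}: the smallest admissible numerator in $(\frac65,\frac54)$ is $a_\textnormal{min}=11$, and $(s-\tfrac{t}{\Imin})(a_\textnormal{min}-2)=1\cdot 9=9\ge 5=\mmax$, so every $w$ short enough to contribute a factor of length at most $5a$ is automatically $\ab$-power-free.

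It then remains, by Proposition~\ref{big m}, to show that for every word $w$ the image $\varphi(w)$ contains no $\ab$-power $(xy)^{a/b}=xyx$ of length $ma$ for $1\le m\le 5$, where $|x|=m\,(a-b)$ and $|y|=m\,(2b-a)$. For each such $m$ I would list all symbolic factors of length $ma$ of $\varphi(n_0)\varphi(n_1)\cdots$ via the queue procedure of Section~\ref{listing factors}, split $(\frac65,\frac54)$ into the finitely many subintervals on which the orderings of the run-length expressions are constant, write each factor as $xyz$ with the prescribed block lengths, and use the inequality criteria of Section~\ref{testing inequality} to certify $x\neq z$ on each open subinterval. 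The subinterval endpoints, whose run lengths are explicit integers, are checked separately. The two excluded rationals $\frac{11}{9}$ and $\frac{17}{14}$ are precisely the endpoints at which this final check fails, i.e.\ where some length-$ma$ factor is genuinely an $\ab$-power, so they must be removed.

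The main obstacle is the scale and delicacy of the symbolic case analysis rather than any single conceptual difficulty. With four nonzero letters and factor lengths up to $5a$, the queue procedure produces many parameterized factors and forces several interval splits as the differences $6a-7b-1$, $-3a+4b-1$, $-8a+10b-1$ change order; numerous resulting pairs $x,z$ are not separated by a first- or last-letter comparison and require the ``delete the zeros and show the associated system of block-length equations is unsolvable'' criterion. Ensuring that the inequality tests resolve every pair on every open subinterval, while correctly flagging exactly the two failing endpoints $\frac{11}{9}$ and $\frac{17}{14}$, is where essentially all the effort lies, and is precisely the part that the \textsc{SymbolicWords} package is designed to automate.
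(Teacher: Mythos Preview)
Your plan is correct and follows precisely the paper's automated scheme: the paper proves all of Theorems~\ref{2 2a-b power-free}--\ref{279 67a-30b power-free} by the procedure of Section~\ref{power-free morphisms}, and for this morphism the parameters are exactly as you compute ($k=a$, $\ell=a$, $\mmax=5$, $a_\textnormal{min}=11$, no $\gcd$ condition since $s=1$). One small point to tighten: your justification that $\varphi(n)=u\,(n+1)$ is never an integer power (``the cyclic sequence of block lengths\ldots generically distinct'') is loose---the relevant check is non-cyclic, and ``generically'' is not enough---but the claim is easily verified directly (for $n\ge 1$ the unique letter $n+1\ge 2$ rules out any power, and for $n=0$ a square would force simultaneously $6a-7b=-8a+10b$ and $-3a+4b=6a-7b$, which is impossible), so Lemma~\ref{locating length} does apply and the rest goes through.
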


For $6$ nonzero letters, there are two $(4a-2b)$-uniform morphisms that apply to the same set of rationals $\ab$, given in Theorems~\ref{6 4a-2b I power-free} and \ref{6 4a-2b II power-free}.
The morphisms are closely related; they are
\begin{align*}
	& \varphi(n) = u \, 1 \, v \, (n+1), \\
	& \varphi(n) = v \, 1 \, u \, (n+1)
\end{align*}
for two words $u, v$.
Let us use them to illustrate some details of the algorithm.
Proving that each of these morphisms is $\ab$-power-free takes the computer approximately $2$ minutes; the interval $\frac{4}{3} < \ab < 2$ is broken up into $21$ subintervals during the process of Section~\ref{listing factors}.
Some of the endpoints of these subintervals have even denominators; we need not check these for $\ab$-powers since we assume $\gcd(b,4) = 1$.
In checking the subinterval endpoints with odd denominators, we find that $\frac{5}{3}$ and $\frac{7}{5}$ do yield $\ab$-powers.
For example, for the morphism $\varphi$ in Theorem~\ref{6 4a-2b I power-free}, the word $\varphi(n_0) \varphi(n_1) \cdots$ contains $(001100 \cdot 110)^{5/3} = 001100 \cdot 110 \cdot 001(n_1+1)00$ and
\begin{multline*}
	(1001000100 \cdot (n_0+1)00010011001000)^{7/5} \\
	= 1001000100 \cdot (n_0+1)00010011001000 \cdot 100(n_1+1)000100
\end{multline*}
in the case $n_1 = 0$.
Therefore we must exclude $\frac{5}{3}$ and $\frac{7}{5}$ in the hypotheses of these theorems.
These exceptional residues are detected by the algorithm automatically and are not part of the input.

\begin{theorem}\label{6 4a-2b I power-free}
Let $a, b$ be relatively prime positive integers such that $\frac{4}{3} < \ab < 2$ and $\ab \notin \{\frac{5}{3}, \frac{7}{5}\}$ and $\gcd(b,4) = 1$.
Then the $(4a-2b)$-uniform morphism
\[
	\varphi(n) = 0^{2 a-2 b-1} \, 1 \, 0^{-a+2 b-1} \, 1 \, 0^{3 a-4 b-1} \, 1 \, 0^{-a+2 b-1} \, 1 \, 0^{2 a-2 b-1} \, 1 \, 0^{-a+2 b-1} \, (n+1),
\]
with $6$ nonzero letters, locates words of length $5 a - 4 b$ and is $\ab$-power-free.
\end{theorem}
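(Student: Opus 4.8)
The plan is to run the morphism through the general machinery of Section~\ref{power-free morphisms}, exactly as that machinery is illustrated on the morphism of Theorem~\ref{2 2a-b}. There are two assertions to establish: that $\varphi$ locates words of length $5a - 4b$, and that $\varphi$ is $\ab$-power-free. I would prove them in that order, since the locating property feeds into the power-freeness argument through Lemma~\ref{big x}. Throughout, note that $\varphi$ has the shape $\varphi(n) = u\,(n+1)$ with $u$ a word on $\Z_{\geq 0}$ of length $k - 1$, where $k = 4a - 2b$, so $s = 4$ and $t = 2$; since $\gcd(a,b) = 1$, the hypothesis $\gcd(b,4) = 1$ is equivalent to $\gcd(b,k) = 1$, as required by Lemma~\ref{big x}.

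First I would establish the locating length. Rather than invoke Lemma~\ref{locating length}, which would only yield $\ell = k$, I would run the procedure of Section~\ref{determining a locating length}: generate the candidate lengths $ca - db$ with $10 \geq c \geq d \geq 0$ satisfying the hypothesis of Lemma~\ref{short words are a/b-power-free}, sort them by the implied bound $\mmax$, and test each in turn. For $\ell = 5a - 4b$ I would compute all symbolic factors of $\varphi(n)\varphi(n)\cdots$ of that length via the queue-based sliding window of Section~\ref{listing factors}, breaking $\frac{4}{3} < \ab < 2$ into subintervals on which every run length keeps a fixed sign, and then verify with the criteria of Section~\ref{testing inequality} that any two symbolic factors occurring at positions incongruent modulo $k$ are unequal. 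This confirms that $\varphi$ locates words of length $5a - 4b$.

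Second I would deduce $\ab$-power-freeness. The remaining hypothesis of Lemma~\ref{big x} holds trivially, since $\varphi(n)$ and $\varphi(n')$ differ only in their last letter. With $\ell = 5a - 4b$, so $c = 5$ and $d = 4$, and $\Imin = \frac{4}{3}$, Proposition~\ref{big m} gives $\mmax = \lceil \frac{5 \cdot 4/3 - 4}{4/3 - 1}\rceil - 1 = 7$. I would then check the hypothesis of Lemma~\ref{short words are a/b-power-free} with $s = 4$, $t = 2$, $\Imin = \frac{4}{3}$, $\Imax = 2$: the smallest admissible numerator is $a_\textnormal{min} = 5$ (attained at $\frac{5}{3}$), and $\left(s - \frac{t}{\Imin}\right)(a_\textnormal{min} - 2) = \frac{5}{2}\cdot 3 = \frac{15}{2} \geq 7 = \mmax$, so any $w$ whose image can contain a relevant factor is automatically $\ab$-power-free. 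It therefore suffices to verify, for every word $w$, that $\varphi(w)$ contains no $\ab$-power of length $ma$ for $1 \leq m \leq 7$. For each such $m$ I would list all symbolic factors $xyz$ with $|x| = |z| = m(a-b)$ and $|y| = m(2b-a)$ as in Section~\ref{listing factors}, and verify $x \neq z$ over each subinterval, together with the admissible endpoints of odd denominator.

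The main obstacle is the symbolic case analysis for power-freeness rather than any single conceptual step. Two difficulties stand out. First, listing factors forces $\frac{4}{3} < \ab < 2$ to be split into many subintervals (twenty-one of them) whenever two run-length expressions cross, and these splits must be tracked consistently across all seven values of $m$. Second, the inequality tests of Section~\ref{testing inequality} form a heuristic battery rather than a complete decision procedure, so one must confirm that each pair $x, z$ that arises is dispatched by some criterion (prefix or suffix comparison, zero-deletion, or inconsistency of the resulting block-length system). The endpoint check is where the exclusions enter: at $\frac{5}{3}$ and $\frac{7}{5}$ the run lengths collapse so that genuine $\ab$-powers appear in $\varphi(n_0)\varphi(n_1)\cdots$, and these two rationals, and only these, must be removed from the hypotheses. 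Verifying that no other endpoint of odd denominator produces an $\ab$-power is the delicate part of the computation.
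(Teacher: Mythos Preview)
Your proposal is correct and follows essentially the same approach as the paper: the paper's proof of this theorem is precisely the automated verification via the machinery of Section~\ref{power-free morphisms}, and you have reproduced all the relevant parameters accurately (including $\mmax = 7$, the check of Lemma~\ref{short words are a/b-power-free} with $a_\textnormal{min} = 5$, the twenty-one subintervals, and the identification of $\tfrac{5}{3}$ and $\tfrac{7}{5}$ as the only odd-denominator endpoints producing $\ab$-powers). The only cosmetic quibble is that you call $\tfrac{5}{3}$ an ``admissible'' numerator even though it is excluded from the theorem's hypotheses; this is harmless, since Lemma~\ref{short words are a/b-power-free} defines $a_\textnormal{min}$ from the interval and $\gcd$ condition alone, independently of any exceptional set.
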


\begin{theorem}\label{6 4a-2b II power-free}
Let $a, b$ be relatively prime positive integers such that $\frac{4}{3} < \ab < 2$ and $\ab \notin \{\frac{5}{3}, \frac{7}{5}\}$ and $\gcd(b,4) = 1$.
Then the $(4a-2b)$-uniform morphism
\[
	\varphi(n) = 0^{-a+2 b-1} \, 1 \, 0^{2 a-2 b-1} \, 1 \, 0^{-a+2 b-1} \, 1 \, 0^{2 a-2 b-1} \, 1 \, 0^{-a+2 b-1} \, 1 \, 0^{3 a-4 b-1} \, (n+1),
\]
with $6$ nonzero letters, locates words of length $5 a - 4 b$ and is $\ab$-power-free.
\end{theorem}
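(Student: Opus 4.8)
The plan is to prove both assertions—that $\varphi$ locates words of length $5a-4b$ and that $\varphi$ is $\ab$-power-free—by running the automated scheme of this section, exactly as for the preceding theorems. The morphism has the form required by Lemma~\ref{big x}: it is $k$-uniform with $k = 4a-2b$, and $\varphi(n)$ and $\varphi(n')$ differ only in their final letter $n+1$ versus $n'+1$. Moreover $k = sa - tb$ with $s = 4$, so the hypothesis $\gcd(b,4) = 1$ is precisely the condition $\gcd(b,k) = 1$ used throughout. One might hope to deduce this theorem from Theorem~\ref{6 4a-2b I power-free}, since the two morphisms are $v\,1\,u\,(n+1)$ and $u\,1\,v\,(n+1)$; reversal does preserve $\ab$-power-freeness on $(1,2)$ and converts $\varphi$ into the morphism $n \mapsto R(\varphi(n))$, but because reversal moves the incrementing letter to the front and replaces $u$ by $R(u)$ (with $v$ a palindrome), it does not produce the morphism of Theorem~\ref{6 4a-2b I power-free}. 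So I would verify this case independently.

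First I would establish the locating length. Writing $\ell = 5a-4b = ca - db$ with $c = 5$ and $d = 4$, I would run the procedure of Section~\ref{determining a locating length}: compute, over subintervals of $\frac{4}{3} < \ab < 2$, all symbolic length-$\ell$ factors of the periodic word $\varphi(n)\varphi(n)\cdots$, and check with the inequality tests of Section~\ref{testing inequality} that any two factors beginning at positions distinct modulo $k$ are unequal. Since $0 < 5a-4b < 4a-2b = k$ on this interval, this length is genuinely smaller than the value $k$ guaranteed by Lemma~\ref{locating length}, which is what keeps the ensuing case analysis tractable.

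Next I would apply Proposition~\ref{big m} with $\Imin = \frac{4}{3}$. Here $\mmax = \lceil \frac{c \cdot \Imin - d}{\Imin - 1}\rceil - 1 = \lceil \frac{5 \cdot 4/3 - 4}{4/3 - 1}\rceil - 1 = \lceil 8 \rceil - 1 = 7$, so it suffices to show that $\varphi(w)$ contains no $\ab$-power of length $ma$ for $1 \le m \le 7$. Before testing factors I would invoke Lemma~\ref{short words are a/b-power-free} to avoid checking $\ab$-power-freeness of the preimages: with $s = 4$, $t = 2$, $\Imax = 2$, and $a_\textnormal{min} = 5$ (the least numerator admitting an odd denominator with $\frac{4}{3} < \ab < 2$, realized by $\frac{5}{3}$), we have $\left(s - \frac{t}{\Imin}\right)(a_\textnormal{min} - 2) = \frac{5}{2}\cdot 3 = \frac{15}{2} \ge 7 = \mmax$, so every relevant preimage is too short to contain an $\ab$-power and ``for every $\ab$-power-free $w$'' may be replaced by ``for every $w$.''

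The bulk of the work, and the main obstacle, is the symbolic case analysis carried out by \textsc{SymbolicWords} for each $m$ from $1$ to $7$: list all factors $xyz$ of $\varphi(n_0)\varphi(n_1)\cdots$ of length $ma$ with $|x| = |z| = m(a-b)$ and $|y| = m(2b-a)$, breaking $\frac{4}{3} < \ab < 2$ into the requisite subintervals (the text reports $21$), and verify $x \neq z$ for every factor on every subinterval, together with the subinterval endpoints having odd denominators. The delicate point is the inequality testing for symbolic run lengths: naive prefix/suffix comparison fails in the boundary case $n = 0$, where one must delete explicit $0$ blocks and then argue that the resulting system of block-length equations has no solution on the given subinterval. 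This step is also where the exceptional rationals surface: the endpoint checks at $\frac{5}{3}$ and $\frac{7}{5}$ produce genuine $\ab$-powers (the same ones exhibited for Theorem~\ref{6 4a-2b I power-free}), forcing their exclusion from the hypotheses. Combining the verified case analysis with Proposition~\ref{big m} then yields that $\varphi$ is $\ab$-power-free on the stated domain.
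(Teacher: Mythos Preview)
Your proposal is correct and follows the paper's approach exactly: the paper states that all thirty theorems in this section, including this one, are proved ``completely automatically'' by the scheme of Section~\ref{power-free morphisms}, and your write-up supplies the specific parameter values ($c=5$, $d=4$, $\mmax=7$, $a_\textnormal{min}=5$) that the paper leaves implicit. One small inaccuracy: the explicit $\ab$-powers the paper exhibits at $\tfrac{5}{3}$ and $\tfrac{7}{5}$ are for the morphism of Theorem~\ref{6 4a-2b I power-free}, not this one, so while the same rationals must be excluded here, the offending factors themselves will differ.
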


The following morphism was obtained by specializing Conjecture~\ref{4r power-free} and is not directly related to any words $\word_{a/b}$ as far as we know.

\begin{theorem}\label{8 5a-3b power-free}
Let $a, b$ be relatively prime positive integers such that $\frac{5}{4} < \ab < \frac{4}{3}$ and $\ab \neq \frac{9}{7}$ and $\gcd(b,5) = 1$.
Then the $(5a-3b)$-uniform morphism
\begin{align*}
	\varphi(n) = {}
	& 0^{5 a-6 b-1} \, 1 \, 0^{2 a-2 b-1} \, 1 \, 0^{a-b-1} \, 1 \, 0^{-2 a+3 b-1} \, 1 \\
	& 0^{3 a-3 b-1} \, 1 \, 0^{-6 a+8 b-1} \, 1 \, 0^{a-b-1} \, 1 \, 0^{a-b-1} \, (n+1),
\end{align*}
with $8$ nonzero letters, locates words of length $6 a - 6 b$ and is $\ab$-power-free.
\end{theorem}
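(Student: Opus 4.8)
The plan is to invoke the automated machinery of Section~\ref{bounding the factor length} in exactly the way illustrated by the worked example following Proposition~\ref{big m}. First I would verify the two structural hypotheses of Lemma~\ref{big x}. Since $\varphi(n)$ and $\varphi(n')$ agree except in their final letter, they differ in at most one position. Since $k = 5a - 3b$ and $\gcd(a,b) = 1$, we have $\gcd(b,k) = \gcd(b, 5a) = \gcd(b,5)$, so the divisibility hypothesis $\gcd(b,k) = 1$ is precisely the stated condition $\gcd(b,5) = 1$. It then remains to produce a locating length, which I would obtain by running the procedure of Section~\ref{determining a locating length}: generate the candidates $c a - d b$ with $10 \geq c \geq d \geq 0$, discard those violating Lemma~\ref{short words are a/b-power-free}, sort them by the bound $\mmax$ each would imply, and test them in increasing order using the factor-listing routine of Section~\ref{listing factors} together with the inequality tests of Section~\ref{testing inequality}. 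The claim to confirm is that $6a - 6b$ (that is, $c = d = 6$) is the first length that $\varphi$ provably locates.

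With $\ell = 6a - 6b$ in hand I would compute the bound of Proposition~\ref{big m}. Taking $\Imin = \tfrac{5}{4}$ and $c = d = 6$ gives
\[
	\mmax = \left\lceil \frac{6 \cdot \frac{5}{4} - 6}{\frac{5}{4} - 1} \right\rceil - 1 = \lceil 6 \rceil - 1 = 5.
\]
To justify replacing ``every $\ab$-power-free word $w$'' by ``every word $w$'', I would apply Lemma~\ref{short words are a/b-power-free} with $s = 5$, $t = 3$, $\Imin = \tfrac{5}{4}$, and $\Imax = \tfrac{4}{3}$. Here the smallest admissible numerator is $a_\textnormal{min} = 9$ (attained by $\tfrac{9}{7}$), and
\[
	\left(s - \frac{t}{\Imin}\right)(a_\textnormal{min} - 2) = \left(5 - \frac{12}{5}\right) \cdot 7 = \frac{91}{5} \geq 5 = \mmax,
\]
so the hypothesis holds and any factor of $\varphi(w)$ of length $m a$ with $m \leq \mmax$ lies inside $\varphi(v)$ for a word $v$ too short to contain an $\ab$-power. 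By Proposition~\ref{big m}, the theorem then reduces to the finite check that no factor of $\varphi(w)$ of length $m a$ is an $\ab$-power for $1 \leq m \leq 5$.

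I would carry out this check symbolically, as in the proof of Theorem~\ref{2 2a-b}. For each $m \in \{1,\dots,5\}$ I would slide a window of length $m a$ through the periodic word $\varphi(n)\varphi(n)\cdots$, renaming distinct occurrences of $n$ as separate symbols, and factor each resulting window as $xyz$ with $|x| = |z| = m(a-b)$ and $|y| = m(2b-a)$; an $\ab$-power requires $x = z$, so it suffices to prove $x \neq z$ for every window. Because the run-length expressions in $\varphi(n)$ (such as $5a - 6b$, $-2a + 3b$, $-6a + 8b$) change sign and relative order across $\tfrac{5}{4} < \ab < \tfrac{4}{3}$, this forces the interval to be split into finitely many open subintervals on each of which the windows have a fixed run-length encoding; the inequalities $x \neq z$ are then discharged by the sufficient criteria of Section~\ref{testing inequality} (comparing first and last letters, cancelling common affixes, and deleting explicit $0$s to compare the surviving nonzero blocks). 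The subinterval endpoints have explicit integer run lengths and must be checked separately; it is precisely here that $\ab = \tfrac{9}{7}$ yields a genuine $\ab$-power and so is excluded in the hypotheses, while endpoints with $\gcd(b,5) \neq 1$ fall outside the theorem and are skipped.

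The hard part will be the size and delicacy of this symbolic case analysis rather than any single conceptual step. With eight nonzero letters and a locating length of $6a - 6b$, both establishing the locating length and checking $m$ up to $5$ generate a large number of windows and a fine subdivision of $\left(\tfrac{5}{4}, \tfrac{4}{3}\right)$, and the inequality routine of Section~\ref{testing inequality} is only a collection of sufficient conditions, not a decision procedure; the genuine work is verifying that every pair $(x,z)$ that actually arises is covered by one of these conditions, splitting subintervals further where needed. The exceptional value $\tfrac{9}{7}$ is a useful warning that these criteria can legitimately fail, so a real $\ab$-power must be detected and excluded rather than explained away.
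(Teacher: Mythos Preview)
Your proposal is correct and follows exactly the automated procedure the paper uses for all thirty theorems in Section~\ref{symbolic power-free morphisms}: verify the structural hypotheses of Lemma~\ref{big x}, find a locating length via Section~\ref{determining a locating length}, compute $\mmax$ from Proposition~\ref{big m} and check Lemma~\ref{short words are a/b-power-free}, then carry out the finite symbolic window-sliding case analysis of Sections~\ref{listing factors}--\ref{testing inequality}, with the exceptional rational $\tfrac{9}{7}$ emerging as a subinterval endpoint where a genuine $\ab$-power is found. Your computations of $\mmax = 5$ and the Lemma~\ref{short words are a/b-power-free} bound $91/5$ are correct.
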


\begin{theorem}\label{10 11a-10b power-free}
Let $a, b$ be relatively prime positive integers such that $\frac{6}{5} < \ab < \frac{5}{4}$ and $\ab \notin \{\frac{11}{9}, \frac{16}{13}, \frac{17}{14}, \frac{23}{19}\}$ and $\gcd(b,11) = 1$.
Then the $(11a-10b)$-uniform morphism
\begin{align*}
	\varphi(n) = {}
	& 0^{-2 a+3 b-1} \, 1 \, 0^{3 a-3 b-1} \, 1 \, 0^{a-b-1} \, 1 \, 0^{-3 a+4 b-1} \, 1 \, 0^{5 a-6 b-1} \, 1 \\
	& 0^{a-b-1} \, 1 \, 0^{2 a-2 b-1} \, 1 \, 0^{a-b-1} \, 1 \, 0^{2 a-2 b-1} \, 1 \, 0^{a-b-1} \, (n+1),
\end{align*}
with $10$ nonzero letters, locates words of length $7 a - 7 b$ and is $\ab$-power-free.
\end{theorem}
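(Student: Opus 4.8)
The plan is to apply the general machinery of Section~\ref{power-free morphisms} verbatim, since this morphism has exactly the shape those tools are designed for. Set $k = 11a - 10b$, so that $s = 11$ and $t = 10$, with the interval bounded below by $\Imin = \frac{6}{5}$ and above by $\Imax = \frac{5}{4}$. The two hypotheses of Lemma~\ref{big x} must be checked at the outset: first, the images $\varphi(n)$ and $\varphi(n')$ agree except in their final letter, so they differ in at most one position; second, $\gcd(b, k) = \gcd(b, 11a - 10b) = \gcd(b, 11) = 1$ by the assumption $\gcd(b, 11) = 1$ together with $\gcd(a, b) = 1$. Thus once a suitable locating length is found, Lemma~\ref{big x} will dispose of all sufficiently long $\ab$-powers.

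First I would establish that $\varphi$ locates words of length $\ell = 7a - 7b = 7(a - b)$, which is the claim with $c = d = 7$. This is carried out by the procedure of Section~\ref{determining a locating length}: compute all symbolic factors of $\varphi(n)\varphi(n)\cdots$ of length $7a - 7b$, breaking the interval $\frac{6}{5} < \ab < \frac{5}{4}$ into subintervals wherever a run length changes sign or two run lengths cross (as in Section~\ref{listing factors}), and then verify with the inequality criteria of Section~\ref{testing inequality} that no two distinct symbolic factors coincide for any admissible parameter values. Granting this, Proposition~\ref{big m} reduces the whole statement to checking factors of bounded length: with $\Imin = \frac{6}{5}$ and $c = d = 7$ one computes $\mmax = \lceil \frac{7 \cdot (6/5) - 7}{(6/5) - 1} \rceil - 1 = \lceil 7 \rceil - 1 = 6$, so it suffices to rule out $\ab$-powers of length $m a$ for $1 \le m \le 6$.

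Next I would discharge the bookkeeping that lets us ignore $\ab$-power-freeness of the preimage. Applying Lemma~\ref{short words are a/b-power-free} with $s = 11$, $t = 10$, $\mmax = 6$, $\Imin = \frac{6}{5}$, and $\Imax = \frac{5}{4}$: the smallest numerator of a rational in $(\frac{6}{5}, \frac{5}{4})$ whose denominator is coprime to $11$ is $a_\textnormal{min} = 11$ (realized by $\frac{11}{9}$), and $(s - \frac{t}{\Imin})(a_\textnormal{min} - 2) = (11 - \frac{50}{6}) \cdot 9 = 24 \ge 6 = \mmax$, so the hypothesis holds. Consequently every window of length $m a$ with $m \le 6$ lies inside $\varphi(v)$ for a word $v$ too short to contain any $\ab$-power, and in Step~\ref{step2} we may quantify over all words $w$ rather than only $\ab$-power-free ones. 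The remaining work is then purely finite: for each $m$ from $1$ to $6$, list every symbolic factor of length $m a$, split it as $xyz$ with $|x| = |z| = m(a - b)$ and $|y| = m(2b - a)$, and confirm $x \ne z$ over every subinterval and at each admissible endpoint.

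The main obstacle is the sheer size of this symbolic case analysis rather than any conceptual difficulty. With $10$ nonzero letters the run-length expressions are intricate, the interval fragments into many subintervals during factor generation, and the inequality tests of Section~\ref{testing inequality}---deleting common affixes, comparing first or last letters, deleting explicit zeros and checking unsatisfiability of the resulting block-length systems---must succeed on every generated pair. Part of this step is the automatic detection of the exceptional rationals $\frac{11}{9}, \frac{16}{13}, \frac{17}{14}, \frac{23}{19}$: at precisely these values the generic inequality argument fails because a genuine $\ab$-power appears among the factors, which is why they are excluded in the hypothesis. I expect the whole verification to be feasible only by the \textsc{SymbolicWords} computation, with correctness resting on the completeness of the inequality criteria across all subintervals.
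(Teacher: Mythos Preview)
Your proposal is correct and follows exactly the paper's approach: this theorem is one of the thirty in Section~\ref{symbolic power-free morphisms} that are proved ``completely automatically'' by the machinery of Section~\ref{power-free morphisms}, and you have accurately reconstructed the relevant parameters ($c=d=7$, $\mmax=6$, $a_{\min}=11$, and the check $(11-\tfrac{50}{6})\cdot 9 = 24 \ge 6$ for Lemma~\ref{short words are a/b-power-free}). The exceptional rationals $\tfrac{11}{9},\tfrac{16}{13},\tfrac{17}{14},\tfrac{23}{19}$ are indeed found automatically as subinterval endpoints at which genuine $\ab$-powers appear, just as you describe.
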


\begin{theorem}\label{12 7a-5b power-free}
Let $a, b$ be relatively prime positive integers such that $\frac{7}{6} < \ab < \frac{6}{5}$ and $\ab \neq \frac{13}{11}$ and $\gcd(b,7) = 1$.
Then the $(7a-5b)$-uniform morphism
\begin{align*}
	\varphi(n) = {}
	& 0^{7 a-8 b-1} \, 1 \, 0^{-4 a+5 b-1} \, 1 \, 0^{6 a-7 b-1} \, 1 \, 0^{2 a-2 b-1} \, 1 \, 0^{a-b-1} \, 1 \, 0^{-4 a+5 b-1} \, 1 \\
	& 0^{2 a-2 b-1} \, 1 \, 0^{3 a-3 b-1} \, 1 \, 0^{-4 a+5 b-1} \, 1 \, 0^{-4 a+5 b-1} \, 1 \, 0^{a-b-1} \, 1 \, 0^{a-b-1} \, (n+1),
\end{align*}
with $12$ nonzero letters, locates words of length $5 a - 3 b$ and is $\ab$-power-free.
\end{theorem}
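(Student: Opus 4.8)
The plan is to apply the automated scheme of Section~\ref{power-free morphisms} verbatim, since this morphism has exactly the form that machinery requires. Writing $k = 7a - 5b$, I first record the structural hypotheses needed by Lemma~\ref{big x} and Proposition~\ref{big m}. Because $\gcd(a,b) = 1$, we have $\gcd(b,k) = \gcd(b, 7a - 5b) = \gcd(b, 7a) = \gcd(b, 7) = 1$, so the hypothesis $\gcd(b,k) = 1$ is precisely the stated condition $\gcd(b,7) = 1$. Any two images $\varphi(n)$ and $\varphi(n')$ agree except in their last letter, so they differ in at most one position, as Lemma~\ref{big x} demands. Finally one checks that each run-length exponent ($7a-8b-1$, $-4a+5b-1$, $6a-7b-1$, $2a-2b-1$, $a-b-1$, $3a-3b-1$) is nonnegative throughout $\frac{7}{6} < \ab < \frac{6}{5}$; each is linear in $\ab$ and nonnegative on the whole open interval (for instance $6a-7b-1 \geq 0$ because $\ab > \frac{7}{6}$ forces $6a - 7b \geq 1$), and summing the twelve run lengths together with the twelve nonzero letters recovers the total length $7a - 5b = k$.

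Next I would pin down the two quantities that drive the case analysis. Using the procedure of Section~\ref{determining a locating length}, I verify that $\varphi$ locates words of length $\ell = 5a - 3b$ (so $c = 5$ and $d = 3$); this verification itself runs the factor enumeration of Section~\ref{listing factors} together with the inequality tests of Section~\ref{testing inequality}. With $\Imin = \frac{7}{6}$, Proposition~\ref{big m} then gives
\[
	\mmax = \left\lceil \frac{c \cdot \Imin - d}{\Imin - 1} \right\rceil - 1 = \left\lceil \frac{5 \cdot \frac{7}{6} - 3}{\frac{7}{6} - 1} \right\rceil - 1 = \lceil 17 \rceil - 1 = 16.
\]
To justify replacing ``every $\ab$-power-free word $w$'' by ``every word $w$'' in Proposition~\ref{big m}, I invoke Lemma~\ref{short words are a/b-power-free} with $s = 7$ and $t = 5$. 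The smallest numerator of a rational in $\left(\frac{7}{6}, \frac{6}{5}\right)$ whose denominator is coprime to $7$ is $a_\textnormal{min} = 13$ (attained by $\frac{13}{11}$), and one checks $\mmax = 16 \leq \left(s - \frac{t}{\Imin}\right)(a_\textnormal{min} - 2) = \frac{19}{7}\cdot 11 = \frac{209}{7}$, so the lemma applies and the relevant short preimages are automatically $\ab$-power-free.

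The core of the proof is then the finite check: for each $m$ with $1 \leq m \leq 16$, enumerate all symbolic factors of $\varphi(n)\varphi(n)\cdots$ of length $ma$ by the windowing/queue method of Section~\ref{listing factors}, splitting $\left(\frac{7}{6}, \frac{6}{5}\right)$ into finitely many subintervals wherever a run-length minimum changes value. Each factor is written as $xyz$ with $|x| = |z| = m(a-b)$ and $|y| = m(2b-a)$, and one verifies $x \neq z$ using the criteria of Section~\ref{testing inequality} (prefix/suffix cancellation, deletion of explicit zeros, and showing the associated systems of run-length equations have no solution on the subinterval). The open-subinterval endpoints with denominators coprime to $7$ are checked separately, and it is exactly here that $\ab = \frac{13}{11}$ is found to yield a factor with $x = z$, i.e.\ a genuine $\ab$-power; this is the reason that value is excluded from the hypotheses.

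I expect the main obstacle to be twofold. Computationally, with $12$ nonzero letters and factors up to length $16a$, the number of symbolic factors and of subintervals is large, so the run time is substantial (comparable longer morphisms in this section require hours). Conceptually, the inequality tests of Section~\ref{testing inequality} form a sufficient but not complete toolkit, so there is no a priori guarantee that every required inequality $x \neq z$ yields to one of the implemented criteria; the genuine content of the proof is that, on each subinterval and after excluding $\frac{13}{11}$, they do in fact suffice.
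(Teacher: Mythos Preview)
Your proposal is correct and follows essentially the same approach as the paper: this theorem is one of the thirty in Section~\ref{symbolic power-free morphisms} that the paper proves entirely by running the automated scheme (Lemma~\ref{big x}, Proposition~\ref{big m}, Lemma~\ref{short words are a/b-power-free}, plus the symbolic factor enumeration and inequality tests), and you have correctly instantiated all the parameters, including $\mmax = 16$, $a_\textnormal{min} = 13$, and the verification that Lemma~\ref{short words are a/b-power-free} applies. Your remark that the exclusion $\ab = \frac{13}{11}$ emerges from the subinterval-endpoint check is also consistent with how the paper says such exceptional rationals are detected.
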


\begin{theorem}\label{13 7a-4b power-free}
Let $a, b$ be relatively prime positive integers such that $\frac{6}{5} < \ab < \frac{5}{4}$ and $\gcd(b,7) = 1$.
Then the $(7a-4b)$-uniform morphism
\begin{align*}
	\varphi(n) = {}
	& 0^{a-b-1} \, 1 \, 0^{-3 a+4 b-1} \, 1 \, 0^{4 a-4 b-1} \, 1 \, 0^{-3 a+4 b-1} \, 1 \, 0^{5 a-6 b-1} \, 1 \, 0^{a-b-1} \, 1 \, 0^{a-b-1} \, 1 \\
	& 0^{-3 a+4 b-1} \, 1 \, 0^{2 a-2 b-1} \, 2 \, 0^{2 a-2 b-1} \, 1 \, 0^{-3 a+4 b-1} \, 1 \, 0^{2 a-2 b-1} \, 1 \, 0^{a-b-1} \, (n+2),
\end{align*}
with $13$ nonzero letters, locates words of length $5 a - 5 b$ and is $\ab$-power-free.
\end{theorem}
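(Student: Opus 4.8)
The plan is to apply the automated scheme of Section~\ref{power-free morphisms} verbatim, since this morphism is exactly of the form that scheme handles: it is $k$-uniform with $k = 7a - 4b$ (so $s = 7$, $t = 4$), its increment is the constant $d = 2$, and any two images $\varphi(n)$, $\varphi(n')$ differ only in their final letter. Because $\gcd(a,b) = 1$, we have $\gcd(b, k) = \gcd(b, 7a - 4b) = \gcd(b, 7a) = \gcd(b, 7) = 1$, so the divisibility hypothesis holds; together with the fact that images differ in at most one position, the structural hypotheses of Lemma~\ref{big x} (shared by Proposition~\ref{big m}) are met. Throughout I would take $\Imin = \frac{6}{5}$ and $\Imax = \frac{5}{4}$.

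The first deliverable is the locating length $\ell = 5a - 5b$. Since $5a - 5b < 7a - 4b = k$, this is strictly smaller than the length guaranteed by Lemma~\ref{locating length}, so rather than invoke that lemma I would run the procedure of Section~\ref{determining a locating length}: list every symbolic factor of $\varphi(n)\varphi(n)\cdots$ of length $5a - 5b$ via the queue construction of Section~\ref{listing factors}, splitting $\left(\frac{6}{5}, \frac{5}{4}\right)$ into subintervals as forced by the run-length expressions, and then verify with the criteria of Section~\ref{testing inequality} that two such factors beginning at positions incongruent modulo $k$ are never equal.

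Next I would bound the factor lengths that must be examined. Writing $\ell = c a - d b$ with $c = d = 5$, Proposition~\ref{big m} gives $\mmax = \left\lceil \frac{c \Imin - d}{\Imin - 1} \right\rceil - 1 = \left\lceil \frac{5 \cdot 6/5 - 5}{6/5 - 1} \right\rceil - 1 = \lceil 5 \rceil - 1 = 4$. To replace ``every $\ab$-power-free word $w$'' by ``every word $w$'' in Step~\eqref{step2}, I would verify the hypothesis of Lemma~\ref{short words are a/b-power-free}: the least admissible numerator is $a_\textnormal{min} = 11$ (realized by $\frac{11}{9}$, since for $a \leq 10$ there is no $b$ with $\gcd(b,7) = 1$ and $\frac{6}{5} < \ab < \frac{5}{4}$), and then $\left(s - \frac{t}{\Imin}\right)(a_\textnormal{min} - 2) = \left(7 - \frac{10}{3}\right) \cdot 9 = 33 \geq 4 = \mmax$, as required.

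The substance of the proof is then the verification, for each $m \in \{1, 2, 3, 4\}$, that no length-$m a$ factor of $\varphi(n)\varphi(n)\cdots$ is an $\ab$-power. I would generate each such factor as $x y z$ with $|x| = |z| = m(a - b)$ and $|y| = m(2b - a)$, and confirm $x \neq z$ for all parameter values using the inequality criteria of Section~\ref{testing inequality}; Proposition~\ref{big m} then delivers $\ab$-power-freeness. I expect the main obstacle to be precisely this symbolic case analysis at $m = 4$: the morphism has $13$ nonzero letters, so the factors carry intricate run-length encodings, the interval breaks into many subintervals, and---because the fixed letter $2$ in the image coincides with the variable letter $n + 2$ when $n = 0$---the elementary prefix/suffix and ``delete the zeros'' tests can be inconclusive, exactly as in the $1\,(n+1)$ versus $(n+1)\,1$ example of Section~\ref{testing inequality}. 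Such pairs would have to be separated by the finer criterion that the system of equalities among the corresponding deleted run lengths is unsatisfiable on the subinterval at hand. Finally, since the subintervals are open I would separately test each endpoint with $\gcd(b,7) = 1$; the absence of excluded rationals in the hypotheses (in contrast to Theorems~\ref{4 a power-free} and \ref{10 11a-10b power-free} on the same interval) is precisely the assertion that none of these endpoints yields an $\ab$-power.
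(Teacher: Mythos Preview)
Your proposal is correct and follows exactly the paper's approach: this theorem is one of the thirty symbolic morphisms in Section~\ref{symbolic power-free morphisms} for which, as the paper states, ``$\ab$-power-freeness is proved completely automatically'' by the scheme of Section~\ref{power-free morphisms}, and you have accurately instantiated that scheme with the correct parameters ($s=7$, $t=4$, $\ell=5a-5b$, $c=d=5$, $\mmax=4$, $a_\textnormal{min}=11$). Your anticipated obstacles---interval splitting, the $n=0$ coincidence between the fixed $2$ and $n+2$, and the endpoint checks---are precisely the issues the paper's algorithm is designed to handle.
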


\begin{theorem}\label{14 8a-4b power-free}
Let $a, b$ be relatively prime positive integers such that $\frac{9}{7} \leq \ab < \frac{4}{3}$ and $\gcd(b,8) = 1$.
Then the $(8a-4b)$-uniform morphism
\begin{align*}
	\varphi(n) = {}
	& 0^{2 a-2 b-1} \, 1 \, 0^{a-b-1} \, 1 \, 0^{-2 a+3 b-1} \, 1 \, 0^{a-b-1} \, 2 \, 0^{2 a-2 b-1} \, 1 \, 0^{-a+2 b-1} \, 1 \, 0^{a-b-1} \, 2 \\
	& 0^{a-b-1} \, 1 \, 0^{a-b-1} \, 1 \, 0^{b-1} \, 2 \, 0^{a-b-1} \, 1 \, 0^{a-b-1} \, 1 \, 0^{a-b-1} \, 1 \, 0^{-a+2 b-1} \, (n+2),
\end{align*}
with $14$ nonzero letters, locates words of length $a$ and is $\ab$-power-free.
\end{theorem}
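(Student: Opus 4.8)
The plan is to apply the automated scheme of Section~\ref{bounding the factor length} essentially verbatim, since this morphism has exactly the form to which Lemma~\ref{big x}, Proposition~\ref{big m}, and Lemma~\ref{short words are a/b-power-free} apply. Here $k = 8a - 4b$, so in the notation of those results $s = 8$ and $t = 4$; the hypothesis $\gcd(b,8) = 1$ is precisely $\gcd(b,s) = 1$, which together with $\gcd(a,b) = 1$ gives $\gcd(b,k) = 1$ as required by Lemma~\ref{big x}. The assertion to certify is that $\varphi$ locates words of length $\ell = a$, i.e.\ $c = 1$ and $d = 0$. The second bullet of Lemma~\ref{big x} is immediate: $\varphi(n)$ and $\varphi(n')$ differ only in the final letter $n+2$, hence in at most one position.

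First I would establish the locating length. Following Section~\ref{determining a locating length}, I would run the factor-listing procedure of Section~\ref{listing factors} on the periodic word $\varphi(n)\varphi(n)\cdots$ to produce all symbolic length-$a$ factors, breaking the interval $\frac{9}{7} < \ab < \frac{4}{3}$ into finitely many subintervals on which each run length $2a-2b-1$, $a-b-1$, $-2a+3b-1$, $-a+2b-1$, $b-1$ keeps a fixed sign and the block comparisons needed to slide the window are resolved. On each subinterval I would confirm, using the inequality tests of Section~\ref{testing inequality}, that factors beginning at positions with different residues modulo $k$ are pairwise unequal. This is plausible because every maximal zero run has length less than $a$ on this interval (for instance $b - 1 < a$ and $2a-2b-1 < a$), so every length-$a$ window contains a nonzero letter and its pattern of gaps can pin down the residue. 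Lemma~\ref{locating length} guarantees that \emph{some} locating length exists; the purpose of this step is to certify the smaller value $\ell = a$, which is what controls $\mmax$.

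Next I would bound the factor length. With $\Imin = \frac{9}{7}$, $c = 1$, $d = 0$, Proposition~\ref{big m} gives $\mmax = \lceil \frac{9/7}{9/7 - 1}\rceil - 1 = \lceil \frac{9}{2}\rceil - 1 = 4$. To discharge the phrase ``for every $\ab$-power-free word $w$'' I would invoke Lemma~\ref{short words are a/b-power-free} with $s = 8$, $t = 4$: the least admissible numerator in $(\frac{9}{7}, \frac{4}{3})$ with $\gcd(b,8) = 1$ is $a_\textnormal{min} = 17$ (attained at $\frac{17}{13}$), and $(s - \tfrac{t}{\Imin})(a_\textnormal{min} - 2) = (8 - \tfrac{28}{9})\cdot 15 = \tfrac{220}{3} \geq 4 = \mmax$, so the relevant factors are short enough that any preimage is automatically $\ab$-power-free and we may quantify over all words $w$. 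It then remains to verify, for each $m \in \{1,2,3,4\}$, that no length-$ma$ factor of $\varphi(n_0)\varphi(n_1)\cdots$ is an $\ab$-power: I would list all such symbolic factors by the queue procedure of Section~\ref{listing factors}, decompose each as $xyz$ with $|x| = |z| = m(a-b)$ and $|y| = m(2b-a)$, and check $x \neq z$ by the criteria of Section~\ref{testing inequality} (common prefix/suffix removal, unequal terminal letters, and deletion of explicit zeros followed by solving the induced system of block-length equalities). Since the subintervals are open, I would finally check each admissible subinterval endpoint lying in the interval with odd denominator, and separately treat the included left endpoint $\ab = \frac{9}{7}$ as noted in Section~\ref{symbolic power-free morphisms}.

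The main obstacle will be the size of the case analysis rather than any single hard idea. For $m = 3$ and $m = 4$ the list of symbolic factors is large, and the interval $(\frac{9}{7}, \frac{4}{3})$ must be cut into many subintervals because the run lengths $-2a+3b-1$, $-a+2b-1$, and $b-1$ interact in order comparisons during the window slide; keeping these splits consistent across the four values of $m$ is where the bookkeeping concentrates. The genuinely delicate inequalities are those in which deleting zeros from $x$ and $z$ leaves equal letter-sequences (as in the $1\,(n+1)$ versus $(n+1)\,1$ example of Section~\ref{testing inequality}), so that $x \neq z$ must instead be forced by showing the corresponding system of block-length equalities has no solution on the subinterval. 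Finally, because this theorem excludes no residues (in contrast to Theorems~\ref{6 4a-2b I power-free} and \ref{6 4a-2b II power-free}), one must confirm that none of the finitely many admissible subinterval endpoints actually produces an $\ab$-power; this is the step where an unexpected exceptional ratio would surface, and it is the part of the argument that most depends on the endpoint check being exhaustive.
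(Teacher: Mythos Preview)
Your proposal is correct and follows essentially the same automated scheme the paper uses for all $30$ symbolic morphisms in Section~\ref{symbolic power-free morphisms}: find a locating length, apply Proposition~\ref{big m} to get $\mmax$, invoke Lemma~\ref{short words are a/b-power-free}, enumerate symbolic length-$ma$ factors via Section~\ref{listing factors}, verify $x\neq z$ via Section~\ref{testing inequality}, check admissible subinterval endpoints, and treat the included endpoint $\tfrac{9}{7}$ separately. Your numerical checks ($\mmax=4$, $a_\textnormal{min}=17$, and $(8-\tfrac{28}{9})\cdot 15=\tfrac{220}{3}\geq 4$) are all correct.
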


\begin{theorem}\label{14 6a-b I power-free}
Let $a, b$ be relatively prime positive integers such that $\frac{11}{8} < \ab < \frac{3}{2}$ and $\ab \notin \{\frac{7}{5}, \frac{10}{7}\}$ and $\gcd(b,6) = 1$.
Then the $(6a-b)$-uniform morphism
\begin{align*}
	\varphi(n) = {}
	& 0^{a-b-1} \, 1 \, 0^{-6 a+9 b-1} \, 1 \, 0^{8 a-11 b-1} \, 1 \, 0^{-a+2 b-1} \, 1 \, 0^{2 a-2 b-1} \, 1 \\
	& 0^{-a+2 b-1} \, 1 \, 0^{-6 a+9 b-1} \, 1 \, 0^{a-b-1} \, 1 \, 0^{6 a-8 b-1} \, 1 \, 0^{-4 a+6 b-1} \, 1 \\
	& 0^{-a+2 b-1} \, 1 \, 0^{2 a-2 b-1} \, 1 \, 0^{-a+2 b-1} \, 1 \, 0^{6 a-8 b-1} \, (n+1),
\end{align*}
with $14$ nonzero letters, locates words of length $6 a - b$ and is $\ab$-power-free.
\end{theorem}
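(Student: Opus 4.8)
The plan is to apply the automated power-freeness scheme of Section~\ref{power-free morphisms} essentially verbatim, specializing Lemma~\ref{big x} and Proposition~\ref{big m} to this morphism. Here $k = 6a - b$, so $s = 6$ and $t = 1$, and the stated hypothesis $\gcd(b, 6) = 1$ is exactly the condition $\gcd(b, k) = 1$ that Lemma~\ref{big x} requires. I would first check that every run-length exponent is nonnegative for all relatively prime $(a, b)$ with $\frac{11}{8} < \ab < \frac{3}{2}$; this is routine, since each exponent is an integer linear form in $a$ and $b$ bounded away from $0$ on the open interval (for instance $-6a + 9b - 1 = 3(3b - 2a) - 1 \geq 2$, because $3b - 2a \geq 1$ whenever $\ab < \frac{3}{2}$), so $\varphi$ is well defined. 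To pin down the locating length, note that the claimed value $6a - b$ equals $k$, so it suffices to verify the hypothesis of Lemma~\ref{locating length}: that $\varphi(n) = u\,(n+1)$ is never an $a$-power for $a \geq 2$. Its only letter besides $0$ and $1$ is the terminal $n + 1$, which for $n \geq 1$ exceeds every other nonzero letter and hence destroys any nontrivial period, while for $n = 0$ the asymmetric pattern of gaps between the $1$'s is already aperiodic. The shorter-length search of Section~\ref{determining a locating length} does not beat $c = 6$, $d = 1$, so I would take $\ell = 6a - b$.

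Next I would bound the factor length. With $\ell = 6a - b = c a - d b$ for $c = 6$, $d = 1$ and $\Imin = \frac{11}{8}$, Proposition~\ref{big m} gives $\mmax = \left\lceil \frac{6 \cdot (11/8) - 1}{(11/8) - 1} \right\rceil - 1 = \left\lceil \frac{58}{3} \right\rceil - 1 = 19$, so it remains to check that for every word $w$ the image $\varphi(w)$ contains no $\ab$-power of length $m a$ for $1 \leq m \leq 19$. To legitimately quantify over \emph{all} words $w$ rather than only the $\ab$-power-free ones, I would invoke Lemma~\ref{short words are a/b-power-free} with $s = 6$, $t = 1$, $\Imin = \frac{11}{8}$, $\Imax = \frac{3}{2}$: the smallest admissible numerator is $a_\textnormal{min} = 7$ (from $\ab = \frac{7}{5}$), and $\left(s - \frac{t}{\Imin}\right)(a_\textnormal{min} - 2) = \left(6 - \frac{8}{11}\right) \cdot 5 = \frac{290}{11} > 19 = \mmax$, so the hypothesis holds and each relevant preimage is too short to contain an $\ab$-power.

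For each $m$ from $1$ to $19$ I would then enumerate all symbolic length-$ma$ factors of $\varphi(n)\varphi(n)\cdots$ by the queue-based sliding-window procedure of Section~\ref{listing factors}, renaming each copy of $n$ to a distinct symbol and factoring every candidate as $xyz$ with $|x| = |z| = m(a - b)$ and $|y| = m(2b - a)$. This forces a subdivision of $\frac{11}{8} < \ab < \frac{3}{2}$ into finitely many open subintervals, on each of which every ``$\min$'' arising in the run-length bookkeeping resolves to a single linear form. On each subinterval, and at each of its admissible (odd-denominator) endpoints, I would certify $x \neq z$ using the inequality criteria of Section~\ref{testing inequality}: common prefix/suffix cancellation, deletion of explicit $0$'s, and unsatisfiability of the associated block-length equalities. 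Once every factor passes, Proposition~\ref{big m} yields that $\varphi$ is $\ab$-power-free.

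The delicate point --- and the reason for the two excluded ratios --- lies at the subinterval endpoints: at $\ab = \frac{7}{5}$ and $\ab = \frac{10}{7}$ the enumeration produces factors that are \emph{genuine} $\ab$-powers, exactly as happens for the closely related Theorems~\ref{6 4a-2b I power-free} and \ref{6 4a-2b II power-free} at $\frac{5}{3}$ and $\frac{7}{5}$, so these values must be removed from the hypotheses; the algorithm flags them automatically rather than taking them as input. The true obstacle here is scale rather than idea: with $\mmax = 19$, length $k = 6a - b$, and $14$ nonzero letters, both the number of symbolic factors and the number of subintervals to track are large, and essentially all of the work --- together with the only real risk of failure, namely an inequality comparison the implemented criteria cannot resolve --- is concentrated in the pairwise testing of $x \neq z$ across all factors and all subintervals.
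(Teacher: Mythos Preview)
Your outline follows the paper's scheme, but there is a genuine gap in your verification of the locating length. You claim that Lemma~\ref{locating length} applies because ``for $n = 0$ the asymmetric pattern of gaps between the $1$'s is already aperiodic.'' This is false as a symbolic statement on the interval: at $\ab = \tfrac{17}{12}$ the run-length sequence $(a-b-1,\,-6a+9b-1,\,8a-11b-1,\dots,\,6a-8b-1)$ evaluates to $(4,5,3,6,9,6,5,4,5,3,6,9,6,5)$, and $\varphi(0)$ is a perfect square of length $90 = 6a-b$. Thus the hypothesis of Lemma~\ref{locating length} fails there, and your hand-waving periodicity argument cannot be correct as written. The paper singles out exactly this morphism as containing a ``new obstacle'' for this reason.

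You are right that $\tfrac{17}{12}$ is already excluded by $\gcd(b,6)=1$, so the theorem statement is not in danger. But you still owe an argument that $\varphi(0)$ is not an $e$-power for any admissible $(a,b)$, and ``the gaps look asymmetric'' is not it; you have just seen a point where the gaps are symmetric. The paper handles this not via Lemma~\ref{locating length} at all but by the procedure of Section~\ref{determining a locating length}: when the symbolic search for $\ell$ stalls, it detects the obstruction at $\tfrac{17}{12}$, inserts $\ab \neq \tfrac{17}{12}$ as an additional assumption, and thereby splits the interval into $\tfrac{11}{8} < \ab < \tfrac{17}{12}$ and $\tfrac{17}{12} < \ab < \tfrac{3}{2}$, on each of which the locating-length verification then succeeds. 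If you want to keep your Lemma~\ref{locating length} route, you must actually prove that $\varphi(0)$ is primitive for every admissible pair, which amounts to the same interval-splitting analysis; either way the step is not as trivial as you present it. The rest of your plan (the computation of $\mmax = 19$, the appeal to Lemma~\ref{short words are a/b-power-free} with $a_{\min}=7$, and the sliding-window verification yielding the exclusions $\tfrac{7}{5}$ and $\tfrac{10}{7}$) matches the paper's approach.
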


\begin{theorem}\label{14 6a-b II power-free}
Let $a, b$ be relatively prime positive integers such that $\frac{7}{5} < \ab < \frac{10}{7}$ and $\gcd(b,6) = 1$.
Then the $(6a-b)$-uniform morphism
\begin{align*}
	\varphi(n) = {}
	& 0^{a-b-1} \, 1 \, 0^{-23 a+33 b-1} \, 1 \, 0^{25 a-35 b-1} \, 1 \, 0^{-a+2 b-1} \, 1 \, 0^{2 a-2 b-1} \, 1 \\
	& 0^{-a+2 b-1} \, 1 \, 0^{-23 a+33 b-1} \, 1 \, 0^{a-b-1} \, 1 \, 0^{23 a-32 b-1} \, 1 \, 0^{-21 a+30 b-1} \, 1 \\
	& 0^{-a+2 b-1} \, 1 \, 0^{2 a-2 b-1} \, 1 \, 0^{-a+2 b-1} \, 1 \, 0^{23 a-32 b-1} \, (n+1),
\end{align*}
with $14$ nonzero letters, locates words of length $2 a$ and is $\ab$-power-free.
\end{theorem}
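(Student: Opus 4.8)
The plan is to prove Theorem~\ref{14 6a-b II power-free} by exactly the automated scheme developed in this section for the preceding morphisms, since it has the standard form: $\varphi$ is $k$-uniform with $k = 6a - b$ (so $s = 6$ and $t = 1$), the ratio is confined to an interval with lower endpoint $\Imin = \frac{7}{5}$, and the stated condition $\gcd(b,6) = 1$ is precisely $\gcd(b,k) = 1$. Two of the hypotheses needed for Lemma~\ref{big x} are immediate: $\varphi$ has the form $u\,(n+1)$ with $u$ independent of $n$, so $\varphi(n)$ and $\varphi(n')$ differ in at most one position (the last); and the remaining ingredient, the claimed locating length $\ell = 2a$, is the first thing to establish.

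First I would verify that $\varphi$ locates words of length $2a$ using the procedure of Section~\ref{determining a locating length}: generate all symbolic factors of $\varphi(n)\varphi(n)\cdots$ of length $2a$ by sliding a window (Section~\ref{listing factors}), splitting $(\frac{7}{5}, \frac{10}{7})$ into subintervals on which the run lengths vary linearly, and confirm via the criteria of Section~\ref{testing inequality} that two distinct such factors cannot occupy incongruent positions. With $\ell = 2a$ we have $c = 2$ and $d = 0$, so Proposition~\ref{big m} gives $\mmax = \lceil \frac{2 \cdot 7/5}{7/5 - 1} \rceil - 1 = 6$, reducing the task to showing that $\varphi(w)$ contains no $\ab$-power of length $m a$ for $1 \leq m \leq 6$. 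Lemma~\ref{short words are a/b-power-free}, whose hypothesis is comfortably met here since the smallest admissible numerator in this interval is large, then permits replacing ``every $\ab$-power-free word $w$'' by ``every word $w$''. For each $m \leq 6$ I would enumerate all symbolic factors of length $m a$, factored as $xyz$ with $|x| = |z| = m(a-b)$, and verify $x \neq z$ on every subinterval using the inequality toolkit of Section~\ref{testing inequality}; finally I would treat the finitely many subinterval endpoints whose denominators are coprime to $6$ as explicit rationals, exactly as in the discussion accompanying Theorems~\ref{6 4a-2b I power-free} and \ref{6 4a-2b II power-free}.

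The conceptual content is supplied entirely by Lemma~\ref{big x}, Proposition~\ref{big m}, and Lemma~\ref{short words are a/b-power-free}, so the hard part will be purely computational. Because the interval $(\frac{7}{5}, \frac{10}{7})$ is extremely narrow, the run-length coefficients are large (for instance $25a - 35b$ and $-23a + 33b$), which forces many subinterval splits in the factor-listing step and produces a large number of symbolic factor pairs to compare. The genuine risk lies in the inequality testing: Section~\ref{testing inequality} offers only a collection of sufficient criteria rather than a complete decision procedure, so the argument succeeds only if those criteria---removing common prefixes or suffixes, deleting explicit $0$s and checking that the residual systems of block-length equations are unsolvable, and so on---happen to resolve every pair that arises on this interval. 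Confirming that they do, together with the automatic detection of any exceptional residues (of which this theorem's hypotheses list none), is what must be checked by running the computation, paralleling the companion morphism of Theorem~\ref{14 6a-b I power-free} on the surrounding interval.
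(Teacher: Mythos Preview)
Your plan is exactly the paper's automated scheme, and your parameter bookkeeping ($s=6$, $t=1$, $\ell=2a$, $c=2$, $d=0$, $\mmax=6$, and the easy verification of Lemma~\ref{short words are a/b-power-free}) is correct.

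One detail you should anticipate: the paper points out that the locating-length search of Section~\ref{determining a locating length} does not terminate for this morphism as stated, because at $\tfrac{a}{b}=\tfrac{65}{46}$ the word $\varphi(0)$ is a square, so $\varphi$ locates no word of any length there. This rational is excluded by $\gcd(b,6)=1$, but the symbolic search does not know that; the remedy (as for the companion Theorem~\ref{14 6a-b I power-free} at $\tfrac{17}{12}$) is to feed in $\tfrac{a}{b}\neq\tfrac{65}{46}$ as an additional assumption, which splits the interval at that point and lets the algorithm find $\ell=2a$ on each piece. With that split in place, the rest of your plan goes through.
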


Theorems~\ref{14 6a-b I power-free} and \ref{14 6a-b II power-free} contain a new obstacle, which is that their morphisms do not satisfy the conditions of Lemma~\ref{locating length} on the given intervals.
Namely, for the morphism $\varphi$ in Theorem~\ref{14 6a-b I power-free}, if $\ab = \frac{17}{12}$ then the word $\varphi(0)$ is a square, and therefore $\varphi$ does not locate words of any length $\ell$.
This rational does not satisfy $\gcd(b,6) = 1$, so it is excluded by the hypotheses, but the algorithm for finding $\ell$ does not take this into account.
In practice, when the algorithm is unable to find a suitable $\ell$, we perform a separate search for obstructions, which reveals the square for $\ab = \frac{17}{12}$.
(This separate search could also be performed preemptively, but it is not exhaustive since we only check rationals with small denominators.)
Adding the assumption $\ab \neq \frac{17}{12}$ as input then effectively causes $\frac{17}{12}$ to become an interior endpoint, splitting the interval into the two subintervals $\frac{11}{8} < \ab < \frac{17}{12}$ and $\frac{17}{12} < \ab < \frac{3}{2}$.

Similarly, the morphism in Theorem~\ref{14 6a-b II power-free} produces a square $\varphi(0)$ for $\ab = \frac{65}{46}$; again this rational does not satisfy the $\gcd$ condition but must be taken into account to find $\ell$.
For some morphisms (namely, those in Theorems~\ref{26 9a-4b power-free}, \ref{42 13a-8b power-free}, \ref{54 13a-5b power-free}, \ref{158 53a-30b power-free}, and \ref{279 67a-30b power-free}) there exists rationals for which $\varphi(0)$ is a perfect power but that are not excluded by the $\gcd$ condition.
These rationals must be added to the hypotheses, and this is the second reason why there might be exceptional rationals in an interval.

\begin{theorem}\label{16 9a-7b power-free}
Let $a, b$ be relatively prime positive integers such that $\frac{9}{8} < \ab < \frac{8}{7}$ and $\gcd(b,9) = 1$.
Then the $(9a-7b)$-uniform morphism
\begin{align*}
	\varphi(n) = {}
	& 0^{9 a-10 b-1} \, 1 \, 0^{-6 a+7 b-1} \, 1 \, 0^{8 a-9 b-1} \, 1 \, 0^{-6 a+7 b-1} \, 1 \, 0^{8 a-9 b-1} \, 1 \, 0^{2 a-2 b-1} \, 1 \\
	& 0^{a-b-1} \, 1 \, 0^{-6 a+7 b-1} \, 1 \, 0^{2 a-2 b-1} \, 1 \, 0^{2 a-2 b-1} \, 1 \, 0^{3 a-3 b-1} \, 1 \, 0^{-6 a+7 b-1} \, 1 \\
	& 0^{2 a-2 b-1} \, 1 \, 0^{-6 a+7 b-1} \, 1 \, 0^{a-b-1} \, 1 \, 0^{a-b-1} \, (n+1),
\end{align*}
with $16$ nonzero letters, locates words of length $7 a - 7 b$ and is $\ab$-power-free.
\end{theorem}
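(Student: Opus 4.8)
The plan is to apply the general machinery of Section~\ref{power-free morphisms}, following the same scheme used for Theorem~\ref{2 2a-b power-free}. The morphism has the form $\varphi(n) = u\,(n+1)$ for a fixed word $u$ of length $k - 1$ with $k = 9a - 7b$, so $\varphi(n)$ and $\varphi(n')$ differ in at most one position, namely the final letter. Since $\gcd(a,b) = 1$, the divisibility hypothesis $\gcd(b,9) = 1$ is equivalent to $\gcd(b,k) = 1$, so both bullet points of Lemma~\ref{big x} are available.

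First I would establish that $\varphi$ locates words of length $\ell = 7a - 7b = 7(a-b)$, as asserted in the statement. Following Section~\ref{determining a locating length}, this length is selected by generating candidate lengths $ca - db$ with $10 \geq c \geq d \geq 0$, filtering them through Lemma~\ref{short words are a/b-power-free}, and testing the candidates with smallest implied $\mmax$ in turn. To verify locating at length $7(a-b)$, I would use the factor-generation procedure of Section~\ref{listing factors} to produce, for each subinterval of $(\frac{9}{8}, \frac{8}{7})$ that arises, the symbolic list of all length-$7(a-b)$ factors of $\varphi(n)\varphi(n)\cdots$, and then confirm via the inequality tests of Section~\ref{testing inequality} that no such factor can occur at two positions that are incongruent modulo $k$. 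Unlike Theorems~\ref{14 6a-b I power-free} and \ref{14 6a-b II power-free}, here $\varphi(0)$ is not a perfect power for any rational in the interval satisfying the $\gcd$ condition, so no obstruction to locating arises and no exceptional rational need be excluded on this account.

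With $\ell = 7a - 7b$ we have $c = 7$ and $d = 7$ in the notation of Proposition~\ref{big m}, and $c \geq d$, so taking $\Imin = \frac{9}{8}$ gives
\[
	\mmax = \left\lceil \frac{7 \cdot 9/8 - 7}{9/8 - 1} \right\rceil - 1 = \lceil 7 \rceil - 1 = 6.
\]
To invoke the proposition in the stronger form that replaces ``$\ab$-power-free word $w$'' by ``word $w$,'' I would check the hypotheses of Lemma~\ref{short words are a/b-power-free} with $s = 9$, $t = 7$, $\Imin = \frac{9}{8}$, and $\Imax = \frac{8}{7}$. Here the least admissible numerator is $a_\textnormal{min} = 25$, realized by $\frac{25}{22}$, and one computes $\left(s - \tfrac{t}{\Imin}\right)(a_\textnormal{min} - 2) = \tfrac{25}{9} \cdot 23 = \tfrac{575}{9} \geq 6 = \mmax$, so the lemma applies. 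It then suffices to verify, for every subinterval and every $m$ with $1 \leq m \leq 6$, that no length-$ma$ factor of $\varphi(n_0)\varphi(n_1)\cdots$ is an $\ab$-power.

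The main obstacle is this final symbolic case analysis. For each $m \in \{1, \dots, 6\}$ and each subinterval, the procedure of Section~\ref{listing factors} produces a list of candidate factors $xyz$ with $|x| = |z| = m(a-b)$ and $|y| = m(2b-a)$, each a run-length-encoded word whose exponents are linear in $a$, $b$, and the interval parameters $i, j$. Establishing the theorem reduces to showing $x \neq z$ for every such candidate and every admissible parameter value, using the inequality criteria of Section~\ref{testing inequality}. With $16$ nonzero letters and $\mmax = 6$, the number of factors and the number of subintervals are both large, so essentially all of the computational effort lies here; moreover the open-interval endpoints with odd denominator must be checked individually, and one must confirm that none of them in $(\frac{9}{8}, \frac{8}{7})$ satisfying $\gcd(b,9) = 1$ produces an $\ab$-power, so that no exceptional rational is forced into the hypotheses.
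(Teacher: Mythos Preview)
Your proposal is correct and follows exactly the approach the paper uses: all of Theorems~\ref{2 2a-b power-free}--\ref{279 67a-30b power-free} are proved by the automated procedure of Section~\ref{power-free morphisms}, and you have correctly instantiated each step (the form $\varphi(n)=u\,(n+1)$, the equivalence $\gcd(b,9)=\gcd(b,k)$, the values $c=d=7$, $\mmax=6$, $a_{\textnormal{min}}=25$ from $\tfrac{25}{22}$, and the Lemma~\ref{short words are a/b-power-free} check $(9-\tfrac{56}{9})\cdot 23=\tfrac{575}{9}\geq 6$). The paper likewise reports no exceptional rationals for this morphism, consistent with your remarks about locating and endpoint checks.
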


\begin{theorem}\label{18 10a-8b power-free}
Let $a, b$ be relatively prime positive integers such that $\frac{10}{9} < \ab < \frac{9}{8}$ and $\ab \neq \frac{19}{17}$ and $\gcd(b,10) = 1$.
Then the $(10a-8b)$-uniform morphism
\begin{align*}
	\varphi(n) = {}
	& 0^{10 a-11 b-1} \, 1 \, 0^{-7 a+8 b-1} \, 1 \, 0^{9 a-10 b-1} \, 1 \, 0^{2 a-2 b-1} \, 1 \, 0^{a-b-1} \, 1 \, 0^{2 a-2 b-1} \, 1 \\
	& 0^{a-b-1} \, 1 \, 0^{-7 a+8 b-1} \, 1 \, 0^{2 a-2 b-1} \, 1 \, 0^{3 a-3 b-1} \, 1 \, 0^{3 a-3 b-1} \, 1 \, 0^{-7 a+8 b-1} \, 1 \\
	& 0^{-7 a+8 b-1} \, 1 \, 0^{a-b-1} \, 1 \, 0^{a-b-1} \, 1 \, 0^{10 a-11 b-1} \, 1 \, 0^{-8 a+9 b-1} \, 1 \, 0^{a-b-1} \, (n+1),
\end{align*}
with $18$ nonzero letters, locates words of length $6 a - 5 b$ and is $\ab$-power-free.
\end{theorem}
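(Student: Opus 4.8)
The plan is to follow the general scheme of Section~\ref{power-free morphisms} verbatim, as in the proof of Theorem~\ref{2 2a-b}, delegating the bulk of the work to the symbolic factor enumeration and inequality tests of Sections~\ref{listing factors}--\ref{testing inequality}. First I would record the two structural features that license the machinery. The morphism has the form $\varphi(n) = u\,(n+1)$ with $u \in \{0,1\}^{k-1}$, so any two images $\varphi(n), \varphi(n')$ differ only in their last letter; this is the second hypothesis of Lemma~\ref{big x}. The required divisibility $\gcd(b, k) = 1$ holds because $k = 10a - 8b$ together with $\gcd(a,b) = 1$ gives $\gcd(b, 10a - 8b) = \gcd(b, 10a) = \gcd(b, 10)$, which the hypothesis $\gcd(b, 10) = 1$ makes equal to $1$.

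Next I would establish the claimed locating length $\ell = 6a - 5b$. Lemma~\ref{locating length} already yields the locating length $k = 10a - 8b$, but to obtain the smaller bound I would run the procedure of Section~\ref{determining a locating length}: subdivide the interval $\frac{10}{9} < \ab < \frac{9}{8}$ into finitely many open subintervals on which every run length $c a - d b$ appearing in $\varphi$ has a fixed sign and the relevant minima are determined, enumerate all symbolic factors of $\varphi(n)\varphi(n)\cdots$ of length $6a - 5b$ on each subinterval as in Section~\ref{listing factors}, and apply the criteria of Section~\ref{testing inequality} to check that two such factors beginning at positions incongruent modulo $k$ are never equal.

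With $\ell = ca - db$ for $c = 6$, $d = 5$ and $\Imin = \frac{10}{9}$, Proposition~\ref{big m} gives $\mmax = \lceil \tfrac{6 \cdot (10/9) - 5}{10/9 - 1}\rceil - 1 = \lceil 15 \rceil - 1 = 14$, so it suffices to verify that $\varphi(w)$ contains no $\ab$-power of length $ma$ for $1 \le m \le 14$. Before enumerating I would discharge the hypothesis of Lemma~\ref{short words are a/b-power-free}, so that ``$\ab$-power-free $w$'' may be replaced by ``arbitrary $w$'': here $k = sa - tb$ with $s = 10$, $t = 8$, the least admissible numerator is $a_\textnormal{min} = 19$ (the mediant of $\frac{10}{9}$ and $\frac{9}{8}$), and $\mmax = 14 \le (s - t/\Imin)(a_\textnormal{min} - 2) = \tfrac{14}{5}\cdot 17 = \tfrac{238}{5}$, so every factor of length at most $14a$ lies inside some $\varphi(v)$ with $v$ too short to contain an $\ab$-power. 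For each $m$ with $1 \le m \le 14$ I would then list all factors of $\varphi(n_0)\varphi(n_1)\cdots$ of length $ma$, write each as $xyz$ with $|x| = |z| = m(a - b)$ and $|y| = m(2b - a)$, and verify $x \ne z$; by Lemma~\ref{big x} and Proposition~\ref{big m} this establishes $\ab$-power-freeness. Finally I would examine the explicit rational at each subinterval endpoint lying in the open interval with $\gcd(b,10) = 1$. It is here that the excluded value $\frac{19}{17}$ surfaces, as an endpoint at which $\varphi(n_0)\varphi(n_1)\cdots$ does contain an $\ab$-power, exactly as $\frac{5}{3}$ and $\frac{7}{5}$ do in Theorems~\ref{6 4a-2b I power-free} and \ref{6 4a-2b II power-free}; this is what forces the hypothesis $\ab \ne \frac{19}{17}$.

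The conceptual content is entirely supplied by the lemmas of Section~\ref{power-free morphisms}, so the real obstacle is computational. With $18$ nonzero letters, a period $k = 10a - 8b$ that is large relative to $a - b$ (so that $\mmax$ is as large as $14$), and factor lengths running up to $14a$, the number of symbolic factors and pairwise inequality checks is substantial, and the interval must be subdivided many times. The delicate point is ensuring that the finite battery of inequality criteria of Section~\ref{testing inequality} actually resolves $x \ne z$ for every factor on every subinterval; when a direct comparison fails—for instance when deleting zeros reduces two words to $1\,(n+1)$ versus $(n+1)\,1$—one must fall back on showing that the associated system of block-length equalities is unsatisfiable on the subinterval, and confirming that this always succeeds is where the computation, rather than any new idea, does the work.
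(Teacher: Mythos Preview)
Your proposal is correct and follows exactly the automated scheme the paper uses for all thirty theorems in Section~\ref{symbolic power-free morphisms}: verify the structural hypotheses of Lemma~\ref{big x}, determine the locating length via Section~\ref{determining a locating length}, compute $\mmax$ from Proposition~\ref{big m} (your value $\mmax=14$ is right), invoke Lemma~\ref{short words are a/b-power-free} with $a_\textnormal{min}=19$, and then run the symbolic factor enumeration and inequality tests of Sections~\ref{listing factors}--\ref{testing inequality}, with the exceptional endpoint $\tfrac{19}{17}$ emerging exactly as you describe. The paper itself gives no further argument for this theorem beyond pointing to the automated computation, so there is nothing to add.
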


\begin{theorem}\label{20 11a-9b power-free}
Let $a, b$ be relatively prime positive integers such that $\frac{11}{10} < \ab < \frac{10}{9}$ and $\ab \neq \frac{21}{19}$ and $\gcd(b,11) = 1$.
Then the $(11a-9b)$-uniform morphism
\begin{align*}
	\varphi(n) = {}
	& 0^{11 a-12 b-1} \, 1 \, 0^{-8 a+9 b-1} \, 1 \, 0^{10 a-11 b-1} \, 1 \, 0^{-8 a+9 b-1} \, 1 \, 0^{10 a-11 b-1} \, 1 \\
	& 0^{-8 a+9 b-1} \, 1 \, 0^{10 a-11 b-1} \, 1 \, 0^{2 a-2 b-1} \, 1 \, 0^{a-b-1} \, 1 \, 0^{-8 a+9 b-1} \, 1 \\
	& 0^{2 a-2 b-1} \, 1 \, 0^{2 a-2 b-1} \, 1 \, 0^{2 a-2 b-1} \, 1 \, 0^{3 a-3 b-1} \, 1 \, 0^{-8 a+9 b-1} \, 1 \\
	& 0^{2 a-2 b-1} \, 1 \, 0^{2 a-2 b-1} \, 1 \, 0^{-8 a+9 b-1} \, 1 \, 0^{a-b-1} \, 1 \, 0^{a-b-1} \, (n+1),
\end{align*}
with $20$ nonzero letters, locates words of length $4 a - 3 b$ and is $\ab$-power-free.
\end{theorem}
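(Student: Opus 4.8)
The plan is to apply the automated scheme of Section~\ref{power-free morphisms} with the parameters of this morphism, namely $\Imin = \frac{11}{10}$, $\Imax = \frac{10}{9}$, uniformity length $k = 11a - 9b$ (so $s = 11$ and $t = 9$ in the notation $k = sa - tb$), and locating length $\ell = 4a - 3b$. First I would establish, via the procedure of Section~\ref{determining a locating length}, that $\varphi$ locates words of length $\ell = 4a - 3b$: generate every symbolic factor of $\varphi(n)\varphi(n)\cdots$ of this length, splitting $(\frac{11}{10}, \frac{10}{9})$ into subintervals on which the run-length encoding is stable, and then use the inequality tests of Section~\ref{testing inequality} to confirm that two such factors lying in different residue classes modulo $k$ are always unequal. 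No $\varphi(0)$ in the interval is a perfect power, so a locating length genuinely exists, unlike the situation for $\frac{17}{12}$ in Theorem~\ref{14 6a-b I power-free}; the exclusion of $\frac{21}{19}$ will instead arise in the factor check below.

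Writing $\ell = 4a - 3b$ as $ca - db$ with $c = 4$ and $d = 3$, Proposition~\ref{big m} reduces the claim to a finite check, with bound
\[
	\mmax = \left\lceil \frac{c \cdot \Imin - d}{\Imin - 1} \right\rceil - 1 = \left\lceil \frac{4 \cdot \frac{11}{10} - 3}{\frac{11}{10} - 1} \right\rceil - 1 = \lceil 14 \rceil - 1 = 13.
\]
Thus it suffices to verify that $\varphi(w)$ contains no $\ab$-power of length $ma$ for $1 \leq m \leq 13$. Moreover, I would invoke Lemma~\ref{short words are a/b-power-free} to replace ``$\ab$-power-free $w$'' by ``arbitrary $w$'': here the fraction of smallest numerator in $(\frac{11}{10}, \frac{10}{9})$ whose denominator is coprime to $11$ is $\frac{21}{19}$, so $a_\textnormal{min} = 21$, and the hypothesis of that lemma holds since $\mmax = 13 \leq (s - \frac{t}{\Imin})(a_\textnormal{min} - 2) = \frac{31}{11} \cdot 19 = \frac{589}{11}$. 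Hence every factor of length at most $13a$ comes from a preimage too short to contain an $\ab$-power.

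The heart of the argument is then the symbolic case analysis of Sections~\ref{listing factors} and~\ref{testing inequality}: for each $m$ with $1 \leq m \leq 13$, list all factors of $\varphi(n)\varphi(n)\cdots$ of length $ma$, write each as $xyz$ with $|x| = |z| = m(a-b)$ and $|y| = m(2b-a)$, and prove $x \neq z$ for all admissible parameter values using the available criteria (removing common prefixes or suffixes, comparing first or last letters, and deleting explicit zeros and checking that the residual block-length equations are unsolvable). This is carried out on each subinterval produced by the window-sliding procedure, together with each subinterval endpoint satisfying $\gcd(b, 11) = 1$. It is precisely at the interior endpoint $\frac{21}{19}$ that some factor is an $\ab$-power, which is what forces the hypothesis $\ab \neq \frac{21}{19}$. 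Once every length-$ma$ factor with $m \leq \mmax$ is shown not to be an $\ab$-power, Proposition~\ref{big m} gives that $\varphi$ is $\ab$-power-free.

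The hard part will be entirely computational rather than conceptual. With $20$ nonzero letters and $\mmax = 13$, the windows reach length $13a$ and sweep across many blocks of $\varphi(n)$, so both the number of symbolic factors and the number of subintervals of $(\frac{11}{10}, \frac{10}{9})$ needed to stabilize the run-length encodings become large, making this one of the more expensive theorems to verify. The two delicate points are that the inequality tests of Section~\ref{testing inequality} are sound but not complete, so a single pair $(x, z)$ they fail to separate would stall the proof and require a new criterion; and that the interior endpoint $\frac{21}{19}$ must be detected and excluded correctly, since its $\ab$-power is what dictates the one exceptional rational in the statement.
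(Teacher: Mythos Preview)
Your proposal is correct and follows exactly the automated scheme the paper uses for all thirty theorems in Section~\ref{symbolic power-free morphisms}; the paper itself gives no individual proof for Theorem~\ref{20 11a-9b power-free} beyond stating that the computation succeeds, and you have correctly instantiated the parameters ($c=4$, $d=3$, $\mmax=13$, $a_\textnormal{min}=21$) and identified that the exclusion of $\tfrac{21}{19}$ arises from the endpoint factor check rather than from $\varphi(0)$ being a perfect power (consistent with the paper's remark that only Theorems~\ref{26 9a-4b power-free}, \ref{42 13a-8b power-free}, \ref{54 13a-5b power-free}, \ref{158 53a-30b power-free}, and \ref{279 67a-30b power-free} have that obstruction).
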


\begin{theorem}\label{24 13a-11b power-free}
Let $a, b$ be relatively prime positive integers such that $\frac{13}{12} < \ab < \frac{12}{11}$ and $\ab \neq \frac{25}{23}$ and $\gcd(b,13) = 1$.
Then the $(13a-11b)$-uniform morphism
\begin{align*}
	\varphi(n) = {}
	& 0^{13 a-14 b-1} \, 1 \, 0^{-10 a+11 b-1} \, 1 \, 0^{12 a-13 b-1} \, 1 \, 0^{-10 a+11 b-1} \, 1 \, 0^{12 a-13 b-1} \, 1 \, 0^{-10 a+11 b-1} \, 1 \\
	& 0^{12 a-13 b-1} \, 1 \, 0^{-10 a+11 b-1} \, 1 \, 0^{12 a-13 b-1} \, 1 \, 0^{2 a-2 b-1} \, 1 \, 0^{a-b-1} \, 1 \, 0^{-10 a+11 b-1} \, 1 \\
	& 0^{2 a-2 b-1} \, 1 \, 0^{2 a-2 b-1} \, 1 \, 0^{2 a-2 b-1} \, 1 \, 0^{2 a-2 b-1} \, 1 \, 0^{3 a-3 b-1} \, 1 \, 0^{-10 a+11 b-1} \, 1 \\
	& 0^{2 a-2 b-1} \, 1 \, 0^{2 a-2 b-1} \, 1 \, 0^{2 a-2 b-1} \, 1 \, 0^{-10 a+11 b-1} \, 1 \, 0^{a-b-1} \, 1 \, 0^{a-b-1} \, (n+1),
\end{align*}
with $24$ nonzero letters, locates words of length $4 a - 3 b$ and is $\ab$-power-free.
\end{theorem}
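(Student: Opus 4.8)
The plan is to prove this in exactly the way the preceding theorems of this section are proved, by running the automated scheme assembled from Lemma~\ref{locating length}, Lemma~\ref{big x}, Proposition~\ref{big m}, and Lemma~\ref{short words are a/b-power-free}. The morphism has the required shape $\varphi(n) = u\,(n+1)$ with $u$ a fixed word of length $k-1$, where $k = 13a - 11b$; hence $\varphi(n)$ and $\varphi(n')$ differ in at most one position (the last letter), which is the second hypothesis of Lemma~\ref{big x}. Since $\gcd(a,b)=1$ and $\gcd(b,13)=1$, we get $\gcd(b,k)=1$ as required.

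First I would establish that $\varphi$ locates words of length $\ell = 4a - 3b$, following Section~\ref{determining a locating length}: generate the symbolic factors of $\varphi(n)\varphi(n)\cdots$ of length $4a-3b$, breaking $\frac{13}{12} < \ab < \frac{12}{11}$ into subintervals as the run-length minima dictate, and verify pairwise inequality with the criteria of Section~\ref{testing inequality}. One should also confirm that $\varphi(0)$ is not a perfect power anywhere on the interval (this is the obstruction that appeared for Theorems~\ref{14 6a-b I power-free} and \ref{14 6a-b II power-free}); on the stated interval this causes no trouble. Next I would compute $\mmax$ from Proposition~\ref{big m}: writing $\ell = ca - db$ with $c=4$, $d=3$ and taking $\Imin = \frac{13}{12}$ gives $\frac{c\Imin - d}{\Imin - 1} = 16$, so $\mmax = 15$. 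Before the case analysis I would check that Lemma~\ref{short words are a/b-power-free} applies with $s=13$, $t=11$: the smallest admissible numerator in the open interval is $a_\textnormal{min} = 25$ (the mediant $\frac{25}{23}$, since $\frac{13}{12}$ and $\frac{12}{11}$ are Farey neighbors), and $(s - t/\Imin)(a_\textnormal{min} - 2) = \frac{37}{13}\cdot 23 > 15 = \mmax$. Consequently every factor I must examine sits inside the image of a word too short to contain an $\ab$-power, so I may replace ``for every $\ab$-power-free word $w$'' by ``for every word $w$'' in Proposition~\ref{big m}.

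The core computation is then to verify, for each $m$ with $1 \le m \le 15$, that no length-$ma$ factor of $\varphi(n)\varphi(n)\cdots$ is an $\ab$-power. For each $m$ I would list all such factors symbolically by the queue procedure of Section~\ref{listing factors}, write each as $xyz$ with $|x| = |z| = m(a-b)$ and $|y| = m(2b-a)$, and confirm $x \ne z$ using the inequality tests of Section~\ref{testing inequality}, splitting the interval wherever the sign of a run-length minimum changes. The open-subinterval endpoints with odd denominators must be checked separately; this is precisely where the exceptional value $\frac{25}{23}$ surfaces, as an endpoint at which $\varphi(n_0)\varphi(n_1)\cdots$ does contain an $\ab$-power, forcing the exclusion $\ab \ne \frac{25}{23}$ in the hypotheses (the analogue of the excluded $\frac{5}{3}$ and $\frac{7}{5}$ in Theorems~\ref{6 4a-2b I power-free} and \ref{6 4a-2b II power-free}).

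The main obstacle is the sheer volume of symbolic work rather than any conceptual difficulty. Because $\ab$ lies so close to $1$, the border $x$ has the small length $m(a-b)$ while $y$ is long, so the factors are long and numerous, $\mmax = 15$ is comparatively large, and the interval fragments into many subintervals during factor generation. All of this is executed by the \textsc{SymbolicWords} package, so the cost is measured in computation time rather than in locating the right argument; the delicate points (finding the short locating length $4a-3b$ and detecting the exceptional $\frac{25}{23}$) are both handled automatically by the machinery described above.
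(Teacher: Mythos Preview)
Your proposal is correct and follows exactly the automated scheme the paper uses for all thirty theorems in Section~\ref{symbolic power-free morphisms}: verify the locating length, compute $\mmax$ via Proposition~\ref{big m}, check Lemma~\ref{short words are a/b-power-free}, and run the symbolic case analysis, with the exceptional $\tfrac{25}{23}$ emerging at a subinterval endpoint. Your numerical checks ($\mmax = 15$, $a_\textnormal{min} = 25$, $(s - t/\Imin)(a_\textnormal{min}-2) = \tfrac{37}{13}\cdot 23 > 15$) are all correct.
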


\begin{theorem}\label{26 9a-4b power-free}
Let $a, b$ be relatively prime positive integers such that $\frac{7}{6} < \ab < \frac{6}{5}$ and $\ab \notin \{\frac{13}{11}, \frac{20}{17}\}$ and $\gcd(b,9) = 1$.
Then the $(9a-4b)$-uniform morphism
\begin{align*}
	\varphi(n) = {}
	& 0^{7 a-8 b-1} \, 1 \, 0^{2 a-2 b-1} \, 1 \, 0^{-10 a+12 b-1} \, 1 \, 0^{2 a-2 b-1} \, 1 \, 0^{7 a-8 b-1} \, 1 \, 0^{2 a-2 b-1} \, 1 \\
	& 0^{-10 a+12 b-1} \, 1 \, 0^{2 a-2 b-1} \, 1 \, 0^{7 a-8 b-1} \, 1 \, 0^{-5 a+6 b-1} \, 1 \, 0^{7 a-8 b-1} \, 1 \, 0^{-10 a+12 b-1} \, 1 \\
	& 0^{12 a-14 b-1} \, 1 \, 0^{-10 a+12 b-1} \, 1 \, 0^{2 a-2 b-1} \, 1 \, 0^{7 a-8 b-1} \, 1 \, 0^{2 a-2 b-1} \, 1 \, 0^{-10 a+12 b-1} \, 1 \\
	& 0^{2 a-2 b-1} \, 1 \, 0^{7 a-8 b-1} \, 1 \, 0^{2 a-2 b-1} \, 1 \, 0^{-10 a+12 b-1} \, 1 \, 0^{12 a-14 b-1} \, 1 \, 0^{-10 a+12 b-1} \, 1 \\
	& 0^{7 a-8 b-1} \, 1 \, 0^{-5 a+6 b-1} \, (n+1),
\end{align*}
with $26$ nonzero letters, locates words of length $7 a - 6 b$ and is $\ab$-power-free.
\end{theorem}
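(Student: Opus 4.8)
The plan is to treat this theorem as an instance of the general automated scheme developed in Section~\ref{power-free morphisms}, exactly as for the earlier morphisms in this subsection. The proof splits into two tasks: first, exhibit an integer $\ell$ such that $\varphi$ locates words of length $\ell$ (the theorem asserts $\ell = 7a - 6b$ works), and second, invoke Proposition~\ref{big m} to reduce $\ab$-power-freeness to a finite check on short factors. Here $k = 9a - 4b$, so $s = 9$ and $t = 4$, and the divisibility hypothesis $\gcd(b,9) = 1$ is exactly the condition $\gcd(b,k) = 1$ required by Lemma~\ref{big x}. Writing $\ell = ca - db$ with $c = 7$ and $d = 6$ and taking $\Imin = \frac{7}{6}$, we get $\mmax = \lceil \frac{c\Imin - d}{\Imin - 1}\rceil - 1 = \lceil 13 \rceil - 1 = 12$, so it will suffice to examine $\ab$-powers of length $ma$ for $1 \le m \le 12$.

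For the first task I would run the search of Section~\ref{determining a locating length}: enumerate the combinations $ca - db$ with $10 \ge c \ge d \ge 0$ that survive Lemma~\ref{short words are a/b-power-free}, order them by the $\mmax$ each would force, and test them in turn with the factor-listing machinery of Section~\ref{listing factors} together with the inequality criteria of Section~\ref{testing inequality}, confirming that $7a-6b$ is the first length located. Because the morphism has $26$ nonzero letters, listing the length-$(7a-6b)$ factors of $\varphi(n)\varphi(n)\cdots$ forces the interval $\frac{7}{6} < \ab < \frac{6}{5}$ to be broken into many open subintervals, and on each one I would verify that factors beginning at positions incongruent modulo $k$ are pairwise unequal. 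A complication, of the kind flagged in the discussion after Theorem~\ref{14 6a-b II power-free}, is that on this interval $\varphi(0)$ degenerates to a perfect power at certain rationals not excluded by $\gcd(b,9)=1$; at such a rational $\varphi$ locates no words of any length. A preliminary search for these degeneracies, followed by the automatic detection of any subinterval endpoints that themselves carry $\ab$-powers, recovers the excluded set $\{\frac{13}{11},\frac{20}{17}\}$, each of which is then added to the hypotheses and becomes an interior endpoint subdividing the interval further.

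For the second task I would first check that Lemma~\ref{short words are a/b-power-free} applies, so that the quantifier ``for every $\ab$-power-free word $w$'' in Proposition~\ref{big m} may be relaxed to ``for every word $w$''. With $a_\textnormal{min} = 13$ (the least numerator of a rational in $(\frac{7}{6},\frac{6}{5})$ with denominator coprime to $9$) one has $\left(s - \frac{t}{\Imin}\right)(a_\textnormal{min}-2) = \frac{39}{7}\cdot 11 = \frac{429}{7} > 12 = \mmax$, so the hypothesis of the lemma holds. It then remains, for each $m$ with $1 \le m \le 12$ and on each subinterval produced above, to list every length-$ma$ factor of $\varphi(n)\varphi(n)\cdots$ (renaming each occurrence of $n$ to a distinct symbol as in Section~\ref{listing factors}), decompose it as $xyz$ with $|x| = |z| = m(a-b)$ and $|y| = m(2b-a)$, and certify $x \ne z$ using the inequality tests of Section~\ref{testing inequality}. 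Finally, since the subintervals are open, each of their endpoints that satisfies the theorem's hypotheses must be checked separately with explicit integer run lengths.

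The main obstacle is the scale of the symbolic bookkeeping rather than any conceptual difficulty: with $26$ nonzero letters and $\mmax = 12$, the number of subintervals and the number of length-$ma$ factors to inspect grow substantially, which is why this computation runs far longer than the earlier ones. The genuinely delicate point is that the inequality tests of Section~\ref{testing inequality} form only a sufficient, not a complete, procedure for deciding $x \ne z$; the proof goes through precisely because every symbolic pair arising here can be separated by one of the implemented criteria (comparing first or last letters, deleting explicit $0$s, or showing the induced block-length equalities are inconsistent on the relevant subinterval), and the real risk is encountering a genuinely unequal pair that none of these criteria certifies.
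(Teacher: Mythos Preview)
Your proposal is correct and follows precisely the paper's approach: this theorem is one of the thirty morphisms in Section~\ref{symbolic power-free morphisms} whose $\ab$-power-freeness is established entirely by the automated scheme of Section~\ref{power-free morphisms}, and the paper gives no argument beyond pointing to that machinery and the accompanying computation files. Your parameter calculations ($s=9$, $t=4$, $c=7$, $d=6$, $\mmax=12$, $a_\textnormal{min}=13$, and the verification of Lemma~\ref{short words are a/b-power-free}) are all correct, and you have correctly identified that the exceptional set $\{\tfrac{13}{11},\tfrac{20}{17}\}$ arises here for the second of the two reasons the paper discusses after Theorem~\ref{14 6a-b II power-free}, namely that $\varphi(0)$ degenerates to a perfect power at a rational not already ruled out by $\gcd(b,9)=1$.
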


\begin{theorem}\label{29 14a-9b power-free}
Let $a, b$ be relatively prime positive integers such that $\frac{8}{7} < \ab < \frac{6}{5}$ and $\gcd(b,14) = 1$.
Then the $(14a-9b)$-uniform morphism
\begin{align*}
	\varphi(n) = {}
	& 0^{a-b-1} \, 1 \, 0^{a-b-1} \, 2 \, 0^{2 a-2 b-1} \, 1 \, 0^{2 a-2 b-1} \, 1 \, 0^{-3 a+4 b-1} \, 1 \, 0^{a-b-1} \, 2 \\
	& 0^{a-b-1} \, 1 \, 0^{2 a-2 b-1} \, 1 \, 0^{a-b-1} \, 1 \, 0^{a-b-1} \, 1 \, 0^{-3 a+4 b-1} \, 2 \, 0^{2 a-2 b-1} \, 2 \\
	& 0^{2 a-2 b-1} \, 1 \, 0^{a-b-1} \, 1 \, 0^{a-b-1} \, 1 \, 0^{-5 a+6 b-1} \, 1 \, 0^{7 a-8 b-1} \, 1 \, 0^{-5 a+6 b-1} \, 1 \\
	& 0^{a-b-1} \, 2 \, 0^{2 a-2 b-1} \, 2 \, 0^{a-b-1} \, 1 \, 0^{2 a-2 b-1} \, 1 \, 0^{-4 a+5 b-1} \, 1 \, 0^{3 a-3 b-1} \, 1 \\
	& 0^{2 a-2 b-1} \, 1 \, 0^{a-b-1} \, 1 \, 0^{-5 a+6 b-1} \, 1 \, 0^{a-b-1} \, 1 \, 0^{a-b-1} \, (n+2),
\end{align*}
with $29$ nonzero letters, locates words of length $6 a - 6 b$ and is $\ab$-power-free.
\end{theorem}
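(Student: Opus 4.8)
The plan is to apply the automated scheme of Section~\ref{power-free morphisms} verbatim, since the morphism $\varphi$ has exactly the form to which that machinery applies: it is $k$-uniform with $k = 14a - 9b$, so $s = 14$ and $t = 9$, the images $\varphi(n)$ and $\varphi(n')$ differ only in their final letter $n+2$, and the interval is $\frac{8}{7} < \ab < \frac{6}{5}$, so $\Imin = \frac{8}{7}$. Taking the claimed locating length $\ell = 6a - 6b = 6(a-b)$, we have $\ell = ca - db$ with $c = d = 6$, and Proposition~\ref{big m} then gives the bound
\[
	\mmax = \left\lceil \frac{6 \cdot \frac{8}{7} - 6}{\frac{8}{7} - 1} \right\rceil - 1 = \lceil 6 \rceil - 1 = 5.
\]
So once the locating length is established, it will suffice to verify that for every word $w$ the image $\varphi(w)$ contains no $\ab$-power of length $ma$ for $1 \le m \le 5$.

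First I would establish that $\varphi$ locates words of length $6a - 6b$. Rather than invoke Lemma~\ref{locating length} directly (which would only yield the larger length $k$), I would run the search of Section~\ref{determining a locating length}: compute, via the queue procedure of Section~\ref{listing factors}, all symbolic factors of the periodic word $\varphi(n)\varphi(n)\cdots$ of length $6a - 6b$, and then confirm with the inequality tests of Section~\ref{testing inequality} that no two factors beginning at positions in distinct residue classes modulo $k$ can coincide. Because the interval $\frac{8}{7} < \ab < \frac{6}{5}$ is narrow and the run lengths are linear forms such as $-5a + 6b$ and $7a - 8b$, this step forces the interval to be broken into several subintervals on which the relevant $\min$ computations resolve consistently.

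Next I would confirm the hypotheses of Lemma~\ref{short words are a/b-power-free} so that ``every $\ab$-power-free $w$'' in Proposition~\ref{big m} may be replaced by ``every $w$''. With $s = 14$, $t = 9$, and $\Imin = \frac{8}{7}$ we have $s - \frac{t}{\Imin} = 14 - \frac{63}{8} = \frac{49}{8}$, and since $\Imax = \frac{6}{5} \le 2$ forces $a_\textnormal{min} \ge 3$, the required inequality $\mmax = 5 \le \frac{49}{8}(a_\textnormal{min} - 2)$ holds. The core of the proof is then the case analysis: for each $m$ with $1 \le m \le 5$, list all symbolic factors of $\varphi(w)$ of length $ma$ on each subinterval, write each as $xyz$ with $|x| = |z| = m(a - b)$ and $|y| = m(2b - a)$, and verify $x \ne z$ using the battery of inequality criteria (common prefixes and suffixes, deletion of explicit zeros, and unsatisfiability of the resulting block-length systems). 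As the subintervals are open, each subinterval endpoint whose denominator is odd and coprime to $14$ must additionally be checked with its explicit integer run lengths.

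The main obstacle I anticipate is purely one of scale and completeness of the inequality tests. With $29$ nonzero letters and $m$ ranging up to $5$, the factors of length $5a$ are long, the number of window positions is large, and the repeated subdivision of $\frac{8}{7} < \ab < \frac{6}{5}$ multiplies the work; the delicate point is whether the fixed collection of inequality criteria from Section~\ref{testing inequality} suffices to separate every pair $x, z$ on every subinterval, since a single unresolved pair would require strengthening the criteria. Because the statement lists no exceptional rationals, I would also confirm along the way that no subinterval endpoint actually yields an $\ab$-power and that $\varphi(0)$ is never a perfect power for $\ab$ in this interval with $\gcd(b,14) = 1$, as otherwise such rationals would have to be excluded from the hypotheses.
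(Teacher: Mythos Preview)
Your proposal is correct and follows essentially the same approach as the paper: this is one of the $30$ theorems in Section~\ref{symbolic power-free morphisms} that the paper proves ``completely automatically'' via exactly the machinery you describe (Proposition~\ref{big m} with $c=d=6$, hence $\mmax=5$; Lemma~\ref{short words are a/b-power-free} with $s=14$, $t=9$; and the factor-listing and inequality-testing routines of Sections~\ref{listing factors}--\ref{testing inequality}). Your anticipation that no exceptional rationals need be excluded, and that the inequality criteria and endpoint checks succeed throughout, matches what the paper reports for this particular morphism.
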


\begin{theorem}\label{30 10a-5b power-free}
Let $a, b$ be relatively prime positive integers such that $\frac{15}{13} < \ab < \frac{22}{19}$ and $\gcd(b,10) = 1$.
Then the $(10a-5b)$-uniform morphism
\begin{align*}
	\varphi(n) = {}
	& 0^{-11 a+13 b-1} \, 1 \, 0^{14 a-16 b-1} \, 1 \, 0^{-18 a+21 b-1} \, 1 \, 0^{20 a-23 b-1} \, 1 \, 0^{2 a-2 b-1} \, 1 \, 0^{a-b-1} \, 1 \\
	& 0^{-18 a+21 b-1} \, 1 \, 0^{14 a-16 b-1} \, 1 \, 0^{2 a-2 b-1} \, 1 \, 0^{3 a-3 b-1} \, 1 \, 0^{-18 a+21 b-1} \, 1 \, 0^{14 a-16 b-1} \, 1 \\
	& 0^{-18 a+21 b-1} \, 1 \, 0^{a-b-1} \, 1 \, 0^{a-b-1} \, 1 \, 0^{21 a-24 b-1} \, 1 \, 0^{-18 a+21 b-1} \, 1 \, 0^{14 a-16 b-1} \, 1 \\
	& 0^{-12 a+14 b-1} \, 1 \, 0^{2 a-2 b-1} \, 1 \, 0^{a-b-1} \, 1 \, 0^{14 a-16 b-1} \, 1 \, 0^{-18 a+21 b-1} \, 1 \, 0^{2 a-2 b-1} \, 1 \\
	& 0^{3 a-3 b-1} \, 1 \, 0^{14 a-16 b-1} \, 1 \, 0^{-18 a+21 b-1} \, 1 \, 0^{14 a-16 b-1} \, 1 \, 0^{a-b-1} \, 1 \, 0^{a-b-1} \, (n+1),
\end{align*}
with $30$ nonzero letters, locates words of length $a$ and is $\ab$-power-free.
\end{theorem}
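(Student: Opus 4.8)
The plan is to follow the automated scheme of Section~\ref{power-free morphisms} directly: the theorem asserts only that $\varphi$ locates words of length $a$ and is $\ab$-power-free, so no lexicographic-minimality argument enters. I would begin by recording the two structural facts that activate the machinery. The morphism is $(10a-5b)$-uniform, so $k = sa - tb$ with $s = 10$ and $t = 5$; since $\gcd(a,b)=1$ we get $\gcd(b,k) = \gcd(b,10) = 1$ from the hypothesis, which is the divisibility condition of Lemma~\ref{big x}. Moreover $\varphi(n)$ and $\varphi(n')$ agree except possibly in the final letter, $(n+1)$ versus $(n'+1)$, so any two images differ in at most one position, the remaining hypothesis of Lemma~\ref{big x}.

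Next I would run the procedure of Section~\ref{determining a locating length} to pin down a locating length. It generates the candidate lengths $ca - db$ with $10 \ge c \ge d \ge 0$, discards those violating Lemma~\ref{short words are a/b-power-free}, and tests them in increasing order of the implied bound $\mmax$; since Theorem~\ref{30 10a-5b power-free} is not among those for which $\varphi(0)$ is a perfect power for some rational surviving the $\gcd$ condition, no obstruction intervenes, and location first succeeds at $\ell = a$ (that is, $c = 1$, $d = 0$). By Proposition~\ref{big m} this yields
\[
	\mmax = \left\lceil \tfrac{c\,\Imin - d}{\Imin - 1} \right\rceil - 1 = \left\lceil \tfrac{15/13}{2/13} \right\rceil - 1 = 7,
\]
so only factors of length $ma$ with $1 \le m \le 7$ must be examined. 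Before examining them I would verify Lemma~\ref{short words are a/b-power-free} with $s = 10$, $t = 5$: the width $\tfrac{1}{247}$ of $(\tfrac{15}{13}, \tfrac{22}{19})$ forces $a_\textnormal{min}$ to be large, so $\bigl(s - \tfrac{t}{\Imin}\bigr)(a_\textnormal{min} - 2) = \tfrac{17}{3}(a_\textnormal{min} - 2)$ comfortably exceeds $7$. Hence every length-$ma$ factor already sits inside $\varphi(v)$ for a word $v$ too short to carry an $\ab$-power, and it suffices to range over all words $w$ rather than only the $\ab$-power-free ones.

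The substance of the argument is then Step~\eqref{step2}: for each $m$ with $1 \le m \le 7$, enumerate every symbolic factor of $\varphi(n)\varphi(n)\cdots$ of length $ma$ by the queue procedure of Section~\ref{listing factors}, write it as $xyz$ with $|x| = |z| = m(a-b)$ and $|y| = m(2b-a)$, and confirm $x \neq z$ using the inequality tests of Section~\ref{testing inequality}, splitting $(\tfrac{15}{13}, \tfrac{22}{19})$ into the finitely many subintervals on which all run-length comparisons are resolved. Each subinterval endpoint that lies in the open interval and satisfies $\gcd(b,10)=1$ is then checked individually, now with explicit integer run lengths; since the hypotheses exclude no rationals, none of these endpoints produces an $\ab$-power. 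With these verifications in hand, Proposition~\ref{big m} gives that $\varphi$ is $\ab$-power-free.

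I expect the obstacle to be computational rather than conceptual. The interval is extremely narrow, so it fragments into many subintervals, and with $30$ nonzero letters and factors as long as $7a$ the number of symbolic factors---and of pairwise comparisons $x \neq z$---is large; the symbolic inequality verification of Section~\ref{testing inequality} is the bottleneck, and correctness ultimately rests on the \textsc{SymbolicWords} implementation. No new mathematical ingredient beyond Lemmas~\ref{locating length}--\ref{short words are a/b-power-free} and Proposition~\ref{big m} is needed.
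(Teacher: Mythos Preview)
Your proposal is correct and follows exactly the automated scheme the paper uses: the paper states that all $30$ theorems in Section~\ref{symbolic power-free morphisms} are proved ``completely automatically'' by the procedure of Sections~\ref{bounding the factor length}--\ref{determining a locating length}, and you have accurately instantiated that procedure with the correct parameters ($s=10$, $t=5$, $\ell=a$, $\mmax=7$, and the verification of Lemma~\ref{short words are a/b-power-free}). One small caveat: your inference that ``no obstruction intervenes'' in the locating-length search because the theorem lists no exceptional rationals is slightly indirect---the paper notes that even rationals already excluded by the $\gcd$ condition (as in Theorems~\ref{14 6a-b I power-free} and \ref{14 6a-b II power-free}) can block the search and must be fed in as interval splits---but since Theorem~\ref{30 10a-5b power-free} carries no exclusions of either kind, the end result is as you describe.
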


\begin{theorem}\label{37 24a-15b power-free}
Let $a, b$ be relatively prime positive integers such that $\frac{9}{7} < \ab < \frac{4}{3}$ and $\gcd(b,24) = 1$.
Then the $(24a-15b)$-uniform morphism
\begin{align*}
	\varphi(n) = {}
	& 0^{a-b-1} \, 1 \, 0^{2 a-2 b-1} \, 1 \, 0^{-a+2 b-1} \, 1 \, 0^{2 a-2 b-1} \, 1 \, 0^{a-b-1} \, 1 \, 0^{-2 a+3 b-1} \, 1 \, 0^{4 a-5 b-1} \, 1 \\
	& 0^{-a+2 b-1} \, 1 \, 0^{2 a-2 b-1} \, 1 \, 0^{a-b-1} \, 1 \, 0^{-2 a+3 b-1} \, 1 \, 0^{-2 a+3 b-1} \, 1 \, 0^{5 a-6 b-1} \, 1 \\
	& 0^{-2 a+3 b-1} \, 1 \, 0^{4 a-5 b-1} \, 1 \, 0^{a-b-1} \, 1 \, 0^{-2 a+3 b-1} \, 1 \, 0^{3 a-3 b-1} \, 1 \, 0^{-2 a+3 b-1} \, 1 \\
	& 0^{a-b-1} \, 1 \, 0^{-3 a+4 b-1} \, 1 \, 0^{5 a-6 b-1} \, 1 \, 0^{2 a-2 b-1} \, 1 \, 0^{a-b-1} \, 1 \, 0^{-2 a+3 b-1} \, 1 \\
	& 0^{3 a-3 b-1} \, 1 \, 0^{-2 a+3 b-1} \, 1 \, 0^{4 a-5 b-1} \, 1 \, 0^{a-b-1} \, 1 \, 0^{-2 a+3 b-1} \, 1 \, 0^{2 a-2 b-1} \, 2 \\
	& 0^{a-b-1} \, 1 \, 0^{-2 a+3 b-1} \, 1 \, 0^{3 a-3 b-1} \, 1 \, 0^{-2 a+3 b-1} \, 1 \, 0^{a-b-1} \, 1 \, 0^{a-b-1} \, (n+2),
\end{align*}
with $37$ nonzero letters, locates words of length $2 a$ and is $\ab$-power-free.
\end{theorem}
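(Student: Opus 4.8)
The plan is to apply the general machinery of Section~\ref{power-free morphisms} verbatim. First I would note that $\varphi$ has the form $\varphi(n) = u \, (n+2)$ for a fixed word $u$ of length $k-1$, where $k = 24a - 15b$; hence any two images $\varphi(n)$ and $\varphi(n')$ differ only in their last letter, which verifies the second hypothesis of Lemma~\ref{big x}. One must also confirm that every exponent appearing in $\varphi$ is non-negative on $\frac{9}{7} < \ab < \frac{4}{3}$, equivalently that each underlying linear form is a positive integer there; for example $4a - 5b > 0$ since $\ab > \frac{9}{7} > \frac{5}{4}$, and $-2a + 3b > 0$ since $\ab < \frac{4}{3} < \frac{3}{2}$, and the remaining forms are checked the same way.

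The crux is the locating length. Following Section~\ref{determining a locating length}, I would run the factor enumeration of Section~\ref{listing factors} on the periodic word $\varphi(n) \varphi(n) \cdots$ to list all symbolic factors of length $2a$, splitting $\frac{9}{7} < \ab < \frac{4}{3}$ into finitely many subintervals whenever a minimum of two linear forms switches arguments, and then apply the inequality tests of Section~\ref{testing inequality} to check that no nonzero factor of length $2a$ occurs at two positions that are distinct modulo $k$. This shows $\varphi$ locates words of length $\ell = 2a$, so here $c = 2$ and $d = 0$. (Unlike Theorems~\ref{14 6a-b I power-free} and \ref{14 6a-b II power-free}, no rational in the interval needs to be excluded for the locating step to succeed, which is why the hypotheses carry no exceptional residues.)

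With $\Imin = \frac{9}{7}$ and $\ell = 2a = c a - d b$, Proposition~\ref{big m} yields $\mmax = \lceil \frac{2 \cdot 9/7 - 0}{9/7 - 1} \rceil - 1 = \lceil 9 \rceil - 1 = 8$, so it suffices to verify that $\varphi(w)$ contains no $\ab$-power of length $m a$ for $1 \leq m \leq 8$. To pass from ``every $\ab$-power-free word $w$'' to the unconditional ``every word $w$'', I would invoke Lemma~\ref{short words are a/b-power-free} with $s = 24$, $t = 15$, $\Imin = \frac{9}{7}$, and $\Imax = \frac{4}{3}$: the smallest numerator of a rational in $(\frac{9}{7}, \frac{4}{3})$ with denominator coprime to $24$ is $a_\textnormal{min} = 17$, attained at $\frac{17}{13}$, and then $\left(s - \frac{t}{\Imin}\right)(a_\textnormal{min} - 2) = \left(24 - \frac{35}{3}\right) \cdot 15 = 185 \geq 8 = \mmax$, so the hypothesis holds.

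It then remains, for each $m$ with $1 \leq m \leq 8$, to enumerate all symbolic factors of length $m a$, written as $xyz$ with $|x| = |z| = m(a-b)$ and $|y| = m(2b-a)$, over every subinterval, and to verify with the criteria of Section~\ref{testing inequality} that $x \neq z$ in each case, together with a finite check at every subinterval endpoint whose denominator is coprime to $24$. I expect this last verification to be the main obstacle. Since $\mmax = 8$ forces factors as long as $8a$ for a morphism with $37$ nonzero letters and the large modulus $k = 24a - 15b$, both the number of symbolic factors and the number of subintervals are large, and the delicate point is that the (deliberately incomplete) inequality heuristics of Section~\ref{testing inequality} must succeed on every resulting pair $(x, z)$. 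As observed for comparably sized morphisms, the procedure is entirely mechanical but computationally heavy; no new idea beyond the framework of Section~\ref{power-free morphisms} is needed once the locating length $2a$ has been established.
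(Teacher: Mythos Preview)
Your proposal is correct and follows essentially the same approach as the paper, which proves all of Theorems~\ref{2 2a-b power-free}--\ref{279 67a-30b power-free} by the fully automated procedure of Section~\ref{power-free morphisms}; your computations of $\mmax = 8$, $a_{\textnormal{min}} = 17$, and the Lemma~\ref{short words are a/b-power-free} bound $185$ are all right. One tiny slip: in the locating step you should check that \emph{every} length-$2a$ factor (not just nonzero ones) is uniquely positioned modulo $k$, though here the all-zero word of length $2a$ does not occur in $\varphi(w)$ anyway, so the restriction is harmless.
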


\begin{theorem}\label{38 12a-7b power-free}
Let $a, b$ be relatively prime positive integers such that $\frac{10}{9} < \ab < \frac{19}{17}$ and $\gcd(b,12) = 1$.
Then the $(12a-7b)$-uniform morphism
\begin{align*}
	\varphi(n) = {}
	& 0^{19 a-21 b-1} \, 1 \, 0^{-16 a+18 b-1} \, 1 \, 0^{10 a-11 b-1} \, 1 \, 0^{-8 a+9 b-1} \, 1 \, 0^{10 a-11 b-1} \, 1 \, 0^{-8 a+9 b-1} \, 1 \, 0^{2 a-2 b-1} \, 1 \\
	& 0^{a-b-1} \, 1 \, 0^{10 a-11 b-1} \, 1 \, 0^{-16 a+18 b-1} \, 1 \, 0^{2 a-2 b-1} \, 1 \, 0^{2 a-2 b-1} \, 1 \, 0^{3 a-3 b-1} \, 1 \, 0^{10 a-11 b-1} \, 1 \\
	& 0^{-16 a+18 b-1} \, 1 \, 0^{2 a-2 b-1} \, 1 \, 0^{10 a-11 b-1} \, 1 \, 0^{a-b-1} \, 1 \, 0^{a-b-1} \, 1 \, 0^{-7 a+8 b-1} \, 1 \\
	& 0^{10 a-11 b-1} \, 1 \, 0^{-16 a+18 b-1} \, 1 \, 0^{18 a-20 b-1} \, 1 \, 0^{-16 a+18 b-1} \, 1 \, 0^{18 a-20 b-1} \, 1 \, 0^{2 a-2 b-1} \, 1 \\
	& 0^{a-b-1} \, 1 \, 0^{-16 a+18 b-1} \, 1 \, 0^{10 a-11 b-1} \, 1 \, 0^{2 a-2 b-1} \, 1 \, 0^{2 a-2 b-1} \, 1 \, 0^{3 a-3 b-1} \, 1 \\
	& 0^{-16 a+18 b-1} \, 1 \, 0^{10 a-11 b-1} \, 1 \, 0^{2 a-2 b-1} \, 1 \, 0^{-16 a+18 b-1} \, 1 \, 0^{a-b-1} \, 1 \, 0^{a-b-1} \, (n+1),
\end{align*}
with $38$ nonzero letters, locates words of length $a$ and is $\ab$-power-free.
\end{theorem}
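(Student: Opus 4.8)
The plan is to prove this exactly as an instance of the automated scheme developed in Section~\ref{power-free morphisms}, specialized to the parameters of this morphism: $k = 12a - 7b$ (so $s = 12$, $t = 7$), the interval with lower endpoint $\Imin = \frac{10}{9}$ and upper endpoint $\Imax = \frac{19}{17}$, and the divisibility condition $\gcd(b, 12) = 1$, which is equivalent to $\gcd(b, k) = 1$ because $\gcd(a,b) = 1$. The two hypotheses of Lemma~\ref{big x} are then in place: any two images $\varphi(n)$ and $\varphi(n')$ differ only in their final letter, hence in at most one position, and we show below that $\varphi$ locates words of length $\ell = a$. With $\ell = a$ we take $c = 1$ and $d = 0$, and Proposition~\ref{big m} gives the bound $\mmax = \lceil \frac{\Imin}{\Imin - 1}\rceil - 1 = \lceil 10 \rceil - 1 = 9$. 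Thus the whole theorem reduces to two finite verifications: that $\varphi$ locates words of length $a$, and that $\varphi(w)$ contains no $\ab$-power of length $ma$ for $1 \le m \le 9$.

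First I would establish the locating length. Following Section~\ref{determining a locating length}, I would not appeal to Lemma~\ref{locating length} directly (which would only yield the larger length $k$), but instead run the factor-listing procedure of Section~\ref{listing factors} to compute, symbolically in $a$, $b$, and $n$, every length-$a$ factor of the periodic word $\varphi(n)\varphi(n)\cdots$, producing one family of factors for each starting position modulo $k$. Because the run lengths are linear forms in $a$ and $b$ whose signs and relative orders vary across the interval, this requires splitting $\frac{10}{9} < \ab < \frac{19}{17}$ into finitely many subintervals on which each $\min$ of two run-length expressions is attained by a fixed argument. On each subinterval I would then apply the inequality tests of Section~\ref{testing inequality} to confirm that factors beginning at distinct positions modulo $k$ are pairwise unequal, which is precisely the statement that $\varphi$ locates words of length $a$. (One should also confirm that no rational in the interval makes $\varphi(0)$ a perfect power, which would obstruct locating; the absence of excluded rationals in the hypotheses reflects that no such obstruction arises here.)

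With $\ell = a$ fixed, Lemma~\ref{short words are a/b-power-free} applies with $s = 12$, $t = 7$, $\Imin = \frac{10}{9}$, and $\Imax = \frac{19}{17}$: since $\bigl(s - \tfrac{t}{\Imin}\bigr)(a_\textnormal{min} - 2)$ comfortably exceeds $\mmax = 9$, any factor of $\varphi(w)$ of length at most $\mmax a$ already appears inside $\varphi(v)$ for some word $v$ of length at most $a - 1$, which is automatically $\ab$-power-free. Hence I may verify the hypothesis of Proposition~\ref{big m} over all words $w$ rather than only $\ab$-power-free ones. The core computation is then: for each $m$ with $1 \le m \le 9$, list all factors of $\varphi(n_0)\varphi(n_1)\cdots$ of length $ma$, written as $xyz$ with $|x| = |z| = m(a-b)$ and $|y| = m(2b-a)$, and use the inequality criteria of Section~\ref{testing inequality} to show $x \ne z$ in every case, so that no such factor is an $\ab$-power. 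Since the subintervals produced are open, I would separately check, at each subinterval endpoint satisfying the hypotheses (in particular $\gcd(b,12) = 1$), that the factors, which there have explicit integer run lengths, are not $\ab$-powers.

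The main obstacle I expect is the sheer scale of the symbolic case analysis rather than any conceptual difficulty: with $38$ nonzero letters, $k = 12a - 7b$, and factor lengths up to $9a$, the number of symbolic factors is large, the narrow interval $(\tfrac{10}{9}, \tfrac{19}{17})$ fragments into many subintervals, and the inequality tests of Section~\ref{testing inequality} must succeed on every pair $x$, $z$ that arises. The delicate cases will be those in which deleting the explicit $0$s does not immediately separate the two words, where one must instead argue that the corresponding system of block-length equalities has no solution on the given subinterval. All of this is carried out by the \textsc{SymbolicWords} package, and completing it is what certifies the theorem.
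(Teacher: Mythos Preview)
Your proposal is correct and follows essentially the same approach as the paper: this theorem is one of the thirty in Section~\ref{symbolic power-free morphisms} that are proved entirely by the automated scheme of Section~\ref{power-free morphisms}, and you have accurately instantiated that scheme with the correct parameters ($s=12$, $t=7$, $\ell=a$, $c=1$, $d=0$, $\mmax=9$). The paper's own proof consists precisely of running the \textsc{SymbolicWords} computation you describe, with the result recorded on the second author's web site.
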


\begin{theorem}\label{42 13a-8b power-free}
Let $a, b$ be relatively prime positive integers such that $\frac{11}{10} < \ab < \frac{21}{19}$ and $\ab \neq \frac{32}{29}$ and $\gcd(b,13) = 1$.
Then the $(13a-8b)$-uniform morphism
\begin{align*}
	\varphi(n) = {}
	& 0^{21 a-23 b-1} \, 1 \, 0^{-18 a+20 b-1} \, 1 \, 0^{11 a-12 b-1} \, 1 \, 0^{-9 a+10 b-1} \, 1 \, 0^{2 a-2 b-1} \, 1 \, 0^{a-b-1} \, 1 \\
	& 0^{2 a-2 b-1} \, 1 \, 0^{a-b-1} \, 1 \, 0^{11 a-12 b-1} \, 1 \, 0^{-18 a+20 b-1} \, 1 \, 0^{2 a-2 b-1} \, 1 \, 0^{3 a-3 b-1} \, 1 \\
	& 0^{3 a-3 b-1} \, 1 \, 0^{11 a-12 b-1} \, 1 \, 0^{-18 a+20 b-1} \, 1 \, 0^{11 a-12 b-1} \, 1 \, 0^{a-b-1} \, 1 \, 0^{a-b-1} \, 1 \\
	& 0^{-8 a+9 b-1} \, 1 \, 0^{10 a-11 b-1} \, 1 \, 0^{a-b-1} \, 1 \, 0^{-8 a+9 b-1} \, 1 \, 0^{11 a-12 b-1} \, 1 \, 0^{-18 a+20 b-1} \, 1 \\
	& 0^{20 a-22 b-1} \, 1 \, 0^{2 a-2 b-1} \, 1 \, 0^{a-b-1} \, 1 \, 0^{2 a-2 b-1} \, 1 \, 0^{a-b-1} \, 1 \, 0^{-18 a+20 b-1} \, 1 \\
	& 0^{11 a-12 b-1} \, 1 \, 0^{2 a-2 b-1} \, 1 \, 0^{3 a-3 b-1} \, 1 \, 0^{3 a-3 b-1} \, 1 \, 0^{-18 a+20 b-1} \, 1 \, 0^{11 a-12 b-1} \, 1 \\
	& 0^{-18 a+20 b-1} \, 1 \, 0^{a-b-1} \, 1 \, 0^{a-b-1} \, 1 \, 0^{21 a-23 b-1} \, 1 \, 0^{-19 a+21 b-1} \, 1 \, 0^{a-b-1} \, (n+1),
\end{align*}
with $42$ nonzero letters, locates words of length $a$ and is $\ab$-power-free.
\end{theorem}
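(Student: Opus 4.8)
The plan is to invoke the general machinery of Section~\ref{power-free morphisms} exactly as in the preceding theorems, specialized to $k = 13a - 8b$ (so $s = 13$ and $t = 8$) on the interval $\frac{11}{10} < \ab < \frac{21}{19}$. First I would confirm the locating length. The claim is that $\varphi$ locates words of length $\ell = a$, which corresponds to $c = 1$ and $d = 0$ in the notation of Proposition~\ref{big m}. To establish this I would run the procedure of Section~\ref{determining a locating length}: generate the symbolic factors of $\varphi(n)\varphi(n)\cdots$ of length $a$ (splitting $\frac{11}{10} < \ab < \frac{21}{19}$ into subintervals as needed), and verify with the inequality tests of Section~\ref{testing inequality} that no length-$a$ factor occurs at two positions that are incongruent modulo $k$. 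Because $\varphi(0)$ is a perfect power precisely when $\ab = \frac{32}{29}$ --- the one rational in the interval not already removed by $\gcd(b,13) = 1$ --- this rational genuinely obstructs locating and must be excluded; a separate search for obstructions, as for Theorems~\ref{14 6a-b I power-free} and \ref{14 6a-b II power-free}, identifies it and splits the interval at $\frac{32}{29}$.

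With $\ell = a$ in hand, Proposition~\ref{big m} bounds the factor lengths to check. Taking $\Imin = \frac{11}{10}$, $c = 1$, $d = 0$ gives
\[
	\mmax = \left\lceil \tfrac{c \cdot \Imin - d}{\Imin - 1} \right\rceil - 1 = \left\lceil \tfrac{11/10}{1/10} \right\rceil - 1 = 10.
\]
I would then verify the hypotheses of Lemma~\ref{short words are a/b-power-free} with $s = 13$, $t = 8$, $\Imin = \frac{11}{10}$, and $\Imax = \frac{21}{19}$. Since $\frac{11}{10}$ and $\frac{21}{19}$ are Farey neighbors (indeed $21 \cdot 10 - 11 \cdot 19 = 1$), the least numerator of a fraction strictly between them is that of the mediant $\frac{32}{29}$, with $\gcd(29, 13) = 1$, so $a_\textnormal{min} = 32$; and $\mmax = 10 \leq \big(13 - \tfrac{8}{11/10}\big)(32 - 2) = \tfrac{63}{11}\cdot 30 \approx 171.8$, so the lemma applies. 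This guarantees that every factor of $\varphi(w)$ of length at most $\mmax\, a$ already lives inside $\varphi(v)$ for a word $v$ too short to contain an $\ab$-power, so I may quantify over all words $w$ rather than only $\ab$-power-free ones.

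The core of the proof is then the finite symbolic case analysis of Section~\ref{listing factors}. For each $m$ with $1 \leq m \leq 10$, I would list every symbolic factor of $\varphi(n)\varphi(n)\cdots$ of length $m a$, rename the copies of $n$ to distinct symbols, split each factor as $xyz$ with $|x| = |z| = m(a - b)$ and $|y| = m(2b - a)$, and check $x \neq z$ using the inequality criteria of Section~\ref{testing inequality}. The endpoints of each subinterval of $\frac{11}{10} < \ab < \frac{21}{19}$ must be handled separately with explicit run lengths, skipping those with $\gcd(b,13) \neq 1$ as well as $\frac{32}{29}$. Once all these factors are shown not to be $\ab$-powers, Proposition~\ref{big m} yields that $\varphi$ is $\ab$-power-free.

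The hard part is purely computational and lies in this last step: with $42$ nonzero letters, $k = 13a - 8b$, and $\mmax = 10$, the interval fragments into many subintervals and the number of symbolic factors of length up to $10a$ is large. Since the inequality tester of Section~\ref{testing inequality} is a heuristic collection of sufficient criteria rather than a complete decision procedure, the real risk is that some pair $x, z$ resists every implemented criterion; the expectation --- borne out empirically for all $30$ morphisms --- is that the criteria (common prefix/suffix removal, comparing first and last letters, and deleting explicit $0$s and comparing the resulting block-length systems) always suffice. Managing the subinterval splitting and the run time, comparable to the other large morphisms, is therefore the practical difficulty rather than any conceptual one.
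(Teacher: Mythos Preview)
Your proposal is correct and follows precisely the paper's own approach: the paper states that each of the thirty theorems in Section~\ref{symbolic power-free morphisms}, including this one, is proved ``completely automatically'' by the procedure of Section~\ref{power-free morphisms} (determine a locating length, apply Proposition~\ref{big m} and Lemma~\ref{short words are a/b-power-free}, then run the symbolic factor check of Sections~\ref{listing factors}--\ref{testing inequality}), and it explicitly lists Theorem~\ref{42 13a-8b power-free} among those where an exceptional rational ($\tfrac{32}{29}$) must be excluded because $\varphi(0)$ is a perfect power there. Your computation of $\mmax = 10$, identification of $a_\textnormal{min} = 32$, and verification of the bound in Lemma~\ref{short words are a/b-power-free} are all on target.
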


\begin{theorem}\label{46 14a-9b I power-free}
Let $a, b$ be relatively prime positive integers such that $\frac{12}{11} < \ab < \frac{23}{21}$ and $\gcd(b,14) = 1$.
Then the $(14a-9b)$-uniform morphism
\begin{align*}
	\varphi(n) = {}
	& 0^{-9 a+10 b-1} \, 1 \, 0^{2 a-2 b-1} \, 1 \, 0^{2 a-2 b-1} \, 1 \, 0^{12 a-13 b-1} \, 1 \, 0^{2 a-2 b-1} \, 1 \, 0^{2 a-2 b-1} \, 1 \\
	& 0^{a-b-1} \, 1 \, 0^{-20 a+22 b-1} \, 1 \, 0^{2 a-2 b-1} \, 1 \, 0^{2 a-2 b-1} \, 1 \, 0^{12 a-13 b-1} \, 1 \, 0^{-20 a+22 b-1} \, 1 \\
	& 0^{a-b-1} \, 1 \, 0^{24 a-26 b-1} \, 1 \, 0^{-20 a+22 b-1} \, 1 \, 0^{22 a-24 b-1} \, 1 \, 0^{-20 a+22 b-1} \, 1 \, 0^{22 a-24 b-1} \, 1 \\
	& 0^{-20 a+22 b-1} \, 1 \, 0^{12 a-13 b-1} \, 1 \, 0^{-10 a+11 b-1} \, 1 \, 0^{-9 a+10 b-1} \, 1 \, 0^{a-b-1} \, 1 \, 0^{23 a-25 b-1} \, 1 \\
	& 0^{2 a-2 b-1} \, 1 \, 0^{2 a-2 b-1} \, 1 \, 0^{-20 a+22 b-1} \, 1 \, 0^{2 a-2 b-1} \, 1 \, 0^{2 a-2 b-1} \, 1 \, 0^{a-b-1} \, 1 \\
	& 0^{12 a-13 b-1} \, 1 \, 0^{2 a-2 b-1} \, 1 \, 0^{2 a-2 b-1} \, 1 \, 0^{-20 a+22 b-1} \, 1 \, 0^{12 a-13 b-1} \, 1 \, 0^{a-b-1} \, 1 \\
	& 0^{-8 a+9 b-1} \, 1 \, 0^{12 a-13 b-1} \, 1 \, 0^{-10 a+11 b-1} \, 1 \, 0^{12 a-13 b-1} \, 1 \, 0^{-10 a+11 b-1} \, 1 \, 0^{12 a-13 b-1} \, 1 \\
	& 0^{-20 a+22 b-1} \, 1 \, 0^{22 a-24 b-1} \, 1 \, 0^{-9 a+10 b-1} \, 1 \, 0^{a-b-1} \, (n+1),
\end{align*}
with $46$ nonzero letters, locates words of length $10 a - 10 b$ and is $\ab$-power-free.
\end{theorem}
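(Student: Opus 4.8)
The plan is to apply the automated procedure of Section~\ref{power-free morphisms} verbatim, since this theorem has exactly the form to which that machinery applies: $\varphi$ is $k$-uniform with $k = s a - t b$ for $s = 14$ and $t = 9$, the ratio is confined to an interval with lower endpoint $\Imin = \frac{12}{11}$, and the hypothesis $\gcd(b,14) = 1$ together with $\gcd(a,b)=1$ guarantees $\gcd(b,k) = 1$, as required by Lemma~\ref{big x}. Because every image $\varphi(n)$ shares the same length-$(k-1)$ prefix and differs from $\varphi(n')$ only in its final letter $(n+1)$, the second bullet of Lemma~\ref{big x} holds outright, and only the first bullet (a locating length) must be supplied.

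The first task is to confirm that $\varphi$ locates words of length $\ell = 10 a - 10 b$. Rather than invoke the crude value $\ell = k$ from Lemma~\ref{locating length}, I would run the search of Section~\ref{determining a locating length}: enumerate the candidates $c a - d b$ with $10 \ge c \ge d \ge 0$, discard those failing the hypothesis of Lemma~\ref{short words are a/b-power-free}, and test the survivors in increasing order of the bound $\mmax$ they would force. For the length $10 a - 10 b$ this means listing, via the queue-based windowing of Section~\ref{listing factors}, every symbolic factor of $\varphi(n)\varphi(n)\cdots$ of that length---breaking $\frac{12}{11} < \ab < \frac{23}{21}$ into finitely many subintervals as the block lengths force---and checking with the criteria of Section~\ref{testing inequality} that no two distinct such factors coincide, so that every occurrence of a length-$\ell$ word is pinned to a single residue modulo $k$.

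With locating in hand, $\ab$-power-freeness follows from Proposition~\ref{big m}. Taking $\Imin = \frac{12}{11}$, $c = 10$, and $d = 10$ gives
\[
	\mmax = \left\lceil \frac{10 \cdot \frac{12}{11} - 10}{\frac{12}{11} - 1} \right\rceil - 1 = \lceil 10 \rceil - 1 = 9,
\]
so it suffices to rule out $\ab$-powers of length $m a$ for $1 \le m \le 9$. Lemma~\ref{short words are a/b-power-free}, applied with $s = 14$, $t = 9$, and the interval $(\frac{12}{11}, \frac{23}{21})$, shows $\lceil \frac{\mmax a - 1}{k}\rceil + 1 \le a - 1$: the inequality $9 \le (14 - 9 \cdot \frac{11}{12})\,(a_\textnormal{min} - 2)$ holds comfortably because the narrow interval forces $a_\textnormal{min}$ to be large. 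Hence the relevant factors are too short to contain an $\ab$-power on their own, and I may quantify over all words $w$ rather than only $\ab$-power-free ones. Concretely, for each $m \le 9$ I would list every symbolic factor of length $m a$, split it as $xyz$ with $|x| = |z| = m\,(a - b)$, and verify $x \ne z$ using the inequality tests, finishing by separately checking the odd-denominator subinterval endpoints, where the run lengths become explicit integers.

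The main obstacle is computational rather than conceptual: with $46$ nonzero letters and a window reaching length $9a$---which spans parts of two or three consecutive images $\varphi(n_i)$---both the number of symbolic factors and the number of subintervals into which $(\frac{12}{11}, \frac{23}{21})$ fragments are large. Since there is no complete decision procedure for equality of symbolic words (Section~\ref{testing inequality}), the verification succeeds only if the implemented inequality criteria happen to dispatch every pair of subfactors that arises. I expect this to go through, as for the other morphisms in this section, but it is the step where the argument could in principle stall and require a new inequality criterion.
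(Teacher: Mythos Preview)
Your proposal is correct and follows exactly the automated procedure of Section~\ref{power-free morphisms} that the paper uses for all thirty theorems in Section~\ref{symbolic power-free morphisms}; the paper gives no separate proof for this theorem but states that $\ab$-power-freeness is established completely automatically by the machinery you describe. Your computation $\mmax = 9$ from $c = d = 10$ and $\Imin = \tfrac{12}{11}$ is right, and your observation that the narrow interval forces $a_\textnormal{min}$ large enough for Lemma~\ref{short words are a/b-power-free} is also correct (in fact $a_\textnormal{min} = 47$ here, giving $(14 - 9\cdot\tfrac{11}{12})(47-2) = 258.75 \ge 9$).
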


\begin{theorem}\label{46 14a-9b II power-free}
Let $a, b$ be relatively prime positive integers such that $\frac{23}{21} < \ab < \frac{34}{31}$ and $\gcd(b,14) = 1$.
Then the $(14a-9b)$-uniform morphism
\begin{align*}
	\varphi(n) = {}
	& 0^{33 a-36 b-1} \, 1 \, 0^{-30 a+33 b-1} \, 1 \, 0^{22 a-24 b-1} \, 1 \, 0^{-20 a+22 b-1} \, 1 \, 0^{22 a-24 b-1} \, 1 \, 0^{-20 a+22 b-1} \, 1 \\
	& 0^{22 a-24 b-1} \, 1 \, 0^{-20 a+22 b-1} \, 1 \, 0^{2 a-2 b-1} \, 1 \, 0^{a-b-1} \, 1 \, 0^{22 a-24 b-1} \, 1 \, 0^{-30 a+33 b-1} \, 1 \\
	& 0^{2 a-2 b-1} \, 1 \, 0^{2 a-2 b-1} \, 1 \, 0^{2 a-2 b-1} \, 1 \, 0^{3 a-3 b-1} \, 1 \, 0^{22 a-24 b-1} \, 1 \, 0^{-30 a+33 b-1} \, 1 \\
	& 0^{2 a-2 b-1} \, 1 \, 0^{2 a-2 b-1} \, 1 \, 0^{22 a-24 b-1} \, 1 \, 0^{a-b-1} \, 1 \, 0^{a-b-1} \, 1 \, 0^{-19 a+21 b-1} \, 1 \\
	& 0^{22 a-24 b-1} \, 1 \, 0^{-30 a+33 b-1} \, 1 \, 0^{32 a-35 b-1} \, 1 \, 0^{-30 a+33 b-1} \, 1 \, 0^{32 a-35 b-1} \, 1 \, 0^{-30 a+33 b-1} \, 1 \\
	& 0^{32 a-35 b-1} \, 1 \, 0^{2 a-2 b-1} \, 1 \, 0^{a-b-1} \, 1 \, 0^{-30 a+33 b-1} \, 1 \, 0^{22 a-24 b-1} \, 1 \, 0^{2 a-2 b-1} \, 1 \\
	& 0^{2 a-2 b-1} \, 1 \, 0^{2 a-2 b-1} \, 1 \, 0^{3 a-3 b-1} \, 1 \, 0^{-30 a+33 b-1} \, 1 \, 0^{22 a-24 b-1} \, 1 \, 0^{2 a-2 b-1} \, 1 \\
	& 0^{2 a-2 b-1} \, 1 \, 0^{-30 a+33 b-1} \, 1 \, 0^{a-b-1} \, 1 \, 0^{a-b-1} \, (n+1),
\end{align*}
with $46$ nonzero letters, locates words of length $a$ and is $\ab$-power-free.
\end{theorem}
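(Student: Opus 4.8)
The plan is to apply the machinery of Section~\ref{power-free morphisms} exactly as it is assembled there, carrying out the symbolic computation with the \textsc{SymbolicWords} package. First I would record the parameters: the morphism is $k$-uniform with $k = 14a - 9b$, so $s = 14$ and $t = 9$ in $k = sa - tb$, the interval is bounded below by $\Imin = \frac{23}{21}$ and above by $\Imax = \frac{34}{31}$, and the condition $\gcd(b,14) = 1$ guarantees $\gcd(b,k) = 1$ (as noted in Section~\ref{symbolic power-free morphisms}, for relatively prime $a,b$ this is equivalent to $\gcd(b,s) = 1$). The two structural hypotheses of Lemma~\ref{big x} hold: the images $\varphi(n)$ are pairwise identical except in their final letter $n+1$, so any two differ in at most one position, and the claimed locating length is $\ell = a$.

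Next I would justify and use the locating length $\ell = a$, that is, $c = 1$ and $d = 0$ in $\ell = ca - db$. Running the procedure of Section~\ref{determining a locating length}, one generates the admissible combinations $ca - db$, discards those failing Lemma~\ref{short words are a/b-power-free}, sorts by the implied bound, and tests the locating property by computing the symbolic factors of $\varphi(n)\varphi(n)\cdots$ on each subinterval and checking that distinct factors are provably unequal; the shortest length the morphism actually locates is $a$. With $\ell = a$ and $\Imin = \frac{23}{21}$, Proposition~\ref{big m} gives $\mmax = \lceil \frac{1 \cdot \frac{23}{21} - 0}{\frac{23}{21} - 1} \rceil - 1 = \lceil \frac{23}{2} \rceil - 1 = 11$. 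I would then confirm the hypothesis of Lemma~\ref{short words are a/b-power-free}, namely $\mmax \leq (s - \frac{t}{\Imin})(a_\textnormal{min} - 2)$ with $s - \frac{t}{\Imin} = \frac{133}{23}$; since $\frac{23}{21}$ and $\frac{34}{31}$ are Farey neighbors, the numerators of rationals strictly between them are large, so $a_\textnormal{min}$ is comfortably large enough for this inequality. Consequently I may replace ``every $\ab$-power-free word $w$'' by ``every word $w$'' in the hypothesis of Proposition~\ref{big m}.

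The core of the proof is then the finite case analysis of Sections~\ref{listing factors} and~\ref{testing inequality}: for each $m$ with $1 \leq m \leq 11$, list every symbolic factor of $\varphi(w)$ of length $ma$, split it as $xyz$ with $|x| = |z| = m(a-b)$ and $|y| = m(2b-a)$, and establish $x \neq z$ for all parameter values using the inequality criteria (matching or mismatching prefixes and suffixes, deletion of explicit zeros, and unsolvability of the induced system of run-length equations). Because the run lengths are linear in $a, b, 1$, listing the factors requires subdividing $(\frac{23}{21}, \frac{34}{31})$ at the rationals where two run-length expressions cross; each open subinterval is handled symbolically, while its endpoints with denominator coprime to $14$ are checked individually, since such endpoints are exactly the candidates that could secretly support an $\ab$-power, as happened at $\frac{5}{3}$ and $\frac{7}{5}$ for Theorems~\ref{6 4a-2b I power-free} and~\ref{6 4a-2b II power-free}. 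Once no factor survives as an $\ab$-power on any subinterval or endpoint, Proposition~\ref{big m} yields that $\varphi$ is $\ab$-power-free.

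I expect the main obstacle to be the scale of the symbolic computation rather than any conceptual difficulty. Because $\Imin = \frac{23}{21}$ is so close to $1$, the bound $\mmax = 11$ is comparatively large, and the $46$ nonzero letters force both a fine subdivision of the interval and a great many symbolic factors of lengths $a, 2a, \dots, 11a$ to be generated and tested. The delicate point inside this is ensuring that the inequality tests of Section~\ref{testing inequality} succeed on \emph{every} pair $(x,z)$: when deleting explicit zeros leaves words that would coincide at $n = 0$, one must fall back on showing that the corresponding block-length system is inconsistent on the relevant subinterval, and it is the uniform completeness of this collection of criteria across all subintervals that must be verified.
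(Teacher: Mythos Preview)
Your proposal is correct and follows exactly the paper's approach: this theorem is one of the thirty symbolic morphisms in Section~\ref{symbolic power-free morphisms} that the paper proves entirely by the automated procedure of Section~\ref{power-free morphisms}, and you have accurately identified the parameters ($s=14$, $t=9$, $\ell=a$, $\mmax=11$) and the required verifications via Lemma~\ref{big x}, Proposition~\ref{big m}, and Lemma~\ref{short words are a/b-power-free}. Your computation $\mmax=11$ and the Farey-neighbor observation for $a_\textnormal{min}$ are both correct, and your description of the finite case analysis matches what the \textsc{SymbolicWords} package carries out.
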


\begin{theorem}\label{54 13a-5b power-free}
Let $a, b$ be relatively prime positive integers such that $\frac{9}{8} < \ab < \frac{26}{23}$ and $\ab \neq \frac{35}{31}$ and $\gcd(b,13) = 1$.
Then the $(13a-5b)$-uniform morphism
\begin{align*}
	\varphi(n) = {}
	& 0^{25 a-28 b-1} \, 1 \, 0^{-22 a+25 b-1} \, 1 \, 0^{9 a-10 b-1} \, 1 \, 0^{9 a-10 b-1} \, 1 \, 0^{-7 a+8 b-1} \, 1 \, 0^{2 a-2 b-1} \, 1 \\
	& 0^{a-b-1} \, 1 \, 0^{9 a-10 b-1} \, 1 \, 0^{-22 a+25 b-1} \, 1 \, 0^{9 a-10 b-1} \, 1 \, 0^{2 a-2 b-1} \, 1 \, 0^{3 a-3 b-1} \, 1 \\
	& 0^{9 a-10 b-1} \, 1 \, 0^{-22 a+25 b-1} \, 1 \, 0^{9 a-10 b-1} \, 1 \, 0^{9 a-10 b-1} \, 1 \, 0^{a-b-1} \, 1 \, 0^{a-b-1} \, 1 \\
	& 0^{-6 a+7 b-1} \, 1 \, 0^{9 a-10 b-1} \, 1 \, 0^{-22 a+25 b-1} \, 1 \, 0^{9 a-10 b-1} \, 1 \, 0^{-7 a+8 b-1} \, 1 \, 0^{2 a-2 b-1} \, 1 \\
	& 0^{a-b-1} \, 1 \, 0^{9 a-10 b-1} \, 1 \, 0^{9 a-10 b-1} \, 1 \, 0^{-22 a+25 b-1} \, 1 \, 0^{2 a-2 b-1} \, 1 \, 0^{3 a-3 b-1} \, 1 \\
	& 0^{9 a-10 b-1} \, 1 \, 0^{9 a-10 b-1} \, 1 \, 0^{-22 a+25 b-1} \, 1 \, 0^{9 a-10 b-1} \, 1 \, 0^{a-b-1} \, 1 \, 0^{a-b-1} \, 1 \\
	& 0^{-6 a+7 b-1} \, 1 \, 0^{9 a-10 b-1} \, 1 \, 0^{9 a-10 b-1} \, 1 \, 0^{-22 a+25 b-1} \, 1 \, 0^{24 a-27 b-1} \, 1 \, 0^{2 a-2 b-1} \, 1 \\
	& 0^{a-b-1} \, 1 \, 0^{-22 a+25 b-1} \, 1 \, 0^{9 a-10 b-1} \, 1 \, 0^{9 a-10 b-1} \, 1 \, 0^{2 a-2 b-1} \, 1 \, 0^{3 a-3 b-1} \, 1 \\
	& 0^{-22 a+25 b-1} \, 1 \, 0^{9 a-10 b-1} \, 1 \, 0^{9 a-10 b-1} \, 1 \, 0^{-22 a+25 b-1} \, 1 \, 0^{a-b-1} \, 1 \, 0^{a-b-1} \, (n+1),
\end{align*}
with $54$ nonzero letters, locates words of length $a$ and is $\ab$-power-free.
\end{theorem}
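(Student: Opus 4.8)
The plan is to prove both assertions—that $\varphi$ locates words of length $a$ and that $\varphi$ is $\ab$-power-free—entirely by the automated scheme of this section, specialized to the uniform length $k = 13a - 5b$ (so $s = 13$ and $t = 5$) and to the interval $\Imin = \frac{9}{8} < \ab < \frac{26}{23} = \Imax$ under the congruence condition $\gcd(b, 13) = 1$. Since the theorem asserts only power-freeness, and not that some $\varphi^\infty(0)$ is lexicographically least, no separate leastness argument is needed; the entire content is the symbolic case analysis.

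First I would determine a locating length, following Section~\ref{determining a locating length}. The obstacle here is that $\varphi(0)$ is a perfect power when $\ab = \frac{35}{31}$, a rational that lies in the interval and satisfies $\gcd(31, 13) = 1$ and so is not removed by the congruence condition; at such a rational $\varphi$ locates words of no length, so Lemma~\ref{locating length} cannot be applied uniformly. As described after Theorem~\ref{14 6a-b II power-free}, I would detect this obstruction by a separate search, add the hypothesis $\ab \neq \frac{35}{31}$, and split the interval at $\frac{35}{31}$. On each resulting piece I would compute, with the queue procedure of Section~\ref{listing factors}, all symbolic length-$a$ factors of $\varphi(n)\varphi(n)\cdots$—subdividing further whenever the relevant run-length minima require it—and then use the inequality tests of Section~\ref{testing inequality} to confirm that distinct starting residues modulo $k$ always produce distinct factors. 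This verifies that $\varphi$ locates words of length $\ell = a$.

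With $\ell = a$ in hand, I would apply Proposition~\ref{big m} with $c = 1$ and $d = 0$, obtaining $\mmax = \lceil \tfrac{\Imin}{\Imin - 1} \rceil - 1 = \lceil 9 \rceil - 1 = 8$. To replace ``for every $\ab$-power-free word $w$'' by ``for every word $w$'', I would check the hypothesis of Lemma~\ref{short words are a/b-power-free}: since $\frac{9}{8}$ and $\frac{26}{23}$ are Farey neighbors, the smallest admissible numerator in the interval is $a_\textnormal{min} = 35$, and $\mmax = 8 \leq (13 - \tfrac{40}{9})(35 - 2) = \tfrac{77}{9} \cdot 33$ holds with room to spare. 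Thus every factor of length $ma$ with $m \leq 8$ occurs inside $\varphi(v)$ for some $v$ of length at most $a - 1$, which is too short to contain an $\ab$-power.

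It then remains, for each $m$ with $1 \leq m \leq 8$, to list all symbolic factors of length $ma$, written as $xyz$ with $|x| = |z| = m(a - b)$ and $|y| = m(2b - a)$, and to verify $x \neq z$ for every such factor on each open subinterval and at each admissible subinterval endpoint (those whose denominator is coprime to $13$ and distinct from $\frac{35}{31}$). By Lemma~\ref{big x} together with Proposition~\ref{big m}, this establishes that $\varphi$ is $\ab$-power-free. I expect the main obstacle to be computational scale rather than conceptual difficulty: with $54$ nonzero letters per block and factor lengths up to $8a$, the factor-listing step produces a large collection of symbolic words spread over many subintervals of $(\frac{9}{8}, \frac{26}{23})$, and the true bottleneck is certifying $x \neq z$ for all of them through the available inequality criteria—the step most likely to demand either an additional inequality test or a finer interval subdivision before the verification closes automatically.
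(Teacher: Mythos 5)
Your proposal is correct and follows essentially the same route as the paper, which proves this theorem entirely by the automated scheme of Section~\ref{power-free morphisms}: find the locating length $\ell = a$ after excluding $\frac{35}{31}$ (where $\varphi(0)$ is a perfect power but the $\gcd$ condition does not apply), invoke Proposition~\ref{big m} with $c=1$, $d=0$ to get $\mmax = 8$, use Lemma~\ref{short words are a/b-power-free} (your computation $a_\textnormal{min}=35$ via the Farey-neighbor observation is right) to drop the power-freeness hypothesis on $w$, and then verify $x \neq z$ for all symbolic factors of length $ma$, $1 \leq m \leq 8$, over the subintervals and their admissible endpoints.
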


\begin{theorem}\label{102 38a-15b power-free}
Let $a, b$ be relatively prime positive integers such that $\frac{11}{9} < \ab < \frac{16}{13}$ and $\gcd(b,38) = 1$.
Then the $(38a-15b)$-uniform morphism
\small
\begin{align*}
	\varphi(n) = {}
	& 0^{-7 a+9 b-1} \, 1 \, 0^{10 a-12 b-1} \, 1 \, 0^{2 a-2 b-1} \, 1 \, 0^{a-b-1} \, 1 \, 0^{-12 a+15 b-1} \, 1 \, 0^{10 a-12 b-1} \, 1 \\
	& 0^{3 a-3 b-1} \, 1 \, 0^{-12 a+15 b-1} \, 1 \, 0^{10 a-12 b-1} \, 1 \, 0^{-8 a+10 b-1} \, 1 \, 0^{a-b-1} \, 1 \, 0^{10 a-12 b-1} \, 1 \\
	& 0^{-12 a+15 b-1} \, 1 \, 0^{2 a-2 b-1} \, 2 \, 0^{a-b-1} \, 1 \, 0^{10 a-12 b-1} \, 1 \, 0^{-12 a+15 b-1} \, 1 \, 0^{3 a-3 b-1} \, 1 \\
	& 0^{10 a-12 b-1} \, 1 \, 0^{-12 a+15 b-1} \, 1 \, 0^{a-b-1} \, 1 \, 0^{a-b-1} \, 2 \, 0^{a-b-1} \, 1 \, 0^{10 a-12 b-1} \, 1 \\
	& 0^{2 a-2 b-1} \, 1 \, 0^{-11 a+14 b-1} \, 1 \, 0^{10 a-12 b-1} \, 1 \, 0^{2 a-2 b-1} \, 1 \, 0^{a-b-1} \, 1 \, 0^{-12 a+15 b-1} \, 1 \\
	& 0^{10 a-12 b-1} \, 1 \, 0^{-8 a+10 b-1} \, 1 \, 0^{11 a-13 b-1} \, 1 \, 0^{-12 a+15 b-1} \, 1 \, 0^{2 a-2 b-1} \, 1 \, 0^{a-b-1} \, 1 \\
	& 0^{10 a-12 b-1} \, 1 \, 0^{-12 a+15 b-1} \, 1 \, 0^{10 a-12 b-1} \, 1 \, 0^{-7 a+9 b-1} \, 1 \, 0^{10 a-12 b-1} \, 1 \, 0^{-12 a+15 b-1} \, 1 \\
	& 0^{14 a-17 b-1} \, 1 \, 0^{a-b-1} \, 1 \, 0^{-12 a+15 b-1} \, 1 \, 0^{10 a-12 b-1} \, 1 \, 0^{3 a-3 b-1} \, 1 \, 0^{-12 a+15 b-1} \, 1 \\
	& 0^{10 a-12 b-1} \, 1 \, 0^{a-b-1} \, 1 \, 0^{-13 a+16 b-1} \, 1 \, 0^{15 a-18 b-1} \, 1 \, 0^{-12 a+15 b-1} \, 1 \, 0^{2 a-2 b-1} \, 1 \\
	& 0^{a-b-1} \, 1 \, 0^{10 a-12 b-1} \, 1 \, 0^{-12 a+15 b-1} \, 1 \, 0^{3 a-3 b-1} \, 1 \, 0^{10 a-12 b-1} \, 1 \, 0^{-12 a+15 b-1} \, 1 \\
	& 0^{14 a-17 b-1} \, 1 \, 0^{a-b-1} \, 1 \, 0^{-12 a+15 b-1} \, 1 \, 0^{10 a-12 b-1} \, 1 \, 0^{2 a-2 b-1} \, 2 \, 0^{a-b-1} \, 1 \\
	& 0^{-12 a+15 b-1} \, 1 \, 0^{10 a-12 b-1} \, 1 \, 0^{3 a-3 b-1} \, 1 \, 0^{-12 a+15 b-1} \, 1 \, 0^{10 a-12 b-1} \, 1 \, 0^{a-b-1} \, 1 \\
	& 0^{a-b-1} \, 2 \, 0^{a-b-1} \, 1 \, 0^{-12 a+15 b-1} \, 1 \, 0^{2 a-2 b-1} \, 1 \, 0^{11 a-13 b-1} \, 1 \, 0^{-12 a+15 b-1} \, 1 \\
	& 0^{2 a-2 b-1} \, 1 \, 0^{a-b-1} \, 1 \, 0^{10 a-12 b-1} \, 1 \, 0^{-12 a+15 b-1} \, 1 \, 0^{14 a-17 b-1} \, 1 \, 0^{-11 a+14 b-1} \, 1 \\
	& 0^{10 a-12 b-1} \, 1 \, 0^{2 a-2 b-1} \, 1 \, 0^{a-b-1} \, 1 \, 0^{-12 a+15 b-1} \, 1 \, 0^{10 a-12 b-1} \, 1 \, 0^{-12 a+15 b-1} \, 1 \\
	& 0^{15 a-18 b-1} \, 1 \, 0^{-12 a+15 b-1} \, 1 \, 0^{10 a-12 b-1} \, 1 \, 0^{-8 a+10 b-1} \, 1 \, 0^{a-b-1} \, 1 \, 0^{10 a-12 b-1} \, 1 \\
	& 0^{-12 a+15 b-1} \, 1 \, 0^{3 a-3 b-1} \, 1 \, 0^{10 a-12 b-1} \, 1 \, 0^{-12 a+15 b-1} \, 1 \, 0^{a-b-1} \, 1 \, 0^{9 a-11 b-1} \, (n+1),
\end{align*}
\normalsize
with $102$ nonzero letters, locates words of length $2 a$ and is $\ab$-power-free.
\end{theorem}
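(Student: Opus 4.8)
The plan is to apply the general machinery of Section~\ref{power-free morphisms} essentially verbatim, since this theorem has exactly the form those tools are designed to discharge. Write $\varphi(n) = u\,(n+1)$, where $u$ is the fixed word of length $k - 1 = 38a - 15b - 1$ displayed above. First I would verify the two structural hypotheses of Lemma~\ref{big x}. The divisibility condition $\gcd(b,k) = 1$ holds because $\gcd(b, 38a - 15b) = \gcd(b, 38a) = \gcd(b, 38) = 1$, using $\gcd(a,b) = 1$ together with the hypothesis $\gcd(b,38) = 1$. The second hypothesis, that $\varphi(n)$ and $\varphi(n')$ differ in at most one position, is immediate: the two images share the common prefix $u$ and differ only in their final letter.

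Next I would establish the locating length $\ell = 2a$ asserted in the statement. Following Section~\ref{determining a locating length}, this amounts to enumerating, via the windowing procedure of Section~\ref{listing factors}, all symbolic length-$2a$ factors of $\varphi(n)\varphi(n)\cdots$ over the interval $\frac{11}{9} < \ab < \frac{16}{13}$ (subdivided into finitely many subintervals on which each run length is a fixed linear form in $a$ and $b$), and then checking with the inequality tests of Section~\ref{testing inequality} that no such factor can recur at two positions distinct modulo $k$. Because the statement lists no excepted rationals, this search succeeds outright: there is no rational $\ab$ coprime to $38$ in the interval for which $\varphi(0)$ is a perfect power, so no obstruction of the kind encountered in Theorems~\ref{14 6a-b I power-free} and \ref{14 6a-b II power-free} arises.

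With $\ell = 2a$ in hand (so $c = 2$, $d = 0$) and $\Imin = \frac{11}{9}$, Proposition~\ref{big m} yields $\mmax = \lceil \frac{2 \cdot 11/9}{11/9 - 1}\rceil - 1 = \lceil 11 \rceil - 1 = 10$, and Lemma~\ref{big x} reduces $\ab$-power-freeness to checking that no factor of $\varphi(w)$ of length $ma$ is an $\ab$-power for $1 \le m \le 10$. I would then invoke Lemma~\ref{short words are a/b-power-free} with $s = 38$ and $t = 15$: since $\Imax = \frac{16}{13} \le 2$ forces $a_\textnormal{min} \ge 3$, the required bound $\mmax = 10 \le (38 - 15\cdot\frac{9}{11})(a_\textnormal{min} - 2) = \frac{283}{11}(a_\textnormal{min} - 2)$ holds, so every preimage of such a factor is too short to contain an $\ab$-power. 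This permits quantifying over all words $w$ rather than only $\ab$-power-free ones. The remaining task, for each $m$ from $1$ to $10$, is to list every symbolic factor of length $ma$, write it as $xyz$ with $|x| = |z| = m(a-b)$ and $|y| = m(2b-a)$, and certify $x \ne z$ using the tests of Section~\ref{testing inequality}, on each open subinterval and at each valid subinterval endpoint (those whose denominator is coprime to $38$).

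The main obstacle is computational rather than conceptual. This is the largest of the thirty morphisms ($102$ nonzero letters, $k = 38a - 15b$), and the interval $(\frac{11}{9}, \frac{16}{13})$ is extremely narrow, so the number of subintervals produced while listing factors, together with the number of pairwise inequalities to discharge up to length $10a$, is large. The genuine risk is that the inequality tests of Section~\ref{testing inequality}, which form a sound but incomplete battery of criteria rather than a decision procedure, fail to certify some truly unequal pair $x, z$; were that to happen, one would need an additional criterion, for instance a finer analysis of which deleted-$0$ block lengths can coincide on the relevant subinterval. Barring that, every step is a mechanical instance of the lemmas already proved, and the result follows.
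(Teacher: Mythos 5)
Your proposal is correct and follows essentially the same route as the paper: this is one of the thirty theorems the authors prove completely automatically via the machinery of Section~\ref{power-free morphisms} (locate words of length $\ell = 2a$, apply Lemma~\ref{big x}, Proposition~\ref{big m} with $c=2$, $d=0$, $\Imin = \frac{11}{9}$ to get $\mmax = 10$, check Lemma~\ref{short words are a/b-power-free}, then discharge the finitely many symbolic factor inequalities on each subinterval and at its valid endpoints). Your arithmetic for $\mmax$ and for the hypothesis of Lemma~\ref{short words are a/b-power-free} checks out, and your observation that the absence of excepted rationals reflects the absence of perfect-power obstructions for $\varphi(0)$ on this interval matches the paper's discussion.
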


\newpage

\begin{theorem}\label{158 53a-30b power-free}
Let $a, b$ be relatively prime positive integers such that $\frac{9}{8} < \ab < \frac{17}{15}$ and $\ab \neq \frac{26}{23}$ and $\gcd(b,53) = 1$.
Then the $(53a-30b)$-uniform morphism
\small
\begin{align*}
	\varphi(n) = {}
	& 0^{a-b-1} \, 1 \, 0^{9 a-10 b-1} \, 1 \, 0^{-7 a+8 b-1} \, 1 \, 0^{a-b-1} \, 1 \, 0^{9 a-10 b-1} \, 1 \, 0^{2 a-2 b-1} \, 1 \\
	& 0^{-13 a+15 b-1} \, 1 \, 0^{2 a-2 b-1} \, 2 \, 0^{a-b-1} \, 1 \, 0^{9 a-10 b-1} \, 1 \, 0^{2 a-2 b-1} \, 1 \, 0^{a-b-1} \, 1 \\
	& 0^{-14 a+16 b-1} \, 1 \, 0^{3 a-3 b-1} \, 1 \, 0^{9 a-10 b-1} \, 1 \, 0^{-7 a+8 b-1} \, 1 \, 0^{10 a-11 b-1} \, 1 \, 0^{-14 a+16 b-1} \, 1 \\
	& 0^{a-b-1} \, 1 \, 0^{a-b-1} \, 2 \, 0^{a-b-1} \, 1 \, 0^{2 a-2 b-1} \, 1 \, 0^{a-b-1} \, 1 \, 0^{9 a-10 b-1} \, 1 \\
	& 0^{2 a-2 b-1} \, 1 \, 0^{-13 a+15 b-1} \, 1 \, 0^{9 a-10 b-1} \, 1 \, 0^{-6 a+7 b-1} \, 1 \, 0^{9 a-10 b-1} \, 1 \, 0^{2 a-2 b-1} \, 1 \\
	& 0^{a-b-1} \, 1 \, 0^{-14 a+16 b-1} \, 1 \, 0^{16 a-18 b-1} \, 1 \, 0^{a-b-1} \, 1 \, 0^{-14 a+16 b-1} \, 1 \, 0^{9 a-10 b-1} \, 1 \\
	& 0^{-7 a+8 b-1} \, 1 \, 0^{10 a-11 b-1} \, 1 \, 0^{3 a-3 b-1} \, 1 \, 0^{-14 a+16 b-1} \, 1 \, 0^{2 a-2 b-1} \, 1 \, 0^{a-b-1} \, 1 \\
	& 0^{9 a-10 b-1} \, 1 \, 0^{a-b-1} \, 1 \, 0^{-15 a+17 b-1} \, 1 \, 0^{17 a-19 b-1} \, 1 \, 0^{-14 a+16 b-1} \, 1 \, 0^{9 a-10 b-1} \, 1 \\
	& 0^{-6 a+7 b-1} \, 1 \, 0^{2 a-2 b-1} \, 1 \, 0^{a-b-1} \, 1 \, 0^{9 a-10 b-1} \, 1 \, 0^{-14 a+16 b-1} \, 1 \, 0^{16 a-18 b-1} \, 1 \\
	& 0^{a-b-1} \, 1 \, 0^{-14 a+16 b-1} \, 1 \, 0^{3 a-3 b-1} \, 1 \, 0^{9 a-10 b-1} \, 1 \, 0^{3 a-3 b-1} \, 1 \, 0^{-14 a+16 b-1} \, 1 \\
	& 0^{16 a-18 b-1} \, 1 \, 0^{a-b-1} \, 1 \, 0^{-14 a+16 b-1} \, 1 \, 0^{9 a-10 b-1} \, 1 \, 0^{a-b-1} \, 1 \, 0^{-15 a+17 b-1} \, 1 \\
	& 0^{17 a-19 b-1} \, 1 \, 0^{2 a-2 b-1} \, 2 \, 0^{a-b-1} \, 1 \, 0^{-14 a+16 b-1} \, 1 \, 0^{2 a-2 b-1} \, 1 \, 0^{a-b-1} \, 1 \\
	& 0^{9 a-10 b-1} \, 1 \, 0^{3 a-3 b-1} \, 1 \, 0^{-14 a+16 b-1} \, 1 \, 0^{3 a-3 b-1} \, 1 \, 0^{9 a-10 b-1} \, 1 \, 0^{a-b-1} \, 1 \\
	& 0^{a-b-1} \, 2 \, 0^{a-b-1} \, 1 \, 0^{-14 a+16 b-1} \, 1 \, 0^{16 a-18 b-1} \, 1 \, 0^{a-b-1} \, 1 \, 0^{-14 a+16 b-1} \, 1 \\
	& 0^{2 a-2 b-1} \, 1 \, 0^{10 a-11 b-1} \, 1 \, 0^{2 a-2 b-1} \, 2 \, 0^{a-b-1} \, 1 \, 0^{-14 a+16 b-1} \, 1 \, 0^{2 a-2 b-1} \, 1 \\
	& 0^{a-b-1} \, 1 \, 0^{9 a-10 b-1} \, 1 \, 0^{3 a-3 b-1} \, 1 \, 0^{-14 a+16 b-1} \, 1 \, 0^{16 a-18 b-1} \, 1 \, 0^{-13 a+15 b-1} \, 1 \\
	& 0^{9 a-10 b-1} \, 1 \, 0^{a-b-1} \, 1 \, 0^{a-b-1} \, 2 \, 0^{a-b-1} \, 1 \, 0^{2 a-2 b-1} \, 1 \, 0^{a-b-1} \, 1 \\
	& 0^{-14 a+16 b-1} \, 1 \, 0^{2 a-2 b-1} \, 1 \, 0^{10 a-11 b-1} \, 1 \, 0^{-14 a+16 b-1} \, 1 \, 0^{17 a-19 b-1} \, 1 \, 0^{-14 a+16 b-1} \, 1 \\
	& 0^{2 a-2 b-1} \, 1 \, 0^{a-b-1} \, 1 \, 0^{9 a-10 b-1} \, 1 \, 0^{-7 a+8 b-1} \, 1 \, 0^{a-b-1} \, 1 \, 0^{9 a-10 b-1} \, 1 \\
	& 0^{-14 a+16 b-1} \, 1 \, 0^{16 a-18 b-1} \, 1 \, 0^{-13 a+15 b-1} \, 1 \, 0^{3 a-3 b-1} \, 1 \, 0^{9 a-10 b-1} \, 1 \, 0^{2 a-2 b-1} \, 1 \\
	& 0^{a-b-1} \, 1 \, 0^{-14 a+16 b-1} \, 1 \, 0^{a-b-1} \, 1 \, 0^{8 a-9 b-1} \, 1 \, 0^{-6 a+7 b-1} \, 1 \, 0^{9 a-10 b-1} \, 1 \\
	& 0^{-14 a+16 b-1} \, 1 \, 0^{17 a-19 b-1} \, 1 \, 0^{2 a-2 b-1} \, 1 \, 0^{a-b-1} \, 1 \, 0^{-14 a+16 b-1} \, 1 \, 0^{9 a-10 b-1} \, 1 \\
	& 0^{-7 a+8 b-1} \, 1 \, 0^{a-b-1} \, 1 \, 0^{9 a-10 b-1} \, 1 \, 0^{3 a-3 b-1} \, 1 \, 0^{-14 a+16 b-1} \, 1 \, 0^{3 a-3 b-1} \, 1 \\
	& 0^{9 a-10 b-1} \, 1 \, 0^{-7 a+8 b-1} \, 1 \, 0^{a-b-1} \, 1 \, 0^{9 a-10 b-1} \, 1 \, 0^{-14 a+16 b-1} \, 1 \, 0^{a-b-1} \, 1 \\
	& 0^{8 a-9 b-1} \, 1 \, 0^{-6 a+7 b-1} \, 1 \, 0^{2 a-2 b-1} \, 2 \, 0^{a-b-1} \, 1 \, 0^{9 a-10 b-1} \, 1 \, 0^{2 a-2 b-1} \, 1 \\
	& 0^{a-b-1} \, 1 \, 0^{-14 a+16 b-1} \, 1 \, 0^{3 a-3 b-1} \, 1 \, 0^{9 a-10 b-1} \, 1 \, 0^{3 a-3 b-1} \, 1 \, 0^{-14 a+16 b-1} \, 1 \\
	& 0^{a-b-1} \, 1 \, 0^{a-b-1} \, (n+2),
\end{align*}
\normalsize
with $158$ nonzero letters, locates words of length $8 a - 7 b$ and is $\ab$-power-free.
\end{theorem}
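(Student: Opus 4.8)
The theorem is another instance of the general scheme of Section~\ref{power-free morphisms}, so the plan is to instantiate Lemma~\ref{big x} and Proposition~\ref{big m} with the parameters of this morphism and then carry out the resulting finite symbolic case analysis. Here $k = 53a - 30b$, so in the notation of Lemma~\ref{short words are a/b-power-free} we have $s = 53$ and $t = 30$; the interval is $\frac{9}{8} < \ab < \frac{17}{15}$, so $\Imin = \frac{9}{8}$ and $\Imax = \frac{17}{15}$; and the asserted locating length $\ell = 8a - 7b$ corresponds to $c = 8$, $d = 7$ (note $c \ge d$, as Proposition~\ref{big m} requires). The hypothesis $\gcd(b,53) = 1$ gives $\gcd(b,k) = 1$ since $\gcd(a,b) = 1$, and the second bullet of Lemma~\ref{big x} holds trivially because $\varphi(n)$ and $\varphi(n')$ differ only in their final letter $(n+2)$.

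First I would verify that $\varphi$ locates words of length $\ell = 8a - 7b$, following the procedure of Section~\ref{determining a locating length}. Before this can succeed one must account for the perfect-power obstruction: $\varphi(0)$ is a perfect power when $\ab = \frac{26}{23}$, which is precisely why this rational is excluded from the hypotheses. Treating $\frac{26}{23}$ as an interior endpoint splits the interval into $\frac{9}{8} < \ab < \frac{26}{23}$ and $\frac{26}{23} < \ab < \frac{17}{15}$. On each piece I would enumerate all symbolic length-$\ell$ factors of $\varphi(n)\varphi(n)\cdots$ by the queue-sliding method of Section~\ref{listing factors}, subdividing into further subintervals wherever a run-length minimum changes form, and then use the inequality criteria of Section~\ref{testing inequality} to check that any two factors beginning at positions inequivalent modulo $k$ are distinct. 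This establishes the first hypothesis of Lemma~\ref{big x}.

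With the locating length secured, I would compute $\mmax = \lceil \frac{c \cdot \Imin - d}{\Imin - 1}\rceil - 1 = \lceil \frac{8 \cdot \frac{9}{8} - 7}{\frac{9}{8} - 1}\rceil - 1 = \lceil 16 \rceil - 1 = 15$ as in Proposition~\ref{big m}, and confirm via Lemma~\ref{short words are a/b-power-free} that $\lceil \frac{\mmax a - 1}{k}\rceil + 1 \le a - 1$; this guarantees that every factor of $\varphi(w)$ of length at most $\mmax a$ already sits inside $\varphi(v)$ for a word $v$ too short to contain an $\ab$-power, so the quantifier ``for every $\ab$-power-free $w$'' may be relaxed to ``for every $w$''. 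Then for each $m$ with $1 \le m \le 15$ I would list all symbolic factors of length $m a$, factor each as $xyz$ with $|x| = |z| = m(a-b)$ and $|y| = m(2b-a)$, and certify $x \ne z$ for all parameter values on every subinterval, also checking the subinterval endpoints that satisfy $\gcd(b,53) = 1$. Proposition~\ref{big m} then yields that $\varphi$ is $\ab$-power-free.

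The hard part will be purely computational scale together with the reliability of the inequality tests. With $k = 53a - 30b$ and a morphism carrying $158$ nonzero letters, the factor lists for $m$ up to $15$ are very large and the interval fractures into many subintervals; this is among the most expensive computations in the paper (compare the seven hours for Theorem~\ref{279 67a-30b power-free}). The two genuinely delicate points are (i) detecting the perfect-power obstruction at $\frac{26}{23}$, without which the search of Section~\ref{determining a locating length} for a valid $\ell$ cannot terminate successfully, and (ii) ensuring that the finite repertoire of criteria in Section~\ref{testing inequality}---there being no full decision procedure for equality of symbolic words---actually certifies $x \ne z$ for every one of the enormously many pairs that arise, since a single unresolved pair would leave the case analysis incomplete.
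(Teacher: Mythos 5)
Your proposal matches the paper's treatment: this theorem is one of the thirty proved "completely automatically" by the machinery of Section~\ref{power-free morphisms}, and you instantiate it correctly (locating length $\ell = 8a-7b$, so $c=8$, $d=7$, $\mmax = 15$, with the interval split at $\frac{26}{23}$ because $\varphi(0)$ is a perfect power there, exactly the obstruction the paper cites for this morphism). The only caveat is the one you already flag yourself, namely that the argument ultimately rests on the machine-checked case analysis succeeding, which is also how the paper leaves it.
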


\newpage

\begin{theorem}\label{191 66a-28b power-free}
Let $a, b$ be relatively prime positive integers such that $\frac{7}{6} < \ab < \frac{13}{11}$ and $\ab \neq \frac{20}{17}$ and $\gcd(b,66) = 1$.
Then the $(66a-28b)$-uniform morphism
\small
\begin{align*}
	\varphi(n) = {}
	& 0^{2 a-2 b-1} \, 1 \, 0^{7 a-8 b-1} \, 1 \, 0^{-10 a+12 b-1} \, 1 \, 0^{7 a-8 b-1} \, 1 \, 0^{a-b-1} \, 1 \, 0^{-4 a+5 b-1} \, 1 \\
	& 0^{7 a-8 b-1} \, 1 \, 0^{-10 a+12 b-1} \, 1 \, 0^{8 a-9 b-1} \, 1 \, 0^{-4 a+5 b-1} \, 1 \, 0^{7 a-8 b-1} \, 1 \, 0^{-10 a+12 b-1} \, 1 \\
	& 0^{a-b-1} \, 1 \, 0^{7 a-8 b-1} \, 1 \, 0^{-5 a+6 b-1} \, 1 \, 0^{6 a-7 b-1} \, 1 \, 0^{2 a-2 b-1} \, 1 \, 0^{-9 a+11 b-1} \, 1 \\
	& 0^{8 a-9 b-1} \, 1 \, 0^{-5 a+6 b-1} \, 1 \, 0^{7 a-8 b-1} \, 1 \, 0^{a-b-1} \, 1 \, 0^{-10 a+12 b-1} \, 1 \, 0^{a-b-1} \, 1 \\
	& 0^{2 a-2 b-1} \, 1 \, 0^{8 a-9 b-1} \, 1 \, 0^{-9 a+11 b-1} \, 1 \, 0^{2 a-2 b-1} \, 1 \, 0^{a-b-1} \, 1 \, 0^{7 a-8 b-1} \, 1 \\
	& 0^{a-b-1} \, 1 \, 0^{-10 a+12 b-1} \, 1 \, 0^{12 a-14 b-1} \, 1 \, 0^{-9 a+11 b-1} \, 1 \, 0^{8 a-9 b-1} \, 1 \, 0^{2 a-2 b-1} \, 1 \\
	& 0^{a-b-1} \, 1 \, 0^{-10 a+12 b-1} \, 1 \, 0^{a-b-1} \, 1 \, 0^{7 a-8 b-1} \, 1 \, 0^{-10 a+12 b-1} \, 1 \, 0^{13 a-15 b-1} \, 1 \\
	& 0^{-9 a+11 b-1} \, 1 \, 0^{7 a-8 b-1} \, 1 \, 0^{-5 a+6 b-1} \, 1 \, 0^{a-b-1} \, 1 \, 0^{7 a-8 b-1} \, 1 \, 0^{a-b-1} \, 1 \\
	& 0^{-10 a+12 b-1} \, 1 \, 0^{3 a-3 b-1} \, 1 \, 0^{8 a-9 b-1} \, 1 \, 0^{-10 a+12 b-1} \, 1 \, 0^{a-b-1} \, 1 \, 0^{6 a-7 b-1} \, 1 \\
	& 0^{-4 a+5 b-1} \, 1 \, 0^{a-b-1} \, 1 \, 0^{7 a-8 b-1} \, 1 \, 0^{2 a-2 b-1} \, 1 \, 0^{a-b-1} \, 1 \, 0^{-9 a+11 b-1} \, 1 \\
	& 0^{7 a-8 b-1} \, 1 \, 0^{3 a-3 b-1} \, 1 \, 0^{a-b-1} \, 1 \, 0^{-10 a+12 b-1} \, 1 \, 0^{7 a-8 b-1} \, 1 \, 0^{-5 a+6 b-1} \, 1 \\
	& 0^{a-b-1} \, 1 \, 0^{8 a-9 b-1} \, 1 \, 0^{-10 a+12 b-1} \, 1 \, 0^{2 a-2 b-1} \, 2 \, 0^{a-b-1} \, 1 \, 0^{a-b-1} \, 1 \\
	& 0^{7 a-8 b-1} \, 1 \, 0^{-10 a+12 b-1} \, 1 \, 0^{3 a-3 b-1} \, 2 \, 0^{a-b-1} \, 1 \, 0^{7 a-8 b-1} \, 1 \, 0^{-10 a+12 b-1} \, 1 \\
	& 0^{a-b-1} \, 1 \, 0^{3 a-3 b-1} \, 1 \, 0^{7 a-8 b-1} \, 1 \, 0^{2 a-2 b-1} \, 1 \, 0^{a-b-1} \, 1 \, 0^{a-b-1} \, 1 \\
	& 0^{-10 a+12 b-1} \, 1 \, 0^{7 a-8 b-1} \, 1 \, 0^{-9 a+11 b-1} \, 1 \, 0^{13 a-15 b-1} \, 1 \, 0^{-10 a+12 b-1} \, 1 \, 0^{7 a-8 b-1} \, 1 \\
	& 0^{-5 a+6 b-1} \, 1 \, 0^{-4 a+5 b-1} \, 1 \, 0^{13 a-15 b-1} \, 1 \, 0^{-10 a+12 b-1} \, 1 \, 0^{2 a-2 b-1} \, 1 \, 0^{-4 a+5 b-1} \, 1 \\
	& 0^{13 a-15 b-1} \, 1 \, 0^{-10 a+12 b-1} \, 1 \, 0^{7 a-8 b-1} \, 1 \, 0^{a-b-1} \, 1 \, 0^{-10 a+12 b-1} \, 1 \, 0^{12 a-14 b-1} \, 1 \\
	& 0^{-11 a+13 b-1} \, 1 \, 0^{13 a-15 b-1} \, 1 \, 0^{-3 a+4 b-1} \, 1 \, 0^{-9 a+11 b-1} \, 1 \, 0^{12 a-14 b-1} \, 1 \, 0^{-11 a+13 b-1} \, 1 \\
	& 0^{a-b-1} \, 1 \, 0^{a-b-1} \, 1 \, 0^{7 a-8 b-1} \, 1 \, 0^{a-b-1} \, 1 \, 0^{3 a-3 b-1} \, 1 \, 0^{-10 a+12 b-1} \, 1 \\
	& 0^{8 a-9 b-1} \, 1 \, 0^{a-b-1} \, 1 \, 0^{2 a-2 b-1} \, 1 \, 0^{-10 a+12 b-1} \, 1 \, 0^{a-b-1} \, 1 \, 0^{2 a-2 b-1} \, 1 \\
	& 0^{a-b-1} \, 1 \, 0^{7 a-8 b-1} \, 1 \, 0^{-9 a+11 b-1} \, 1 \, 0^{3 a-3 b-1} \, 1 \, 0^{7 a-8 b-1} \, 1 \, 0^{a-b-1} \, 1 \\
	& 0^{-10 a+12 b-1} \, 1 \, 0^{12 a-14 b-1} \, 1 \, 0^{a-b-1} \, 1 \, 0^{-10 a+12 b-1} \, 1 \, 0^{8 a-9 b-1} \, 1 \, 0^{2 a-2 b-1} \, 2 \\
	& 0^{a-b-1} \, 1 \, 0^{-10 a+12 b-1} \, 1 \, 0^{a-b-1} \, 1 \, 0^{7 a-8 b-1} \, 1 \, 0^{3 a-3 b-1} \, 1 \, 0^{-9 a+11 b-1} \, 1 \\
	& 0^{7 a-8 b-1} \, 1 \, 0^{a-b-1} \, 1 \, 0^{a-b-1} \, 2 \, 0^{a-b-1} \, 1 \, 0^{a-b-1} \, 1 \, 0^{-10 a+12 b-1} \, 1 \\
	& 0^{2 a-2 b-1} \, 1 \, 0^{8 a-9 b-1} \, 2 \, 0^{a-b-1} \, 1 \, 0^{-10 a+12 b-1} \, 1 \, 0^{2 a-2 b-1} \, 1 \, 0^{a-b-1} \, 1 \\
	& 0^{8 a-9 b-1} \, 1 \, 0^{-10 a+12 b-1} \, 1 \, 0^{12 a-14 b-1} \, 1 \, 0^{-9 a+11 b-1} \, 1 \, 0^{a-b-1} \, 1 \, 0^{7 a-8 b-1} \, 1 \\
	& 0^{2 a-2 b-1} \, 1 \, 0^{a-b-1} \, 1 \, 0^{-9 a+11 b-1} \, 1 \, 0^{7 a-8 b-1} \, 1 \, 0^{-10 a+12 b-1} \, 1 \, 0^{13 a-15 b-1} \, 1 \\
	& 0^{a-b-1} \, 1 \, 0^{-10 a+12 b-1} \, 1 \, 0^{7 a-8 b-1} \, 1 \, 0^{-5 a+6 b-1} \, 1 \, 0^{a-b-1} \, 1 \, 0^{8 a-9 b-1} \, 1 \\
	& 0^{-10 a+12 b-1} \, 1 \, 0^{3 a-3 b-1} \, 1 \, 0^{a-b-1} \, 1 \, 0^{7 a-8 b-1} \, 1 \, 0^{-10 a+12 b-1} \, 1 \, 0^{a-b-1} \, 1 \\
	& 0^{6 a-7 b-1} \, 1 \, 0^{-3 a+4 b-1} \, 1 \, 0^{7 a-8 b-1} \, 1 \, 0^{2 a-2 b-1} \, 1 \, 0^{a-b-1} \, 1 \, 0^{a-b-1} \, 1 \\
	& 0^{-10 a+12 b-1} \, 1 \, 0^{7 a-8 b-1} \, 1 \, 0^{-3 a+4 b-1} \, 1 \, 0^{7 a-8 b-1} \, 1 \, 0^{-10 a+12 b-1} \, 1 \, 0^{7 a-8 b-1} \, 1 \\
	& 0^{-5 a+6 b-1} \, 1 \, 0^{2 a-2 b-1} \, 1 \, 0^{7 a-8 b-1} \, 1 \, 0^{-10 a+12 b-1} \, 1 \, 0^{2 a-2 b-1} \, (n+2),
\end{align*}
\normalsize
with $191$ nonzero letters, locates words of length $2 a$ and is $\ab$-power-free.
\end{theorem}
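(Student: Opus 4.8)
The plan is to prove Theorem~\ref{191 66a-28b power-free} by the automated procedure developed in Section~\ref{power-free morphisms}, exactly as for the other morphisms in Section~\ref{symbolic power-free morphisms}; only the scale of the computation is new. There are two tasks: confirm that $\varphi$ locates words of length $\ell = 2a$, and then invoke Proposition~\ref{big m} to reduce $\ab$-power-freeness to a finite check of short factors.

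First I would establish the locating length. Since the claimed $\ell = 2a$ is smaller than $k = 66a - 28b$, Lemma~\ref{locating length} does not apply directly; instead I would follow Section~\ref{determining a locating length}, generating all symbolic factors of length $2a$ of the periodic word $\varphi(n)\varphi(n)\cdots$ by the queue procedure of Section~\ref{listing factors}. This requires splitting the open interval $\frac{7}{6} < \ab < \frac{13}{11}$ into finitely many subintervals on which each run length $ca - db$ appearing in the factorization keeps a fixed sign, and then verifying on each subinterval that all pairs of distinct symbolic factors are unequal using the criteria of Section~\ref{testing inequality} (comparison of first and last letters, deletion of explicit zeros, and unsolvability of the associated systems of block-length equalities).

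With $\ell = 2a$ in hand I would apply Proposition~\ref{big m} with $c = 2$, $d = 0$, and $\Imin = \frac{7}{6}$, which gives $\mmax = \lceil \frac{2 \cdot 7/6}{7/6 - 1} \rceil - 1 = \lceil 14 \rceil - 1 = 13$. The hypotheses of Lemma~\ref{big x} hold because $\gcd(b, 66) = 1$ forces $\gcd(b, k) = 1$ and because distinct images $\varphi(n), \varphi(n')$ differ only in their final letter. I would then check that the growth inequality of Lemma~\ref{short words are a/b-power-free} is satisfied (here $s = 66$ and $t = 28$, so $s - t/\Imin = 42$ and $42(a_\textnormal{min} - 2) \geq 13$ comfortably), so that it suffices to verify, for every word $w \in \Z_{\geq 0}^*$ and each $m$ with $1 \leq m \leq 13$, that no factor of $\varphi(w)$ of length $ma$ — written as $xyz$ with $|x| = |z| = m(a-b)$ and $|y| = m(2b-a)$ — satisfies $x = z$. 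This is carried out subinterval by subinterval, reusing the subdivision found for $\ell = 2a$, and finishing by checking each odd-denominator subinterval endpoint that satisfies the hypotheses as an explicit rational. During the endpoint analysis the exceptional rational $\frac{20}{17}$ is flagged automatically, as with $\frac{5}{3}$ and $\frac{7}{5}$ in Theorems~\ref{6 4a-2b I power-free}--\ref{6 4a-2b II power-free}, as a value at which $\varphi(n)\varphi(n')\cdots$ introduces an $\ab$-power, and so it is excluded in the hypotheses.

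The main obstacle is purely computational rather than conceptual: with $191$ nonzero letters and $k = 66a - 28b$, the symbolic factorizations of length up to $13a$ are long, the sign analysis produces many subintervals, and the inequality tests of Section~\ref{testing inequality} must dispatch every pair among a large number of factors whose run lengths are dense linear combinations of $a$ and $b$. The real risk is that some pair of symbolic words is genuinely unequal on a subinterval but eludes the implemented criteria, in which case one must supply an additional inequality test — for instance, by exhibiting a position at which the two words provably disagree. The bulk of the effort, and the reason this is among the most expensive theorems in the section, lies in making the inequality checker succeed on all such pairs within a feasible running time.
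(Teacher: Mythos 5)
Your proposal matches the paper's proof: Theorem~\ref{191 66a-28b power-free}, like all the theorems of Section~\ref{symbolic power-free morphisms}, is proved completely automatically by the machinery of Section~\ref{power-free morphisms} --- finding the locating length $\ell = 2a$ via the procedures of Sections~\ref{listing factors}--\ref{determining a locating length}, applying Proposition~\ref{big m} (your computation $\mmax = 13$ with $c=2$, $d=0$, $\Imin = \frac{7}{6}$ is correct), checking Lemma~\ref{short words are a/b-power-free}, and verifying the symbolic factors of length $ma$ for $1 \leq m \leq 13$ subinterval by subinterval, with exceptional rationals such as $\frac{20}{17}$ detected automatically at subinterval endpoints. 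No substantive difference from the paper's approach.
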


\begin{theorem}\label{279 67a-30b power-free}
Let $a, b$ be relatively prime positive integers such that $\frac{10}{9} < \ab < \frac{29}{26}$ and $\ab \neq \frac{39}{35}$ and $\gcd(b,67) = 1$.
Then the $(67a-30b)$-uniform morphism
\tiny
\begin{align*}
	\varphi(n) = {}
	& 0^{-7 a+8 b-1} \, 1 \, 0^{10 a-11 b-1} \, 1 \, 0^{10 a-11 b-1} \, 1 \, 0^{a-b-1} \, 1 \, 0^{-26 a+29 b-1} \, 1 \, 0^{28 a-31 b-1} \, 1 \, 0^{2 a-2 b-1} \, 1 \\
	& 0^{a-b-1} \, 1 \, 0^{-25 a+28 b-1} \, 1 \, 0^{10 a-11 b-1} \, 1 \, 0^{2 a-2 b-1} \, 1 \, 0^{a-b-1} \, 1 \, 0^{10 a-11 b-1} \, 1 \, 0^{3 a-3 b-1} \, 1 \\
	& 0^{-25 a+28 b-1} \, 1 \, 0^{10 a-11 b-1} \, 1 \, 0^{3 a-3 b-1} \, 1 \, 0^{10 a-11 b-1} \, 1 \, 0^{-8 a+9 b-1} \, 1 \, 0^{a-b-1} \, 1 \, 0^{10 a-11 b-1} \, 1 \\
	& 0^{-25 a+28 b-1} \, 1 \, 0^{10 a-11 b-1} \, 1 \, 0^{-8 a+9 b-1} \, 1 \, 0^{a-b-1} \, 1 \, 0^{10 a-11 b-1} \, 1 \, 0^{2 a-2 b-1} \, 2 \, 0^{a-b-1} \, 1 \\
	& 0^{10 a-11 b-1} \, 1 \, 0^{-25 a+28 b-1} \, 1 \, 0^{2 a-2 b-1} \, 2 \, 0^{a-b-1} \, 1 \, 0^{10 a-11 b-1} \, 1 \, 0^{3 a-3 b-1} \, 1 \, 0^{10 a-11 b-1} \, 1 \\
	& 0^{-25 a+28 b-1} \, 1 \, 0^{3 a-3 b-1} \, 1 \, 0^{10 a-11 b-1} \, 1 \, 0^{a-b-1} \, 1 \, 0^{a-b-1} \, 2 \, 0^{a-b-1} \, 1 \, 0^{10 a-11 b-1} \, 1 \\
	& 0^{-25 a+28 b-1} \, 1 \, 0^{a-b-1} \, 1 \, 0^{a-b-1} \, 2 \, 0^{a-b-1} \, 1 \, 0^{2 a-2 b-1} \, 1 \, 0^{11 a-12 b-1} \, 1 \, 0^{10 a-11 b-1} \, 1 \\
	& 0^{2 a-2 b-1} \, 1 \, 0^{-24 a+27 b-1} \, 1 \, 0^{2 a-2 b-1} \, 1 \, 0^{a-b-1} \, 1 \, 0^{10 a-11 b-1} \, 1 \, 0^{10 a-11 b-1} \, 1 \, 0^{2 a-2 b-1} \, 1 \\
	& 0^{a-b-1} \, 1 \, 0^{-25 a+28 b-1} \, 1 \, 0^{27 a-30 b-1} \, 1 \, 0^{-24 a+27 b-1} \, 1 \, 0^{10 a-11 b-1} \, 1 \, 0^{10 a-11 b-1} \, 1 \, 0^{-8 a+9 b-1} \, 1 \\
	& 0^{11 a-12 b-1} \, 1 \, 0^{2 a-2 b-1} \, 1 \, 0^{a-b-1} \, 1 \, 0^{-25 a+28 b-1} \, 1 \, 0^{10 a-11 b-1} \, 1 \, 0^{2 a-2 b-1} \, 1 \, 0^{a-b-1} \, 1 \\
	& 0^{10 a-11 b-1} \, 1 \, 0^{-25 a+28 b-1} \, 1 \, 0^{28 a-31 b-1} \, 1 \, 0^{-25 a+28 b-1} \, 1 \, 0^{10 a-11 b-1} \, 1 \, 0^{10 a-11 b-1} \, 1 \, 0^{-7 a+8 b-1} \, 1 \\
	& 0^{10 a-11 b-1} \, 1 \, 0^{-8 a+9 b-1} \, 1 \, 0^{a-b-1} \, 1 \, 0^{10 a-11 b-1} \, 1 \, 0^{-25 a+28 b-1} \, 1 \, 0^{10 a-11 b-1} \, 1 \, 0^{-8 a+9 b-1} \, 1 \\
	& 0^{a-b-1} \, 1 \, 0^{10 a-11 b-1} \, 1 \, 0^{3 a-3 b-1} \, 1 \, 0^{10 a-11 b-1} \, 1 \, 0^{-25 a+28 b-1} \, 1 \, 0^{3 a-3 b-1} \, 1 \, 0^{10 a-11 b-1} \, 1 \\
	& 0^{a-b-1} \, 1 \, 0^{9 a-10 b-1} \, 1 \, 0^{-7 a+8 b-1} \, 1 \, 0^{10 a-11 b-1} \, 1 \, 0^{-25 a+28 b-1} \, 1 \, 0^{a-b-1} \, 1 \, 0^{9 a-10 b-1} \, 1 \\
	& 0^{-7 a+8 b-1} \, 1 \, 0^{2 a-2 b-1} \, 1 \, 0^{a-b-1} \, 1 \, 0^{10 a-11 b-1} \, 1 \, 0^{10 a-11 b-1} \, 1 \, 0^{2 a-2 b-1} \, 1 \, 0^{a-b-1} \, 1 \\
	& 0^{-25 a+28 b-1} \, 1 \, 0^{3 a-3 b-1} \, 1 \, 0^{10 a-11 b-1} \, 1 \, 0^{10 a-11 b-1} \, 1 \, 0^{3 a-3 b-1} \, 1 \, 0^{-25 a+28 b-1} \, 1 \, 0^{27 a-30 b-1} \, 1 \\
	& 0^{a-b-1} \, 1 \, 0^{-25 a+28 b-1} \, 1 \, 0^{10 a-11 b-1} \, 1 \, 0^{10 a-11 b-1} \, 1 \, 0^{-8 a+9 b-1} \, 1 \, 0^{a-b-1} \, 1 \, 0^{10 a-11 b-1} \, 1 \\
	& 0^{2 a-2 b-1} \, 2 \, 0^{a-b-1} \, 1 \, 0^{-25 a+28 b-1} \, 1 \, 0^{10 a-11 b-1} \, 1 \, 0^{2 a-2 b-1} \, 2 \, 0^{a-b-1} \, 1 \, 0^{10 a-11 b-1} \, 1 \\
	& 0^{3 a-3 b-1} \, 1 \, 0^{-25 a+28 b-1} \, 1 \, 0^{10 a-11 b-1} \, 1 \, 0^{3 a-3 b-1} \, 1 \, 0^{10 a-11 b-1} \, 1 \, 0^{a-b-1} \, 1 \, 0^{a-b-1} \, 2 \\
	& 0^{a-b-1} \, 1 \, 0^{-25 a+28 b-1} \, 1 \, 0^{10 a-11 b-1} \, 1 \, 0^{a-b-1} \, 1 \, 0^{a-b-1} \, 2 \, 0^{a-b-1} \, 1 \, 0^{2 a-2 b-1} \, 1 \\
	& 0^{11 a-12 b-1} \, 1 \, 0^{-25 a+28 b-1} \, 1 \, 0^{2 a-2 b-1} \, 1 \, 0^{11 a-12 b-1} \, 1 \, 0^{2 a-2 b-1} \, 1 \, 0^{a-b-1} \, 1 \, 0^{10 a-11 b-1} \, 1 \\
	& 0^{-25 a+28 b-1} \, 1 \, 0^{2 a-2 b-1} \, 1 \, 0^{a-b-1} \, 1 \, 0^{10 a-11 b-1} \, 1 \, 0^{-8 a+9 b-1} \, 1 \, 0^{11 a-12 b-1} \, 1 \, 0^{10 a-11 b-1} \, 1 \\
	& 0^{-25 a+28 b-1} \, 1 \, 0^{27 a-30 b-1} \, 1 \, 0^{-24 a+27 b-1} \, 1 \, 0^{2 a-2 b-1} \, 1 \, 0^{a-b-1} \, 1 \, 0^{10 a-11 b-1} \, 1 \, 0^{10 a-11 b-1} \, 1 \\
	& 0^{2 a-2 b-1} \, 1 \, 0^{a-b-1} \, 1 \, 0^{-25 a+28 b-1} \, 1 \, 0^{10 a-11 b-1} \, 1 \, 0^{-7 a+8 b-1} \, 1 \, 0^{10 a-11 b-1} \, 1 \, 0^{10 a-11 b-1} \, 1 \\
	& 0^{-25 a+28 b-1} \, 1 \, 0^{28 a-31 b-1} \, 1 \, 0^{-25 a+28 b-1} \, 1 \, 0^{27 a-30 b-1} \, 1 \, 0^{a-b-1} \, 1 \, 0^{-25 a+28 b-1} \, 1 \, 0^{10 a-11 b-1} \, 1 \\
	& 0^{10 a-11 b-1} \, 1 \, 0^{-8 a+9 b-1} \, 1 \, 0^{a-b-1} \, 1 \, 0^{10 a-11 b-1} \, 1 \, 0^{3 a-3 b-1} \, 1 \, 0^{-25 a+28 b-1} \, 1 \, 0^{10 a-11 b-1} \, 1 \\
	& 0^{3 a-3 b-1} \, 1 \, 0^{10 a-11 b-1} \, 1 \, 0^{a-b-1} \, 1 \, 0^{-26 a+29 b-1} \, 1 \, 0^{28 a-31 b-1} \, 1 \, 0^{-25 a+28 b-1} \, 1 \, 0^{10 a-11 b-1} \, 1 \\
	& 0^{a-b-1} \, 1 \, 0^{9 a-10 b-1} \, 1 \, 0^{-7 a+8 b-1} \, 1 \, 0^{2 a-2 b-1} \, 1 \, 0^{a-b-1} \, 1 \, 0^{10 a-11 b-1} \, 1 \, 0^{-25 a+28 b-1} \, 1 \\
	& 0^{2 a-2 b-1} \, 1 \, 0^{a-b-1} \, 1 \, 0^{10 a-11 b-1} \, 1 \, 0^{3 a-3 b-1} \, 1 \, 0^{10 a-11 b-1} \, 1 \, 0^{-25 a+28 b-1} \, 1 \, 0^{3 a-3 b-1} \, 1 \\
	& 0^{10 a-11 b-1} \, 1 \, 0^{-8 a+9 b-1} \, 1 \, 0^{a-b-1} \, 1 \, 0^{10 a-11 b-1} \, 1 \, 0^{10 a-11 b-1} \, 1 \, 0^{-25 a+28 b-1} \, 1 \, 0^{27 a-30 b-1} \, 1 \\
	& 0^{a-b-1} \, 1 \, 0^{-25 a+28 b-1} \, 1 \, 0^{2 a-2 b-1} \, 2 \, 0^{a-b-1} \, 1 \, 0^{10 a-11 b-1} \, 1 \, 0^{10 a-11 b-1} \, 1 \, 0^{2 a-2 b-1} \, 2 \\
	& 0^{a-b-1} \, 1 \, 0^{-25 a+28 b-1} \, 1 \, 0^{3 a-3 b-1} \, 1 \, 0^{10 a-11 b-1} \, 1 \, 0^{10 a-11 b-1} \, 1 \, 0^{3 a-3 b-1} \, 1 \, 0^{-25 a+28 b-1} \, 1 \\
	& 0^{a-b-1} \, 1 \, 0^{a-b-1} \, 2 \, 0^{a-b-1} \, 1 \, 0^{10 a-11 b-1} \, 1 \, 0^{10 a-11 b-1} \, 1 \, 0^{a-b-1} \, 1 \, 0^{a-b-1} \, 2 \\
	& 0^{a-b-1} \, 1 \, 0^{2 a-2 b-1} \, 1 \, 0^{-24 a+27 b-1} \, 1 \, 0^{10 a-11 b-1} \, 1 \, 0^{2 a-2 b-1} \, 1 \, 0^{11 a-12 b-1} \, 1 \, 0^{2 a-2 b-1} \, 1 \\
	& 0^{a-b-1} \, 1 \, 0^{-25 a+28 b-1} \, 1 \, 0^{10 a-11 b-1} \, 1 \, 0^{2 a-2 b-1} \, 1 \, 0^{a-b-1} \, 1 \, 0^{10 a-11 b-1} \, 1 \, 0^{-8 a+9 b-1} \, 1 \\
	& 0^{11 a-12 b-1} \, 1 \, 0^{-25 a+28 b-1} \, 1 \, 0^{10 a-11 b-1} \, 1 \, 0^{-8 a+9 b-1} \, 1 \, 0^{11 a-12 b-1} \, 1 \, 0^{2 a-2 b-1} \, 1 \, 0^{a-b-1} \, 1 \\
	& 0^{10 a-11 b-1} \, 1 \, 0^{-25 a+28 b-1} \, 1 \, 0^{2 a-2 b-1} \, 1 \, 0^{a-b-1} \, 1 \, 0^{10 a-11 b-1} \, 1 \, 0^{10 a-11 b-1} \, 1 \, 0^{-7 a+8 b-1} \, 1 \\
	& 0^{10 a-11 b-1} \, 1 \, 0^{-25 a+28 b-1} \, 1 \, 0^{10 a-11 b-1} \, 1 \, 0^{-7 a+8 b-1} \, 1 \, 0^{10 a-11 b-1} \, 1 \, 0^{-8 a+9 b-1} \, 1 \, 0^{a-b-1} \, 1 \\
	& 0^{10 a-11 b-1} \, 1 \, 0^{10 a-11 b-1} \, 1 \, 0^{-25 a+28 b-1} \, 1 \, 0^{27 a-30 b-1} \, 1 \, 0^{a-b-1} \, 1 \, 0^{-25 a+28 b-1} \, 1 \, 0^{3 a-3 b-1} \, 1 \\
	& 0^{10 a-11 b-1} \, 1 \, 0^{10 a-11 b-1} \, 1 \, 0^{3 a-3 b-1} \, 1 \, 0^{-25 a+28 b-1} \, 1 \, 0^{a-b-1} \, 1 \, 0^{9 a-10 b-1} \, (n+1),
\end{align*}
\normalsize
with $279$ nonzero letters, locates words of length $5 a - 4 b$ and is $\ab$-power-free.
\end{theorem}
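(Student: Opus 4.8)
The plan is to apply the general automated scheme of Section~\ref{power-free morphisms} to this particular morphism, exactly as for the other theorems in Section~\ref{symbolic power-free morphisms}. Here the uniformity parameter is $k = 67 a - 30 b$, so in the notation of that section $s = 67$ and $t = 30$, and the morphism has the form $\varphi(n) = u\,(n+1)$ where $u$ is the common prefix of length $k-1$. The first thing to check is the two standing hypotheses of Lemma~\ref{big x}. Since only the final letter of $\varphi(n)$ depends on $n$, the words $\varphi(n)$ and $\varphi(n')$ differ in at most one position. And because $\gcd(a,b) = 1$, the condition $\gcd(b,k) = \gcd(b, 67 a - 30 b) = 1$ is equivalent to the stated hypothesis $\gcd(b, 67) = 1$.

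Next I would determine a locating length by the procedure of Section~\ref{determining a locating length}, searching through the admissible linear combinations $ca - db$ in order of the bound $\mmax$ they would produce. The asserted answer is $\ell = 5a - 4b$, i.e.\ $c = 5$ and $d = 4$; I would confirm it by generating every symbolic length-$\ell$ factor of $\varphi(n)\varphi(n)\cdots$ over the interval (splitting $\frac{10}{9} < \ab < \frac{29}{26}$ into subintervals as needed, per Section~\ref{listing factors}) and verifying with the tests of Section~\ref{testing inequality} that each such factor determines its position modulo $k$. A complication, flagged in the discussion after Theorem~\ref{14 6a-b II power-free}, is that the search for $\ell$ does not a priori respect the $\gcd$ condition: at $\ab = \frac{39}{35}$ --- the mediant of the Farey neighbors $\frac{10}{9}$ and $\frac{29}{26}$ --- the word $\varphi(0)$ is a perfect power, so $\varphi$ locates no length at all. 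Since $\gcd(35,67)=1$ this rational is not removed by the divisibility hypothesis and must be excluded by hand, which splits the interval at $\frac{39}{35}$; this is exactly why $\ab \neq \frac{39}{35}$ appears in the statement. I would also record that Lemma~\ref{short words are a/b-power-free} applies: with $\Imin = \frac{10}{9}$ one has $a_\textnormal{min} = 39$ and $s - t/\Imin = 67 - 27 = 40$, so $\bigl(s - \tfrac{t}{\Imin}\bigr)(a_\textnormal{min} - 2) = 40 \cdot 37 = 1480$ comfortably exceeds the value $\mmax = 13$ found below, which lets me replace ``for every $\ab$-power-free word $w$'' by ``for every word $w$'' throughout.

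With the locating length in hand, Proposition~\ref{big m} reduces everything to a finite check. Taking $\Imin = \frac{10}{9}$, $c = 5$, $d = 4$ gives
\[
	\mmax = \left\lceil \frac{5 \cdot \tfrac{10}{9} - 4}{\tfrac{10}{9} - 1} \right\rceil - 1 = \lceil 14 \rceil - 1 = 13,
\]
so it suffices to show that for every word $w$ the word $\varphi(w)$ contains no $\ab$-power of length $ma$ for $1 \leq m \leq 13$. For each such $m$ I would list, by the queue procedure of Section~\ref{listing factors}, all symbolic factors of length $ma$, each factored as $xyz$ with $|x|=|z|=m(a-b)$ and $|y|=m(2b-a)$, over every subinterval produced by the splitting process, and then certify $x \neq z$ for each of them using the inequality criteria of Section~\ref{testing inequality} (deleting shared prefixes and suffixes, comparing first and last letters, deleting explicit zeros and testing the residual block-length systems for insolubility). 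Finally I would verify the odd-denominator subinterval endpoints separately, since there the run lengths become explicit integers; any endpoint at which an $\ab$-power actually occurs would have to be added as an exception, and the algorithm reports these automatically.

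The main obstacle here is not conceptual but computational scale. With $279$ nonzero letters in $\varphi$ and $\mmax = 13$, the factor lists for the larger values of $m$ are enormous, the interval $\frac{10}{9} < \ab < \frac{29}{26}$ fragments into many subintervals, and each symbolic inequality must be certified over each piece; this is why the paper reports the computation taking over seven hours. The comparatively large locating length $5a - 4b$ (several of the shorter morphisms in this section locate words of length $a$ or $2a$, forcing only $\mmax = 1$ or $2$) is what pushes $\mmax$ up to $13$ and makes this the most expensive of the thirty verifications. There is no new mathematical content beyond the lemmas and Proposition~\ref{big m} of Section~\ref{bounding the factor length}; the work is entirely in executing the symbolic case analysis to completion and in correctly detecting the exceptional rational $\frac{39}{35}$.
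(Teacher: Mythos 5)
Your proposal is correct and follows exactly the paper's own route: the theorem is proved by the automated scheme of Section~\ref{power-free morphisms} (Lemma~\ref{locating length}, Lemma~\ref{big x}, Proposition~\ref{big m}, Lemma~\ref{short words are a/b-power-free}), and your parameter values ($s=67$, $t=30$, the reduction of $\gcd(b,67a-30b)=1$ to $\gcd(b,67)=1$, $\ell=5a-4b$ giving $c=5$, $d=4$, $\mmax=13$, $a_\textnormal{min}=39$) all check out, as does your identification of $\frac{39}{35}$ as the exceptional rational at which $\varphi(0)$ is a perfect power and which is not removed by the $\gcd$ hypothesis. The only quibble is the closing parenthetical claiming that locating lengths of $a$ or $2a$ force $\mmax=1$ or $2$ (false for intervals near $1$, e.g.\ Theorem~\ref{102 38a-15b power-free} has $\ell=2a$ and $\mmax=10$), but this is commentary on running time, not part of the argument.
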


The morphisms in Theorems~\ref{12 7a-5b power-free}, \ref{16 9a-7b power-free}, \ref{20 11a-9b power-free}, and \ref{24 13a-11b power-free} appear to belong to a general family of parameterized morphisms with $4r$ nonzero letters for each $r \geq 3$.

\begin{conjecture}\label{4r power-free}
Let $r \geq 3$ be an integer.
Let $a, b$ be relatively prime positive integers such that $\frac{2 r + 1}{2 r} < \ab < \frac{2 r}{2 r - 1}$ and $\ab \neq \frac{4 r + 1}{4 r - 1}$ and $\gcd(b, 2 r + 1) = 1$.
Let
\begin{align*}
	A &= 0^{a - b - 1} \, 1 \\
	B &= 0^{2 a - 2 b - 1} \, 1 \\
	C &= 0^{3 a - 3 b - 1} \, 1 \\
	X &= 0^{(2r+1) a - (2r+2) b - 1} \, 1 \\
	Y &= 0^{-(2r-2) a + (2r-1) b - 1} \, 1 \\
	Z &= 0^{(2r) a - (2r+1) b - 1} \, 1.
\end{align*}
Then the $((2 r + 1) a - (2 r - 1) b)$-uniform morphism
\[
	\varphi(n) = 
	X (Y Z)^{r-2} B A Y B^{r-2} C Y B^{r-3} Y A \,
	0^{a - b - 1} \, (n+1),
\]
with $4r$ nonzero letters, is $\ab$-power-free.
\end{conjecture}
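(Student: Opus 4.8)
The plan is to run the machinery of Section~\ref{power-free morphisms}, but with $r$ promoted to a third symbolic parameter alongside $a$ and $b$. Since the conjecture asks only for $\ab$-power-freeness (not lexicographic leastness), it suffices to verify the two hypotheses of Proposition~\ref{big m}: that $\varphi$ locates words of some length $\ell$, and that no factor of $\varphi(w)$ of length $m a$ is an $\ab$-power for $1 \leq m \leq \mmax$. As usual, once $\ell$ is pinned down and the bound of Lemma~\ref{short words are a/b-power-free} is checked, one may replace ``every $\ab$-power-free $w$'' by ``every $w$.'' The cases $r = 3, 4, 5, 6$ are exactly Theorems~\ref{12 7a-5b power-free}, \ref{16 9a-7b power-free}, \ref{20 11a-9b power-free}, and \ref{24 13a-11b power-free}, each dispatched by the automated procedure for a fixed morphism; the whole difficulty is to carry out the same analysis uniformly in $r$.

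First I would determine a locating length. The key structural feature is that $|B| = 2(a-b)$ and $|Y| + |Z| = 2a - 2b = 2(a-b)$, so the repeated regions $(YZ)^{r-2}$ and $B^{r-2}$ are periodic with period $2(a-b)$; they are distinguished only by the spacing of their nonzero letters (a single $1$ per period inside $B^{r-2}$, versus $1$'s spaced by $|Z|$ and $|Y| \neq |Z|$ inside $(YZ)^{r-2}$). Consequently a window that stays inside one such region cannot be located, and any admissible $\ell$ must exceed the region length $(2r-4)(a-b)$. A length of order $(2r-3)(a-b)$ — equivalently the value $4a - 3b$ that the algorithm selects for $r \geq 5$ — reaches a distinctive landmark ($X$, $C$, the trailing $(n+1)$, or a region boundary) from every starting position, and I would use this to prove locating by the argument of Lemma~\ref{locating length}. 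At the excluded rational $\ab = \frac{4r+1}{4r-1}$ the run lengths collapse ($|X| = |Y| = 3$, $|Z| = 1$, $|A| = 2$, $|B| = 4$, $|C| = 6$, with $k = 12r$) and $\varphi(0)$ becomes a perfect power, so $\varphi$ locates no length at all; this is exactly the obstruction described after Theorem~\ref{14 6a-b II power-free}, and it is why $\frac{4r+1}{4r-1}$ must be excluded.

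With such an $\ell$ one computes $\mmax = \lceil \frac{c \cdot \Imin - d}{\Imin - 1} \rceil - 1$; here $\Imin = \frac{2r+1}{2r}$, and for $\ell = 4a - 3b$ this gives $\mmax = 2r + 3$. This is the crux of the problem: because $\ell$ must exceed the region length $(2r-4)(a-b)$, \emph{no} choice of locating length avoids $\mmax = \Theta(r)$, so unlike the fixed-$r$ theorems there is no bounded list of factor lengths to inspect. Moreover a factor of length $\mmax \cdot a$ spans on the order of $r$ consecutive images $\varphi(n_i)$, so the symbolic factors to be tested for the inequality $x \neq z$ (in the decomposition $xyz$ with $|x| = |z| = m(a-b)$) form a family whose combinatorial size grows with $r$. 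The main obstacle is therefore to collapse this unbounded case analysis into finitely many $r$-parametrized families. The approach I would take is to exploit the period-$2(a-b)$ structure directly: away from the finitely many landmarks ($X$, $C$, the letter $2$ where it occurs, the trailing $(n+1)$, and the region junctions), each candidate factor is a concatenation of whole periods, so its run-length encoding depends on $r$ only through the number of periods it contains. One then tracks the absolute positions of the landmarks inside the two occurrences of $x$ in a putative $\ab$-power $xyx$; since $|xy| = m b$ and the landmarks recur aperiodically relative to the period $2(a-b)$, matching the landmark patterns of the two copies of $x$ would force an equation among the expressions $|X|, |Y|, |Z|, |A|, |B|, |C|$ that, by the inequality tests of Section~\ref{testing inequality}, has no solution with $\frac{2r+1}{2r} < \ab < \frac{2r}{2r-1}$ except at $\ab = \frac{4r+1}{4r-1}$. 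Establishing this landmark bookkeeping uniformly in $r$ — rather than recomputing it for each fixed $r$ as the automation does — is precisely the step I expect to be hard, and it is the reason the statement is presented as a conjecture.
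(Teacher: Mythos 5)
This statement is a \emph{conjecture} in the paper: no proof is given, and the authors say explicitly that, because of the symbolic parameter $r$, proving it is beyond the scope of their code. So there is no proof to compare yours against, only the paper's diagnosis of why one is missing --- and on that diagnosis you and the paper agree. Your plan is the paper's own machinery: Lemma~\ref{big x} and Proposition~\ref{big m} reduce $\ab$-power-freeness to finding a locating length $\ell$ and checking all symbolic factors of length $ma$ for $m \leq \mmax$, and your computation that $\Imin = \frac{2r+1}{2r}$ with $\ell = 4a-3b$ gives $\mmax = 2r+3$ is correct, as is your identification of the fixed-$r$ instances with Theorems~\ref{12 7a-5b power-free}, \ref{16 9a-7b power-free}, \ref{20 11a-9b power-free}, and \ref{24 13a-11b power-free}. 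But the step that would actually close the argument --- your ``landmark bookkeeping uniformly in $r$'' --- is described, not carried out: you assert that matching the landmark patterns in the two copies of $x$ ``would force an equation \dots\ that has no solution'' without exhibiting the family of equations or proving its inconsistency on the interval. That is the entire content of the conjecture, and you say yourself that you expect it to be hard. So what you have is an accurate reduction plus an honest statement of the open gap, not a proof.

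One concrete error is worth flagging. Your run-length computations at the excluded rational $\ab = \frac{4r+1}{4r-1}$ (namely $|X|=|Y|=3$, $|Z|=1$, $|A|=2$, $|B|=4$, $|C|=6$, $k=12r$) are correct, but your explanation of the exclusion --- that $\varphi(0)$ becomes a perfect power there, so $\varphi$ locates no length --- is false. For $r=3$, $\ab=\frac{13}{11}$, writing out the $36$-letter word $\varphi(0) = 001001100010100100010000010010010101$ and checking each divisor of $36$ shows it is not a perfect power; and for $n \geq 1$ the letter $n+1 \geq 2$ occurs exactly once in $\varphi(n)$, so Lemma~\ref{locating length} applies and $\varphi$ does locate words of some length at this rational. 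The paper's list of morphisms whose exceptional rationals come from $\varphi(0)$ being a perfect power (Theorems~\ref{26 9a-4b power-free}, \ref{42 13a-8b power-free}, \ref{54 13a-5b power-free}, \ref{158 53a-30b power-free}, \ref{279 67a-30b power-free}) does not include the specializations of this conjecture; the exclusions $\frac{13}{11}$, $\frac{21}{19}$, $\frac{25}{23}$ arise the way $\frac{5}{3}$ and $\frac{7}{5}$ do in Theorem~\ref{6 4a-2b I power-free}, namely because an actual $\ab$-power appears in $\varphi(w)$ at that rational (for $r=4$ the candidate $\frac{17}{15}$ is already killed by $\gcd(b,2r+1)=3$, which is why Theorem~\ref{16 9a-7b power-free} needs no explicit exclusion). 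A uniform-in-$r$ proof would therefore also have to exhibit this one-parameter family of $\ab$-powers explicitly, an extra piece of bookkeeping your sketch does not anticipate.
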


Because of the additional symbolic parameter $r$, proving Conjecture~\ref{4r power-free} is beyond the scope of our code.
If this conjecture is true, then it exhibits $\ab$-power-free morphisms with $\ab$ arbitrarily close to $1$.

For $r = 2$ the morphism in Conjecture~\ref{4r power-free} is not defined, due to the factor $B^{r-3}$.
However, moving from the free monoid to the free group on $\Z_{\geq 0}$ allows us to interpret the factor $Y B^{r - 3}$ for $r = 2$ as $0^{-2 a + 3 b - 1} \, 1 \cdot (0^{2 a - 2 b - 1} \, 1)^{-1} = 0^{-4 a + 5 b}$.
By doing this, we obtain the morphism in Theorem~\ref{8 5a-3b power-free}.

\subsection{Finding morphisms experimentally}\label{finding conjectures}

The statements of the theorems in Section~\ref{symbolic power-free morphisms}, with the exception of Theorem~\ref{8 5a-3b power-free}, were discovered by computing prefixes of $\word_{a/b}$ for $910$ rational numbers in the interval $1 < \ab < 2$.
In all, we computed over $256$ million letters.
For each $\ab$, we attempted to find an integer $k$ such that partitioning (the prefix of) $\word_{a/b}$ into rows of length $k$ as in Figures~\ref{3/2 5/3 9/5 arrays} and \ref{6/5 array} produces an array with $k-1$ eventually periodic columns and one self-similar column in which $\word_{a/b}$ reappears in some modified form.

In some cases, $k$ can be found easily by determining the largest integer $c$ that occurs in the prefix, computing the positions where it occurs, and computing the $\gcd$ of the successive differences of these positions.
If $\word_{a/b} = \varphi^\infty(0)$ for some $\varphi(n) = u \, (n + d)$ where $c$ does not occur in $u$, then this $\gcd$ is a multiple of $|\varphi(n)| = k$.
When this method does not identify a candidate $k$, one can look for periodic blocks in the difference sequence of the positions of $1$ (or some larger integer), and add the integers in the repetition period to get the length of the corresponding repeating factor of $\word_{a/b}$; this procedure can detect repetitions of $\varphi(0)$ in $\word_{a/b}$.

We identified conjectural structure in $\word_{a/b}$ for $520$ of the $910$ rational numbers.
(Note that these $910$ numbers were not chosen uniformly; some were chosen to bound the interval endpoints for symbolic morphisms that had already been conjectured.)
Of these $520$ words, $510$ have the property that $\word_{a/b} + d$ (the word obtained by incrementing each letter by $d$) appears in the self-similar column for some integer $d \geq 0$.

The remaining $10$ words do not have a constant difference $d$.
For the $4$ rationals $\frac{59}{48}, \frac{65}{57}, \frac{73}{60}, \frac{113}{99}$ the word $\word_{a/b}$ reappears in the self-similar column with its letters incremented by a periodic but not constant sequence.
For the $6$ rationals $\frac{4}{3}, \frac{13}{12}, \frac{15}{14}, \frac{28}{25}, \frac{37}{34}, \frac{64}{59}$ the increment appears to depend on the letter, as in Theorem~\ref{4/3}.

To find families of words $\word_{a/b}$ with related structure among the $510$ with constant $d$, for each word we record
\begin{itemize}
\item
the difference $d$,
\item
the number of columns $k$,
\item
the index of the self-similar column,
and
\item
the number of transient rows.
\end{itemize}

For words $\word_{a/b}$ such that all columns except the self-similar column are eventually constant, we build a word $u$ of length $k-1$ from the eventual values of these columns.
Letting $\varphi(n) = u \, (n + d)$, the structure of $\word_{a/b}$ is potentially related to $\varphi^\infty(0)$.

If $u_1$ and $u_2$, arising from two different words, have the same subsequence $c_1, \dots, c_{r-1}$ of nonzero letters, we can look for a symbolic morphism $\varphi(n) = u \, (n + d)$ that generalizes the two morphisms $\varphi_1(n) = u_1 \, (n + d)$ and $\varphi_2(n) = u_2 \, (n + d)$ by writing
\[
	u = 0^{i_1 a + j_1 b - 1} c_1 \, \cdots \, 0^{i_{r-1} a + j_{r-1} b - 1} c_{r-1} \, 0^{i_r a + j_r b - 1}
\]
and solving each linear system $i a + j b = l$ using the two rationals $\ab$ to determine~$i, j$.
If there is not a unique solution for some block, discard this pair of rationals; otherwise this gives a unique symbolic morphism.

For each pair of words with the same $d$ and same subsequence of nonzero letters, we construct a symbolic morphism, if possible.
If multiple pairs of rationals give the same symbolic morphism, this suggests a general family.
On the other hand, a symbolic morphism is likely not meaningful if it only appears for one pair of rationals and contains run lengths where the coefficients of $a, b$ are rational numbers with large denominators.
In practice, the coefficients of $a, b$ in all morphisms in Section~\ref{symbolic power-free morphisms} are integers, although it is conceivable that families with non-integer coefficients exist (in which case $\ab$ would be restricted by a $\gcd$ condition on $a$ or~$b$).

For each symbolic morphism, we then attempt to determine an interval $\Imin < \ab < \Imax$ on which the morphism is $\ab$-power-free.
Each block $0^{i a + j b - 1} c$ in $\varphi(n)$ restricts the values that $\ab$ can take, since the run length must be non-negative.
We solve the homogeneous equation $i a + j b = 0$ to get a lower bound or upper bound on $\ab$ for each block.
We use the maximum lower bound and minimum upper bound as initial guesses for the interval endpoints.
If this guessed interval is too wide, then running the algorithm identifies obstructions to $\ab$-power-freeness, and we shorten the interval by removing the subintervals on which $\ab$-power-freeness failed to be verified.

Many symbolic morphisms do not turn out to be $\ab$-power-free on a general interval.
A common problem is that solving a homogeneous equation to split an interval gives a value that is not in the interval.
A particularly disappointing case occurs among morphisms with $14$ nonzero letters.
We identified $30$ rational numbers in the interval $\frac{4}{3} < \ab < \frac{3}{2}$ for which the correct value of $k$ for $\word_{a/b}$ seems to be $6 a - b$ and which have $14$ eventually nonzero columns when partitioned into rows of length $k$.
One might expect the structure of all these words to be explained by the same symbolic morphism, but in fact no three of these words are captured by the same symbolic morphism.
We only found suitable intervals for two of the resulting morphisms (Theorems~\ref{14 6a-b I power-free} and \ref{14 6a-b II power-free}).
Both morphisms use $\frac{24}{17}$ as one of their sources, which leaves $27$ of the $30$ rationals without a symbolic morphism.

\section{Families of words $\word_{a/b}$}\label{Families of words}

In Section~\ref{power-free morphisms} we identified a number of symbolic $\ab$-power-free morphisms that were derived from words $\word_{a/b}$.
In this section we discuss exact relationships between some of these morphisms and $\word_{a/b}$.
We begin with the morphism in Theorem~\ref{4 5a-4b power-free}.

\begin{theorem}\label{4 5a-4b}
Let $a, b$ be relatively prime positive integers such that $\frac{3}{2} < \ab < \frac{5}{3}$ and $\gcd(b, 5) = 1$.
Let $\varphi$ be the $(5a-4b)$-uniform morphism defined by
\[
	\varphi(n) = 0^{a-1} \, 1 \, 0^{a-b-1} \, 1 \, 0^{2 a-2 b-1} \, 1 \, 0^{a-b-1} \, (n+1).
\]
Then $\word_{a/b} = \varphi^\infty(0)$.
In particular, $\rho(\ab) = 5a-4b$.
\end{theorem}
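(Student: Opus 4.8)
The plan is to follow the two-step template of the proof of Theorem~\ref{2 2a-b}: first show that $\varphi^\infty(0)$ is $\ab$-power-free, which forces $\word_{a/b} \leq \varphi^\infty(0)$ lexicographically, and then show that $\varphi^\infty(0)$ is lexicographically least, which forces the reverse inequality. The first step is immediate here, because Theorem~\ref{4 5a-4b power-free} already establishes that this exact $(5a-4b)$-uniform morphism is $\ab$-power-free on the interval $\frac{3}{2} < \ab < \frac{5}{3}$ with $\gcd(b,5)=1$. Since the length-one word $0$ is $\ab$-power-free and $\varphi$ preserves $\ab$-power-freeness, the word $\varphi^\infty(0)$ is $\ab$-power-free, giving $\word_{a/b} \leq \varphi^\infty(0)$.

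For lexicographic minimality I would show that decrementing any nonzero letter of $\varphi^\infty(0)$ to any smaller value introduces an $\ab$-power ending at that position. Writing $k = 5a-4b$, the nonzero letters sit at positions congruent to $a-1$, $2a-b-1$, $4a-3b-1$, or $k-1$ modulo $k$; the first three carry the letter $1$ and the last carries $n+1$. Decrementing to $0$ is handled by exhibiting a short local $\ab$-power. At a position $\equiv a-1$ the preceding block $0^{a-1}$ becomes $0^a = (0^b)^{a/b}$. At a position $\equiv 2a-b-1$ the nearest $1$ to the left lies at distance $a-b$, producing $0^{b-1}\,1\,0^{a-b} = (0^{b-1}\,1)^{a/b}$, and the same factor appears at a position $\equiv k-1$ using the $1$ at distance $a-b$. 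The position $\equiv 4a-3b-1$ is the delicate case: there is no $1$ at distance $a-b$ to its left, so no $\ab$-power of length $a$ is available, and one must use a power of length $2a$ instead. Here I would verify that setting this letter to $0$ creates the factor $0^{2a-2b}\,y\,0^{2a-2b}$ of length $2a$, whose two length-$(2a-2b)$ outer blocks coincide; the two interior $1$'s fall strictly inside the central block $y$ precisely because $3a \leq 5b-1$ and $2a > 3b$ on the given interval.

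The letter $n+1$ at a position $\equiv k-1$, when decremented to $c$ with $1 \leq c \leq n$, is then handled by the induction used in Theorem~\ref{2 2a-b}: a prefix ending in $n+1$ has the form $\varphi(w)$ with $w$ ending in $n$, decrementing to $c$ produces $\varphi(w')$ where $w'$ is $w$ with its last letter changed to $c-1$, and by the inductive hypothesis $w'$ has an $\ab$-power suffix $uvu$; since $\varphi$ is uniform, $\varphi(u)\varphi(v)\varphi(u)$ is again an $\ab$-power, so $\varphi(w')$ has an $\ab$-power suffix. Combining the two inequalities gives $\word_{a/b} = \varphi^\infty(0)$.

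I expect the only real obstacle to be the position $\equiv 4a-3b-1$: confirming the length-$2a$ power requires tracking the relative positions of the two interior $1$'s and checking that the governing inequalities hold throughout the open interval rather than only at sampled rationals. This is exactly the bookkeeping that the symbolic procedure of Section~\ref{listing factors} is designed to discharge, so in practice the delicate case is verified by the same automated machinery, with the endpoints handled separately.
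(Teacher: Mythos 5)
Your proposal follows the paper's proof exactly: $\ab$-power-freeness is delegated to Theorem~\ref{4 5a-4b power-free}, and lexicographic leastness is established by showing that decrementing each nonzero letter introduces an $\ab$-power ending at that position, with the reduction of ``decrement $n+1$ to $c\geq 1$'' to an earlier decrement handled by the same induction as in Theorem~\ref{2 2a-b}. Your treatment of the $1$ at position $\equiv 4a-3b-1$ is in fact more careful than the paper's: the paper displays the length-$a$ expression $(0^{-a+2b-1}\,1\,0^{a-b})^{a/b}$ there, which is not an $\ab$-power decomposition of the relevant suffix when $2a>3b$ (the suffix of length $a$ is $0^{2b-a-1}\,1\,0^{2a-2b}$, whose first and last $a-b$ letters disagree), whereas your length-$2a$ witness $0^{2a-2b}\,y\,0^{2a-2b}$, valid because $3a\le 5b-1$ on the given interval, is the correct one.
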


\begin{proof}
The morphism $\varphi$ is $\ab$-power-free by Theorem~\ref{4 5a-4b power-free}, so it suffices to show that $\varphi^\infty(0)$ is the lexicographically least $\ab$-power-free word.
Write $\varphi(n) = u \, (n+1)$.
Decrementing one of the three $1$ letters in $u$ to $0$ introduces the $\ab$-power $(0^b)^{a/b}$, $(0^{b-1} 1)^{a/b}$, or $(0^{-a+2b-1} 1 0^{a-b})^{a/b}$.
The word $u \, 0$ ends with $(0^{b-1} 1)^{a/b}$, and the induction argument showing that decrementing $n + 1$ to $c \geq 1$ introduces an $\ab$-power works exactly as in the proof of Theorem~\ref{2 2a-b}.
Namely, decrementing $n + 1$ to $c$ corresponds, under $\varphi$, to decrementing an earlier letter $n$ to $c - 1$.
\end{proof}

The morphisms in Theorems~\ref{2 2a-b power-free} and \ref{4 5a-4b power-free} are the only morphisms in Section~\ref{symbolic power-free morphisms} that were derived from words $\word_{a/b}$ with no transient.
We can see $\word_{a/b} \neq \varphi^\infty(0)$ for the other morphisms, since the length-$a$ prefix of $\word_{a/b}$ is $0^{a-1} 1$ (as in Proposition~\ref{inequality of words}) but the length-$a$ prefix of $\varphi^\infty(0)$ is not $0^{a-1} 1$.
To account for a transient, we extend $\varphi$ to the alphabet $\Z_{\geq 0} \cup \{0'\}$ and consider morphisms of the form
\[
	\varphi(n) =
	\begin{cases}
		v \, \varphi(0)		& \text{if $n = 0'$} \\
		u \, (n + d)			& \text{if $n \in \Z_{\geq 0}$}.
	\end{cases}
\]

Still, we cannot expect every morphism in Section~\ref{symbolic power-free morphisms} to be related to $\word_{a/b}$ for each $\ab$ in its corresponding interval, since there exist rationals to which multiple theorems apply with different values of $k$; by Corollary~\ref{unique k}, the word $\tau(\varphi^\infty(0'))$ is $k$-regular for a unique value of $k$ up to multiplicative dependence.
For example, $\ab = \frac{24}{17}$ satisfies the conditions of Theorems~\ref{6 4a-2b I power-free}, \ref{6 4a-2b II power-free}, \ref{14 6a-b I power-free}, and \ref{14 6a-b II power-free}.
The corresponding values of $k$ are $4 a - 2 b = 62$ for Theorems~\ref{6 4a-2b I power-free} and \ref{6 4a-2b II power-free}, and $6 a - b = 127$ for Theorems~\ref{14 6a-b I power-free} and \ref{14 6a-b II power-free}; these two row widths are shown in Figure~\ref{24/17 arrays}.
While there are regions of the word $\word_{24/17}$ that have constant columns when partitioned into rows of width $62$, these do not persist, and therefore it seems the morphisms in Theorems~\ref{6 4a-2b I power-free} and \ref{6 4a-2b II power-free} do not determine the long-term structure.

\begin{figure}
	\includegraphics[scale=.33]{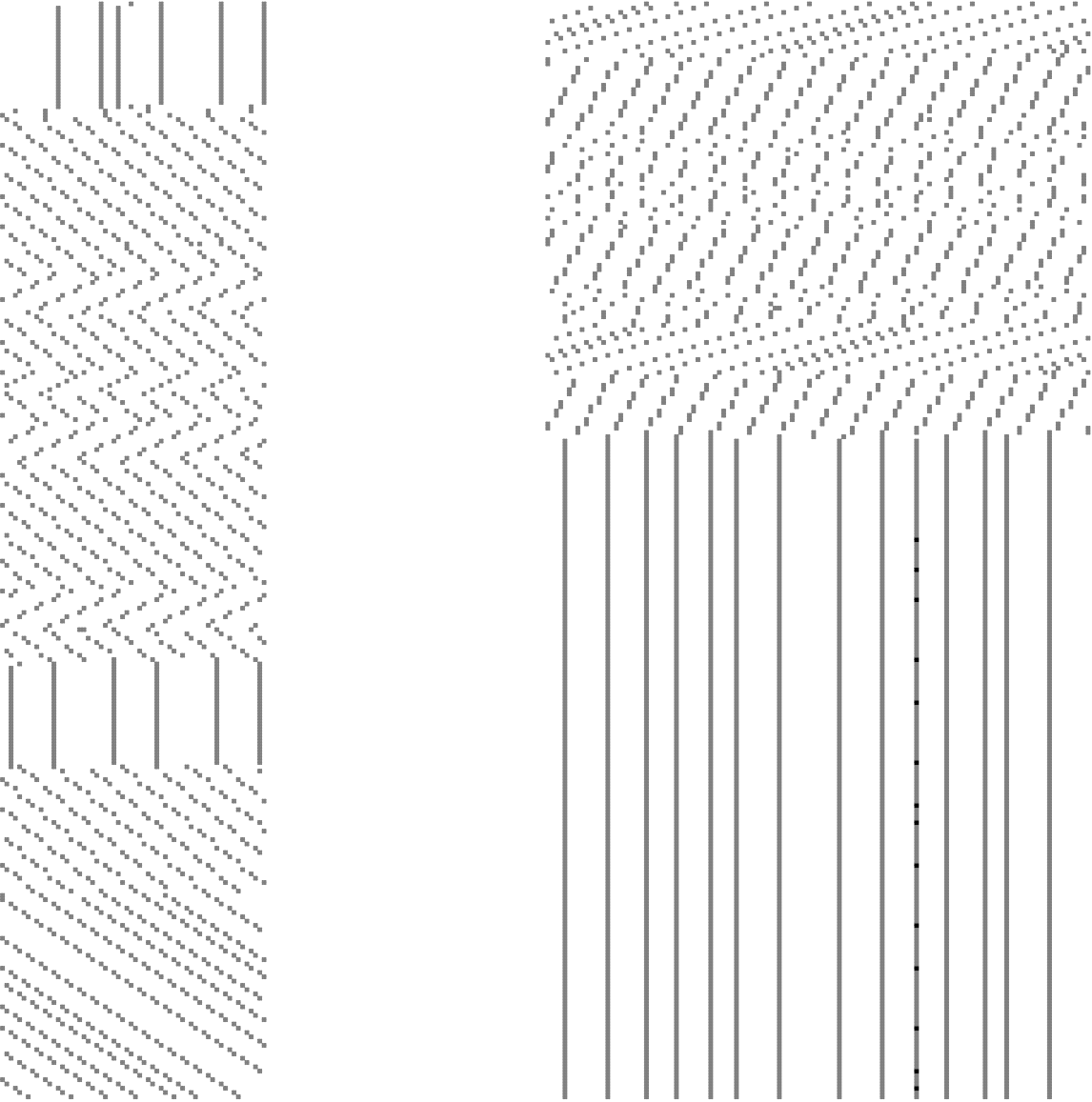}
	\caption{Prefixes of $\word_{24/17}$, partitioned into rows of width $k = 62$ (left) and $k = 127$ (right).}
	\label{24/17 arrays}
\end{figure}

To establish the structure of a word $\word_{a/b}$ with a transient, we must generalize our approach to proving $\ab$-power-freeness and lexicographic-leastness.
For rationals satisfying the conditions of Theorem~\ref{37 24a-15b power-free}, the word $\word_{a/b}$ has a short transient.
Recall that $\tau(0') = 0$ and $\tau(n) = n$ for $n \in \Z_{\geq 0}$.

\begin{theorem}\label{37 24a-15b}
Let $a, b$ be relatively prime positive integers such that $\frac{9}{7} < \ab < \frac{4}{3}$ and $\gcd(b, 24) = 1$.
Let
\[
	v = 0' \, 0^{a-2} \, 1 \, 0^{a-b-1} \, 1 \, 0^{a-b-1} \, 1.
\]
Let $\varphi$ be defined as in Theorem~\ref{37 24a-15b power-free}, extended to the alphabet $\Z_{\geq 0} \cup \{0'\}$ by
\[
	\varphi(0') = v \, \varphi(0).
\]
Then $\word_{a/b} = \tau(\varphi^\infty(0'))$.
In particular, $\rho(\ab) = 24a-15b$.
\end{theorem}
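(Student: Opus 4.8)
The plan is to follow the two-part template used for Theorems~\ref{2 2a-b} and~\ref{4 5a-4b}: first show that $X := \tau(\varphi^\infty(0'))$ is $\ab$-power-free, then show it is lexicographically least by verifying that decrementing any letter introduces an $\ab$-power ending at that position. The essential new ingredient is a clean way to peel off the transient. Writing $\varphi^\infty(0') = 0' W_1 W_2 \cdots$ and applying $\tau$, I would first derive the self-referential identity
\[
	X = \tau(v)\,\psi(X), \qquad \psi := {\varphi|}_{\Z_{\geq 0}}.
\]
This follows because $0'$ occurs only in position $0$ (so each $W_i$ with $i \geq 1$ lies in $\Z_{\geq 0}$ and $\tau(\varphi(W_i)) = \psi(W_i)$), together with $\tau(\varphi(0')) = \tau(v)\,\psi(0)$ and the fact that $0 W_1 W_2 \cdots = \tau(\varphi^\infty(0')) = X$. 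Concretely $\tau(v) = 0^{a-1}\,1\,0^{a-b-1}\,1\,0^{a-b-1}\,1$ of length $3a-2b$, and the identity exhibits $X$ as this transient followed by $\psi(X)$, so that all positions $\geq 3a-2b$ form $\psi(X)$ block by block.

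For power-freeness I would argue by minimal counterexample. Suppose $X$ contains an $\ab$-power $xyx$ of minimal length $ma$, with $|x| = m(a-b)$. By Theorem~\ref{37 24a-15b power-free}, $\psi$ is $\ab$-power-free and locates words of length $2a$, and $\gcd(b,24)=1$ gives $\gcd(b,k)=1$ for $k = 24a-15b$. If the occurrence lies far enough past the transient that $|x| \geq 2a$ (which on $\frac{9}{7} < \ab < \frac{4}{3}$ means $m$ exceeds a small explicit bound), then, exactly as in Lemma~\ref{big x}, the locating property forces $k \mid |xy|$, and sliding to a block boundary of $\psi(X)$ desubstitutes $xyx$ into a strictly shorter $\ab$-power $x_1 y_1 x_1$ in $X$ (a genuine factor, since the letters substituted by $\psi$ are precisely the letters of $X$), contradicting minimality. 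The remaining occurrences either have $m$ below the bound or meet the transient $\tau(v)$; these all lie in a prefix of $X$ of bounded length, and I would dispose of them by the symbolic factor-listing and inequality tests of Section~\ref{power-free morphisms}, now applied to $\tau(v)\,\psi(X)$ rather than to a purely uniform fixed point.

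For lexicographic leastness I would prove, by strong induction on the target value $c$, that decrementing any letter of $X$ larger than $c$ down to $c$ introduces an $\ab$-power ending at that position; combined with power-freeness this identifies $X$ as the greedy word $\word_{a/b}$. The nonzero letters split into the explicit $1$'s and the single $2$ inside each block $\psi(n) = u\,(n+2)$ (and inside $\tau(v)$), for which decrementing to each smaller value produces a short $\ab$-power checked symbolically over the interval, and the self-similar letters $n+2$. For the latter, the cases $c \in \{0,1\}$ are base cases checked directly, while for $c \geq 2$ the change turns the final block from $u\,(n+2)$ into $u\,c = \psi(c-2)$, so it corresponds to decrementing the earlier source letter $n$ to $c-2$; the inductive hypothesis at value $c-2$ supplies an $\ab$-power suffix there, and applying the uniform morphism $\psi$ (which sends $\ab$-powers to $\ab$-powers) and prepending $\tau(v)$ yields the required power. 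The $d=2$ increment is what makes the induction step $c \mapsto c-2$, so the two base values $c \in \{0,1\}$ exactly anchor all residues.

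The main obstacle, as in the uniform case, is the finite but genuinely symbolic verification at the boundary between $\tau(v)$ and $\psi(X)$: the base cases of the induction and the short or transient-overlapping powers in the power-freeness argument must all be confirmed uniformly for $\frac{9}{7} < \ab < \frac{4}{3}$ with $\gcd(b,24)=1$. This is exactly where the \textsc{SymbolicWords} computations are needed, and where the interaction of the transient with the self-similar region could in principle fail; everything past the boundary is handled cleanly by the descent and the induction once the identity $X = \tau(v)\,\psi(X)$ is in hand.
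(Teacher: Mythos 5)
Your overall architecture matches the paper's: the identity $X = \tau(v)\,\psi(X)$, reduction of positions $\geq |v|$ to the $\ab$-power-freeness of $\psi = {\varphi|}_{\Z_{\geq 0}}$ (Theorem~\ref{37 24a-15b power-free}), a finite symbolic check near the transient, and the lexicographic-leastness induction with step $c \mapsto c-2$ and the two-case analysis of whether a block is preceded by $\tau(v)$ or by $\varphi(n)$. The last part in particular is essentially identical to the paper's argument.

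However, there is a genuine gap in your treatment of $\ab$-powers that begin inside the transient. You assert that occurrences meeting $\tau(v)$ ``all lie in a prefix of $X$ of bounded length'' and can therefore be dispatched by a finite symbolic check, but nothing in your argument bounds the length of such a power: an $\ab$-power $xyx$ whose first letter sits at some position $i \leq |v|-1$ could a priori have its second copy of $x$ reaching arbitrarily far into $\psi(X)$, and the locating/desubstitution machinery of Lemma~\ref{big x} does not apply to it because a factor overlapping the transient is not a factor of $\psi(w)$ for any $w$. The paper closes exactly this hole with a separate idea: every $\varphi(n)$ begins with $p = 0^{a-b-1}\,1\,0^{2a-2b-1}\,1$, the prefix $v$ ends in $1$ while every $\varphi(n)$ ends in $n+2 \neq 1$, and one checks that $1\,p$ is not a factor of $\varphi(w)$ for $w$ over $\Z_{\geq 0}$; hence $1\,p$ occurs in $X$ only at position $|v|-1$, so any factor of length at least $|v| + |p| = 6a-5b$ starting at a position $\leq |v|-1$ occurs exactly once in $X$ and cannot be the repeated block $x$. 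This forces $|x| < 6a-5b \leq 10(a-b)$, i.e.\ $m \leq 9$, and only then does the window-sliding verification over $1 \leq m \leq 9$ become a finite computation. Without this (or an equivalent) uniqueness argument for a factor straddling the boundary, your ``finite check near the transient'' is not actually finite, and the proof does not go through.
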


\begin{proof}
First we show that $\tau(\varphi^\infty(0'))$ is $\ab$-power-free.
Since
\[
	\varphi^\infty(0') = v \, \varphi(\tau(v)) \, \varphi^2(\tau(v)) \, \cdots,
\]
a factor beginning at position $i \geq |v|$ in $\tau(\varphi^\infty(0'))$ is a factor of $\varphi(w)$ for some finite factor $w$ of $\tau(\varphi^\infty(0'))$.
Since ${\varphi |}_{\Z_{\geq 0}}$ is $\ab$-power-free by Theorem~\ref{37 24a-15b power-free}, it follows that if $\tau(\varphi^\infty(0'))$ contains an $\ab$-power then it contains an $\ab$-power beginning at some position $i \leq |v| - 1$.

For each $n \in \Z_{\geq 0}$, the word $\varphi(n)$ begins with $p \colonequal 0^{a-b-1} \, 1 \, 0^{2 a-2 b-1} \, 1$.
The prefix $v$ ends with $1$, while $\varphi(n)$ ends with $n + 2 \neq 1$.
By examining $\varphi(n)$, one checks that if $w$ is a word on the alphabet $\Z_{\geq 0}$ then $1 \, p$ is not a factor of $\varphi(w)$.
It follows that $1 \, p$ occurs at position $|v| - 1$ in $\tau(\varphi^\infty(0'))$ but does not occur after position $|v| - 1$.
If a factor $x$ begins at position $i \leq |v| - 1$ in $\tau(\varphi^\infty(0'))$ and has length $|x| \geq |v| + |p| = (3 a - 2 b) + (3 a - 3 b) = 6 a - 5 b$, then $x$ contains $1 \, p$ and therefore only occurs once in $\tau(\varphi^\infty(0'))$.
Therefore if $\tau(\varphi^\infty(0'))$ contains an $\ab$-power factor $(xy)^{a/b} = xyx$ then $|x| < 6 a - 5 b$; since $\frac{9}{7} < \ab < \frac{4}{3}$, we have $|x| < 6 a - 5 b \leq 10 (a-b)$.

It remains to show that $\tau(\varphi^\infty(0'))$ contains no $\ab$-power $(xy)^{a/b} = xyx$ with $|x| \leq 9 (a-b)$ beginning at a position $i \leq |v| - 1$.
For each $m$ in the range $1 \leq m \leq 9$, this is accomplished by sliding a window of length $m a$ through $\tau(\varphi^\infty(0'))$ from position $0$ to position $|v| - 1$ and verifying inequality of symbolic factors as in Section~\ref{testing inequality}.

Now we show that decrementing any nonzero letter of $\tau(\varphi^\infty(0'))$ introduces an $\ab$-power.
Decrementing one of the three $1$ letters in the prefix $\tau(v)$ to $0$ introduces the $\ab$-power $(0^b)^{a/b}$ or $(0^{b-1} 1)^{a/b}$.
Every other nonzero letter is a factor of $\varphi(n)$ for some integer $n$.
Write $\varphi(n) = u \, (n+2)$ for $n \in \Z_{\geq 0}$.
The word $u \, 2$ contains $37$ nonzero letters.
One checks that decrementing each $1$ in $u \, 2$, except the first two, to $0$ and each $2$ in $u \, 2$ to $0$ or $1$ introduces an $\ab$-power of length $a$ or $2 a$.
For the first two $1$s, there are two cases each, depending on whether $u$ is immediately preceded by $\tau(v)$ or $\varphi(n)$.
Decrementing the first $1$ to $0$ introduces the $\ab$-power
\[
	0^{a-b} \cdot 0^{-3 a+4 b-1} \, 1 \, 0^{a-b-1} \, 1 \, 0^{a-b-1} \, 1 \cdot 0^{a-b}
\]
if preceded by $\tau(v)$ and the $\ab$-power
\[
	0^{a-b} \cdot 0^{-3 a+4 b-1} \, 1 \, 0^{a-b-1} \, 1 \, 0^{a-b-1} \, (n+2) \cdot 0^{a-b}
\]
if preceded by $\varphi(n)$.
Decrementing the second $1$ to $0$ introduces the $\ab$-power
\[
	0^{2 a-2 b} \cdot 0^{-5 a+7 b-1} \, 1 \, 0^{a-b-1} \, 1 \, 0^{a-b-1} \, 1 \, 0^{a-b-1} \, 1 \cdot 0^{2 a-2 b}
\]
if preceded by $\tau(v)$ and the $\ab$-power
\[
	0^{2 a-2 b} \cdot 0^{-3 a+4 b-1} \, 1 \, 0^{-2 a+3 b-1} \, 1 \, 0^{a-b-1} \, 1 \, 0^{a-b-1} \, (n+2) \, 0^{a-b-1} \, 1 \cdot 0^{2 a-2 b}
\]
if preceded by $\varphi(n)$.
As in the proof of Theorem~\ref{4 5a-4b}, decrementing $n + 2$ to $c \geq 2$ corresponds to decrementing an earlier letter $n$ to $c - 2$ and therefore, inductively, introduces an $\ab$-power.
\end{proof}

In addition to the presence of transients, another complication is that some words $\word_{a/b}$ reappear with finitely many modified letters in the self-similar column.
As shown in Figure~\ref{19/16 array}, $\word_{19/16}$ has $k-1$ eventually constant columns when partitioned into $k = 53$ columns.
After $4$ transient rows, the self-similar column consists of $\word_{19/16} + 1$ with the letter at position $18$ changed from $2$ to $0$.
The previous letter in $\word_{19/16}$ is $1$, despite it being in an eventually-$0$ column.
However, changing this $1$ to $0$ introduces an $\ab$-power
\[
	\left(0^{-5 a + 6 b - 1} \, 1 \, \varphi(0) \, 1^{-1} \, 0^{-(-5 a + 6 b - 1)}\right)^a
\]
ending at that position, where we work in the free group on $\Z_{\geq 0}$ in order to remove some letters from the end of $\varphi(0)$.
This happens because the prefix $v$ and $\varphi(0)$ have a nonempty common suffix.
In contrast, in Theorem~\ref{37 24a-15b} the prefix $v$ ends with $1$ whereas $\varphi(n)$ ends with $n+2$, so we aren't at risk of completing the $a$-power before we reach the self-similar column on the $(a+1)$th row.

To capture this modification, we introduce a new letter $1'$, define $\tau(1') = 1$, and define $\varphi(1')$ to be identical to $\varphi(1)$ except in two positions.
Only minor changes to the proof of Theorem~\ref{regular sequences} are necessary to establish that the sequence of letters in $\tau(\varphi^\infty(0'))$ is $k$-regular.
The following conjecture claims the structure of $\word_{19/16}$ is related to the morphism in Theorem~\ref{12 7a-5b power-free}.
Note however that the interval is shorter than in Theorem~\ref{12 7a-5b power-free}.

\begin{conjecture}\label{12 7a-5b}
Let $a, b$ be relatively prime positive integers such that $\frac{13}{11} < \ab < \frac{6}{5}$ and $\gcd(b, 7) = 1$.
Let $\varphi$ be the morphism defined in Theorem~\ref{12 7a-5b power-free}.
Let
\begin{align*}
	v = {}
	& 0' \, 0^{a-2} \, 1' \, 0^{a-b-1} \, 1 \, 0^{a-b-1} \, 1 \, 0^{a-b-1} \, 1 \, 0^{a-b-1} \, 1 \, 0^{a-b-1} \, 1 \, 0^{2 a-2 b-1} \, 1 \, 0^{2 a-2 b-1} \, 1 \\
	& 0^{-3 a+4 b-1} \, 1 \, 0^{2 a-2 b-1} \, 1 \, 0^{2 a-2 b-1} \, 1 \, 0^{a-b-1} \, 1 \, 0^{-4 a+5 b-1} \, 1 \, 0^{6 a-7 b-1} \, 1 \, 0^{-4 a+5 b-1} \, 1 \\
	& 0^{6 a-7 b-1} \, 1 \, 0^{-3 a+4 b-1} \, 1 \, 0^{2 a-2 b-1} \, 1 \, 0^{2 a-2 b-1} \, 1 \, 0^{a-b-1} \, 1 \, 0^{-4 a+5 b-1} \, 1 \, 0^{2 a-2 b-1} \, 1 \\
	& 0^{-4 a+5 b-1} \, 1 \, 0^{7 a-8 b-1} \, 1 \, 0^{-4 a+5 b-1} \, 1 \, 0^{6 a-7 b-1} \, 1 \, 0^{-4 a+5 b-1} \, 1 \, 0^{6 a-7 b-1} \, 1 \, 0^{a-b-1} \, 1 \\
	& 0^{-4 a+5 b-1} \, 1 \, 0^{2 a-2 b-1} \, 1 \, 0^{3 a-3 b-1} \, 1 \, 0^{-4 a+5 b-1} \, 1 \, 0^{2 a-2 b-1} \, 1 \, 0^{a-b-1} \, 1 \, 0^{-5 a+6 b-1} \, 1.
\end{align*}
Extend $\varphi$ to the alphabet $\Z_{\geq 0} \cup \{0', 1'\}$ by
\[
	\varphi(0') = v \, \varphi(0),
\]
and define $\varphi(1')$ to be the word obtained by taking $\varphi(1)$ and changing the $0$ at position $12 a - 11 b - 1$ to $1$ and the last letter (at position $7 a - 5 b - 1$) to $0$.
Then $\word_{a/b} = \tau(\varphi^\infty(0'))$.
In particular, $\rho(\ab) = 7a-5b$.
\end{conjecture}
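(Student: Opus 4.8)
The plan is to follow the scheme already used for Theorem~\ref{37 24a-15b}, proving separately that $\tau(\varphi^\infty(0'))$ is $\ab$-power-free and that it is lexicographically least, and then invoking the variant of Theorem~\ref{regular sequences} for the value of $\rho$. The preliminary step is to pin down the combinatorial skeleton of $W \colonequal \varphi^\infty(0')$. Because $\varphi(0') = v\,\varphi(0)$ with $v$ beginning in $0'$, while $\varphi(1')$ contains no primed letters, one checks that $W$ has exactly one $0'$ (at position $0$) and exactly one $1'$ (lying inside the prefix $v$), both within the first $|v|$ positions. Applying $\varphi$ to the single $1'$ produces one anomalous block, the modified copy $\varphi(1')$ of $\varphi(1)$, sitting at a determined position in the tail; every other block of $W$ beyond $v$ is an ordinary image $\varphi(n)$ with $n \in \Z_{\geq 0}$.

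For $\ab$-power-freeness I would argue as in Theorem~\ref{37 24a-15b}. Since our interval $\frac{13}{11} < \ab < \frac{6}{5}$ is contained in the interval of Theorem~\ref{12 7a-5b power-free}, the restriction ${\varphi|}_{\Z_{\geq 0}}$ is $\ab$-power-free; hence any $\ab$-power of $\tau(W)$ that avoids both the transient $v$ and the single block $\varphi(1')$ has a strictly shorter preimage $\ab$-power, a contradiction. It then remains to rule out $\ab$-powers meeting the initial segment or meeting $\varphi(1')$. For this I would exhibit a short ``beacon'' factor, the analogue of $1\,p$ in Theorem~\ref{37 24a-15b}, occurring in $\tau(W)$ only at a controlled position; a sufficiently long border $x$ then forces $(xy)^{a/b}=xyx$ to straddle a unique occurrence of the beacon, so $|x|$ is bounded by an explicit multiple of $a-b$ and $m = |xyx|/a$ by an explicit constant. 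The finitely many surviving cases — windows of length $ma$ started at the $O(1)$ positions that can reach $v$ or the block $\varphi(1')$ — are discharged by the symbolic window-sliding and inequality tests of Sections~\ref{listing factors} and \ref{testing inequality}, applied uniformly in $a$ and $b$ and then at the rational endpoints of each resulting subinterval.

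For lexicographic leastness I would show that decrementing any nonzero letter of $\tau(W)$ creates an $\ab$-power ending at that position. Letters inside $\tau(v)$ and inside each ordinary block $\varphi(n) = u\,(n+d)$ are handled exactly as in Theorems~\ref{4 5a-4b} and \ref{37 24a-15b}: most interior $1$'s fall to $0$ and complete a power of bounded length, while the final letter $n+d$ is treated by the inductive correspondence that decrementing $n+d$ to $c$ is the $\varphi$-image of decrementing an earlier $n$ to $c-d$. The genuinely new point is the special letter encoded by $1'$, which lies in an otherwise eventually-$0$ column at position $12a-11b-1$ of the block $\varphi(1')$; here the naive argument fails because $v$ and $\varphi(0)$ share a suffix. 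Following the $\word_{19/16}$ computation in Section~\ref{Families of words}, I would verify that decrementing this $1$ to $0$ produces the $\ab$-power
\[
\left(0^{-5 a + 6 b - 1} \, 1 \, \varphi(0) \, 1^{-1} \, 0^{-(-5 a + 6 b - 1)}\right)^a,
\]
interpreted in the free group on $\Z_{\geq 0}$, and confirm that the word in parentheses is a genuine positive word on the whole subinterval, so that the expression is an honest $\ab$-power. Finally, $k$-regularity with $\rho(\ab) = 7a-5b$ follows from the adaptation of Theorem~\ref{regular sequences} to the extra letter $1'$, the value being unique up to multiplicative dependence by Corollary~\ref{unique k}.

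The hard part will be making the two appearances of the anomaly fully rigorous and uniform over the entire subinterval. In the power-freeness half one must confirm that inserting the single modified block $\varphi(1')$, whose last letter is $0$ rather than $n+d$, neither creates a second occurrence of the beacon nor produces a short $\ab$-power straddling it; this enlarges the finite, symbolic case analysis and breaks the clean ``every block ends in a letter $\neq 1$'' structure that makes the beacon argument work in Theorem~\ref{37 24a-15b}. In the leastness half one must check that the free-group cancellation above yields a legitimate positive $\ab$-power for every admissible $\ab$, including verifying that the auxiliary run lengths such as $-5a+6b-1$ remain non-negative (which is exactly what forces the lower endpoint $\frac{13}{11}$ and shortens the interval relative to Theorem~\ref{12 7a-5b power-free}). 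Because all of these checks are symbolic in $a$ and $b$ and the morphism is large, the verification is substantial; this computational burden, rather than any conceptual gap, is presumably why the statement is recorded only as a conjecture.
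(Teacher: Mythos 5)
The statement you are proving is recorded in the paper only as a conjecture; the authors give no proof of it, so there is no argument of theirs to compare yours against. Judged on its own terms, what you have written is a faithful and well-informed \emph{plan} — it correctly transplants the two-part scheme of Theorem~\ref{37 24a-15b} (power-freeness via a uniquely-occurring ``beacon'' factor bounding $|x|$, then leastness letter by letter with the inductive treatment of the final letter), and it correctly isolates the genuinely new feature, namely the single anomalous block $\varphi(1')$ and the free-group $a$-power from the $\word_{19/16}$ discussion. But it is not a proof: every decisive step is deferred. You do not exhibit the beacon factor or show it occurs uniquely; you do not carry out the symbolic window-sliding over the transient $v$ (which here has $34$ nonzero letters and symbolic run lengths) or over the neighborhood of $\varphi(1')$; you do not verify that the free-group expression $0^{-5a+6b-1}\,1\,\varphi(0)\,1^{-1}\,0^{5a-6b+1}$ reduces to a genuine positive word of length $7a-5b$ on the whole interval; and you do not check leastness for the nonzero letters of $v$ itself. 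Since the entire content of such a theorem lies in exactly these finite-but-large symbolic verifications, deferring them leaves the statement where the paper left it: a conjecture.

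One structural point deserves emphasis beyond what you wrote. The reduction ``any $\ab$-power avoiding $v$ and $\varphi(1')$ pulls back to a shorter one'' is not an immediate consequence of Theorem~\ref{12 7a-5b power-free}: Lemma~\ref{big x} and Proposition~\ref{big m} are proved under the hypothesis that any two blocks $\varphi(n)$, $\varphi(n')$ differ in at most one position, and the extended morphism violates this because $\varphi(1')$ differs from $\varphi(1)$ in \emph{two} positions. So even the ``generic'' part of the power-freeness argument — sliding the window to a block boundary — breaks near the unique occurrence of $\varphi(1')$, and that occurrence is not confined to the first $|v|$ letters (it sits one level deeper in the fixed point). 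You flag this as ``the hard part'' but offer no mechanism for handling it; a correct proof would need either a strengthened synchronization lemma tolerating one doubly-modified block, or an explicit enlargement of the finite case analysis to cover all $\ab$-powers meeting that block. Until that is supplied and the symbolic computations are actually run, the argument has a genuine gap rather than merely an omitted routine verification.
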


\begin{figure}
	\includegraphics[scale=.5]{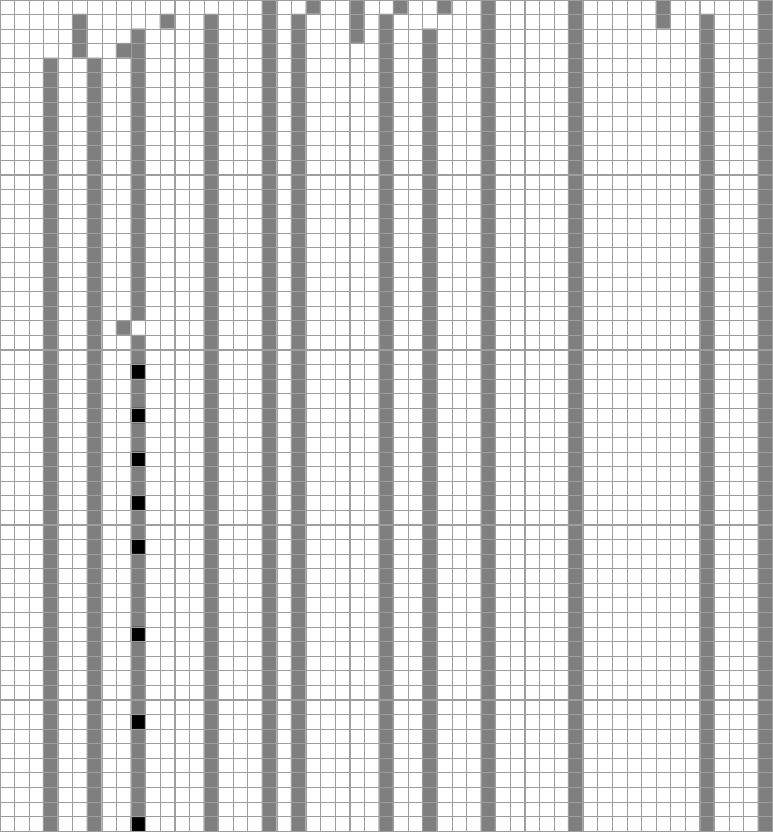}
	\caption{A prefix of $\word_{19/16}$, partitioned into rows of width $53$.}
	\label{19/16 array}
\end{figure}

The prefix $v$ has length $|v| = 19 a - 12 b$.
The length of $\varphi(1)$ is $|\varphi(1)| = 7a-5b$, so the $0$ at position $12 a - 11 b - 1$ in $\varphi(1)$ is contained in the last block of $0$s.
Therefore
\begin{align*}
	\varphi(1') = {}
	& 0^{7 a-8 b-1} \, 1 \, 0^{-4 a+5 b-1} \, 1 \, 0^{6 a-7 b-1} \, 1 \, 0^{2 a-2 b-1} \, 1 \, 0^{a-b-1} \, 1 \, 0^{-4 a+5 b-1} \, 1 \\
	& 0^{2 a-2 b-1} \, 1 \, 0^{3 a-3 b-1} \, 1 \, 0^{-4 a+5 b-1} \, 1 \, 0^{-4 a+5 b-1} \, 1 \, 0^{a-b-1} \, 1 \, 0^{6 a-7 b-1} \, 1 \, 0^{-5 a+6 b}.
\end{align*}

Conjecture~\ref{12 7a-5b} is the case $r=3$ of a more general conjecture corresponding to the morphism in Conjecture~\ref{4r power-free} with $4r$ nonzero letters.
As in Conjecture~\ref{12 7a-5b}, the interval is shorter than in Conjecture~\ref{4r power-free}.

\begin{conjecture}\label{4r}
Let $r \geq 3$ be an integer.
Let $a, b$ be relatively prime positive integers.
Let $\varphi$ be the morphism defined in Conjecture~\ref{4r power-free}.
Let
\[
	v = 0' \, 0^{a-2} \, 1' \, A^{2r-1} B^{r-1} V B^{r-1} A (Y Z)^{r-1} V B^{r-1} A Y B^{r-2} Y X (Y Z)^{r-1} A Y B^{r-2} C Y B^{r-2} A W,
\]
where $A, B, C, X, Y, Z$ are as defined in Conjecture~\ref{4r power-free} and
\begin{align*}
	V &= 0^{-(2r-3) a + (2r-2) b - 1} \, 1 \\
	W &= 0^{-(2r-1) a + (2r) b - 1} \, 1.
\end{align*}
Extend $\varphi$ to $\Z_{\geq 0} \cup \{0', 1'\}$ by
\[
	\varphi(0') = v \, \varphi(0),
\]
and define $\varphi(1')$ to be the word obtained by taking $\varphi(1)$ and changing the $0$ at position $(4 r) a - (4 r - 1) b - 1$ to $1$ and the last letter to $0$.
There exists a finite set $Q_r \subset \Q$ such that if $\frac{4 r + 1}{4 r - 1} < \ab < \frac{2 r}{2 r - 1}$ and $\ab \notin Q_r$ and $\gcd(b, 2 r + 1) = 1$ then $\word_{a/b} = \tau(\varphi^\infty(0'))$.
In particular, $\rho(\ab) = (2 r + 1) a - (2 r - 1) b$.
\end{conjecture}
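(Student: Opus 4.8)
The plan is to follow the two-part template of the proof of Theorem~\ref{37 24a-15b}: first show that $\tau(\varphi^\infty(0'))$ is $\ab$-power-free, and then show it is the lexicographically least $\ab$-power-free word by verifying that decrementing any nonzero letter introduces an $\ab$-power. The essential new feature, compared to Theorem~\ref{37 24a-15b}, is that every object in sight --- the morphism $\varphi$, the prefix $v$, the run lengths, the letter $1'$, and the bound on the lengths of relevant factors --- depends on the symbolic parameter $r$, so each step must be carried out uniformly in $r$ rather than for a single explicit morphism. A natural organization is to prove the result by induction on the structure given by the blocks $A,B,C,X,Y,Z,V,W$, treating the $r$-dependent exponents (such as $B^{r-1}$ and $(YZ)^{r-1}$) symbolically throughout.

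For the power-freeness half, I would first invoke Conjecture~\ref{4r power-free} to know that ${\varphi |}_{\Z_{\geq 0}}$ is $\ab$-power-free; since this is itself open for symbolic $r$, it is a prerequisite rather than an available tool. Granting it, the argument from Theorem~\ref{37 24a-15b} localizes any putative $\ab$-power to the transient: writing $\varphi^\infty(0') = v\,\varphi(\tau(v))\,\varphi^2(\tau(v))\cdots$, any factor starting at position $\geq |v|$ lies inside some $\varphi(w)$ and is already known to be safe. One then needs a short factor, occurring exactly once near the junction between $\tau(v)$ and the self-similar region, that forces any $\ab$-power $(xy)^{a/b}=xyx$ to have $|x|$ bounded by a linear expression in $a,b$ with $r$-dependent coefficients. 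In Theorem~\ref{37 24a-15b} this role was played by $1\,p$, using that $v$ ends with $1$ while each $\varphi(n)$ ends with $n+2\neq 1$; here the analogous distinction is spoiled because $\varphi(0)$ ends with $1$ and, as noted before Conjecture~\ref{12 7a-5b}, $v$ and $\varphi(0)$ share a nonempty suffix. Finding a genuinely unique synchronizing factor is therefore more delicate, and the letter $1'$ is introduced precisely to break this coincidence. Once such a bound on $|x|$ is in hand, it reduces the check to finitely many $m=|xyx|/a$, and for each one slides a window of length $ma$ through the transient and verifies inequality of symbolic factors as in Section~\ref{testing inequality}.

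For lexicographic-leastness I would verify, for each nonzero letter, that decrementing it completes an $\ab$-power. The letters split into: the $1$'s and the $1'$ in the prefix $\tau(v)$; the letters strictly inside a block $\varphi(n)$; the boundary letters whose completing power depends on whether they are preceded by $\tau(v)$ or by some $\varphi(n)$; and the self-similar letter $n+1$, handled by the same induction as in Theorem~\ref{37 24a-15b}, namely that decrementing $n+1$ to $c$ corresponds under $\varphi$ to decrementing an earlier $n$ to $c-1$. The delicate point is the modified letter $1'$: exactly as in the discussion preceding Conjecture~\ref{12 7a-5b}, the letter at position $(4r)a-(4r-1)b-1$ is changed because $v$ and $\varphi(0)$ share a nonempty suffix, and the completing $\ab$-power must be written in the free group on $\Z_{\geq 0}$, in the form $(0^{\ldots}\,1\,\varphi(0)\,1^{-1}\,0^{\ldots})^a$, so as to cancel the trailing letters of $\varphi(0)$. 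Establishing $k$-regularity of the resulting sequence then follows from the (mildly modified) proof of Theorem~\ref{regular sequences}, giving $\rho(\ab)=(2r+1)a-(2r-1)b$.

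The hard part will be making the symbolic case analysis uniform in $r$. In the fixed-$r$ theorems of Section~\ref{symbolic power-free morphisms} the software of Section~\ref{power-free morphisms} breaks the interval into finitely many subintervals and tests finitely many symbolic factors; with $r$ free, both the block exponents and the factor-length bound grow with $r$, so one must instead reason about an $r$-indexed family of factorizations and show the inequality criteria of Section~\ref{testing inequality} hold for all $r\geq 3$ simultaneously. Two further obstructions are intertwined with this: identifying the exceptional set $Q_r$ explicitly --- presumably the finitely many $\ab$ in the interval for which $\varphi(0)$ or a short factor becomes a perfect power, generalizing the single exclusion $\frac{4r+1}{4r-1}$ already present in Conjecture~\ref{4r power-free} --- and, underlying everything, the still-open Conjecture~\ref{4r power-free}, without which the localization step has no foundation. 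I expect that a successful proof would proceed by exhibiting, for symbolic $r$, a finite \emph{certificate} of synchronization and of factor-inequality that specializes correctly at every integer $r\geq 3$, much as Theorem~\ref{2 2a-b} handles a one-parameter family by hand.
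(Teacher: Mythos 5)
The statement you are addressing is Conjecture~\ref{4r}, which the paper itself leaves unproven; there is no proof in the paper to compare your attempt against. What you have written is an accurate and well-informed \emph{plan} --- it correctly identifies the template (the two-part argument of Theorem~\ref{37 24a-15b}: localize any $\ab$-power to the transient via a uniquely-occurring synchronizing factor, then check lexicographic-leastness letter by letter with the inductive treatment of the self-similar letter), and it correctly diagnoses why the letter $1'$ is needed and why the free-group trick from the discussion before Conjecture~\ref{12 7a-5b} enters. But it is not a proof, and the gaps you yourself flag are exactly the ones that matter.

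Concretely: (i) the entire power-freeness half rests on Conjecture~\ref{4r power-free}, which the paper states is ``beyond the scope of our code'' because of the symbolic parameter $r$; assuming it leaves the foundation of the localization step unproven. (ii) Even granting that, you do not exhibit the synchronizing factor that replaces $1\,p$ from Theorem~\ref{37 24a-15b}, nor the resulting bound on $|x|$, nor do you carry out the window-sliding verification for the finitely many $m$ --- and here ``finitely many'' is itself $r$-dependent, so the check must be packaged as an $r$-uniform certificate, which is the genuinely new mathematics required and is not supplied. (iii) The lexicographic-leastness half requires verifying, for every nonzero letter of $v$ and of $u$ (whose counts grow with $r$), that decrementing it completes an $\ab$-power; you describe the case split but produce none of the completing powers except by analogy. (iv) The exceptional set $Q_r$ is not identified even conjecturally in closed form; the paper's example $\frac{37}{34} \in Q_6$ shows that members of $Q_r$ arise from failures deep inside $\varphi(0')$ (an $\frac{37}{34}$-power of length $8 \cdot 37$ ending at position $410$), not merely from $\varphi(0)$ becoming a perfect power, so your proposed characterization of $Q_r$ is likely incomplete. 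In short, the proposal is a reasonable roadmap that matches how the authors would presumably attack the problem, but every substantive step remains to be done, and the statement remains a conjecture.
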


Exceptional rationals can arise when the morphism fails to produce an $\ab$-power-free word.
For example, $\frac{37}{34} \in Q_6$, since $\varphi(0')$ contains an $\frac{37}{34}$-power of length $8 \cdot 37$ ending at position $410$.
The correct value of $k$ for $\word_{37/34}$ seems to be $777$.

Exceptional rationals can also arise when a morphism fails to produce a lexicographically least word.
The following conjecture gives the structure of $\word_{a/b}$ for most rationals satisfying the conditions of Theorem~\ref{42 13a-8b power-free}.
However, $\frac{74}{67}$ is a new exceptional rational because the transient $v$ contains a $1$ at position $1247$, whereas a $0$ does not complete any $\frac{74}{67}$-powers.
Interestingly, the correct value of $k$ for $\ab = \frac{74}{67}$ seems to nonetheless be $k = 13a-8b = 426$, and the nonzero columns have the same spacings as in the morphism, but the self-similar column is in a different place.
It is possible there are additional exceptions like $\frac{74}{67}$.

\begin{conjecture}\label{42 13a-8b}
Let $a, b$ be relatively prime positive integers such that $\frac{11}{10} < \ab < \frac{21}{19}$ and $\ab \notin \{\frac{32}{29}, \frac{74}{67}\}$ and $\gcd(b,13) = 1$.
There exists a word
\[
	v = 0' \, 0^{a - 2} \, 1' \, 0^{a - b - 1} \, 1 \, 0^{a - b - 1} \, 1 \, \cdots \, 0^{-19 a + 21 b - 1} \, 1
\]
of length $317a-178b$ with $1127$ letters from $\{1', 1, 2\}$ such that if we extend the morphism $\varphi$ defined in Theorem~\ref{42 13a-8b power-free} to $\Z_{\geq 0} \cup \{0', 1'\}$ by
\[
	\varphi(0') = v \, \varphi(0)
\]
and define $\varphi(1')$ to be the word obtained by taking $\varphi(1)$ and changing the $0$ at position $32 a - 29 b - 1$ to $1$ and the last letter to $0$, then $\word_{a/b} = \tau(\varphi^\infty(0'))$.
In particular, $\rho(\ab) = 13a-8b$.
\end{conjecture}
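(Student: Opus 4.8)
The plan is to follow the template of the proof of Theorem~\ref{37 24a-15b}, establishing separately that $W \colonequal \tau(\varphi^\infty(0'))$ is $\ab$-power-free and that it is lexicographically least, and then invoking a mild adaptation of Theorem~\ref{regular sequences} for the final claim $\rho(\ab) = 13a - 8b$. The two new features relative to Theorem~\ref{37 24a-15b} are the very long symbolic transient $v$ (of length $317a - 178b$, carrying $1127$ letters from $\{1',1,2\}$) and the marked letter $1'$, whose image $\varphi(1')$ differs from $\varphi(1)$ in two positions, exactly as in Conjecture~\ref{12 7a-5b}. As there, $1'$ occurs only once in $\varphi^\infty(0')$ — at position $a-1$ — and $\varphi(1')$ contains no $1'$, so the modification does not cascade; it merely records the single altered entry of the self-similar column together with the neighboring $0 \mapsto 1$ correction.

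First I would prove power-freeness. Writing $\varphi^\infty(0') = v\,\varphi(\tau(v))\,\varphi^2(\tau(v))\cdots$, any factor beginning at position $\geq |v|$ lies in some $\varphi(w)$ with $w$ a factor on $\Z_{\geq 0}$, save for the single block $\varphi(1')$; since ${\varphi|}_{\Z_{\geq 0}}$ is $\ab$-power-free by Theorem~\ref{42 13a-8b power-free}, an $\ab$-power in $W$ must begin at a position within the bounded initial region containing $v$ and the lone $\varphi(1')$ block. To bound its length as well, I would exhibit a synchronizing factor — the analogue of the word $1\,p$ used in Theorem~\ref{37 24a-15b} — occurring exactly once, near the end of the transient, so that any factor long enough to contain it occurs only once and therefore cannot be the repeated part $x$ of an $\ab$-power $xyx$. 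Since $|x| = m(a-b)$, this caps $m$, and it remains to slide a window of length $ma$ through the initial region for the finitely many admissible $m$ and verify, by the symbolic inequality tests of Section~\ref{testing inequality}, that no factor is an $\ab$-power. The block $\varphi(1')$ is handled in the same sweep, its two altered positions being checked not to create a short power.

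Next I would prove lexicographic-leastness by showing that decrementing any nonzero letter of $W$ introduces an $\ab$-power ending at that position. The letters split into three groups. For the $1127$ letters of $\tau(v)$ drawn from $\{1,2\}$ this is a finite symbolic verification: for each I would exhibit the $\ab$-power created by lowering it, using free-group expressions of the form $\big(0^{\,e}\,1\,\varphi(0)\,1^{-1}\,0^{-e}\big)^{a}$ where necessary (as in the $\word_{19/16}$ discussion) to handle the letter forced by the $0 \mapsto 1$ correction inside $\varphi(1')$. For letters interior to an image $\varphi(n) = u\,(n+1)$ other than the last, a uniform symbolic check of $u$ suffices, with a few letters requiring a case split according to whether the preceding block is $\tau(v)$ or $\varphi(n)$, exactly as in Theorem~\ref{37 24a-15b}. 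For the final letter $n+1$ of each image, decrementing it to $c$ corresponds under $\varphi$ to decrementing an earlier letter to $c-1$, so the result follows by induction once the base case $c=0$, producing an $\ab$-power of the form $(0^{b-1}1)^{a/b}$, is dispatched. The excluded rationals appear here: $\frac{32}{29}$ already fails power-freeness and is excluded in Theorem~\ref{42 13a-8b power-free}, while $\frac{74}{67}$ fails this lexicographic step because the $1$ at position $1247$ of $v$ is not forced — replacing it by $0$ completes no $\frac{74}{67}$-power — so the conjectured word is not lexicographically least there.

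The main obstacle will be the scale of the symbolic computation rather than any conceptual gap. The transient carries over a thousand nonzero letters, each of whose decrement must be certified symbolically across every subinterval into which $\frac{11}{10} < \ab < \frac{21}{19}$ is split by the minimum-comparisons of Section~\ref{listing factors}; combined with a $42$-letter morphism this is precisely the kind of computation that, as the authors note for Conjecture~\ref{7/5}, ``would take longer than we choose to wait.'' A secondary difficulty is delicate bookkeeping: one must pin down the two altered positions of $\varphi(1')$ and verify both that they create no $\ab$-power and that the forced $1$ they introduce is itself required, since a sign error there would either admit a power or fail to force a letter. The $k$-regularity conclusion is the easiest part, following from Theorem~\ref{regular sequences} with the minor modification already flagged for Conjecture~\ref{12 7a-5b} to accommodate the single sequence in the $k$-kernel affected by $1'$.
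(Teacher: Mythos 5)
The statement you are addressing is presented in the paper as a \emph{conjecture}, not a theorem: the authors give no proof of it, and your proposal does not close that gap. What you have written is a faithful outline of the proof template the authors use for Theorem~\ref{37 24a-15b}, Theorem~\ref{6/5}, and Theorem~\ref{27/23}, and as a strategy it is the right one (factor out the transient, exhibit a uniquely occurring synchronizing factor to bound $|x|$, slide windows of length $ma$ for the finitely many admissible $m$, then certify that every nonzero letter is forced, with the marked letter $1'$ handled as in Conjecture~\ref{12 7a-5b}). But in each of the proved cases the entire mathematical content lies in the finite symbolic case analysis that is actually carried out; your proposal defers all of that to computations which are not performed, and which the authors themselves evidently could not complete for this family. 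The transient $v$ here is symbolic in $a$ and $b$ with $1127$ letters from $\{1',1,2\}$, so the lexicographic-leastness step alone requires certifying, uniformly over every rational in $\frac{11}{10} < \ab < \frac{21}{19}$ satisfying the $\gcd$ condition and over every subinterval produced by the splitting of Section~\ref{listing factors}, that decrementing each of those letters creates an $\ab$-power. You correctly identify this as the obstacle, but identifying an obstacle is not the same as overcoming it.

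The deeper problem is that the statement may not be true exactly as written, and your plan has no mechanism for detecting this. The paper warns, in the paragraph immediately preceding the conjecture, that ``[i]t is possible there are additional exceptions like $\frac{74}{67}$,'' i.e.\ rationals in the interval for which some letter of the transient is not forced, so that $\tau(\varphi^\infty(0'))$ is $\ab$-power-free but fails to be lexicographically least. Your proposal treats the exceptional set $\{\frac{32}{29}, \frac{74}{67}\}$ as known and complete and explains post hoc why those two rationals are excluded, but a genuine proof must either show that no further exceptions exist or find them all and amend the hypotheses. Nothing in your outline addresses how one would rule out a sporadic failure of forcing at one of the $1127$ transient letters for some $\ab$ deep in the interval; indeed that is precisely the failure mode of $\frac{74}{67}$. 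Until both issues are resolved --- the exceptional set pinned down and the symbolic verification actually executed and successful on every subinterval --- the statement remains a conjecture, which is exactly how the paper leaves it.
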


For rationals satisfying the conditions of Theorem~\ref{54 13a-5b power-free}, the word $\word_{a/b}$ appears to be given by the following.

\begin{conjecture}\label{54 13a-5b}
Let $a, b$ be relatively prime positive integers such that $\frac{9}{8} < \ab < \frac{26}{23}$ and $\ab \neq \frac{35}{31}$ and $\gcd(b,13) = 1$.
There exists a word
\[
	v = 0' \, 0^{a - 2} \, 1' \, 0^{a - b - 1} \, 1 \, 0^{a - b - 1} \, 1 \, \cdots \, 0^{-23 a + 26 b - 1} \, 1,
\]
of length $148a-66b$ with $552$ letters from $\{1', 1, 2\}$ such that if we extend the morphism $\varphi$ defined in Theorem~\ref{54 13a-5b power-free} to $\Z_{\geq 0} \cup \{0', 1'\}$ by
\[
	\varphi(0') = v \, \varphi(0)
\]
and define $\varphi(1')$ to be the word obtained by taking $\varphi(1)$ and changing the $0$ at position $36 a - 31 b - 1$ to $1$ and the last letter to $0$, then $\word_{a/b} = \tau(\varphi^\infty(0'))$.
In particular, $\rho(\ab) = 13a-5b$.
\end{conjecture}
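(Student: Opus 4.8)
The plan is to follow the two-part template established in the proof of Theorem~\ref{37 24a-15b}: first show that $\tau(\varphi^\infty(0'))$ is $\ab$-power-free, and then show it is the lexicographically least $\ab$-power-free word by verifying that decrementing any of its nonzero letters introduces an $\ab$-power. The new ingredient, absent from Theorem~\ref{37 24a-15b} but present in the $\word_{19/16}$ discussion preceding Conjecture~\ref{12 7a-5b}, is the auxiliary letter $1'$: it occurs exactly once in $\varphi^\infty(0')$ (at position $a-1$, inside $v$), it never reappears under iteration, and its image $\varphi(1')$ differs from $\varphi(1)$ in exactly two positions. Thus $\tau(\varphi^\infty(0'))$ is the fixed point of ${\varphi|}_{\Z_{\geq 0}}$ perturbed by two bounded defects: the transient prefix $v$ and the single modified block arising from $1'$.

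For $\ab$-power-freeness I would argue that away from these defects the word locally agrees with a pullback under ${\varphi|}_{\Z_{\geq 0}}$, which is $\ab$-power-free by Theorem~\ref{54 13a-5b power-free}; hence any $\ab$-power $(xy)^{a/b} = xyx$ in $\tau(\varphi^\infty(0'))$ must overlap $v$ or the modified block. As in Theorem~\ref{37 24a-15b}, the key is to exhibit, near each defect, a short marker factor that occurs only once in $\tau(\varphi^\infty(0'))$; any sufficiently long factor straddling a defect then contains this marker, forcing a unique occurrence and bounding $|x|$. Because Lemma~\ref{big x} as stated assumes adjacent images differ in at most one position---which fails for the pair $\varphi(1), \varphi(1')$---this synchronization step must be run on the unperturbed base word, with the two-position discrepancy folded into the finite case analysis. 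One then lists all symbolic factors of length $ma$ for the finitely many surviving values of $m$, subdividing $\frac{9}{8} < \ab < \frac{26}{23}$ as in Section~\ref{listing factors}, verifies $x \neq z$ by the inequality tests of Section~\ref{testing inequality}, and checks the odd-denominator endpoints separately.

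For lexicographic-leastness I would show that decrementing each nonzero letter completes an $\ab$-power. The leading $1$'s of $\tau(v)$ yield $(0^b)^{a/b}$ or $(0^{b-1}1)^{a/b}$ upon decrementing to $0$, and the generic final letter $n+1$ of $\varphi(n)$ is handled by the inductive pullback used in Theorems~\ref{2 2a-b}, \ref{4 5a-4b}, and \ref{37 24a-15b} (decrementing $n+1$ to $c$ corresponds to decrementing an earlier $n$ to $c-1$). The remaining transient letters and the two positions distinguishing $\varphi(1')$ from $\varphi(1)$ require identifying, for each, an explicit completing $\ab$-power; as with $\word_{19/16}$, several of these are most naturally written in the free group on $\Z_{\geq 0}$, since $v$ and $\varphi(0)$ share a nonempty suffix and one must strip letters off the end of $\varphi(0)$. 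Indeed, the whole purpose of $1'$ is to force precisely these two altered letters so that leastness holds at the boundary between the transient and the self-similar column, and confirming that both alterations are necessary and sufficient is the delicate combinatorial heart of the argument. Finally, $k$-regularity with $\rho(\ab) = 13a-5b$ follows from the variant of Theorem~\ref{regular sequences} noted before Conjecture~\ref{12 7a-5b}.

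The main obstacle is computational scale rather than conceptual novelty. With $k = 13a-5b$, a transient of length $148a-66b$ carrying $552$ nonzero letters, and a morphism with $54$ nonzero letters, the symbolic factor tables and the number of subintervals of $\frac{9}{8} < \ab < \frac{26}{23}$ are vastly larger than for Theorem~\ref{37 24a-15b}; this is exactly the regime where, as remarked for Conjecture~\ref{7/5}, the computation ``would take longer than we choose to wait,'' which is why the statement remains a conjecture. The secondary obstacle is bookkeeping: verifying all $552$ transient decrements and the two $1'$-defect positions, each with its own (possibly free-group) completing power, while ensuring the synchronization marker argument remains valid despite the two-position discrepancy between $\varphi(1)$ and $\varphi(1')$.
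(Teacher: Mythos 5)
The statement you are asked to prove is left as a \emph{conjecture} in the paper; the authors give no proof, so there is nothing to compare your argument against except the template they use for the proved analogues (Theorems~\ref{37 24a-15b}, \ref{6/5}, and \ref{4/3}) and the description of the $1'$ mechanism preceding Conjecture~\ref{12 7a-5b}. Your outline reconstructs that template faithfully: the two-part split into $\ab$-power-freeness and lexicographic leastness, the reduction of power-freeness to factors overlapping the transient $v$ or the single $\varphi(1')$ block via a uniquely-occurring marker factor, the observation that Lemma~\ref{big x} cannot be applied directly because $\varphi(1)$ and $\varphi(1')$ differ in two positions, and the inductive pullback for decrementing the final letter $n+1$. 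This is almost certainly the intended route.

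However, what you have written is a proof strategy, not a proof, and the gap is substantive rather than merely a matter of waiting for a long computation. First, the conjecture asserts the \emph{existence} of a specific symbolic word $v$ of length $148a-66b$ with $552$ nonzero letters; you never construct it, and without an explicit symbolic $v$ none of the subsequent case analyses can even be set up. Second, every step that carries actual content --- the locating length for the transient region, the window sweeps of length $ma$ over all surviving $m$ and all subintervals of $\frac{9}{8} < \ab < \frac{26}{23}$, the identification of an explicit completing $\ab$-power for each of the $552$ transient letters and the two positions where $\varphi(1')$ deviates from $\varphi(1)$ --- is deferred, so the argument proves nothing as it stands. Third, and most importantly, the truth of the statement as phrased is itself in doubt: the paper's discussion of the sibling Conjecture~\ref{42 13a-8b} shows that lexicographic leastness can fail at isolated rationals (there, $\frac{74}{67}$) for reasons invisible to the symbolic power-freeness argument, and the authors explicitly warn that further such exceptions may exist. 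Your plan contains no mechanism for certifying that $\frac{35}{31}$ is the only exceptional rational in the interval, so even a successful execution of your outline might end by disproving the conjecture as stated rather than proving it.
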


Of the $29$ symbolic morphisms given in Section~\ref{symbolic power-free morphisms} that were derived from words $\word_{a/b}$, at this point we have given a theorem or conjecture relating $9$ of them to an infinite family of words $\word_{a/b}$.
The $10$ morphisms in Theorems~\ref{4 a power-free}, \ref{6 4a-2b I power-free}, \ref{6 4a-2b II power-free}, \ref{10 11a-10b power-free}, \ref{13 7a-4b power-free}, \ref{14 6a-b I power-free}, \ref{14 6a-b II power-free}, \ref{26 9a-4b power-free}, \ref{46 14a-9b II power-free}, and \ref{191 66a-28b power-free} were derived from at most four each and therefore are perhaps not likely to be related to infinitely many words $\word_{a/b}$.
It seems likely that the final $10$ morphisms do govern the large-scale structure of infinitely many words $\word_{a/b}$.
However, none of these families support a symbolic prefix $v$ of the form we have seen previously, since the number of nonzero letters in the prefix is not constant.
Specifying the general structure of these words will therefore require a new idea.
We state the following conjectures in terms of $\rho$ only.

For the two families of words that give rise to the morphisms in Theorems~\ref{18 10a-8b power-free} and \ref{158 53a-30b power-free}, the index of the self-similar column is a linear combination of $a, b$, which might indicate some structure that would be useful in a proof.
Note that the intervals are shorter than the intervals for $\ab$-power-freeness.

\begin{conjecture}\label{18 10a-8b}
Let $a, b$ be relatively prime positive integers such that $\frac{28}{25} < \ab < \frac{9}{8}$ and $\gcd(b, 10) = 1$.
Then $\rho(\ab) = 10a-8b$.
\end{conjecture}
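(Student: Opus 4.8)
The plan is to realize $\word_{a/b}$ as $\tau(\varphi^\infty(0'))$ for a transient extension of the morphism in Theorem~\ref{18 10a-8b power-free}, and then to read off the value of $\rho$ from machinery already in place. Concretely, I would extend the $(10a-8b)$-uniform morphism $\varphi$ of Theorem~\ref{18 10a-8b power-free} to the alphabet $\Z_{\geq 0} \cup \{0'\}$ (adjoining $1'$ if, as for $\word_{19/16}$ in Conjecture~\ref{12 7a-5b}, the self-similar column disagrees with $\word_{a/b} + d$ in finitely many places) by setting $\varphi(0') = v\,\varphi(0)$ for a transient prefix $v$ whose first letter is $0'$. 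Once the identity $\word_{a/b} = \tau(\varphi^\infty(0'))$ is established, Theorem~\ref{regular sequences} shows that the sequence of letters is $(10a-8b)$-regular, and Corollary~\ref{unique k} forces $\rho(\ab) = 10a-8b$. Since $(\frac{28}{25}, \frac{9}{8})$ is contained in the interval $(\frac{10}{9}, \frac{9}{8}) \setminus \{\frac{19}{17}\}$ on which $\varphi|_{\Z_{\geq 0}}$ is $\ab$-power-free, no new power-freeness of the underlying morphism is required.

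Establishing $\word_{a/b} = \tau(\varphi^\infty(0'))$ splits into the two verifications used throughout Section~\ref{Families of words}. First, $\tau(\varphi^\infty(0'))$ is $\ab$-power-free: since $\varphi|_{\Z_{\geq 0}}$ is $\ab$-power-free, any $\ab$-power not meeting the transient descends to an $\ab$-power of the preimage by Lemma~\ref{big x} and Proposition~\ref{big m}, so by the synchronization argument of Theorem~\ref{37 24a-15b} one need only rule out $\ab$-powers $(xy)^{a/b} = xyx$ with short $|x|$ beginning inside $v$, which is a finite window check over the transient as in Section~\ref{testing inequality}. Second, $\tau(\varphi^\infty(0'))$ is lexicographically least: one checks directly that decrementing any nonzero letter of $\tau(v)$ introduces an $\ab$-power, and for letters beyond $v$ one uses the inductive morphic argument of Theorem~\ref{4 5a-4b}, where decrementing $n+d$ corresponds under $\varphi$ to decrementing an earlier letter.

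The hard part, and the reason this is stated only as a conjecture, is pinning down the transient $v$. Unlike the prefixes in Theorem~\ref{37 24a-15b} and Conjecture~\ref{12 7a-5b}, here the number of nonzero letters in $v$ is not constant across the family; it grows with $a$ and $b$, so there is no single symbolic $v$ of the fixed shape we have used before. I would attack this by searching for internal self-similarity or repetition in $v$: specifically, I expect $v$ to contain a block, or a nested system of blocks, repeated a number of times that is itself a linear function of $a$ and $b$, consistent with the observation that the index of the self-similar column is linear in $a, b$. If such a description can be found, the repeated block can be encoded with a symbolic exponent, and the window-sliding and inequality-testing procedures of Sections~\ref{listing factors} and \ref{testing inequality} would extend to cover the growing transient, completing both verifications above. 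Finding and justifying this symbolic form for $v$ is the genuine obstacle; everything downstream of it is routine given the tools already developed.
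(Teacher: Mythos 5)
There is no proof of this statement in the paper to compare against: Conjecture~\ref{18 10a-8b} is stated as a conjecture precisely because the authors could not carry out the step you also identify as the obstacle. Your proposal correctly reconstructs the intended architecture --- extend the morphism of Theorem~\ref{18 10a-8b power-free} by a transient $\varphi(0') = v\,\varphi(0)$, prove $\ab$-power-freeness via Lemma~\ref{big x}, Proposition~\ref{big m}, and a finite window check over the transient as in Theorem~\ref{37 24a-15b}, prove lexicographic-leastness by the inductive decrementing argument of Theorem~\ref{4 5a-4b}, and then invoke Theorem~\ref{regular sequences} and Corollary~\ref{unique k} to pin down $\rho$. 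You are also right that the interval $(\frac{28}{25}, \frac{9}{8})$ sits inside the power-freeness interval of Theorem~\ref{18 10a-8b power-free} and avoids the exceptional rational $\frac{19}{17}$, so the underlying morphism needs no new verification. All of that is consistent with how the paper handles Theorems~\ref{37 24a-15b}, \ref{6/5}, and \ref{4/3}.

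But the genuine gap is exactly where you place it, and it is not closed. The paper states explicitly that for this family ``the number of nonzero letters in the prefix is not constant,'' so no symbolic $v$ of the fixed shape used in Theorem~\ref{37 24a-15b} or Conjectures~\ref{12 7a-5b}--\ref{54 13a-5b} exists, and ``specifying the general structure of these words will therefore require a new idea.'' Your suggestion --- that $v$ might decompose into nested blocks repeated a number of times that is linear in $a$ and $b$, so that symbolic exponents could be fed into the machinery of Sections~\ref{listing factors} and \ref{testing inequality} --- is a reasonable research direction (it echoes the paper's observation that the self-similar column index is linear in $a,b$, and the block structure visible in $\varphi(n)$ for $\word_{71/50}$), but you have not exhibited such a decomposition, proved it exists, or shown that the window-sliding and inequality-testing procedures actually extend to symbolic exponents. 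Without that, nothing downstream can be executed, and the statement remains a conjecture. What you have written is an accurate plan and an accurate diagnosis of why the plan cannot currently be completed; it is not a proof.
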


\begin{conjecture}\label{158 53a-30b}
Let $a, b$ be relatively prime positive integers such that $\frac{26}{23} < \ab < \frac{17}{15}$ and $\gcd(b, 53) = 1$.
Then $\rho(\ab) = 53a-30b$.
\end{conjecture}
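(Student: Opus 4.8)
The plan is to exhibit $\word_{a/b}$ as $\tau(\varphi^\infty(0'))$ for a suitable extension of the morphism $\varphi$ of Theorem~\ref{158 53a-30b power-free}, exactly as in Theorem~\ref{37 24a-15b} and Conjectures~\ref{42 13a-8b} and~\ref{54 13a-5b}. One extends $\varphi$ to the alphabet $\Z_{\geq 0} \cup \{0', 1'\}$ by setting $\varphi(0') = v\,\varphi(0)$ for a transient prefix $v$ whose first letter is $0'$, and, if the prefix $v$ and $\varphi(0)$ share a suffix (as for $\word_{19/16}$), by introducing a letter $1'$ and letting $\varphi(1')$ be a copy of $\varphi(1)$ altered in two positions. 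The $\ab$-power-freeness of ${\varphi |}_{\Z_{\geq 0}}$ is already supplied by Theorem~\ref{158 53a-30b power-free} on this subinterval, so the remaining work is (i) to pin down $v$ symbolically, (ii) to verify that $\tau(\varphi^\infty(0'))$ is $\ab$-power-free, and (iii) to verify that it is lexicographically least. Once $\word_{a/b} = \tau(\varphi^\infty(0'))$ is established, the relation $\rho(\ab) = 53a-30b$ follows from Theorem~\ref{regular sequences}, adapted as indicated for the extra letter $1'$.

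Step (i) is where the difficulty lies and is the reason this is only a conjecture. For the earlier families the transient $v$ has a fixed number of nonzero letters, so it can be written as a fixed concatenation of symbolic blocks $0^{ia+jb-1}c$. Here the number of nonzero letters in $v$ grows with $a$ and $b$, so no such fixed expression exists. My plan is to exploit the observation that the index of the self-similar column is an affine function $I(a,b)$ of $a$ and $b$: I would search for a description of $v$ in which certain sub-blocks are repeated a number of times that is itself an affine function of $a$ and $b$, in the spirit of the repetitions $(YZ)^{r-2}$ and $B^{r-2}$ in Conjectures~\ref{4r power-free} and~\ref{4r}, but with the exponents now linear in $a,b$ rather than in a discrete family index $r$. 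Equivalently, one might hope to realize $v$ itself as a prefix of the fixed point of an auxiliary morphism, reducing its specification to finite data. Producing such a description, and proving it correct across the whole family, is the new idea that the present methods lack.

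Granting a symbolic $v$, steps (ii) and (iii) would follow the template of Theorem~\ref{37 24a-15b}. For (ii): any factor beginning at a position $\geq |v|$ lies inside $\varphi(w)$ for a finite factor $w$ and is handled by Theorem~\ref{158 53a-30b power-free}; for a factor beginning inside the transient one first locates a short marker word (the analogue of the factor $1\,p$ in the proof of Theorem~\ref{37 24a-15b}) occurring uniquely at the transient/body boundary, thereby bounding $|x|$ for any putative $\ab$-power $(xy)^{a/b}=xyx$, and then slides a window of length $ma$ through the first $|v|$ positions for the finitely many relevant $m$, checking inequality of the resulting symbolic factors with the criteria of Section~\ref{testing inequality}. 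For (iii): decrementing any nonzero letter of $\tau(v)$ or of any $\varphi(n)$ must introduce an $\ab$-power; the bulk letters reduce to the inductive argument for $n+d$ used in Theorem~\ref{2 2a-b}, the letters of $\tau(v)$ are handled by finitely many explicit $\ab$-powers, and the modified letter $1'$ is handled by completing an $a$-power in the free group on $\Z_{\geq 0}$ exactly as for $\word_{19/16}$.

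The main obstacle, then, is step (i): the symbolic transient. Because the number of nonzero letters in $v$ is unbounded across the family, the symbolic-block apparatus of Section~\ref{listing factors} does not apply directly, and the window-sliding verification in steps (ii) and (iii) must be generalized to pass through blocks whose multiplicities are affine in $a$ and $b$ — a genuine extension of the inequality-testing machinery rather than a routine application of it. I expect that once a correct recursive form of $v$ is found, the power-freeness and lexicographic-leastness verifications, while more involved than in Theorem~\ref{37 24a-15b}, will succumb to the same symbolic case analysis.
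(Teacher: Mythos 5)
This statement is left as a conjecture in the paper; there is no proof to compare against. The authors themselves state that the families corresponding to the morphisms of Theorems~\ref{18 10a-8b power-free} and \ref{158 53a-30b power-free} ``do not support a symbolic prefix $v$ of the form we have seen previously, since the number of nonzero letters in the prefix is not constant,'' and that ``specifying the general structure of these words will therefore require a new idea.'' Your proposal reaches exactly the same impasse: your step (i) is the missing ingredient, and you candidly say so. So what you have written is a plan with an acknowledged hole, not a proof, and the hole is the same one that kept the statement conjectural in the paper.

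To be concrete about why the gap is genuine and not merely bookkeeping: every proved instance of this template (Theorem~\ref{37 24a-15b}, Theorems~\ref{6/5} and \ref{4/3}) rests on writing $v$ as a \emph{fixed} concatenation of blocks $0^{ia+jb-1}c$, because the entire verification apparatus --- locating a uniquely-occurring marker at the transient/body boundary, sliding windows of length $ma$ through positions $0,\dots,|v|-1$, and testing inequality of symbolic factors --- operates on such fixed-length symbolic expressions. If the number of nonzero letters in $v$ grows with $a$ and $b$, then not only is $v$ unspecified, but the case analysis in your steps (ii) and (iii) ranges over an unbounded set of symbolic factors, so the finite case analysis of Sections~\ref{listing factors} and \ref{testing inequality} does not terminate in its current form. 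Your suggestion of blocks repeated with multiplicity affine in $a,b$ (or of realizing $v$ as a prefix of an auxiliary fixed point) is a plausible direction --- and is consistent with the paper's observation that the self-similar column index is affine in $a,b$ --- but until such a description is produced and the inequality-testing machinery is extended to handle it, the argument cannot be carried out. As it stands, the proposal establishes nothing beyond what Theorem~\ref{158 53a-30b power-free} already gives, namely $\ab$-power-freeness of the underlying morphism; it does not establish $\word_{a/b} = \tau(\varphi^\infty(0'))$, and hence does not establish $\rho(\ab) = 53a-30b$.
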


The remaining morphisms are those in Theorems~\ref{3 a power-free}, \ref{14 8a-4b power-free}, \ref{29 14a-9b power-free}, \ref{30 10a-5b power-free}, \ref{38 12a-7b power-free}, \ref{46 14a-9b I power-free}, \ref{102 38a-15b power-free}, and \ref{279 67a-30b power-free}.
For words related to these, the index of the self-similar column is not a linear combination of $a, b$.
The morphism in Theorem~\ref{3 a power-free} seems to be related to $\word_{a/b}$ only for odd $b$, so we add this condition to Conjecture~\ref{3 a}.
Additionally, the morphism in Theorem~\ref{14 8a-4b power-free} seems to be related to $\word_{a/b}$ only for $b$ divisible by $3$, so we add this condition to Conjecture~\ref{14 8a-4b}.

\begin{conjecture}\label{3 a}
Let $a, b$ be relatively prime positive integers such that $\frac{5}{4} < \ab < \frac{9}{7}$ and $\ab \notin \{\frac{14}{11}, \frac{44}{35}\}$ and $\gcd(b, 2) = 1$.
Then $\rho(\ab) = a$.
\end{conjecture}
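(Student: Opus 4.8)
The plan is to establish the stronger structural statement that $\word_{a/b} = \tau(\varphi^\infty(0'))$, where $\varphi$ is the $a$-uniform morphism of Theorem~\ref{3 a power-free} extended by a transient prefix, and then invoke Theorem~\ref{regular sequences} to conclude that the sequence of letters is $a$-regular. Since Theorem~\ref{3 a power-free} already proves that ${\varphi|}_{\Z_{\geq 0}}$ is $\ab$-power-free on $\frac{5}{4} < \ab < \frac{4}{3}$, which contains the interval $\frac{5}{4} < \ab < \frac{9}{7}$, the morphic engine is already in place; what remains is to build the transient and carry out the two standard verifications. First I would construct a prefix $v$ beginning with $0'$ whose remaining letters are integers, chosen so that $\tau(v)$ begins with $0^{a-1} 1$, and extend $\varphi$ to $\Z_{\geq 0} \cup \{0'\}$ by $\varphi(0') = v\,\varphi(0)$, so that $\varphi^\infty(0') = v\,\varphi(\tau(v))\,\varphi^2(\tau(v))\cdots$.

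With $v$ fixed, the $\ab$-power-freeness of $\tau(\varphi^\infty(0'))$ follows the template of Theorem~\ref{37 24a-15b}: a factor beginning at a position $i \geq |v|$ lies inside $\varphi(w)$ for some finite factor $w$ and hence is $\ab$-power-free, so it suffices to rule out $\ab$-powers beginning in the transient. One identifies a short word that occurs only at the boundary of the transient (playing the role of $1\,p$ in the proof of Theorem~\ref{37 24a-15b}) to bound $|x|$ for any offending $\ab$-power $(xy)^{a/b} = xyx$, reducing the problem to sliding a window of length $m a$ through the first $|v|$ positions for finitely many $m$ and testing symbolic inequality as in Section~\ref{testing inequality}.

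Next I would prove lexicographic-leastness by showing that decrementing any nonzero letter of $\tau(\varphi^\infty(0'))$ introduces an $\ab$-power ending at that position. Letters inside the transient $\tau(v)$ are handled by exhibiting an explicit $\ab$-power for each, distinguishing the case where a block is preceded by $\tau(v)$ from the case where it is preceded by some $\varphi(n)$, exactly as in Theorem~\ref{37 24a-15b}. The self-similar letters $n + 2$ are handled by the induction of Theorems~\ref{2 2a-b} and \ref{4 5a-4b}: decrementing $n + 2$ to $c \geq 2$ corresponds under $\varphi$ to decrementing an earlier letter $n$ to $c - 2$, while the base cases $c \in \{0, 1\}$ yield explicit $\ab$-powers analogous to $(0^b)^{a/b}$ and $(0^{b-1} 1)^{a/b}$. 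Having established that $\tau(\varphi^\infty(0'))$ is both $\ab$-power-free and lexicographically least, it equals $\word_{a/b}$, and Theorem~\ref{regular sequences} gives $a$-regularity, whence $\rho(\ab) = a$ by Corollary~\ref{unique k}. The excluded rationals $\frac{14}{11}$ and $\frac{44}{35}$ should drop out because at those values the transient either completes an $\ab$-power prematurely or forces a letter the morphism cannot supply, just as $\frac{37}{34}$ and $\frac{74}{67}$ are excluded in Conjectures~\ref{4r} and \ref{42 13a-8b}.

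The hard part, and the reason the statement is only a conjecture, is producing the transient $v$ in a form uniform in $a$ and $b$. As noted in the text, for these rationals the index of the self-similar column --- equivalently $|v|$ --- is not a linear combination of $a$ and $b$, so there is no single run-length-encoded expression $v = 0'\,0^{i_1 a + j_1 b - 1} c_1 \cdots$ valid across the whole interval, and the number of nonzero letters in $v$ is not constant. Consequently the finite symbolic window computations that drive the automated proofs of Section~\ref{power-free morphisms} cannot be applied verbatim: one would first need a genuinely new combinatorial description of how $|v|$ (and the letters of $v$) depends on $a$ and $b$, most plausibly piecewise or governed by the base-$a$ expansion controlling the self-similar column. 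Once such a description is in hand, the $\ab$-power-freeness and lexicographic-leastness verifications sketched above should reduce to routine symbolic case analyses of the kind already implemented.
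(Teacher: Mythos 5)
The statement you are asked to prove is stated in the paper only as a conjecture, and the paper gives no proof of it: the authors explicitly say that for the families attached to Theorem~\ref{3 a power-free} ``none of these families support a symbolic prefix $v$ of the form we have seen previously, since the number of nonzero letters in the prefix is not constant,'' and that ``specifying the general structure of these words will therefore require a new idea,'' which is why Conjectures~\ref{3 a}--\ref{279 67a-30b} are stated in terms of $\rho$ only. Your proposal is therefore not a proof but a proof plan, and to your credit you say so: the obstruction you name in your final paragraph --- that the transient $v$ (its length, its letter content, and the index of the self-similar column) is not given by a single run-length expression linear in $a$ and $b$ --- is exactly the obstruction the authors identify. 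Everything before that paragraph is a faithful transcription of the template the paper uses for the cases it \emph{can} prove (Theorems~\ref{37 24a-15b}, \ref{6/5}, \ref{4/3}): power-freeness past the transient via the already-proved $\ab$-power-freeness of ${\varphi|}_{\Z_{\geq 0}}$, a uniquely-occurring boundary factor to bound $|x|$, finite window-sliding over the transient, lexicographic-leastness by decrementing letters with the inductive step for $n+2$, and then Theorem~\ref{regular sequences} and Corollary~\ref{unique k}. But without a concrete $v$, uniform in $a$ and $b$, none of those verifications can actually be carried out, so the argument never gets off the ground.

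Two further points your plan glosses over. First, the hypothesis $\gcd(b,2)=1$ does not come from the machinery of Lemma~\ref{big x} (for a $k=a$-uniform morphism the condition $\gcd(b,k)=1$ is automatic from $\gcd(a,b)=1$); it is an empirical restriction, observed by the authors, on when the morphism of Theorem~\ref{3 a power-free} is related to $\word_{a/b}$ at all, and your outline offers no mechanism that would produce it. Second, the exclusions $\frac{14}{11}$ and $\frac{44}{35}$, and the potential counterexamples $\frac{62}{49}$ and $\frac{79}{63}$ that the paper flags immediately after the conjecture, show that even the \emph{statement} may need adjustment; your suggestion that the exceptions ``should drop out'' of the verification is plausible by analogy with $\frac{37}{34}$ and $\frac{74}{67}$, but it is speculation, not argument. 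In short: the approach is the right one and matches what the authors would presumably attempt, but the missing ingredient you acknowledge is the entire content of the open problem, so the proposal does not prove the statement.
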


Two potential counterexamples to Conjecture~\ref{3 a} are $\frac{62}{49}$ and $\frac{79}{63}$, for which we do not have conjectural values of $k$.
On the other hand, $\frac{41}{32}$ is not covered by the conjecture because its denominator is even, but nonetheless the conclusion $\rho(\frac{41}{32}) = 41$ seems to hold, and the structure of $\word_{41/32}$ is related to the morphism in Theorem~\ref{3 a power-free}.
Some of the following conjectures have rationals with this same property.

\begin{conjecture}\label{14 8a-4b}
Let $a, b$ be relatively prime positive integers such that $\frac{9}{7} < \ab < \frac{4}{3}$ and $b \equiv 0 \mod 3$ and $\gcd(b, 8) = 1$.
Then $\rho(\ab) = 8a-4b$.
\end{conjecture}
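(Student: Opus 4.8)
The plan is to follow the template established in Theorem~\ref{37 24a-15b}: exhibit a transient prefix $v$, extend the $(8a-4b)$-uniform morphism $\varphi$ of Theorem~\ref{14 8a-4b power-free} to the alphabet $\Z_{\geq 0} \cup \{0'\}$ (or $\Z_{\geq 0}\cup\{0',1'\}$) by $\varphi(0') = v\,\varphi(0)$, prove that $\word_{a/b} = \tau(\varphi^\infty(0'))$, and then invoke Theorem~\ref{regular sequences} and Corollary~\ref{unique k} to conclude $\rho(\ab)=8a-4b$. The morphism $\varphi$ is already $\ab$-power-free on $\frac{9}{7}\leq\ab<\frac{4}{3}$ with $\gcd(b,8)=1$, and since $\gcd(b,8a-4b)=\gcd(b,8)=1$ the divisibility hypothesis of Lemma~\ref{big x} holds; the content of the conjecture is therefore that this morphism, together with the right transient, produces the \emph{lexicographically least} such word precisely when $3\mid b$.

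Concretely, I would carry out four steps. First, determine the transient $v$. This is where the standard approach breaks down: unlike the prefixes appearing in Theorem~\ref{37 24a-15b} or Conjecture~\ref{12 7a-5b}, here the number of nonzero letters in $v$ is not constant as $\ab$ varies, so $v$ cannot be written as a fixed run-length template with run lengths linear in $a$ and $b$. The new idea required is a \emph{recursive} description of $v$ — for instance, a characterization of $v$ as a truncation of $\varphi$ applied to a shorter word, or as a prefix of $\tau(\psi^\infty(\cdot))$ for an auxiliary morphism $\psi$ — so that $v$ inherits a self-similar structure whose nonzero-letter count can grow with $a$ and $b$. I would compute $v$ for many pairs $(a,b)$ with $3\mid b$ and $\gcd(b,8)=1$ in the interval and search for such a generating rule; pinning it down is the crux of the problem.

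Second, given $v$, prove $\ab$-power-freeness of $\tau(\varphi^\infty(0'))$. Because ${\varphi|}_{\Z_{\geq 0}}$ is $\ab$-power-free, any $\ab$-power beginning at a position $\geq |v|$ descends to one in a shorter word exactly as in Theorem~\ref{37 24a-15b}, so it suffices to rule out $\ab$-powers overlapping the transient. I would locate a short factor marking the transient/periodic boundary that occurs nowhere else (playing the role of $1\,p$ in Theorem~\ref{37 24a-15b}); this bounds $|x|$ for any straddling $\ab$-power $(xy)^{a/b}=xyx$ and reduces the verification to finitely many symbolic window positions, checked by the inequality tests of Section~\ref{testing inequality}. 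Third, prove lexicographic-leastness by showing that decrementing any nonzero letter introduces an $\ab$-power. Letters interior to a block $\varphi(n)=u\,(n+2)$ are handled by the induction of Theorem~\ref{4 5a-4b}, whereby decrementing $n+2$ to $c$ corresponds under $\varphi$ to decrementing an earlier $n$ to $c-2$; the letters inside $v$ must be checked directly, and since $v$ has variably many nonzero letters these checks must themselves be organized recursively, matching the description obtained in the first step (and introducing the letter $1'$, as in Conjecture~\ref{12 7a-5b}, if the self-similar column is altered in finitely many positions). Finally, apply Theorem~\ref{regular sequences} and Corollary~\ref{unique k} to conclude $\rho(\ab)=8a-4b$.

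The main obstacle is the first step, and with it the transient part of the third. The symbolic case analysis of Section~\ref{power-free morphisms} is built on words with a fixed number of run-length blocks and simply does not apply to a prefix whose nonzero-letter count depends on $a$ and $b$, so a genuinely new descriptive device for $v$ is needed. The hypothesis $b\equiv 0\pmod 3$ should enter exactly here: when $\gcd(b,24)=1$ (so $3\nmid b$) the word is instead governed by the $24a-15b$ structure of Theorem~\ref{37 24a-15b}, and since $8a-4b$ and $24a-15b$ are multiplicatively independent for these rationals, Corollary~\ref{unique k} forbids a single $\word_{a/b}$ from realizing both. Thus part of the argument must be to show that the short transient producing the $8a-4b$ structure is lexicographically least precisely when $3\mid b$, presumably because a divisibility among the run lengths of $v$ closes up only in that case.
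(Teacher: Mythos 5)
This statement is a \emph{conjecture} in the paper: the authors explicitly do not prove it, and they explain why not --- for the families governed by the morphisms of Theorems~\ref{3 a power-free}, \ref{14 8a-4b power-free}, etc., the transient prefix $v$ does not have a constant number of nonzero letters as $\ab$ varies, so it cannot be written as a fixed symbolic run-length template, and ``specifying the general structure of these words will therefore require a new idea.'' Your proposal correctly reproduces this diagnosis, and your consistency check for the role of $3 \mid b$ (via Theorem~\ref{37 24a-15b} and Corollary~\ref{unique k}) is sound and matches the paper's framing. But what you have written is a plan, not a proof: the entire argument hinges on your first step, ``determine a recursive description of $v$,'' which you yourself identify as ``the crux of the problem'' and then leave unresolved. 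Steps two and three (power-freeness across the transient, lexicographic-leastness of the letters inside $v$) cannot even be formulated, let alone verified, without an explicit description of $v$, and your proposed workaround --- ``organize the checks recursively, matching the description obtained in the first step'' --- presupposes exactly the missing object.

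So the gap is not a subtle flaw in an otherwise complete argument; it is that the central construction is absent. To turn this into a proof you would need to (i) exhibit $v$ concretely, e.g.\ as a truncated image under an auxiliary morphism or some other self-similar device whose nonzero-letter count grows with $a$ and $b$, (ii) show that the resulting $\tau(\varphi^\infty(0'))$ is $\ab$-power-free and lexicographically least by a case analysis that the machinery of Section~\ref{power-free morphisms} cannot currently express (it is built for words with a bounded number of symbolic blocks), and (iii) only then invoke Theorem~\ref{regular sequences} and Corollary~\ref{unique k}. One caution on (iii): Theorem~\ref{regular sequences} as stated requires $v$ to be a concrete finite word of the prescribed form, so any recursive description of $v$ must still yield, for each fixed $\ab$, a word to which that theorem (or a suitably generalized version, as in the remark following Theorem~\ref{4/3}) applies. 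Until the object in (i) is produced, the statement remains, as in the paper, a conjecture.
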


\begin{conjecture}\label{29 14a-9b}
Let $a, b$ be relatively prime positive integers such that $\frac{8}{7} < \ab < \frac{23}{20}$ and $\gcd(b, 14) = 1$.
Then $\rho(\ab) = 14a-9b$.
\end{conjecture}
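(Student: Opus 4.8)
The plan is to follow the template established by Theorem~\ref{37 24a-15b} and the conjectures of Section~\ref{Families of words}: exhibit a transient prefix $v$ (and, as I expect will be needed here, an auxiliary primed letter $1'$) extending the $(14a-9b)$-uniform morphism $\varphi$ of Theorem~\ref{29 14a-9b power-free} to the alphabet $\Z_{\geq 0} \cup \{0', 1'\}$ via $\varphi(0') = v\,\varphi(0)$, so that $\word_{a/b} = \tau(\varphi^\infty(0'))$. Once this identity is in hand, Theorem~\ref{regular sequences} (with the minor modification for $1'$ described in Section~\ref{Families of words}) shows that the sequence of letters is $(14a-9b)$-regular, and Corollary~\ref{unique k} guarantees this value is well-defined modulo $\sim$, yielding $\rho(\ab) = 14a-9b$.

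First I would establish that $\tau(\varphi^\infty(0'))$ is $\ab$-power-free. Since $\frac{8}{7} < \ab < \frac{23}{20}$ lies inside the interval $\frac{8}{7} < \ab < \frac{6}{5}$ on which ${\varphi|}_{\Z_{\geq 0}}$ is $\ab$-power-free by Theorem~\ref{29 14a-9b power-free}, any $\ab$-power factor $(xy)^{a/b} = xyx$ of $\tau(\varphi^\infty(0'))$ must begin at a position $i \leq |v| - 1$. As in the proof of Theorem~\ref{37 24a-15b}, I would identify a short word occurring uniquely at the boundary between the transient and the self-similar image (the analog of the factor $1\,p$ there), which bounds $|x|$ and hence the length $ma$ of any candidate $\ab$-power. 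This reduces power-freeness to sliding a window of length $ma$ over the transient for finitely many $m$ and verifying inequality of symbolic factors using the criteria of Section~\ref{testing inequality}.

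Next I would prove lexicographic-leastness by showing that decrementing any nonzero letter of $\tau(\varphi^\infty(0'))$ introduces an $\ab$-power ending at that position. For letters inside $\varphi(n)$ with $n \in \Z_{\geq 0}$, this follows from an explicit finite check together with the inductive argument of Theorems~\ref{4 5a-4b} and \ref{37 24a-15b}: decrementing the terminal $n+2$ to $c$ corresponds under $\varphi$ to decrementing an earlier letter $n$ to $c-2$. The genuinely delicate part is the transient, where $v$ and $\varphi(0)$ share a nonempty common suffix; this is precisely the situation that forces the auxiliary letter $1'$ in Conjecture~\ref{12 7a-5b}, with $\varphi(1')$ agreeing with $\varphi(1)$ except in two positions, and it would require verifying the corresponding free-group $\ab$-power at the boundary row.

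The hard part will be writing down the transient prefix $v$ itself. Unlike Theorem~\ref{37 24a-15b} and Conjectures~\ref{12 7a-5b}--\ref{54 13a-5b}, this family admits no symbolic prefix of the previous form, because the number of nonzero letters in $v$ is not constant as $\ab$ varies and the index of the self-similar column is not a linear combination of $a$ and $b$. Consequently the window checks above cannot be carried out over a single symbolic $v$, and the inequality-testing machinery has no fixed object to run on. Pinning down $v$ --- presumably by discovering a finer, possibly recursive or self-referential, description of how the transient depends on $a$ and $b$ --- is exactly the ``new idea'' that Section~\ref{Families of words} flags as missing, and it is the crux on which any complete proof of this conjecture rests.
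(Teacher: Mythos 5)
This statement is a conjecture in the paper; the authors offer no proof of it, and they say explicitly why: for the family attached to the morphism of Theorem~\ref{29 14a-9b power-free}, the number of nonzero letters in the transient prefix is not constant and the index of the self-similar column is not a linear combination of $a$ and $b$, so the symbolic-prefix template of Theorem~\ref{37 24a-15b} and Conjectures~\ref{12 7a-5b}--\ref{54 13a-5b} does not apply, and ``a new idea'' is required. Your proposal correctly reproduces the template that works in the proved cases (power-freeness of ${\varphi|}_{\Z_{\geq 0}}$ from Theorem~\ref{29 14a-9b power-free}, a uniquely-occurring boundary factor to bound $|x|$, finitely many window checks over the transient, the decrementing induction for lexicographic-leastness, and then Theorem~\ref{regular sequences} plus Corollary~\ref{unique k} to extract $\rho(\ab) = 14a-9b$), and it correctly identifies the crux. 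But that crux is a genuine gap, not a deferred technicality: every one of the subsequent steps --- the boundary-factor argument, the window sweeps, the treatment of the common suffix of $v$ and $\varphi(0)$, even the statement that $\word_{a/b}$ is morphic at all --- takes the explicit word $v$ (and the modified image $\varphi(1')$) as input. Without a finitely describable $v$ that is uniform in $a$ and $b$, there is no object on which to run the inequality-testing machinery of Section~\ref{testing inequality}, and the argument cannot even begin for infinitely many rationals at once.

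So the proposal should be read as an accurate diagnosis of why the statement remains open rather than as a proof of it. To be clear about what would count as progress: for any single rational $\ab$ in the interval one could in principle compute $v$ explicitly and push the argument through as in Theorems~\ref{6/5} and \ref{27/23}, but the conjecture quantifies over the whole infinite family, and that is exactly where the missing idea lies.
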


\begin{conjecture}\label{30 10a-5b}
Let $a, b$ be relatively prime positive integers such that $\frac{15}{13} < \ab < \frac{22}{19}$ and $\gcd(b, 10) = 1$.
Then $\rho(\ab) = 10a-5b$.
\end{conjecture}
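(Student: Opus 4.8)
The plan is to realize $\word_{a/b}$ as $\tau(\varphi^\infty(0'))$, where $\varphi$ is the $(10a-5b)$-uniform morphism of Theorem~\ref{30 10a-5b power-free} extended to $\Z_{\geq 0} \cup \{0', 1'\}$ by a transient $\varphi(0') = v\,\varphi(0)$ (and, if needed, a modified image $\varphi(1')$ differing from $\varphi(1)$ in two positions, as in the discussion preceding Conjecture~\ref{12 7a-5b}). Once such an identity is established, $k$-regularity with $k = 10a-5b$, and hence $\rho(\ab) = 10a-5b$, follows immediately from Theorem~\ref{regular sequences} together with the minor modification noted there to accommodate the letter $1'$. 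Thus the entire burden is to prove $\word_{a/b} = \tau(\varphi^\infty(0'))$, which, following Theorem~\ref{37 24a-15b}, splits into showing that $\tau(\varphi^\infty(0'))$ is $\ab$-power-free and that it is lexicographically least.

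For $\ab$-power-freeness I would follow the template of Theorem~\ref{37 24a-15b}. Since ${\varphi |}_{\Z_{\geq 0}}$ is already $\ab$-power-free by Theorem~\ref{30 10a-5b power-free}, every $\ab$-power in $\tau(\varphi^\infty(0'))$ must begin at a position inside the transient prefix $v$. I would exhibit a short synchronizing factor — a word occurring exactly once, at the boundary between $\tau(v)$ and the self-similar tail, that cannot occur in $\varphi(w)$ for any $w \in \Z_{\geq 0}^*$ — to bound the border length $|x|$ of any hypothetical $\ab$-power $(xy)^{a/b} = xyx$ beginning in $v$. This reduces power-freeness to a finite symbolic check: sliding a window of length $m a$ for boundedly many $m$ across the transient and verifying inequality of the resulting symbolic factors, exactly as in Sections~\ref{listing factors} and \ref{testing inequality}. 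Lexicographic-leastness would be handled as in Theorems~\ref{37 24a-15b} and \ref{4 5a-4b}: verify that decrementing each nonzero letter of $\tau(v)$, and each nonzero letter of $u\,(n+d)$, introduces an $\ab$-power, and then run the inductive argument that decrementing the final letter $n+d$ of a block to $c$ corresponds, under $\varphi$, to decrementing an earlier letter to $c-1$.

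The main obstacle is the transient itself. As the discussion preceding these conjectures observes, the index of the self-similar column for this family is \emph{not} an affine function of $a$ and $b$; consequently the transient $v = v(a,b)$ does not have a constant number of nonzero letters, and so it cannot be written as a single symbolic word of the form $0'\,0^{a-2}\,1'\,\cdots$ on which a uniform, $\ab$-independent finite case analysis can be run. This is in sharp contrast to Theorem~\ref{37 24a-15b} and Conjectures~\ref{12 7a-5b}--\ref{54 13a-5b}, where both $|v|$ and the nonzero-letter count are affine in $a, b$. The new idea required is a parameterization of $v$: one would have to discover the law — presumably a recursive description expressing $v$ in terms of morphic images of its own earlier letters, or a description keyed to the base-$(10a-5b)$ or continued-fraction data of $\ab$ — that produces the correct transient for every admissible $\ab$ simultaneously. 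Only with such a description in hand can the synchronizing-factor bound and the lexicographic-leastness induction be made symbolic and uniform across the interval; absent it, one can verify the conjecture rational-by-rational but cannot close the infinite family. I expect this combinatorial analysis of the transient to be the crux, with the power-freeness and leastness arguments being routine adaptations of the completed proofs once $v$ is pinned down.
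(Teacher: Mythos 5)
This statement is left as a conjecture in the paper; the authors give no proof, so there is nothing to compare your argument against except their stated reasons for stopping where they do. Your outline reproduces the template the paper actually uses for the cases it \emph{does} prove (Theorems~\ref{37 24a-15b}, \ref{6/5}, \ref{4/3}): extend the $\ab$-power-free morphism of Theorem~\ref{30 10a-5b power-free} by a transient $\varphi(0') = v\,\varphi(0)$ (possibly with a modified $\varphi(1')$), use a uniquely-occurring synchronizing factor at the boundary of $\tau(v)$ to bound $|x|$ for any $\ab$-power starting in the transient, finish power-freeness by a finite window sweep, prove leastness by the decrement-and-induct argument, and then invoke Theorem~\ref{regular sequences} for $k$-regularity. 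That is exactly the right architecture, and you have also correctly identified the obstruction the authors themselves name: for this family the index of the self-similar column is not an affine combination of $a$ and $b$, the number of nonzero letters in $v$ is not constant across the interval, and so $v$ admits no symbolic parameterization of the form $0'\,0^{a-2}\,1'\cdots$ on which the automated case analysis can run uniformly.

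The gap, however, is genuine and you acknowledge it yourself: the proposal does not supply the missing parameterization of the transient, and without it the argument cannot be closed for the infinite family --- only verified rational by rational. Since the entire content of the conjecture beyond Theorem~\ref{30 10a-5b power-free} is precisely the identification of $\word_{a/b}$ with $\tau(\varphi^\infty(0'))$ for a correctly specified $v$, what you have written is a faithful account of why the statement remains open rather than a proof of it. To be clear about one subsidiary point: even granting a candidate $v$, the $k$-regularity step would need the mild extension of Theorem~\ref{regular sequences} to handle the extra letter $1'$, which the paper only sketches; that part is routine, but the transient itself is not, and it is the crux exactly as you say.
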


\begin{conjecture}\label{38 12a-7b}
Let $a, b$ be relatively prime positive integers such that $\frac{29}{26} < \ab < \frac{19}{17}$ and $\gcd(b, 12) = 1$.
Then $\rho(\ab) = 12a-7b$.
\end{conjecture}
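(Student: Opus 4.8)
The plan is to prove the stronger statement that $\word_{a/b} = \tau(\varphi^\infty(0'))$ for a morphism $\varphi$ obtained by extending the $(12a-7b)$-uniform morphism of Theorem~\ref{38 12a-7b power-free} with a transient, from which $\rho(\ab) = 12a-7b$ follows. Following the template of Theorem~\ref{37 24a-15b}, I would extend $\varphi$ to the alphabet $\Z_{\geq 0} \cup \{0'\}$ (or to $\Z_{\geq 0} \cup \{0', 1'\}$ if, as in Conjecture~\ref{12 7a-5b}, the self-similar column requires a doctored letter $1'$) by setting $\varphi(0') = v\,\varphi(0)$ for a suitable transient word $v$ beginning with $0'$. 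Granting such a description, the sequence of letters of $\tau(\varphi^\infty(0'))$ is $(12a-7b)$-regular by Theorem~\ref{regular sequences} (or the minor variant accommodating $1'$ discussed in connection with Conjecture~\ref{12 7a-5b}), and Corollary~\ref{unique k} then pins down $\rho(\ab) = 12a-7b$ up to multiplicative dependence.

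It remains to verify two properties of $\tau(\varphi^\infty(0'))$, exactly as in the proofs of Theorems~\ref{4 5a-4b} and \ref{37 24a-15b}. First, $\ab$-power-freeness: since $\varphi$ restricted to $\Z_{\geq 0}$ is $\ab$-power-free by Theorem~\ref{38 12a-7b power-free}, any $\ab$-power in $\tau(\varphi^\infty(0'))$ must begin at a position $i \leq |v| - 1$ within the transient. One isolates a short factor (of the form $1\,p$, with $p$ a common prefix of the words $\varphi(n)$, as in Theorem~\ref{37 24a-15b}) that occurs only at the right boundary of the transient; this forces the border $x$ of any $\ab$-power $(xy)^{a/b} = xyx$ to be short, after which a finite symbolic window is slid through positions $0$ to $|v|-1$ and the candidate factors are shown pairwise unequal by the criteria of Section~\ref{testing inequality}. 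Second, lexicographic-leastness: decrementing any nonzero letter must create an $\ab$-power. For letters inside $\varphi(n)$ with $n \in \Z_{\geq 0}$ this is a finite check, and the terminal letter $n+1$ is handled by the same inductive descent as in Theorem~\ref{2 2a-b}, where decrementing $n+1$ to $c$ corresponds under $\varphi$ to decrementing an earlier $n$ to $c-1$; the remaining nonzero letters lie in the transient prefix $\tau(v)$ and must be treated directly.

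The hard part, and the reason this appears only as a conjecture, is supplying the transient word $v$ itself. For the families of Conjectures~\ref{12 7a-5b}, \ref{42 13a-8b}, and \ref{54 13a-5b}, the prefix $v$ is a single symbolic word whose number of nonzero letters is constant in $a$ and $b$, so both verification steps above reduce to finite symbolic computations. Here, by contrast, the index of the self-similar column is \emph{not} a linear combination of $a$ and $b$, and correspondingly the number of nonzero letters in $v$ grows with $a$ and $b$ in a way we have not been able to express in closed form. Thus one cannot even write down a single symbolic $v$ to feed into the machinery of Section~\ref{testing inequality}, and the finite case analyses above do not directly apply. Obtaining a uniform description of how the length and internal nonzero pattern of the transient depend on $a$ and $b$ is the genuinely new ingredient required; once such a description is available, I expect the $\ab$-power-freeness and lexicographic-leastness verifications to go through by a more elaborate version of the symbolic window arguments already used.
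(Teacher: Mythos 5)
This statement is stated in the paper only as a conjecture; the paper supplies no proof of it, and explicitly explains why: for the family attached to the morphism of Theorem~\ref{38 12a-7b power-free}, the index of the self-similar column is not a linear combination of $a$ and $b$, the number of nonzero letters in the transient prefix is not constant, and so no symbolic prefix $v$ of the form used in Theorem~\ref{37 24a-15b} or Conjectures~\ref{12 7a-5b}--\ref{54 13a-5b} can be written down. Your proposal is therefore not a proof, and it does not claim to be one; what it does is accurately reconstruct both the intended strategy (extend $\varphi$ by a transient, verify $\ab$-power-freeness and lexicographic-leastness by the symbolic window machinery, then invoke Theorem~\ref{regular sequences} and Corollary~\ref{unique k} to pin down $\rho$) and the precise obstruction that the paper itself identifies as the reason the statement remains open. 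Since there is no proof in the paper to compare against, the only substantive assessment is that your account of the missing ingredient --- a uniform description of the transient's length and nonzero pattern as a function of $a$ and $b$ --- coincides with the authors' own, and that nothing in your proposal closes that gap.
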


\begin{conjecture}\label{46 14a-9b I}
Let $a, b$ be relatively prime positive integers such that $\frac{12}{11} < \ab < \frac{23}{21}$ and $\ab \neq \frac{95}{87}$ and $\gcd(b, 14) = 1$.
Then $\rho(\ab) = 14a-9b$.
\end{conjecture}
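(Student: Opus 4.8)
The plan is to follow the template established by Theorem~\ref{37 24a-15b}, augmented with the $1'$ modification of Conjecture~\ref{12 7a-5b}. The $(14a-9b)$-uniform morphism $\varphi$ of Theorem~\ref{46 14a-9b I power-free} is already known to be $\ab$-power-free on $\Z_{\geq 0}$ and to locate words of length $10a-10b$, so the two remaining ingredients are: (i) a transient prefix $v$, extended to the alphabet $\Z_{\geq 0} \cup \{0', 1'\}$ via $\varphi(0') = v\,\varphi(0)$ together with a suitable $\varphi(1')$, such that $\word_{a/b} = \tau(\varphi^\infty(0'))$; and (ii) the verification that this word is both $\ab$-power-free and lexicographically least. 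Once $\word_{a/b} = \tau(\varphi^\infty(0'))$ is established, $(14a-9b)$-regularity follows from Theorem~\ref{regular sequences} (with the minor modification accommodating $1'$ noted before Conjecture~\ref{12 7a-5b}), and Corollary~\ref{unique k} then upgrades this to $\rho(\ab) = 14a-9b$.

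For step (ii), the $\ab$-power-freeness argument would proceed exactly as in Theorem~\ref{37 24a-15b}: since $\varphi|_{\Z_{\geq 0}}$ is $\ab$-power-free, any $\ab$-power in $\tau(\varphi^\infty(0'))$ must begin at a position $i \le |v|-1$; one then locates a short synchronizing factor occurring only once, at the boundary between $v$ and the image of the self-similar part, which bounds the length $|x|$ of any candidate $\ab$-power $(xy)^{a/b} = xyx$ and hence the number $m = |xyx|/a$; the finitely many resulting windows are checked symbolically by the inequality tests of Section~\ref{testing inequality}. Lexicographic-leastness would again split into decrementing a nonzero letter inside $v$ (each of which should complete an explicit $\ab$-power, possibly expressed in the free group on $\Z_{\geq 0}$ as in the discussion preceding Conjecture~\ref{12 7a-5b}) and decrementing a letter $n+1$ inside some $\varphi(n)$, the latter handled by the same induction that reduces decrementing $n+1 \to c$ to decrementing an earlier $n \to c-1$. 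One must also verify that $\frac{95}{87}$ is genuinely exceptional—presumably because a letter of $v$ fails to be forced, exactly as $\frac{74}{67}$ fails in Conjecture~\ref{42 13a-8b}—so that it is correctly excluded.

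The hard part, and the reason this remains a conjecture, is step (i): unlike the prefixes $v$ in Theorem~\ref{37 24a-15b} and Conjecture~\ref{12 7a-5b}, the transient for this family does \emph{not} have a constant number of nonzero letters across the interval $\frac{12}{11} < \ab < \frac{23}{21}$, so there is no fixed template $0'\,0^{a-2}\,1'\,0^{i_1 a + j_1 b - 1}\,c_1\cdots$ with a bounded number of blocks into which $v$ fits. Because the index of the self-similar column is not a linear combination of $a$ and $b$, the length and internal structure of $v$ vary with $\ab$ in an essentially arithmetic, base-$(14a-9b)$ fashion rather than a linear one. A successful proof would therefore require a genuinely new description of $v$: most plausibly a recursive or self-similar characterization—expressing $v$ through a secondary morphism, or as a controlled truncation of $\varphi^\infty$ itself—so that the synchronization argument and the finite symbolic case analysis of step (ii) can still be carried out with only finitely many symbolic factor shapes. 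Absent such a uniform description of the transient, the symbolic machinery of Section~\ref{power-free morphisms} does not apply, which is precisely the obstruction the surrounding discussion identifies.
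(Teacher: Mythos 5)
This statement is left as a conjecture in the paper; there is no proof to compare against, and your proposal does not close that gap either --- it is a proof \emph{plan} whose essential step is explicitly conceded to be missing. That said, your diagnosis is accurate and matches the paper's own stated reason for stopping at a conjecture: the morphism of Theorem~\ref{46 14a-9b I power-free} is already proved $\ab$-power-free and locates words of length $10a-10b$, and the template of Theorem~\ref{37 24a-15b} (transient prefix $v$, synchronizing factor at the boundary, finite symbolic window check, induction for lexicographic-leastness) would indeed carry the argument through \emph{if} one had a symbolic description of $v$. The paper states precisely that for this family the number of nonzero letters in the transient is not constant and the index of the self-similar column is not a linear combination of $a$ and $b$, so no prefix template of the form $0'\,0^{a-2}\,1'\,0^{i_1 a + j_1 b - 1} c_1 \cdots$ with a bounded number of blocks exists, and ``a new idea'' is required. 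You have restated that obstruction rather than overcome it.

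Two smaller caveats. First, your claim that $\frac{95}{87}$ is excluded because a letter of $v$ fails to be forced (by analogy with $\frac{74}{67}$ in Conjecture~\ref{42 13a-8b}) is a plausible guess but is not asserted anywhere in the paper and would itself need verification. Second, even granting a description of $v$, the $1'$ mechanism you import from Conjecture~\ref{12 7a-5b} (a modified $\varphi(1')$ when $v$ and $\varphi(0)$ share a suffix) is itself only conjectural machinery in the paper; the only fully worked transient proofs (Theorems~\ref{37 24a-15b}, \ref{6/5}, \ref{4/3}, \ref{27/23}) either avoid it or handle an explicit rational. So the honest assessment is: correct identification of the approach and of why it currently fails, but no progress toward an actual proof of $\rho(\ab) = 14a-9b$ on this interval.
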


\begin{conjecture}\label{102 38a-15b}
Let $a, b$ be relatively prime positive integers such that $\frac{11}{9} < \ab < \frac{16}{13}$ and $\gcd(b, 38) = 1$.
Then $\rho(\ab) = 38a-15b$.
\end{conjecture}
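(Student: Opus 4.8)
The plan is to realize $\word_{a/b}$ as the coding $\tau(\varphi^\infty(0'))$ of a fixed point of the morphism $\varphi$ from Theorem~\ref{102 38a-15b power-free}, extended to the alphabet $\Z_{\geq 0} \cup \{0', 1'\}$ by a transient rule $\varphi(0') = v\,\varphi(0)$ (and, if $v$ and $\varphi(0)$ share a suffix, by a modified letter $1'$ exactly as in Conjecture~\ref{12 7a-5b}). Once such a representation is in hand, the $(38a-15b)$-regularity of the letter sequence follows from Theorem~\ref{regular sequences}, and Corollary~\ref{unique k} promotes this to the statement that the entire $\sim$-class equals $38a-15b$, giving $\rho(\ab) = 38a-15b$. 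Thus everything reduces to two claims about the candidate word $W \colonequal \tau(\varphi^\infty(0'))$: that $W$ is $\ab$-power-free, and that $W$ is lexicographically least, i.e.\ that decrementing any nonzero letter of $W$ introduces an $\ab$-power.

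For power-freeness I would follow the proof of Theorem~\ref{37 24a-15b} in outline. Writing $\varphi^\infty(0') = v\,\varphi(\tau(v))\,\varphi^2(\tau(v))\cdots$, every factor beginning at a position $\geq |v|$ lies inside $\varphi(w)$ for some factor $w$, hence is $\ab$-power-free because $\varphi|_{\Z_{\geq 0}}$ is $\ab$-power-free by Theorem~\ref{102 38a-15b power-free}. An $\ab$-power in $W$ would therefore have to begin at a position $\leq |v|-1$; one then isolates a short synchronizing factor that occurs only at the boundary of the transient in order to bound the border length $|x|$ of any such $\ab$-power $(xy)^{a/b}=xyx$, and checks the finitely many remaining symbolic factors of length $ma$ using the inequality tests of Section~\ref{testing inequality}. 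Lexicographic-leastness is handled as usual: for letters inside the self-similar part, decrementing $n+1$ to $c$ corresponds under $\varphi$ to decrementing an earlier letter to $c-1$, so an $\ab$-power appears by induction, while for the finitely many nonzero letters of the transient $\tau(v)$ one exhibits the completing $\ab$-power directly, using the free-group bookkeeping of Conjecture~\ref{12 7a-5b} for the letter sitting just before the self-similar column.

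The genuine obstacle is producing the transient prefix $v$ itself. In every family we have proved or conjectured with an explicit $v$, the number of nonzero letters in $v$ is constant across the interval, so $v$ can be written as a single symbolic word whose run lengths are linear combinations of $a$ and $b$; the entire automated pipeline of Section~\ref{power-free morphisms} rests on there being finitely many such blocks. Here, however, the index of the self-similar column is \emph{not} a linear combination of $a$ and $b$, which forces the number of nonzero letters in $v$ to grow as $\ab$ ranges over the interval. Consequently there is no single symbolic $v$ with finitely many blocks, the synchronization and inequality checks above cannot be carried out uniformly, and the method does not apply as stated. The hard part is therefore to find a \emph{uniform} description of $v$ of a genuinely new kind --- for instance, to pin down the self-similar column index as an explicit arithmetic function of $a$ and $b$ (plausibly involving floor functions or the Euclidean/continued-fraction data of $\ab$), or to present $v$ recursively via the action of $\varphi$ on a controlled seed --- and then to prove that the greedy construction of $\word_{a/b}$ matches $\varphi^\infty(0')$ by a single induction synchronizing the two descriptions. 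Absent such a description, even stating the structure of $\word_{a/b}$ precisely is open, which is exactly why we record the result only in terms of $\rho$.
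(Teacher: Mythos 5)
This statement appears in the paper only as a conjecture, with no proof offered, so there is no argument of the authors' to compare yours against. Your outline (Theorem~\ref{102 38a-15b power-free} plus a transient prefix $v$ handled as in Theorem~\ref{37 24a-15b}, then Theorem~\ref{regular sequences} and Corollary~\ref{unique k}) and, more importantly, your diagnosis of the obstruction --- that for this family the index of the self-similar column is not a linear combination of $a$ and $b$, so the transient prefix $v$ has a non-constant number of nonzero letters across the interval and admits no symbolic description of the usual form --- agree exactly with the paper's own remarks preceding the conjecture, and you were right not to claim a completed proof.
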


\begin{conjecture}\label{279 67a-30b}
Let $a, b$ be relatively prime positive integers such that $\frac{10}{9} < \ab < \frac{29}{26}$ and $\ab \notin \{\frac{39}{35}, \frac{49}{44}, \frac{69}{62}, \frac{89}{80}\}$ and $\gcd(b, 67) = 1$.
Then $\rho(\ab) = 67a-30b$.
\end{conjecture}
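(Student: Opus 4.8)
The plan is to follow the template established by Theorem~\ref{37 24a-15b} and Conjectures~\ref{12 7a-5b}, \ref{42 13a-8b}, and \ref{54 13a-5b}: exhibit a transient prefix $v$ (and, if $v$ and $\varphi(0)$ share a nonempty suffix, a modified letter $1'$ with $\tau(1') = 1$), extend the morphism $\varphi$ of Theorem~\ref{279 67a-30b power-free} to the alphabet $\Z_{\geq 0} \cup \{0', 1'\}$ by $\varphi(0') = v\,\varphi(0)$, and prove that $\word_{a/b} = \tau(\varphi^\infty(0'))$. Power-freeness of ${\varphi|}_{\Z_{\geq 0}}$ is already supplied by Theorem~\ref{279 67a-30b power-free}, and Theorem~\ref{regular sequences}, with the minor modification needed to accommodate $1'$, would then yield $\rho(\ab) = 67a - 30b$.

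Given a symbolic $v$, two things must be verified: $\ab$-power-freeness of $\tau(\varphi^\infty(0'))$ and lexicographic-leastness. For power-freeness, as in Theorem~\ref{37 24a-15b}, every factor beginning at a position $\geq |v|$ is a factor of $\varphi(w)$ for some finite factor $w$ and hence contains no $\ab$-power by Theorem~\ref{279 67a-30b power-free}; so it suffices to locate a short factor occurring exactly once, straddling the end of the transient (the analogue of the factor $1\,p$ in the proof of Theorem~\ref{37 24a-15b}), which bounds $|x|$ for any $\ab$-power $(xy)^{a/b} = xyx$ beginning inside $v$, and then to check the finitely many short candidates by sliding a window from position $0$ to $|v|-1$ and applying the inequality tests of Section~\ref{testing inequality}. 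For lexicographic-leastness, decrementing any nonzero letter of the morphic tail is handled by the inductive argument of Theorem~\ref{2 2a-b}, where decrementing $n+1$ to $c$ corresponds under $\varphi$ to decrementing an earlier $n$ to $c-1$; the finitely many nonzero letters of $\tau(v)$ and the first few letters of each $\varphi(n)$, whose completing $\ab$-power depends on whether they are preceded by $\tau(v)$ or by $\varphi(m)$, are checked individually, at times working in the free group on $\Z_{\geq 0}$ as in the discussion of $\word_{19/16}$. Finally, the four excluded rationals $\frac{39}{35}, \frac{49}{44}, \frac{69}{62}, \frac{89}{80}$ must be confirmed genuinely exceptional: each either makes $\varphi(0')$ contain an $\ab$-power or breaks lexicographic-leastness, exactly as $\frac{74}{67}$ does in Conjecture~\ref{42 13a-8b}.

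The hard part, and the reason this remains only a conjecture, is producing the symbolic prefix $v$ at all. For the families treated by theorem, $v$ has a fixed number of nonzero letters and $|v|$ is a linear combination of $a$ and $b$; but for the family of Theorem~\ref{279 67a-30b power-free} the index of the self-similar column is \emph{not} a linear combination of $a$ and $b$, and the number of nonzero letters in the transient grows with $a$ and $b$. Consequently $v$ cannot be written as a finite symbolic word of the form used elsewhere, and the automated symbolic case analysis of Sections~\ref{listing factors}--\ref{testing inequality}, which manipulates a bounded list of blocks $0^{ia+jb-1}c$, does not apply verbatim. Overcoming this requires a genuinely new description of the transient, for instance a recursive or self-referential generation of $v$ (perhaps $v$ is itself produced by iterating a related morphism, or the positions of its nonzero letters are governed by the continued-fraction expansion of $\ab$), together with an extension of the factor-listing and inequality-testing machinery to symbolic words whose number of blocks is itself a function of $a$ and $b$. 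Until such a framework is available, the power-freeness and lexicographic-leastness steps above cannot be carried out uniformly in $a$ and $b$.
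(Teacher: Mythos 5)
The statement you were asked to prove is stated in the paper only as a conjecture; the authors give no proof of it. Your write-up is therefore best judged not as a proof but as a diagnosis, and as a diagnosis it is accurate: the template you describe (a transient prefix $v$, possibly a modified letter $1'$, the extension $\varphi(0') = v\,\varphi(0)$, power-freeness via Theorem~\ref{279 67a-30b power-free} plus a uniquely-occurring factor straddling the end of the transient, lexicographic-leastness via the inductive decrementing argument, and $k$-regularity via Theorem~\ref{regular sequences}) is exactly the one the authors use for the cases they do prove (Theorem~\ref{37 24a-15b}) or conjecture in detailed form (Conjectures~\ref{12 7a-5b}, \ref{42 13a-8b}, \ref{54 13a-5b}). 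Your identification of the obstruction is also the authors' own: for the family attached to Theorem~\ref{279 67a-30b power-free}, the index of the self-similar column is not a linear combination of $a$ and $b$ and the number of nonzero letters in the transient is not constant, so no symbolic $v$ of the form used elsewhere exists, and the paper explicitly says that specifying the structure of these words ``will therefore require a new idea'' and states the conjecture in terms of $\rho$ only.

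The genuine gap, which you name yourself, is that without a concrete description of $v$ none of the verification steps can even be set up, let alone carried out uniformly in $a$ and $b$; your suggestions (a recursive generation of $v$, or control via the continued-fraction expansion of $\ab$) are speculative and not substantiated. So the statement remains unproven both in the paper and in your proposal. One further caution: you assert that the four excluded rationals ``must be confirmed genuinely exceptional'' by showing each breaks power-freeness or leastness, but since the general mechanism is unknown, even classifying the exceptional set $\{\frac{39}{35}, \frac{49}{44}, \frac{69}{62}, \frac{89}{80}\}$ (and ruling out further exceptions, as the authors warn may exist in the analogous Conjecture~\ref{42 13a-8b}) is part of what is missing.
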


Our method for identifying families of related rationals is restrictive in several ways.
For example, we did not look for relationships among words with different numbers of nonzero letters.

Additionally, we do not know how to prove $\ab$-power-freeness for families such as the following two families (with $10$ nonzero letters each), where there is an extra congruence condition on the denominator.

\begin{conjecture}\label{10 5a-2b}
Let $a, b$ be relatively prime positive integers such that $\frac{5}{4} < \ab < \frac{9}{7}$ and $b \equiv 2 \mod 4$ and $\gcd(b, 5) = 1$.
Then $\rho(\ab) = 5a-2b$.
\end{conjecture}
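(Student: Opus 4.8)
The plan is to follow the two-part template established in Theorems~\ref{2 2a-b}, \ref{4 5a-4b}, and \ref{37 24a-15b}: first exhibit an explicit symbolic $(5a-2b)$-uniform morphism $\varphi$ with $10$ nonzero letters (presumably with a transient prefix, so that $\varphi(0') = v\,\varphi(0)$ for some word $v$), then prove that $\varphi$ is $\ab$-power-free and that $\tau(\varphi^\infty(0'))$ is lexicographically least. Granting both, we obtain $\word_{a/b} = \tau(\varphi^\infty(0'))$, and the value $\rho(\ab) = 5a-2b$ follows from a suitable adaptation of Theorem~\ref{regular sequences} together with the uniqueness supplied by Corollary~\ref{unique k}. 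The morphism itself would be reconstructed from the empirical data described in Section~\ref{finding conjectures}, using the columns of $\word_{a/b}$ for several rationals in $\frac{5}{4} < \ab < \frac{9}{7}$ with $b \equiv 2 \pmod 4$. Unlike the families treated in Section~\ref{Families of words}, there is here no companion theorem from Section~\ref{symbolic power-free morphisms} supplying a proven $\ab$-power-free morphism, so both the power-freeness and the lexicographic-leastness must be established from scratch for the conjectured morphism.

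The feature that distinguishes this family, and the source of the congruence condition, is that the run-length exponents of $\varphi(n)$ are expected to be linear combinations of $a$ and $b$ with \emph{non-integer} (dyadic) coefficients, exactly the scenario anticipated in Section~\ref{finding conjectures}. A run length such as $\frac{i a + j b}{2} - 1$ is a non-negative integer only when $b$ occupies a fixed residue class modulo $4$ (with $a$ then pinned modulo $4$ by $\gcd(a,b) = 1$ and the interval), which is what forces $b \equiv 2 \pmod 4$. The natural way to bring such a morphism within reach of the existing apparatus is to reparameterize: write $b = 4c + 2$, so that the exponents become honest integer linear combinations of $a$ and $c$, and carry the computation through with $a$ and $c$ as the free symbolic parameters. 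With this substitution in place, one hopes that Lemmas~\ref{locating length}--\ref{short words are a/b-power-free} and Proposition~\ref{big m} apply, and that $\ab$-power-freeness reduces, as before, to listing all symbolic factors of $\varphi(n)\varphi(n)\cdots$ of length $ma$ for $m \leq \mmax$ (Section~\ref{listing factors}) and verifying that each is not an $\ab$-power (Section~\ref{testing inequality}).

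Lexicographic leastness would then be handled exactly as in the proof of Theorem~\ref{37 24a-15b}: one shows that decrementing any nonzero letter of $\tau(\varphi^\infty(0'))$ introduces an $\ab$-power ending at that position. The letter $n + d$ in the self-similar column is treated inductively, since decrementing it corresponds under $\varphi$ to decrementing an earlier letter; the finitely many nonzero letters inside $u$ and the transient $v$ are each handled by exhibiting an explicit $\ab$-power witness, possibly using the free-group bookkeeping of Section~\ref{Families of words} to trim the witness. Under the reparameterization $b = 4c + 2$ these witnesses again have integer run lengths in $a$ and $c$, so their validity can be checked by the same inequality tests.

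The hard part is precisely the step the surrounding discussion flags. The pipeline of Sections~\ref{bounding the factor length}--\ref{determining a locating length}, and the \textsc{SymbolicWords} code that implements it, are built on the assumption that every run length is an integer combination $c a - d b$ of $a$, $b$, and $1$, and that the admissible $\ab$ form an \emph{interval}, refinable by splitting at rationals that solve homogeneous equations. A genuine modular constraint such as $b \equiv 2 \pmod 4$ is not an interval condition, so the minimum computations for $i_\text{max}$ in Section~\ref{listing factors} and the inequality criteria of Section~\ref{testing inequality} must all be reworked in the reparameterized variables, and one must check that the reparameterization does not introduce spurious subintervals whose endpoints violate the congruence. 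Establishing that the substitution $b = 4c + 2$ is compatible with the entire automated case analysis—rather than merely making individual run lengths integral—is the main obstacle, and it is for want of this adaptation that Conjecture~\ref{10 5a-2b} remains open.
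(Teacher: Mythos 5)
The statement you have been asked to prove is stated in the paper as a \emph{conjecture}, not a theorem: the authors say explicitly that they ``do not know how to prove $\ab$-power-freeness for families such as the following two families \dots\ where there is an extra congruence condition on the denominator.'' There is therefore no proof in the paper to compare against, and your proposal does not supply one either — it is a strategy together with a diagnosis of why the strategy does not currently go through, and you concede as much in your final sentence. Concretely, the missing pieces are: (i) no explicit $(5a-2b)$-uniform morphism with $10$ nonzero letters is exhibited, either symbolically or for a single rational in the family, so there is nothing to which the machinery could even be applied; (ii) no verification of $\ab$-power-freeness is carried out for such a morphism, and your own analysis explains why the pipeline of Sections~\ref{bounding the factor length}--\ref{determining a locating length} cannot be run as-is; (iii) the lexicographic-leastness argument is deferred entirely to an analogy with Theorem~\ref{37 24a-15b}. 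A proof requires all three, and in addition one would need to pass from ``$\word_{a/b} = \tau(\varphi^\infty(0'))$'' to the stated conclusion $\rho(\ab) = 5a-2b$ via Theorem~\ref{regular sequences} and Corollary~\ref{unique k}, which you do correctly note.

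That said, your diagnosis is sound and consistent with the paper's own framing. Identifying the congruence $b \equiv 2 \pmod 4$ as the signature of run lengths with non-integer (here half-integer) coefficients matches the remark in Section~\ref{finding conjectures} that such families ``would be restricted by a $\gcd$ condition on $a$ or $b$,'' and the reparameterization $b = 4c+2$ is a reasonable first step toward making the exponents integral in new variables. But the hard part — showing that this substitution is compatible with the interval-splitting, the $i_\text{max}$ minimizations, and the inequality tests, and then actually producing and verifying the morphism — is exactly what is missing, both from your proposal and from the paper. An outline of how one might someday prove a conjecture is not a proof of it; the statement remains open.
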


\begin{conjecture}\label{10 4a-2b}
Let $a, b$ be relatively prime positive integers such that $\frac{13}{10} \leq \ab < \frac{4}{3}$ and $b \equiv 2 \mod 4$ and $\gcd(b, 3) = 1$.
Then $\rho(\ab) = 4a-2b$.
\end{conjecture}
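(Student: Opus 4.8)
The plan is to follow the template established in Theorems~\ref{2 2a-b}, \ref{4 5a-4b}, and \ref{37 24a-15b}. The first step is to take the $(4a-2b)$-uniform morphism $\varphi$ with $10$ nonzero letters that was found experimentally for this family and prove that ${\varphi|}_{\Z_{\geq 0}}$ is $\ab$-power-free. The second step is to relate $\word_{a/b}$ to $\varphi$ by extending $\varphi$ to $\Z_{\geq 0}\cup\{0'\}$ via $\varphi(0') = v\,\varphi(0)$ for an appropriate transient prefix $v$ and proving $\word_{a/b} = \tau(\varphi^\infty(0'))$; the third step is then purely bookkeeping, since Theorem~\ref{regular sequences} gives $(4a-2b)$-regularity of the letter sequence and Corollary~\ref{unique k} forces $4a-2b$ to be the unique class, yielding $\rho(\ab)=4a-2b$. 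Steps two and three are routine: lexicographic-leastness follows, exactly as in Theorems~\ref{4 5a-4b} and \ref{37 24a-15b}, by checking that decrementing any nonzero letter of $\tau(v)$ introduces a short $\ab$-power such as $(0^b)^{a/b}$ or $(0^{b-1}1)^{a/b}$, and that decrementing the self-similar letter $n+d$ to $c$ pulls back under $\varphi$ to decrementing an earlier letter to $c-d$, so leastness follows by induction. Hence the entire difficulty is concentrated in the first step.

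The trouble is that the reduction of Section~\ref{bounding the factor length} rests on the hypothesis $\gcd(b,k)=1$ of Lemma~\ref{big x}, and this fails here. Writing $b=2b'$ with $b'$ odd (so $a$ is odd), one computes $k = 4a-2b = 4(a-b')$ and, since $\gcd(b',a-b')=\gcd(b',a)=1$, one finds $\gcd(b,k)=2$. The locating argument still goes through: if $\varphi$ locates words of length $\ell$, then any $\ab$-power $(xy)^{a/b}=xyx$ in $\varphi(w)$ with $|x|\ge\ell$ has its two copies of $x$ at positions congruent modulo $k$, so $k \mid |xy| = mb$ with $m = |xyx|/a$. With $\gcd(b,k)=2$ this forces only that $m$ is a multiple of $k/2 = 2a-b$, not of $k$.

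Accordingly there are two cases. When $m = (2a-b)m'$ with $m'$ \emph{even}, one has $k \mid |x| = m(a-b)$ and $k \mid |y| = m(2b-a)$, so the window can be shifted to a multiple of $k$ and the power pulled back to an $\ab$-power in $w$ exactly as in Lemma~\ref{big x}. The genuinely new case is $m'$ \emph{odd}: since $a-b$ is odd, $|x| = (k/2)\,m'(a-b)$ is then an \emph{odd} multiple of $k/2$, the border $x$ is shifted by half a period, and the pull-back breaks down. Ruling out these half-period-shifted $\ab$-powers is the main obstacle, and it is precisely the step we do not know how to carry out. Because $m$ is only constrained to be a multiple of $2a-b$, it is not bounded by a small constant, so the finite symbolic case analysis of Sections~\ref{listing factors}--\ref{testing inequality}, which inspects factors of length $ma$ only for $m\le\mmax$, cannot close the argument as it does for the families with $\gcd(b,k)=1$. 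I expect the hypothesis $b\equiv 2\pmod 4$ to be exactly what obstructs the shifted borders through a parity phenomenon: the right move is to seek a $2$-coloring of positions that is respected by $\varphi$ and under which the two length-$(k/2)$ halves of a period are distinguishable, so that an occurrence of $x$ shifted by $k/2$ cannot coincide with the original $x$. Formulating such an invariant and using it to extend Lemma~\ref{big x} to the case $\gcd(b,k)=2$ is the crux; the same reasoning would simultaneously settle Conjecture~\ref{10 5a-2b}, where the identical even-denominator obstruction occurs.
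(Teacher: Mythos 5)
You have not given a proof, and you are right not to: the statement you were asked about is stated in the paper as a \emph{conjecture}, with no proof offered. The paper explicitly says, immediately before Conjectures~\ref{10 5a-2b} and \ref{10 4a-2b}, that the authors ``do not know how to prove $\ab$-power-freeness for families such as the following two families\dots where there is an extra congruence condition on the denominator.'' Your diagnosis of the obstruction is exactly the one implicit in that remark: with $b \equiv 2 \pmod 4$ and $k = 4a-2b$ one gets $\gcd(b,k)=2$ rather than $1$, so Lemma~\ref{big x} only forces $k/2 = 2a-b$ to divide $m$, and the case where $m/(2a-b)$ is odd leaves the two copies of the border $x$ offset by half a period, where the pull-back to an $\ab$-power in $w$ fails and $m$ is unbounded. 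Your arithmetic here ($k = 4(a-b')$, $\gcd(b,k)=2$, $|x|$ an odd multiple of $k/2$ since $a-b$ is odd) is correct, and your suggestion that the resolution should come from a $\varphi$-respected $2$-coloring distinguishing the two halves of a period is a reasonable direction; but it remains a proposal for an argument, not an argument.

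One further point you should fold into any attempt: the paper notes that for the rationals of Conjecture~\ref{10 4a-2b} the word $\word_{a/b}$ has $6$ columns that are not eventually constant but eventually periodic with period $01$, ``reminiscent of $\word_{3/2}$.'' So the candidate morphism is not of the clean form $\varphi(n) = u\,(n+d)$ with $u$ a fixed word over $\Z_{\geq 0}$ that your step one presupposes; as with $\word_{3/2}$, one would need to work over an extended alphabet such as $\Z_{\geq 0} \cup \{0',1'\}$ with the alternating columns encoded by primed letters, and the $k$-regularity bookkeeping in step three would need the corresponding (minor) modification of Theorem~\ref{regular sequences}. This affects the very statement of the morphism whose power-freeness you would need to establish, not just the proof of it.
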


For the rationals in Conjecture~\ref{10 4a-2b}, the word $\word_{a/b}$ contains $6$ columns that are not eventually constant but are eventually periodic with repetition period $01$, reminiscent of $\word_{3/2}$.

\section{Sporadic words $\word_{a/b}$}\label{Sporadic words}

Of the $520$ rational numbers $\ab$ for which we identified conjectural structure in $\word_{a/b}$, there are $277$ that fall under Theorem~\ref{2 2a-b} or one of the theorems or conjectures in Section~\ref{Families of words}.
In this section we establish the structure of $\word_{a/b}$ for some of the remaining rationals.
In particular, we prove Theorems~\ref{8/5}--\ref{6/5}.
For proving $\ab$-power-freeness, much of the algorithm is the same as for symbolic $\ab$.
There are some differences, however.
The most obvious difference is that we need not work with run-length encodings of words; we can manipulate $\varphi(n)$ as a word on the alphabet $\Z_{\geq 0} \cup \{n+d\}$.
Computationally this is much faster, especially considering the scale involved:
Whereas the largest symbolic morphism we identified was the morphism in Theorem~\ref{279 67a-30b power-free} with $279$ nonzero letters, the $50847$-uniform morphism for $\word_{7/4}$ has $11099$ nonzero letters.

Another difference is how we find an integer $\ell$ such that $\varphi$ locates words of length~$\ell$.
Rather than use the method of Section~\ref{determining a locating length}, we compute the minimum possible $\ell$ as follows.
Begin with $\ell = 0$, and maintain a set of sets of the positions of length-$\ell$ factors of $\varphi(n)$ that are not unequal (that is, each pair of these factors is equal for some value of $n$).
Initially, this set is $\{\{0, 1, \dots, |\varphi(n)| - 1\}\}$, since all length-$0$ factors of $\varphi(n)$ are equal.
We treat $\varphi(n)$ as a cyclic word so we visit all factors of $\varphi(n) \varphi(n) \cdots$.
Then increase $\ell$ by $1$, and update the sets of positions for the new length by extracting a single letter from $\varphi(n)$ for each position; in this way we avoid holding many large words in memory.
During this update, a set of positions breaks into multiple sets if it contains a pair of positions corresponding to unequal words of length $\ell$ that were not unequal for $\ell - 1$.
Delete any sets containing a single position, since the corresponding factor of length $\ell$ or greater is uniquely located modulo $|\varphi(n)|$.
When the set of position sets becomes empty, then $\varphi$ locates words of length $\ell$.

We use a variant of Proposition~\ref{big m} in which $\mmax \colonequal \lceil \frac{\ell}{a-b} \rceil - 1$, since $\frac{\ell}{a-b}$ is an explicit rational number.
We verify the conclusion of Lemma~\ref{short words are a/b-power-free} directly by checking that $\lceil \frac{\mmax a - 1}{k} \rceil + 1 \leq a - 1$.

The structure of words $\word_{a/b}$ with no transient can now be established automatically.
We condense results for $24$ words (including Theorems~\ref{8/5} and \ref{7/4}) into the following theorem.
In particular, this establishes the value of $\rho(\ab)$ for these $24$ rationals.

\begin{theorem}\label{sporadic rationals with no transient}
For each $\ab$ in Table~\ref{sporadic rationals}, there is a $k$-uniform morphism $\varphi(n) = u \, (n + d)$ such that $\word_{a/b} = \varphi^\infty(0)$.
Moreover, $\varphi$ locates words of length $\ell$.
\end{theorem}

\begin{table}
\[
\def\arraystretch{1.3}
\begin{array}{lrrrclrrr}
	\ab & d & k & \ell & \hspace{.5cm} & \, \ab & d & k & \ell \\ \cline{1-4} \cline{6-9}
 \frac{7}{4}\approx 1.75 & 2 & 50847 & 12940 & & \frac{37}{26}\approx 1.42308 & 1 & 2359 & 1680 \\
 \frac{8}{5}\approx 1.6 & 2 & 733 & 301 & & \frac{37}{28}\approx 1.32143 & 1 & 5349 & 3861 \\
 \frac{13}{9}\approx 1.44444 & 1 & 45430 & 11400 & & \frac{41}{28}\approx 1.46429 & 1 & 2103 & 999 \\
 \frac{17}{10}\approx 1.7 & 2 & 55657 & 37104 & & \frac{49}{34}\approx 1.44118 & 1 & 4171 & 3008 \\
 \frac{15}{11}\approx 1.36364 & 1 & 6168 & 711 & & \frac{55}{38}\approx 1.44737 & 1 & 5269 & 3816 \\
 \frac{16}{13}\approx 1.23077 & 1 & 12945 & 1321 & & \frac{53}{40}\approx 1.325 & 1 & 9933 & 4149 \\
 \frac{18}{13}\approx 1.38462 & 1 & 4188 & 2094 & & \frac{59}{42}\approx 1.40476 & 1 & 5861 & 4332 \\
 \frac{19}{13}\approx 1.46154 & 1 & 7698 & 946 & & \frac{65}{46}\approx 1.41304 & 1 & 7151 & 5292 \\
 \frac{21}{16}\approx 1.3125 & 2 & 25441 & 5606 & & \frac{67}{46}\approx 1.45652 & 1 & 7849 & 5720 \\
 \frac{25}{17}\approx 1.47059 & 1 & 11705 & 3268 & & \frac{71}{50}\approx 1.42 & 1 & 8569 & 6348 \\
 \frac{31}{22}\approx 1.40909 & 1 & 1645 & 1160 & & \frac{73}{50}\approx 1.46 & 1 & 9331 & 6816 \\
 \frac{33}{23}\approx 1.43478 & 1 & 24995 & 3576 & & \frac{77}{54}\approx 1.42593 & 1 & 10115 & 7500 \\
\end{array}
\]
	\caption{Rational numbers $\ab$ for which the structure of $\word_{a/b}$ is established by Theorem~\ref{sporadic rationals with no transient}.}
	\label{sporadic rationals}
\end{table}

Some of the words $u$ in Theorem~\ref{sporadic rationals with no transient} are themselves highly structured.
For example, for $\ab = \frac{71}{50}$ we have
\begin{align*}
	\varphi(n) = {}
	& \left(0^{70} \, 1 \, 0^{20} \, 1 \, 0^{20} \, 1 \, 0^{41} \, 1 \, 0^{28} \, 1\right) \\
	& \cdot \left(0^{41} \, 1 \, 0^{28} \, 1 \, 0^{12} \, 1 \, 0^{28} \, 1 \, 0^{41} \, 1 \, 0^{28} \, 1\right)^{34} \left(0^{41} \, 1 \, 0^{28} \, 1 \, 0^{12} \, 1 \, 0^{28} \, 1 \, 0^{41} \, 1 \, 0^{28} \, 2\right) \\
	& \cdot \left(0^{41} \, 1 \, 0^{28} \, 1 \, 0^{12} \, 1 \, 0^{28} \, 1 \, 0^{41} \, 1 \, 0^{28} \, 1\right)^{10} \left(0^{41} \, 1 \, 0^{28} \, 1 \, 0^{12} \, 1 \, 0^{7} \, 1 \, 0^{12} \, (n + 1)\right).
\end{align*}
Perhaps $\word_{71/50}$ can be generalized to an infinite family using this structure.

Words $\word_{a/b}$ with a transient can be handled as in Section~\ref{Families of words}.
We now prove Theorem~\ref{6/5} on the structure of $\word_{6/5}$.

\begin{theorem6/5}
There exist words $u, v$ of lengths $|u| = 1001 - 1$ and $|v| = 29949$ such that $\word_{6/5} = \tau(\varphi^\infty(0'))$, where
\[
	\varphi(n) =
	\begin{cases}
		v \, \varphi(0)		& \text{if $n = 0'$} \\
		u \, (n + 3)			& \text{if $n \in \Z_{\geq 0}$}.
	\end{cases}
\]
\end{theorem6/5}

\begin{proof}
Let $k = 1001$.
Compute the prefix of $v' u$ of $\word_{6/5}$ where $|v'| = 29949$ and $|u| = k - 1$.
Let $v = 0' \, 0^4 \, 1^5 \, 0^1 \, 2^1 \cdots 1^1 \, 2^1$ be the word obtained by changing the first letter of $v'$ to $0'$.

First we show that $\tau(\varphi^\infty(0'))$ is $\frac{6}{5}$-power-free.
We compute that ${\varphi |}_{\Z_{\geq 0}}$ locates words of length $315$ and is $\frac{6}{5}$-power-free.
It follows that if $\tau(\varphi^\infty(0'))$ contains a $\frac{6}{5}$-power then it contains a $\frac{6}{5}$-power beginning at some position $i \leq |v| - 1$.

We seek a length $\ell$ such that the factor $x$ of length $\ell$ beginning at position $i$ in $\tau(\varphi^\infty(0'))$, for each $i$ in the interval $0 \leq i \leq |v| - 1$, only occurs once in $\tau(\varphi^\infty(0'))$.
We can compute $\ell$ just as we computed the length $315$, but by beginning with the set $\{\{0, 1, \dots, |\tau(v) \varphi(n)| - 1\}\}$ instead of $\{\{0, 1, \dots, |\varphi(n)| - 1\}\}$; integers in the range $0$ to $|\tau(v)| - 1$ represent positions in $\tau(v)$, while the remaining integers represent general positions (modulo $k$) in the rest of the infinite word $\tau(v) \varphi(n) \varphi(n) \varphi(n) \cdots$.
The minimal such length is $\ell = 18215$.
Therefore if $\tau(\varphi^\infty(0'))$ contains a $\frac{6}{5}$-power factor $(xy)^{6/5} = xyx$ then $|x| < 18215 = 18215 (a-b)$.
Let $\mmax \colonequal 18214$.

It remains to show that $\tau(\varphi^\infty(0'))$ contains no $\frac{6}{5}$-power $(xy)^{6/5} = xyx$ with $|x| \leq \mmax (a-b)$ beginning at a position $i \leq |v| - 1$.
For each $m$ in the range $1 \leq m \leq \mmax$, this could be accomplished by sliding a window of length $m a$ through $\tau(\varphi^\infty(0'))$ from position $0$ to position $|v| - 1$ and verifying inequality of factors.
But since we already computed the length-$50000$ prefix of $\word_{6/5}$ in order to guess its structure, and $(|v| - 1) + \mmax a - 1 = 48161 \leq 50000$, we simply check that this prefix agrees with $\tau(\varphi^\infty(0'))$ and conclude that there are no $\frac{6}{5}$-powers in that range.

Now we show that decrementing any nonzero letter of $\tau(\varphi^\infty(0'))$ introduces a $\frac{6}{5}$-power.
Decrementing any nonzero letter of $\tau(v)$ introduces a $\frac{6}{5}$-power, since we defined $v' = \tau(v)$ to be a prefix of $\word_{6/5}$.
Every other nonzero letter is a factor of $\varphi(n)$ for some integer $n$.
Decrementing any but four of the nonzero letters in $u \, 3$ to $0$ introduces a $\frac{6}{5}$-power of length $m a$ for some $m \leq 25$.
Decrementing any $2$ or $3$ in $u \, 3$ to $1$ introduces a $\frac{6}{5}$-power of length $m a$ for some $m \leq 20$.
Decrementing any of the first five of six $3$s in $u \, 3$ to $2$ introduces a $\frac{6}{5}$-power of length $m a$ for some $m \leq 4$.
For the remaining four $0$s and one $2$, we consider two cases, depending on whether $u$ is immediately preceded by $\tau(v)$ or $\varphi(n)$.
In both cases, decrementing the last letter of $u \, 3$ to $2$ introduces a $\frac{6}{5}$-power of length $233 a$, and decrementing one the four remaining nonzero letters to $0$ introduces a $\frac{6}{5}$-power of length $m a$ for some $m \leq 82$.
As before, decrementing $n + 3$ to $c \geq 3$ corresponds to decrementing an earlier letter $n$ to $c - 3$ and therefore, inductively, introduces a $\frac{6}{5}$-power.
\end{proof}

We have automated the steps of the preceding proof as well.

\begin{theorem4/3}
There exist words $u, v$ of lengths $|u| = 56 - 1$ and $|v| = 18$ such that $\word_{4/3} = \tau(\varphi^\infty(0'))$, where
\[
	\varphi(n) =
	\begin{cases}
		v \, \varphi(0)		& \text{if $n = 0'$} \\
		u \, 1				& \text{if $n = 0$} \\
		u \, (n + 2)			& \text{if $n \in \Z_{\geq 1}$}.
	\end{cases}
\]
\end{theorem4/3}

\begin{proof}
Let
\begin{align*}
	u &= 1202110001120202010101020211100012120201010102020211000, \\
	v &= 0'00111020201110002.
\end{align*}
One checks that $\tau(v)$ is a prefix of $\word_{4/3}$.

Showing that $\tau(\varphi^\infty(0'))$ is $\frac{4}{3}$-power-free with our code requires an extra step, since the morphism effectively uses two values of $d$.
Define $\varphi_1(n) = u \, (n + 1)$ for all $n \geq 0$.
We use the morphism $\varphi_1$ and specify as an assumption that $n = 0$ or $n \geq 2$.
The code then verifies that ${\varphi |}_{\Z_{\geq 0}}$ locates words of length $14$ and is $\frac{4}{3}$-power-free.
It follows that if $\tau(\varphi^\infty(0'))$ contains a $\frac{4}{3}$-power then it contains a $\frac{4}{3}$-power beginning at some position $i \leq |v| - 1$.

We proceed as in the proof of Theorem~\ref{6/5}.
The factor of length $\ell = 7$ beginning at position $i$ in $\tau(\varphi^\infty(0'))$, for each $i$ in the interval $0 \leq i \leq |v| - 1$, only occurs once in $\tau(\varphi^\infty(0'))$.
Therefore if $\tau(\varphi^\infty(0'))$ contains a $\frac{4}{3}$-power factor $(xy)^{4/3} = xyx$ then $|x| < 7 = 7 (a-b)$.
Sliding a window of length $m a$ from position $0$ to position $|v| - 1$ for each $m$ in the range $1 \leq m \leq 6$ shows that $\tau(\varphi^\infty(0'))$ contains no $\frac{4}{3}$-power $(xy)^{4/3} = xyx$ with $|x| \leq 6 (a-b)$.

Now we show that decrementing any nonzero letter of $\tau(\varphi^\infty(0'))$ introduces a $\frac{4}{3}$-power.
Decrementing any nonzero letter of $\tau(v)$ introduces a $\frac{4}{3}$-power, since $\tau(v)$ is a prefix of $\word_{4/3}$.
One checks as before that decrementing a nonzero letter of $u$ introduces a $\frac{4}{3}$-power, regardless of whether $u$ is preceded by $\tau(v)$ or $\varphi(n)$.
The last letter of $\varphi(n)$ is either $1$ or $n + 2 \geq 3$.
Decrementing this letter to $0$ introduces the $\frac{4}{3}$-power $0^4$.
If $n = 0$ then $0$ is the only possibility.
If $n \geq 1$ then decrementing $n + 2$ to $c = 1$ corresponds to decrementing an earlier letter $n$ to $0$ and, inductively, introduces a $\frac{4}{3}$-power.
If $n \geq 1$ then decrementing $n + 2$ to $c \geq 2$ corresponds to decrementing an earlier letter $n$ to $c - 2$ and, inductively, introduces a $\frac{4}{3}$-power.
\end{proof}

The morphisms we have encountered up to this point have all been defined for $n \in \Z_{\geq 0}$ as $\varphi(n) = u \, (n + d)$ for some $d \geq 1$ (or two positive values of $d$ in the case of Theorem~\ref{4/3}).
However, $d = 0$ also occurs, and this is significant because it yields a word on a finite alphabet.

\begin{theorem}\label{27/23}
There exist words $u, v \in \{0, 1, 2\}^*$ of lengths $|u| = 353 - 1$ and $|v| = 75019$ such that $\word_{27/23} = \tau(\varphi^\infty(0'))$, where
\[
	\varphi(n) =
	\begin{cases}
		v \, \varphi(0)	& \text{if $n = 0'$} \\
		u \, n			& \text{if $n \in \Z_{\geq 0}$}.
	\end{cases}
\]
In particular, $\word_{27/23}$ is a $353$-automatic sequence on the alphabet $\{0, 1, 2\}$.
\end{theorem}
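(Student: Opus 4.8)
The plan is to follow the template established in the proofs of Theorems~\ref{6/5} and \ref{4/3}: fix explicit words $u$ and $v$ by computing a long enough prefix of $\word_{27/23}$, and then verify two properties of $\tau(\varphi^\infty(0'))$, namely that it is $\frac{27}{23}$-power-free and that it is lexicographically least. Concretely, I would compute the prefix of $\word_{27/23}$ of length $|v| + |u| = 75019 + 352$, take $u$ to be the final $352$ letters, and obtain $v$ from the length-$75019$ prefix by replacing its initial letter with $0'$; by construction $\tau(v)$ is then a prefix of $\word_{27/23}$. The two properties together force $\tau(\varphi^\infty(0')) = \word_{27/23}$.

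For $\frac{27}{23}$-power-freeness I would run the automated machinery of Section~\ref{power-free morphisms} exactly as for $\word_{6/5}$. First verify that ${\varphi |}_{\Z_{\geq 0}}$ locates words of some length and is $\frac{27}{23}$-power-free, so that any $\frac{27}{23}$-power in $\tau(\varphi^\infty(0'))$ must begin at a position $i \leq |v| - 1$ inside the transient. Then, working with positions in $\tau(v)\,\varphi(n)$ as described for $\word_{6/5}$, find a length $\ell$ such that every length-$\ell$ factor starting in the transient occurs only once; this bounds $|x|$ for any putative power $(xy)^{27/23} = xyx$, after which a finite window-sliding check over $0 \le i \le |v|-1$ rules out all such powers. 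This part is mechanical but computationally heavy because $|v|$ and the resulting $\ell$ are large.

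The genuinely new feature is that $d = 0$, with two consequences. First, this is exactly what makes $\word_{27/23}$ a word on the finite alphabet $\{0,1,2\}$: the recurrence~\eqref{transient recurrence} reads $w(353\,i + |v| + 352) = w(i)$, so the self-similar column reproduces the sequence with no increment, and since $u$ and $v$ use only the letters $0,1,2$, no larger letter is ever introduced. Second, $d=0$ forces a change in the leastness argument. In the earlier proofs the induction was on the target value $c$, using the correspondence ``decrement to $c$'' $\mapsto$ ``decrement to $c-d$''; with $d = 0$ this correspondence is ``decrement to $c$'' $\mapsto$ ``decrement to $c$'' and becomes circular. I would instead induct on position: for letters inside $\tau(v)$ the leastness is inherited because $\tau(v)$ is a prefix of the greedily-built word $\word_{27/23}$; for the fixed letters inside a block $u\,n$ it is a finite check, handling separately the two cases where the block is preceded by $\tau(v)$ or by $\varphi(n)$ as in Theorem~\ref{4/3}; and for the final letter $n$ of a block $\varphi(n) = u\,n$ I would use that this letter equals $w(j)$ at the strictly smaller preimage position $j$, so that decrementing it corresponds to decrementing position $j$ — which introduces a power by the induction hypothesis — and applying the power-preserving morphism $\varphi$ yields a power.

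Finally, the automaticity assertion follows formally. Theorem~\ref{regular sequences} applies verbatim with $k = 353$ and $d = 0$, so the letter sequence of $\tau(\varphi^\infty(0'))$ is $353$-regular; since it takes values only in $\{0,1,2\}$ by the previous paragraph, and a $k$-regular sequence with finitely many values is $k$-automatic~\cite{Allouche--Shallit 1992}, the word $\word_{27/23}$ is $353$-automatic. I expect the main obstacle to be the sheer scale of the computation rather than any conceptual difficulty: verifying power-freeness requires sliding windows of length up to roughly $\tfrac{\ell a}{a-b}$ across a transient of length about $75000$. The one genuinely new idea is the switch to induction on position in the leastness argument, occasioned by $d = 0$.
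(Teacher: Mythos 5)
Your overall plan matches the paper's proof, which explicitly follows the template of Theorem~\ref{6/5}: compute $u$ and $v$ from a prefix of $\word_{27/23}$, verify that ${\varphi|}_{\Z_{\geq 0}}$ locates words of some length (the paper finds $52$) and is $\frac{27}{23}$-power-free, bound the length of any $\frac{27}{23}$-power overlapping the transient via uniquely occurring factors, and finish by sliding windows of length $ma$ across the transient. Your treatment of the two consequences of $d = 0$ is also right, and for lexicographic leastness it is more careful than the paper, which simply says the argument is ``as in Theorem~\ref{6/5}'' even though the induction on the target value $c$ via $c \mapsto c - d$ degenerates when $d = 0$; your switch to induction on position is the correct fix. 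The deduction of $353$-automaticity from $353$-regularity together with the finite alphabet is also the intended one.

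The one step that fails as you state it is: ``find a length $\ell$ such that every length-$\ell$ factor starting in the transient occurs only once.'' No such $\ell$ exists. The word $v$ ends with the letter $1$, and $\varphi(1) = u\,1$ also ends with $1$, so in the symbolic word $\tau(v)\,\varphi(n)\varphi(n)\cdots$ used to compute $\ell$, the factor beginning at position $|v|-1$ (a $1$ followed by a stream of $\varphi$-images) coincides at every finite length with the factor beginning at the last position of an occurrence of $\varphi(1)$; the search for $\ell$ would never terminate at that position. The paper handles this single position by a separate argument: $0^{18}$ occurs in $\tau(\varphi^\infty(0'))$ only at positions $0, \dots, 8$, so the factor $1\,\varphi(0)^{18}$ occurs at position $|v|-1$ but never later; the uniqueness length $29588$ is then computed only over $0 \leq i \leq |v|-2$, and one checks $29588 > 1 + (18+1)\cdot 353$ so that position $|v|-1$ forces no increase. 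Without some such ad hoc argument for position $|v|-1$, your bound on $|x|$, and hence the finite window-sliding verification, does not go through.
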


\begin{proof}
Lexicographic-leastness is proved as in Theorem~\ref{6/5}.
The proof of $\frac{27}{23}$-power-freeness is similar to Theorem~\ref{6/5}.
The morphism ${\varphi |}_{\Z_{\geq 0}}$ locates words of length $52$ and is $\frac{27}{23}$-power-free.
However, there is a new subtlety in finding a length $\ell$ such that the factor of length $\ell$ beginning at position $i$ in $\tau(\varphi^\infty(0'))$, for each $i$ in the interval $0 \leq i \leq |v| - 1$, only occurs once in $\tau(\varphi^\infty(0'))$.
This subtlety arises because $v$ and $\varphi(n)$ have a common suffix for some $n$, namely the word $1$ when $n = 1$.
Consequently, there is no uniquely-occurring factor beginning at position $|v| - 1$ in $\tau(v) \varphi(n) \varphi(n) \varphi(n) \cdots$.
Instead, we use the fact that $0^{18}$ occurs at positions $0, 1, \dots, 8$ in $\tau(\varphi^\infty(0'))$ but nowhere else, so the factor $1 \, \varphi(0)^{18}$ occurs at position $|v| - 1$ but does not re-occur later.
Then we find a length that induces a uniquely-occurring factor at all positions $i$ in the interval $0 \leq i \leq |v| - 2$  using the symbolic $\tau(v) \varphi(n)$ as before.
This length is $29588$, which we do not need to increase on account of position $|v| - 1$ since $29588 > 1 + (18 + 1) \cdot 353$.
So, to finish, one slides a window of length $m a$ from position $0$ to position $|v| - 2$ for each $1 \leq m \leq 7396$.
\end{proof}

Guay-Paquet and Shallit~\cite{Guay-Paquet--Shallit} asked whether $\word_{\geq 5/2}$ is a word on a finite alphabet.
While the answer is not known, Theorem~\ref{27/23} shows that for some rationals the word $\word_{a/b}$ is a word on a finite alphabet.
Theorem~\ref{27/23} also shows that ${\varphi |}_{\{0, 1, 2\}}$ is a $\frac{27}{23}$-power-free morphism on the alphabet $\{0, 1, 2\}$.

We identified $20$ additional rational numbers such that $\word_{a/b}$ seems to be generated by a morphism with $d = 0$ and is therefore conjecturally a word on a finite alphabet.
For the $9$ rationals $\frac{117}{97}$, $\frac{64}{53}$, $\frac{107}{87}$, $\frac{85}{69}$, $\frac{90}{73}$, $\frac{127}{103}$, $\frac{95}{77}$, $\frac{100}{81}$, and $\frac{68}{55}$, the correct value of $k$ seems to be $38a-15b$, just as in Conjecture~\ref{102 38a-15b}.
Moreover, the number of nonzero, eventually constant columns for these words is $102$, which is the number of nonzero letters in the corresponding morphism in Theorem~\ref{102 38a-15b power-free}.
This suggests a further connection between $\ab$-power-free morphisms on finite vs.\ infinite alphabets.

\section*{Acknowledgments}

We thank the referee for a careful reading and good suggestions.

\end{document}